\DeclareFontFamily{U}{mathx}{}
\DeclareFontShape{U}{mathx}{m}{n}{ <-> mathx10 }{}
\DeclareSymbolFont{mathx}{U}{mathx}{m}{n}
\DeclareMathAccent{\widecheck}{0}{mathx}{"71}
\theoremstyle{definition}
\newtheorem{Def}{Definition}[section]
\newtheorem{es}[Def]{Example}
\newtheorem{ese}[Def]{Examples}
\newtheorem{as}[Def]{Assumption}
\theoremstyle{remark}
\newtheorem{obs}[Def]{Remark}
\newtheorem{observation}[Def]{Observation}
\theoremstyle{plain}
\newtheorem{prop}[Def]{Proposition}
\newtheorem{lema}[Def]{Lemma}
\newtheorem{cor}[Def]{Corollary}
\newtheorem{teo}[Def]{Theorem}
\newcommand{\bo}{\mathbf}
\newcommand{\A}{{\mathcal A}}
\newcommand{\B}{{\mathcal B}}
\newcommand{\C}{{\mathcal C}}
\newcommand{\D}{{\mathcal D}}
\newcommand{\E}{{\mathcal E}}
\newcommand{\G}{{\mathcal G}}
\newcommand{\K}{{\mathcal K}}
\newcommand{\M}{{\mathcal M}}
\renewcommand{\P}{{\mathcal P}}
\newcommand{\Q}{{\mathcal Q}}
\renewcommand{\S}{{\mathcal S}}
\newcommand{\T}{{\mathcal T}}
\newcommand{\V}{{\mathcal V}}
\newcommand{\W}{{\mathcal W}}
\newcommand{\Z}{{\mathcal Z}}
\newcommand{\mt}{\mathfrak}
\newcommand{\tx}{\textnormal}
\newcommand{\op}{^\textnormal{op}}
\newcommand{\colim}{\operatornamewithlimits{colim}}
\newcommand{\psicheck}{\Psi^{\vee}}
\newcommand{\changeoperator}[1]{%
	\csletcs{#1@saved}{#1@}%
	\csdef{#1@}{\changed@operator{#1}}%
}
\newcommand{\changed@operator}[1]{%
	\mathop{%
		\mathchoice{\textstyle\csuse{#1@saved}}
		{\csuse{#1@saved}}
		{\csuse{#1@saved}}
		{\csuse{#1@saved}}%
	}%
}
\def\@tocline#1#2#3#4#5#6#7{\relax
	\ifnum #1>\c@tocdepth % then omit
	\else
	\par \addpenalty\@secpenalty\addvspace{#2}%
	\begingroup \hyphenpenalty\@M
	\@ifempty{#4}{%
		\@tempdima\csname r@tocindent\number#1\endcsname\relax
	}{%
		\@tempdima#4\relax
	}%
	\parindent\z@ \leftskip#3\relax \advance\leftskip\@tempdima\relax
	\rightskip\@pnumwidth plus4em \parfillskip-\@pnumwidth
	#5\leavevmode\hskip-\@tempdima
	\ifcase #1
	\or\or \hskip 1em \or \hskip 2em \else \hskip 3em \fi%
	#6\nobreak\relax
	\hfill\hbox to\@pnumwidth{\@tocpagenum{#7}}\par% <---- \dotfill -> \hfill
	\nobreak
	\endgroup
	\fi}
\title{Accessible categories with a class of limits}
\author{Stephen Lack and Giacomo Tendas}
\address{School of Mathematical and Physical Sciences, Macquarie University NSW 2109, 
	Australia}
\email{steve.lack@mq.edu.au}
\address{Department of Mathematics and Statistics, Masaryk University, Kotl\'{a}\v{r}sk\'{a} 2, 611 37 Brno, Czech Republic}
\email{tendasg@math.muni.cz}
\date{\today}
\thanks{The first-named author acknowledges with gratitude the support of an Australian Research Council Discovery Project DP190102432. The second-named author gratefully acknowledges the support of an International Macquarie University Research Excellence Scholarship 2018115/20191524 and of the Grant agency of the Czech republic under the grant 22-02964S}
\begin{document}
	
\begin{abstract}
	In this paper we characterize those accessible $\V$-categories that have limits of a specified class. We do this by introducing the notion of {companion} $\mt C$ for a class of weights $\Psi$, as a collection of special types of colimit diagrams that are compatible with $\Psi$. We then characterize the accessible $\V$-categories with $\Psi$-limits as those accessibly embedded and {\em $\mt C$-virtually reflective} in a presheaf $\V$-category, and as the $\V$-categories of {\em $\mt C$-models} of sketches. This allows us to recover the standard theorems for locally presentable, locally multipresentable, and locally polypresentable categories as instances of the same general framework. In addition, our theorem covers the case of any weakly sound class $\Psi$, and provides a new perspective on the case of weakly locally presentable categories.
\end{abstract}	
	
\maketitle
	
\tableofcontents

\section{Introduction}

An accessible category is complete if and only if it is cocomplete,
and in that case it is called locally presentable. Many of the
accessible categories of interest, however, are not complete, but have
only certain limits: perhaps just the products, or just the connected
limits --- or, in the case of 2-categories, just the flexible
limits. There are a number of results in the literature of the
following type: for a particular class $\Psi$ of limits, a category is
accessible with $\Psi$-limits if and only if it is accessible and
satisfies some ``relaxed'' cocompleteness condition, the details of which depend
on the class $\Psi$; furthermore, the accessible categories with
$\Psi$-limits can be characterized as the categories of models of a
particular type of sketch. The
goal of this paper is to find a general context, containing the known
cases, in which such a theorem can be proved.

In an earlier paper \cite{LT22:virtual}, we introduced an approach to
(enriched) accessibility based on what we called ``virtual'' notions,
in which, rather than work entirely in the given category $\A$, one
passes to the completion $\P^\dagger\A$ of $\A$ under limits. In many
cases this involves relaxing the requirement that a particular
functor defined on $\A$ be representable by the much weaker requirement that it be
small. Thus accessible categories need not be cocomplete, but they are
virtually cocomplete; likewise, they may not be reflective
subcategories of a presheaf category, but they are virtually
reflective.  In this paper, we develop a more nuanced version of this
approach: in order to study accessible categories with $\Psi$-limits,
we work with a suitably chosen full subcategory $\mt{C}^\dagger_1\A$
of $\P^\dagger\A$, depending on $\Psi$ but containing the
representables, and use it to define various {\em $\mt{C}$-virtual}
notions. In particular, we see that an accessible $\A$ is
$\Psi$-complete if and only if it is $\mt{C}$-virtually cocomplete,
and that it is then $\mt{C}$-virtually reflective in a presheaf
category. 

We begin by looking at the two original examples. The first is the locally
presentable case, involving genuine representability, with $\Psi$
consisting of all (small) limits. It goes all the way back to \cite{GU71:libro} for the
unenriched version, with the enriched version appearing in
\cite{Kel82:articolo}, and \cite{BQR98} for the sketch aspect. 

\begin{teo}
	The following are equivalent for a $\V$-category $\A$:\begin{enumerate}\setlength\itemsep{0.25em}
		\item $\A$ is accessible and complete;
		\item $\A$ is accessible and cocomplete;
		\item $\A$ is the $\V$-category of models of a limit sketch.
	\end{enumerate}
\end{teo}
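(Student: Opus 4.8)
The plan is to prove the cycle of implications $(2)\Rightarrow(3)\Rightarrow(1)\Rightarrow(2)$, with the (restricted) Yoneda embedding as the common tool. Fix once and for all a regular cardinal $\lambda$ for which $\A$ is $\lambda$-accessible, and let $i\colon \A_\lambda \hookrightarrow \A$ denote a small skeleton of the full subcategory of $\lambda$-presentable objects; this is dense in $\A$ and exhibits $\A \simeq \tx{Ind}_\lambda\A_\lambda$. Each implication will compare $\A$ with the presheaf $\V$-category $[\A_\lambda\op,\V]$.

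For $(2)\Rightarrow(3)$, suppose $\A$ is accessible and cocomplete. Cocompleteness upgrades $\lambda$-accessibility to local $\lambda$-presentability, so that $\A_\lambda$ is closed in $\A$ under $\lambda$-small colimits. I would then invoke the enriched Gabriel--Ulmer duality (the $\V$-version due to \cite{Kel82:articolo} of \cite{GU71:libro}): the restricted Yoneda embedding $E\colon \A \to [\A_\lambda\op,\V]$, $A \mapsto \A(i-,A)$, is fully faithful with essential image the $\lambda$-continuous presheaves, yielding $\A \simeq \tx{Lex}_\lambda[\A_\lambda\op,\V]$. Taking as limit sketch the $\V$-category $\A_\lambda\op$ equipped with the class of all $\lambda$-small limit cones it carries, a model is precisely a $\lambda$-continuous presheaf, so $\A$ is the $\V$-category of models of a limit sketch.

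For $(3)\Rightarrow(1)$, write $\A=\tx{Mod}(\S)$ for a limit sketch $\S$, realised as the full subcategory of $[\S,\V]$ of presheaves sending each distinguished cone to a limit cone. Since limits in $[\S,\V]$ are computed pointwise and limits commute with limits, this subcategory is closed under all small limits, so $\A$ is complete; accessibility is the content of the enriched sketch theory of \cite{BQR98}, as $\A$ is an accessibly embedded orthogonality class in the locally presentable $[\S,\V]$ and every such class is accessible. For $(1)\Rightarrow(2)$, suppose $\A$ is accessible and complete. Density of $\A_\lambda$ again makes $E\colon \A \to [\A_\lambda\op,\V]$ fully faithful, and $E$ preserves all limits since these are computed pointwise after restriction. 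Accessibility of $\A$ furnishes a solution-set for $E$, so the enriched adjoint functor theorem produces a left adjoint; thus $\A$ is reflective in the cocomplete $[\A_\lambda\op,\V]$, and a reflective subcategory of a cocomplete $\V$-category is cocomplete.

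The step I expect to be the main obstacle is $(1)\Rightarrow(2)$: deducing cocompleteness from completeness plus accessibility. The delicate point is to produce the reflector genuinely $\V$-enrichedly rather than only on underlying ordinary categories, which means verifying the solution-set condition for $E$ from accessibility and then applying the enriched form of the adjoint functor theorem. The duality invoked in $(2)\Rightarrow(3)$ is equally substantial, but it may be quoted as a black box from the cited references, whereas the construction of the reflection is the genuinely load-bearing analytic step.
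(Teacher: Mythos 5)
Your argument is correct, but it is not the paper's: it is the classical, self-contained proof, whereas the paper never proves this statement from scratch. In the paper the theorem is recalled in the introduction as the known characterization of locally presentable $\V$-categories (citing \cite{GU71:libro}, \cite{Kel82:articolo}, and \cite{BQR98} for the sketch aspect) and is then \emph{recovered} as the special case $\Psi=\P$ of the general companion framework: Theorem~\ref{sound-weakcharacterization} applied with the weakly sound class $\Psi=\P$ and companion $\Phi=\Q$ (the Cauchy weights), or more cleanly $\Phi=\emptyset$ (see Examples~\ref{soundcomp-examples}(1) and Remark~\ref{Rmk:why-not-Psi+}); since accessible $\V$-categories are Cauchy complete, the relaxed conditions of that theorem collapse to genuine cocompleteness, genuine reflectivity in $[\C,\V]$, and plain limit sketches. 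The real divergence is at the step you flag as load-bearing, $(1)\Rightarrow(2)$: you produce the reflection along the restricted Yoneda embedding $E$ via a solution-set argument plus the enriched adjoint functor theorem, while the paper's machinery avoids solution sets altogether --- accessibility gives virtual cocompleteness, i.e.\ cocompleteness of $\P^\dagger\A$ by (the dual of) \cite[Theorem~3.8]{DL07} (Proposition~\ref{cocomplete+psicomplete}), and representability of the relevant small continuous $\V$-functors then comes from Kelly's representability criterion \cite[Theorem~4.80]{Kel82:libro}, which is exactly what makes $\Phi=\emptyset$ a companion for $\P$. Your route buys a direct proof of this one theorem; the paper's route buys uniformity, since the same argument specializes to connected limits, wide pullbacks, products and flexible limits. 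One caution on your adjoint-functor step: in the enriched setting ``accessibility furnishes a solution set'' must be read as a statement about underlying ordinary categories (using that underlying categories of accessible $\V$-categories are accessible and that $E_0$ is an accessible functor), after which the ordinary left adjoint enriches because $E$ preserves powers; this is precisely the delicate point you identify, and it is worth making the power-preservation argument explicit rather than citing ``the enriched adjoint functor theorem'' as a black box.
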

\noindent In this example we have genuine cocompleteness, as well as genuine
reflectivity of the models in a presheaf category.

The second example, due to Diers~\cite{Die80:articolo}, involves the locally
multipresentable categories: these are the accessible categories with connected limits.

\begin{teo}
	The following are equivalent for a category $\A$:\begin{enumerate}\setlength\itemsep{0.25em}
		\item $\A$ is accessible with connected limits;
		\item $\A$ is accessible and multicocomplete;
		\item $\A$ is the category of models of a limit/coproduct sketch.
	\end{enumerate}
\end{teo}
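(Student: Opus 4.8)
The guiding principle is that this is the case $\Psi=$ connected limits, whose companion $\mt C$ consists of coproducts; the single fact driving the whole proof is that \emph{connected limits commute with coproducts in $\mathrm{Set}$}. The plan is to establish the cycle $(1)\Rightarrow(2)\Rightarrow(3)\Rightarrow(1)$, together with the observation---immediate from sketch theory for $(3)$ and from the hypotheses for $(1)$ and $(2)$---that all three conditions make $\A$ accessible, so that only the (co)limit-theoretic content needs attention.

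For $(1)\Rightarrow(2)$ I would invoke the multiadjoint form of the adjoint functor theorem for accessible categories. If $\A$ is accessible with connected limits then, for every small $\I$, the diagonal $\Delta\colon\A\to\A^{\I}$ is accessible and preserves connected limits (both computed pointwise), and hence admits a left multiadjoint. Unwinding the definition, a left multiadjoint to $\Delta$ evaluated at a diagram $D\colon\I\to\A$ is exactly a multicolimit of $D$; letting $\I$ range over all small shapes yields multicocompleteness.

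For $(3)\Rightarrow(1)$ I would argue by closure. The models of any sketch form an accessible category, and the models of a limit/coproduct sketch are the functors $\S\to\mathrm{Set}$ sending the distinguished cones to limits and the distinguished cocones to coproducts, sitting inside a presheaf category. Connected limits there are computed pointwise, so they automatically preserve the limit-cone condition; and, by the commutation of connected limits with coproducts in $\mathrm{Set}$, they also preserve the coproduct-cocone condition. Thus the models are closed under connected limits, which is $(1)$.

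The step $(2)\Rightarrow(3)$ is the representation theorem and the point where the real work lies. Every accessible category is already the category of models of a mixed limit/colimit sketch, so the content is to \emph{refine} such a presentation, for a multicocomplete $\A$, into one whose colimit data consist only of coproduct cocones. Concretely I would fix $\lambda$ with $\A$ $\lambda$-accessible, take the dense subcategory $\A_\lambda$ of $\lambda$-presentable objects, and build the sketch on (a skeleton of) $\A_\lambda$: the cones record the $\lambda$-small connected limits holding among $\lambda$-presentables, while the coproduct cocones record the multicolimit decompositions supplied by $(2)$---exactly the colimit diagrams that the commutation fact shows to be respected. The main obstacle is to verify that this sketch is correct: that the restricted-Yoneda embedding $\A\hookrightarrow[\A_\lambda\op,\mathrm{Set}]$ lands in, and is equivalent to, the category of models. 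Faithfulness and fullness follow from density of $\A_\lambda$; the delicate half is essential surjectivity---showing that every model is a $\lambda$-filtered colimit of representables and hence comes from an object of $\A$, and that no cones or cocones beyond those imposed are needed. This bookkeeping, identifying precisely which coproduct cocones encode the multistructure, is where I would expect to spend most of the effort, and where Diers' original analysis is required.
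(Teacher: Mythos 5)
Your cycle is well chosen, and two of its three legs are fine: $(1)\Rightarrow(2)$ via a multiadjoint functor theorem applied to $\Delta\colon\A\to\A^{\I}$ is legitimate (it is exactly what the paper's proposition on left $\mt C$-adjoints gives when specialized to the coproduct companion, cf.\ Corollary~\ref{psi-cont} and Example~\ref{soundcomp-examples}(6)), and $(3)\Rightarrow(1)$ is the same argument the paper uses to show that categories of $\mt C$-models are closed under $\Psi$-limits (Proposition~\ref{phisketchisasketch} together with Remark~\ref{Phi-colim-sketch}), resting on commutation of coproducts with connected limits in $\bo{Set}$. Note also that the paper itself does not prove this theorem directly: it recovers Diers' result as the instance $\Psi=$ connected limits, $\Phi=$ coproducts of Theorem~\ref{sound-weakcharacterization}.

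The genuine gap is in $(2)\Rightarrow(3)$, and it is not merely deferred bookkeeping: the plan as stated cannot be carried out. A cocone in a sketch must have a vertex in the base category. The condition that a model $F$ respect a multicolimit $(c_i\colon D\to\Delta A_i)_{i\in I}$ of a diagram $D\colon\I\to\A_\lambda$ reads $\lim_{\I\op}F(D-)\cong\coprod_{i}F(A_i)$, and to present this as ``$F$ sends a specified cocone to a coproduct cocone'' one needs as vertex precisely the colimit of $D$ --- the object that fails to exist in $\A$ (only its multicolimit does). So the coproduct cocones you want to impose cannot even be written down over the base $\A_\lambda\op$. This is why the paper (and, in substance, Diers and Ageron) proceed differently: first upgrade $(2)$ to the statement that $\A$ is accessibly embedded and $\mt C$-virtually (i.e.\ multi-) reflective in a presheaf category $\K=[\C,\bo{Set}]$ (Theorem~\ref{relatice-C-charact}), deduce closure of $\A$ under connected limits in $\K$ (Theorem~\ref{C-strong-charact}, via Proposition~\ref{prop:what-is-A}), and only then build the sketch on $\B=\K_\beta\op$, a small full subcategory of the presheaf category which does contain the required vertices: the cocones are the maps $M_K\to\B(H'_K-,K)$ whose vertices are objects $K\in\K_\alpha$ of the presheaf category (not of $\A$), encoding the decomposition of $\K(K,J-)$ as a coproduct of representables (Proposition~\ref{strong-psi-sketch}). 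Your closing appeal to ``Diers' original analysis'' concedes that this implication --- which is the substance of the theorem --- is not actually proved in the proposal, so the argument is incomplete at exactly the point where the work lies.
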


How should we understand multicocompleteness? A category $\A$ is 
cocomplete if for each functor $S\colon\D\to\A$ with $\D$ small, the
functor $\A\to\bo{Set}$, sending $A\in\A$ to the set $[\D,\A](S,\Delta A)$
of cocones of $S$ with vertex $A$, is representable. It is
multicocomplete if instead each such functor $\A\to\bo{Set}$ is a (small)
coproduct of representables. Equivalently, $\A$ is multicocomplete
just when the free completion $\tx{Fam}^\dagger\A$ of $\A$ under
(small) products has colimits of diagrams lying in $\A$.

Thus we see that coproducts appear in both condition (2) and (3); the
connection with (1) is that coproducts commute in $\bo{Set}$ with
connected limits, and that general colimits can be constructed as
coproducts of connected colimits.

This situation can be generalized using the notion of weakly sound
class of weights \cite{ABLR02:articolo, LT22:virtual}. Here one starts
with a class $\Psi$ of weights, and considers the class $\Psi^+$ of
those weights whose colimits commute in $\V$ with $\Psi$-limits. 
Diers' theorem generalizes to this context:

\begin{teo}[see Theorem~\ref{sound-weakcharacterization}]\label{sound-intro}
	Let $\Psi$ be a weakly sound class and $\Phi=\Psi^+$ be the class of weights whose colimits commute with $\Psi$-limits in $\V$. The following are equivalent for a $\V$-category $\A$:\begin{enumerate}\setlength\itemsep{0.25em}
		\item $\A$ is accessible with $\Psi$-limits;
		\item $\A$ is accessible and $\Phi^\dagger \A$ has
                  colimits of diagrams in $\A$;
		\item $\A$ is the $\V$-category of models of a limit/$\Phi$-colimit sketch.
	\end{enumerate}
\end{teo}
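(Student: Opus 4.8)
The plan is to establish the cycle $(3)\Rightarrow(1)\Rightarrow(2)\Rightarrow(3)$, using at every step the defining feature of $\Phi=\Psi^+$, namely that $\Phi$-colimits commute in $\V$ with $\Psi$-limits. Two standing presentations will be convenient: the free limit-completion $\P^\dagger\A\simeq[\A,\V]\op$, inside which $\Phi^\dagger\A$ is the closure of the representables under $\Phi$-limits (equivalently, the $\Phi$-colimit closure of the representables once we pass to $[\A,\V]$); and, for a sketch, its underlying $\V$-category $\S$, with $\A$ realised as the models inside $[\S,\V]$.

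For $(3)\Rightarrow(1)$ I would take $\A$ to be the full subcategory of $[\S,\V]$ of functors sending the distinguished cones to limits and the distinguished $\Phi$-cocones to $\Phi$-colimits. Accessibility of $\A$ is the standard sketch theorem. For $\Psi$-limits, form the $\Psi$-limit of a family of models pointwise in $[\S,\V]$: it still sends the distinguished cones to limits because limits commute with limits, and it sends the distinguished $\Phi$-cocones to $\Phi$-colimits precisely because $\Psi$-limits commute with $\Phi$-colimits in $\V$. Hence the pointwise $\Psi$-limit is again a model, so $\A$ is closed under $\Psi$-limits in $[\S,\V]$ and in particular has them.

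For $(1)\Rightarrow(2)$, I would first identify the candidate colimit of a small diagram $D\colon\J\to\A$: presented in $[\A,\V]$ it is the presheaf $F_D:=[\J,\A](D,\Delta(-))=\lim_j\A(Dj,-)$, and condition $(2)$ asks exactly that each $F_D$ lie in $\Phi^\dagger\A$, i.e. be a $\Phi$-colimit of representables. Since every representable preserves the $\Psi$-limits existing in $\A$ and limits commute with limits, $F_D$ is $\Psi$-continuous. Conversely, representables are $\Psi$-continuous and $\Phi$-colimits of $\Psi$-continuous functors stay $\Psi$-continuous by the same commutation, so $\Phi^\dagger\A$ already sits inside the $\Psi$-continuous functors; what is needed is the reverse inclusion, that a $\Psi$-continuous functor such as $F_D$ is forced to be a $\Phi$-colimit of representables. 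This is exactly the point at which weak soundness of $\Psi$ is invoked, with accessibility of $\A$ used to bound the colimits and to keep the presheaves small. I expect this identification to be the principal obstacle, and one must be attentive that only the weakly sound hypothesis — rather than genuine soundness — is in force, so the characterisation of $\Psi$-continuous functors may hold only after the accessibility condition is intersected in, not unconditionally.

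For $(2)\Rightarrow(3)$ I would rebuild $\A$ as the models of a sketch on a dense generator. Fix $\lambda$ with $\A$ $\lambda$-accessible, let $\S=\A_\lambda$ be the small full subcategory of $\lambda$-presentable objects, and use that restricted Yoneda embeds $\A$ fully faithfully into $[\S\op,\V]$. The image is carved out by two families of conditions: cones recording the limits that already hold among the objects of $\S$, and $\Phi$-cocones recording, via condition $(2)$, how the colimits $F_D$ of $\A$-diagrams are realised as $\Phi$-colimits in $\Phi^\dagger\A$. Taking these as the distinguished cones and $\Phi$-cocones yields a limit/$\Phi$-colimit sketch whose models are precisely $\A$. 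The delicate verifications — that the image of restricted Yoneda is cut out by exactly these conditions, and that only $\Phi$-shaped cocones are required — again rest on the same soundness dictionary as in $(1)\Rightarrow(2)$. Alternatively, the entire cycle can be deduced by exhibiting a companion $\mt{C}$ for $\Psi$ built from $\Phi$-colimit diagrams, checking that $\mt{C}^\dagger_1\A$ coincides with $\Phi^\dagger\A$ and that $\mt C$-virtual cocompleteness is condition $(2)$, and then citing the general characterisation; but the direct route above has the advantage of isolating the one genuinely hard step, the $\Psi$-continuous / $\Phi$-colimit identification.
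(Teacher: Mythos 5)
Your direction $(3)\Rightarrow(1)$ is fine (it is essentially the weakly sound specialization of Proposition~\ref{phisketchisasketch}), but your construction for $(2)\Rightarrow(3)$ contains a genuine error: a sketch whose underlying $\V$-category is $\A_\lambda$ cannot, in general, have $\A$ as its category of models. The conditions cutting out the image of the restricted Yoneda embedding are indexed by \emph{presheaves} on $\A_\lambda$, not by objects of $\A_\lambda$, and a cocone of a sketch must have both its diagram and its vertex in the base. Concretely, take $\V=\bo{Set}$, $\Psi=\emptyset$ (weakly sound, with $\Phi=\Psi^+=\P$), and $\A=1\sqcup 1$ the discrete category on two objects $a,b$; then $\A$ is finitely accessible, satisfies (1) and (2), and $\A_\lambda=\A$ for every $\lambda$. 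Any functor from a category $\C$ into this discrete base is constant on connected components of $\C$, so naturality forces the weight of any cone or cocone with vertex $a$ to vanish on the part of the diagram at $b$, and the resulting condition constrains $Fa$ alone (it has the form $Fa\cong[S,Fa]$ or $Fa\cong T\times Fa$); likewise for $b$. Hence the models of any sketch on this base form a product $\X_a\times\X_b$ of full subcategories of $\bo{Set}$, and no such product is equivalent to $1\sqcup 1$: that would require a full subcategory of $\bo{Set}$ equivalent to $1\sqcup 1$, i.e.\ two sets $X,Y$ with $\bo{Set}(X,Y)=\bo{Set}(Y,X)=\emptyset$, which is impossible. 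This is exactly why the paper's Proposition~\ref{strong-psi-sketch} builds the sketch not on $\A_\lambda$ but on the larger base $\K_\beta\op$, where $\K=[\A_\alpha\op,\V]$: the cocones $\eta_K\colon M_K\to\B(H'_K-,K)$ have vertices ranging over the presheaves $K\in\K_\alpha$ and diagrams landing in the image of $\A$ inside $\K_\beta$, with membership of $\A$ detected by Proposition~\ref{prop:what-is-A}. (In the example, the repaired sketch adjoins one object $c$ to the base, with a cone forcing $Fc\cong 1$ and a cocone forcing $Fc\cong Fa+Fb$; its models are then equivalent to $\A$.)

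There is also a smaller gap in $(1)\Rightarrow(2)$: you locate the crux correctly but the appeal to weak soundness is not by itself the needed statement. Definition~\ref{sound-def} only says that a small $\Psi$-continuous $F\colon\A\to\V$ (with $\A$ virtually cocomplete and $\Psi$-complete) is $\Psi$-\emph{flat}, i.e.\ that $F$-weighted colimits commute with $\Psi$-limits in $\V$; it does not say that $F$ is a $\Phi$-colimit of representables. The missing argument is that of Proposition~\ref{wsound comp}: write $F\cong\tx{Lan}_J(FJ)$ for a small full subcategory $J\colon\C\to\A$ (this is where smallness, hence accessibility, enters), observe that $FJ$ is then also $\Psi$-flat by \cite[Lemma~3.3]{LT22:virtual}, and conclude that $F\cong FJ*ZJ$ is a colimit of representables weighted by $FJ\in\Psi^+$. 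Finally, note that the ``alternative'' you set aside in your last paragraph is precisely the paper's actual proof: Proposition~\ref{wsound comp} shows that $\Phi=\Psi^+$ is a companion for $\Psi$ (via the colimit type $\mt C^\Phi$ of Example~\ref{soundcompanion}), and then Theorems~\ref{relatice-C-charact}, \ref{sound-strongcharact} and \ref{Psi-sketch}, together with Remark~\ref{Phi-colim-sketch}, give all the equivalences --- including the sketch characterization, with the correctly enlarged base. Given the failure of your direct route to $(3)$, that machinery is not an optional detour but the step that makes the sketch direction work.
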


This time $\Phi^\dagger\A$ is the free completion of $\A$ under
$\Phi$-weighted limits; thus if $\A$ is small then $\Phi^\dagger\A$ is the opposite of the
$\V$-category of all $\V$-functors $\A\to\V$ lying in $\Phi$.
Once again, the condition that $\Phi^\dagger\A$ have colimits of
diagrams in $\A$ can be understood as a relaxed representability
condition.
The colimit of $S\colon \D\to\A$ weighted by $F\colon \D\op\to\V$
exists just when the $\V$-functor $\A\to\V$ sending $A\in\A$ to
$[\D\op,\V](F,\A(S,A))$ is representable. The colimit exists in
$\Phi^\dagger\A$ just when the same  $\V$-functor $\A\to\V$ lies in
$\Phi^\dagger\A$: see Observation~\ref{virtual-col} below.

When $\Psi=\emptyset$, so that accessible $\V$-categories with
$\Psi$-limits are just accessible categories, $\Phi$ is the class $\P$
of all weights, and
we recover the characterization of accessible categories as the
sketchable ones \cite{Lai81:articolo,BQR98}, as well as the fact \cite[Remark~3.5]{DL07} that if $\A$
is accessible then $\P^\dagger\A$ has colimits of representables; in
other words, it  is virtually cocomplete in the sense of
\cite{LT22:virtual}.

When $\Psi=\P$, so that the accessible $\V$-categories with $\Psi$-limits
are the locally presentable $\V$-categories, $\Phi$ is the class $\Q$
of absolute/Cauchy weights. Since any accessible category is Cauchy
complete, our ``relaxed'' cocompleteness condition of $\Q^\dagger\A$
having colimits of diagrams in $\A$ just says that $\A$ is cocomplete,
and we recover the standard characterization of locally presentable
$\V$-categories.\footnote{Later, in
  Section~\ref{sound-companion}, we show how to work directly with the
  empty class $\Phi=\emptyset$ rather than $\Phi=\Q$: see in
  particular Remark~\ref{Rmk:why-not-Psi+}.}

Even though Theorem~\ref{sound-intro}  is already quite general and captures some
classes of limits that weren't considered before --- see
Example~\ref{soundcomp-examples} for several such --- it doesn't cover two important examples: the weakly locally presentable categories ($\Psi$ is the class for products) and the locally polypresentable categories ($\Psi$ is the class for wide pullbacks), the problem being that in those two cases the classes of limits in question are not weakly sound. 

In the case of locally polypresentable categories the known characterization theorem reads as follows:

\begin{teo}\label{poly-intro}
	The following are equivalent for a category $\A$:\begin{enumerate}\setlength\itemsep{0.25em}
		\item $\A$ is accessible with wide pullbacks;
		\item $\A$ is accessible and polycocomplete;
		\item $\A$ is the category of models of a galoisian sketch.
	\end{enumerate}
\end{teo}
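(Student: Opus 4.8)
The plan is to realise wide pullbacks as $\Psi$-limits for a suitable class $\Psi$ of weights, and then to deduce the three equivalences from the paper's general companion machinery, exactly as Theorem~\ref{sound-intro} handles the weakly sound case. The essential difficulty, flagged in the discussion preceding the statement, is that the class $\Psi$ for wide pullbacks is \emph{not} weakly sound: we cannot take $\Phi=\Psi^{+}$ and invoke Theorem~\ref{sound-intro}. Instead we must construct, by hand, a genuine companion $\mt C$ for $\Psi$ whose colimit diagrams encode the ``poly'', or galoisian, structure, and verify its axioms directly.

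First I would fix $\Psi$ to be the class of weights for wide pullbacks and record the tautology that a category has $\Psi$-limits precisely when it has wide pullbacks, so that clause~(1) becomes literally ``$\A$ is accessible with $\Psi$-limits''. Next I would single out the companion $\mt C$: its distinguished colimit diagrams are the \emph{galoisian} ones, assembled from coproducts of connected diagrams carrying a group (Galois) action, equivalently those cocones whose associated cocone-presheaf on $\A$ is a coproduct of functors, each a quotient of a representable by a group of automorphisms. This is the poly-analogue of the family/coproduct structure governing the multipresentable case, and it is exactly the data recorded by a galoisian sketch. The substance of this step is to check that $\mt C$ really is a companion for $\Psi$, i.e.\ that its galoisian colimits are compatible with wide pullbacks in the precise companion sense.

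Granting that $\mt C$ is a companion, I would then apply the paper's general characterisation theorem to obtain the equivalences in companion form: $\A$ is accessible with $\Psi$-limits if and only if $\A$ is accessible and $\mt C$-virtually cocomplete (that is, $\mt C^\dagger_1\A$ has colimits of diagrams in $\A$), if and only if $\A$ is the category of $\mt C$-models of a sketch. It then remains to translate. For the equivalence of~(1) and~(2) I would unwind $\mt C$-virtual cocompleteness just as multicocompleteness was unwound in the introduction: the cocone functor $\A\to\bo{Set}$ of each small diagram lands in $\mt C^\dagger_1\A$ exactly when that diagram admits a galoisian, i.e.\ poly, colimit, which is the meaning of polycocompleteness. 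For the equivalence of~(2) and~(3) I would match $\mt C$-models with models of a galoisian sketch, the specified galoisian cocones being precisely the colimit diagrams of $\mt C$ that a model must preserve.

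The step I expect to be the main obstacle is the companion verification. Because wide pullbacks are not weakly sound, one cannot rely on the commutation calculus available for sound classes; the galoisian colimits must be exhibited as a bona fide companion, with its closure and descent conditions checked by hand. The subtlety is genuine: a group quotient of a representable does not commute with binary pullbacks in $\bo{Set}$ on the nose --- a free $\mathbb{Z}/2$-action already breaks the interchange of $\lim$ and $\colim$ --- so the compatibility has to be formulated at the level of $\mt C^\dagger_1\A$ rather than as a naive exchange of limits and colimits. Getting this bookkeeping right, and confirming that the resulting notion reproduces Diers' galoisian cocones, is where the real work lies.
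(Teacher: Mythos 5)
Your high-level plan --- take $\Psi$ to be the weights for wide pullbacks, construct a companion by hand, and feed it into the general characterization theorems --- is exactly the paper's route in Section~\ref{widepbks}. The gap is in the construction itself: your colimit type omits the freeness condition, and that omission is fatal. The paper's companion $\mt F$ (Example~\ref{freegroupoids}) consists of groupoid-indexed diagrams, weighted by $\Delta 1$, which are \emph{free} actions (no non-identity map of the groupoid has a fixed point), and a presheaf belongs to $\mt F_1\A$ only when it is a colimit of a diagram $H\colon\G\to\A$ which is \emph{representably} free, i.e.\ $\A(A,H-)$ is free for every $A\in\A$ (Definition~\ref{C_1A}). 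A bare ``quotient of a representable by a group of automorphisms'', as in your proposal, is not such a colimit; freeness is precisely what encodes the ``unique up to \emph{unique} automorphism'' clause in the definition of polylimit, and without it neither compatibility with wide pullbacks nor the identification of $\mt C$-virtual cocompleteness with polycocompleteness (Proposition~\ref{polycomplete} and its dual) is available.

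Moreover, the ``subtlety'' on which you base this design decision is false. Colimits of free groupoid actions \emph{do} commute with wide pullbacks in $\bo{Set}$, on the nose: for free $\mathbb{Z}/2$-sets $X\to Z\leftarrow Y$ the canonical map $(X\times_Z Y)/\sigma\to(X/\sigma)\times_{Z/\sigma}(Y/\sigma)$ is a bijection (injectivity uses exactly that $Z$ has no fixed points), and the same argument works for arbitrary wide pullbacks and groupoids; this is \cite[Proposition~1.4]{HT96qc:articolo}, which the paper invokes for its compatibility axiom (I). Any counterexample of the kind you describe necessarily puts a non-free object somewhere in the diagram (e.g.\ a terminal base), and such diagrams are simply excluded by the definition: compatibility only requires $M*-$ to preserve $\Psi$-limits of diagrams landing in $\mt C_M$, so the ``naive exchange'' is both the correct formulation and a true statement, and no reformulation ``at the level of $\mt C_1^\dagger\A$'' is needed --- nor would one satisfy Definition~\ref{companion}. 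Finally, even granting compatibility, you never address axiom (II) of Definition~\ref{companion}: that over any virtually cocomplete category with wide pullbacks, every small wide-pullback-preserving functor lies in $\mt F_1^\dagger\B$. This is where the paper's real work lies: Proposition~\ref{companion-terminal} reduces (II) to producing an initial object of $\mt F_1^\dagger\B$, Proposition~\ref{polycomplete} identifies such an initial object with a polyinitial family, and the dual of Lamarche's Lemma~\ref{Lamarche} supplies one from the weakly initial family that virtual cocompleteness provides; accessibility of $\mt F$ (Example~\ref{acccomp}) is then needed before clause (3) can be extracted via Theorem~\ref{Psi-sketch} and Example~\ref{galoisian}. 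As it stands, the proposal neither defines the right companion nor proves that what it defines is one.
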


The history behind the proof of this theorem is complicated: its origins are in Lamarche's doctoral thesis \cite{Lam89:PhD}, with further work by Taylor \cite{Tay90:articolo} and Hu--Tholen \cite{HT96qc:articolo}. The notion of galoisian sketch and the equivalence of (3) to the other conditions is due to Ageron \cite{Age92:articolo}. 

Unlike the previous cases, polycolimits in $\A$ are not computed in a
free completion of $\A$, at least not in the sense of a free
completion under limits with respect to a class of weights (or class
of diagram shapes). In fact, Hu and Tholen \cite{HT96qc:articolo}
prove that a category $\A$ has polycolimits if and only if the {\em
  free completion of $\A$ under limits of free groupoid actions} has
colimits of objects from $\A$. This is not a free completion in the
usual sense since the diagrams defining free groupoid actions are not
just functors out of an indexing category, but need to satisfy some
additional properties. Nonetheless, it is the
case that polycocompleteness does say that for any diagram
$S\colon\D\to\A$, the induced functor from $\A$ to $\bo{Set}$ sending $A$ to
$[\D,\A](S,\Delta A)$ lies in a suitably chosen full subcategory of
$\P^\dagger\A$. Colimits of free groupoid actions also play a role in
Ageron's notion of galoisian sketch \cite{Age92:articolo}.

To capture the locally polypresentable case and the sound case under the same theory we shall prove the following theorem, and introduce the various notions on which it relies.

\begin{teo}\label{companioin-intro}
	Let $\Psi$ be a class of weights, and $\mt C$ be a companion for $\Psi$. The following are equivalent for a $\V$-category $\A$:\begin{enumerate}\setlength\itemsep{0.25em}
		\item $\A$ is accessible with $\Psi$-limits;
		\item $\A$ is accessible and $\mt C^\dagger_1\A$ has colimits of representables.
		\item $\A$ is the $\V$-category of $\mt C$-models of a sketch.
	\end{enumerate}
\end{teo}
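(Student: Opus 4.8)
The plan is to establish the three conditions as a cycle through the hub condition~(1), leaning on the machinery of $\mt C$-virtual notions developed earlier. First I would record that $(1) \Leftrightarrow (2)$ is essentially the characterisation already obtained: for an accessible $\A$, the presence of $\Psi$-limits is equivalent to $\mt C$-virtual cocompleteness, which by definition asks that $\mt C^\dagger_1\A$ admit colimits of representables. With this in hand it suffices to prove $(3) \Rightarrow (1)$ and $(1) \Rightarrow (3)$.

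For $(3) \Rightarrow (1)$, suppose $\A$ is the $\V$-category of $\mt C$-models of a sketch $\S$, with underlying $\V$-category $\G$. Accessibility follows as in the classical theory: the data of a sketch is small, so the $\mt C$-models form an accessible $\V$-category, being cut out of the presheaf $\V$-category $[\G\op,\V]$ by a small family of orthogonality conditions encoding the distinguished cones and $\mt C$-diagrams. To see that $\A$ has $\Psi$-limits, I would use that $\Psi$-limits in $[\G\op,\V]$ are computed pointwise. A $\mt C$-model is a presheaf sending each distinguished cone to a limit cone and each distinguished $\mt C$-diagram to a colimit cocone; since $\mt C$ is a companion for $\Psi$, so that the colimits appearing in $\mt C$ commute in $\V$ with $\Psi$-limits, a pointwise $\Psi$-limit of $\mt C$-models again sends the distinguished $\mt C$-diagrams to colimit cocones, and is therefore again a $\mt C$-model. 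Thus $\A$ is closed under $\Psi$-limits in $[\G\op,\V]$ and so possesses them.

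For $(1) \Rightarrow (3)$, the key input is that an accessible $\A$ with $\Psi$-limits is $\mt C$-virtually reflective in a presheaf $\V$-category, as recorded earlier. Choosing a regular cardinal $\kappa$ for which $\A$ is $\kappa$-accessible and taking a small dense subcategory $\B$ of $\kappa$-presentable objects yields a fully faithful, accessibly embedded $\V$-functor $\A \hookrightarrow [\B\op, \V]$. I would then read a sketch off this embedding: its cones record a small, generating collection of the limits that the objects of $\A$, viewed as presheaves, must preserve, while its distinguished $\mt C$-diagrams record the $\mt C$-colimit cocones that the virtual reflection forces the models to respect. The content is to verify that the $\mt C$-models of this sketch are exactly the presheaves in the essential image of $\A$ --- equivalently, that membership in $\A$ is detected precisely by these two families of conditions.

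The hard part will be $(1) \Rightarrow (3)$, and within it the passage from $\mt C$-virtual reflectivity to an honest small sketch whose $\mt C$-models recover $\A$ on the nose. Unlike the weakly sound case of Theorem~\ref{sound-intro}, where the relevant colimits are genuine $\Phi$-weighted colimits, the $\mt C$-diagrams need not be weighted colimits at all --- the free-groupoid-action diagrams of the polypresentable case being the guiding example --- so one must work with the more flexible $\mt C$-shaped sketch data and check that the companion axioms on $\mt C$ are exactly what makes the orthogonality argument close up. A secondary technical point, already implicit in $(3) \Rightarrow (1)$, is to confirm that $\mt C$-models form an accessible $\V$-category even when the $\mt C$-colimit conditions are not of the standard weighted form.
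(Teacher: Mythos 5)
Your overall architecture matches the paper's: $(1)\Leftrightarrow(2)$ is indeed the characterization via $\mt C$-virtual cocompleteness (Theorem~\ref{relatice-C-charact}), and your plan for $(1)\Rightarrow(3)$ follows the same route as Proposition~\ref{strong-psi-sketch} --- embed $\A$ accessibly in a presheaf $\V$-category, use $\mt C$-virtual reflectivity to write each $\K(K,J-)$ as $\tx{Lan}_{H_K}M_K$ with $\A(H_K-,A)\in\mt C_{M_K}$, and take these data as the cocylinders of the sketch. However, your treatment of $(3)\Rightarrow(1)$ contains a genuine gap, and it is not the ``secondary technical point'' you call it: it is the central obstruction in this direction. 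You claim accessibility holds because the $\mt C$-models are ``cut out \ldots by a small family of orthogonality conditions encoding the distinguished cones and $\mt C$-diagrams.'' This is false: conditions (i) and (ii) of Definition~\ref{C-models} are orthogonality-type conditions, but condition (iii) --- that $FH$ lie in the full subcategory $\mt C_M\subseteq[\C,\V]$ --- is a membership constraint in an arbitrary full subcategory and cannot in general be expressed by orthogonality (for the companion $\mt F$ of free groupoid actions, ``$FH$ is a free action'' is not an orthogonality condition on $F$). The paper explicitly warns, just before defining accessible companions, that for arbitrary $\mt C$ and $\S$ the $\V$-category $\tx{Mod}_{\mt C}(\S)$ need not be accessible, and the remark following Theorem~\ref{sound-weakcharacterization} explains why an orthogonality-style approach cannot work beyond the weakly sound case: $\mt F$-virtual orthogonality in a presheaf category would collapse to multiorthogonality, contradicting the existence of locally polypresentable categories that are not locally multipresentable.

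What the paper actually does is add a hypothesis: $\mt C$ must be an \emph{accessible} companion, meaning each $\mt C_M$ is accessible and accessibly embedded in $[\D,\V]$ (this is verified case by case for all the examples in Example~\ref{acccomp}). With this, Proposition~\ref{phisketchisasketch} obtains accessibility of $\tx{Mod}_{\mt C}(\S)$ not by orthogonality but by exhibiting each $\tx{Mod}_{\mt C}(\S_\eta)$ as a pullback of the (isofibration) inclusion $\mt C_M\hookrightarrow[\C,\V]$ along $-\circ H\colon \tx{Mod}(\B,\mathbb{L})\to[\C,\V]$, and then invoking the stability of accessible $\V$-categories with $\Psi$-limits under such pullbacks (\cite[Corollary~5.7]{LT22:virtual}). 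So your proof of $(3)\Rightarrow(1)$ cannot be repaired as written: one must either assume the accessible-companion hypothesis, as the precise body statement (Theorem~\ref{Psi-sketch}) does, or supply some substitute for it. A smaller point in the same paragraph: your argument that a pointwise $\Psi$-limit of $\mt C$-models is again a $\mt C$-model uses both halves of compatibility, not just commutation --- you also need $\mt C_M$ to be closed under $\Psi$-limits in $[\C,\V]$, so that the limit of the composites $F_dH$ again lies in $\mt C_M$ before you can apply the commutation isomorphism.
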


\noindent In particular we need to explain:\begin{itemize}\setlength\itemsep{0.25em}
	\item what it means for $\mt C$ to be a companion (Definition~\ref{companion});
	\item what $\mt C^\dagger_1\A$ is (Definition~\ref{C_1A});
	\item what is a $\mt C$-model of a sketch (Definition~\ref{C-models}).
\end{itemize}
For the moment, we merely point out that $\mt C^\dagger_1\A$ is a full
subcategory of $\P^\dagger\A$, and as such consists of certain (small)
$\V$-functors from $\A$ to $\V$. Thus, as we have seen, any notion
expressed in terms of representability of a $\V$-functor $\A\to\V$ can be relaxed so as
to ask only that the $\V$-functor lie in $\mt C^\dagger_1\A$. We refer
to this as the {\em $\mt C$-virtual} version of the notion; thus, for
example, condition (2) in the theorem says that $\A$ is $\mt
C$-virtually cocomplete.  Allowing $\Psi$ and $\mt C$ to vary, we obtain a whole spectrum of
versions of the ``virtual notions'' of \cite{LT22:virtual}.

If $\Psi$ is weakly sound, Theorem~\ref{companioin-intro} specializes to Theorem~\ref{sound-intro}. If $\V=\bo{Set}$ and $\Psi$ is the class for wide pullbacks we recover Theorem~\ref{poly-intro}. In the context of weakly locally presentable categories we do obtain a theorem but it does not exactly match the characterization theorems of \cite[Chapter~4]{AR94:libro} involving weak reflections and weak cocompleteness; those will be recovered separately in Section~\ref{wrsnc}.

In a follow-up paper \cite{Ten22:duality}, which like this paper is
based on material from the second author's PhD thesis \cite{Ten22:phd}, the notion of companion will be used to establish a duality between the 2-categories of finitely accessible categories with some class of limits and the 2-category of those lex categories which arise as free cocompletions of a small category under colimits of a specific type.

\subsection*{Outline}
We begin in Section~\ref{general-psi} by recalling some notation from
\cite{LT22:virtual} and introducing the general setting of the paper. Already in
Theorem~\ref{relative-quasiflattheorem} we prove a result of the same
flavour as those described above, but not yet sufficiently precise to
capture what is known in specific cases. The main results are discussed in
Section~\ref{main-psi} where we introduce the notion of companion and
prove the characterization theorems (\ref{C-strong-charact} and
\ref{relatice-C-charact}). In Section~\ref{sketches-C} we define the
notion of $\mt C$-model of a sketch whenever $\mt C$ is a companion
for a class $\Psi$, and give conditions on $\mt C$ under which the
accessible $\V$-categories with $\Psi$-limits coincide with the
$\V$-categories of $\mt C$-models  (Theorem~\ref{Psi-sketch}).

Section~\ref{Examples-companions} is entirely devoted to examples. We first discuss the case of a weakly sound class $\Psi$ (Section~\ref{sound-companion}); in this case the colimit types are actually classes of weights and, unlike in the general case, we are able to give a corresponding weakening of orthogonality. The main results of the section are Theorems~\ref{sound-strongcharact} and \ref{sound-weakcharacterization}. Next we consider the class of wide pullbacks (Section~\ref{widepbks}); here we recover the classical results on locally polypresentable categories and compare our results with those given in \cite{HT96qc:articolo}. In Section~\ref{products+G-powers} we consider a generalization of the weakly locally presentable categories to the enriched setting, while in Section~\ref{flexible} we discuss the case of accessible 2-categories with flexible limits.

Finally, in Section~\ref{wrsnc}, we compare reflectivity with respect to a particular colimit type $\mt C$ with the existing notions of weak reflectivity. We do this to obtain the characterization theorems of \cite{AR94:libro} for weakly locally presentable categories, and of \cite{LR12:articolo} for accessible 2-categories with flexible limits, as instances of our theory.

\newpage

\section{The general setting}\label{general-psi}

We fix a locally presentable and symmetric monoidal closed category $\V=(\V_0,\otimes,I)$ as our base of enrichment; in particular $\V$ will be locally $\lambda$-presentable as a closed category in the sense of Kelly \cite{Kel82:articolo}, for some regular cardinal $\lambda$ (by \cite[2.4]{KL2001:articolo}). Any other regular cardinal taken into consideration will be greater or equal than this $\lambda$. The enriched notion of accessibility that we study is the one first introduced in \cite{BQR98} and more recently developed in \cite{LT22:virtual}.

Our convention is that a {\em weight} will always mean a $\V$-functor
$M\colon\C\op\to\V$ with small domain. Notwithstanding this, it still
makes sense to consider weighted colimits $M*S$ for a $\V$-functor
$S\colon\C\to\K$ and presheaf $M\colon\C\op\to\V$ with $\C$ large, and
indeed this colimit will exist if $M$ is small and $\K$ is
cocomplete.

From now on we shall consider a class $\Psi$ of weights representing the kind of limits that our accessible $\V$-categories will be assumed to have.  Such a class $\Psi$ is said to be {\em locally small} \cite{KS05:articolo} if the free completion under $\Psi$-limits of a small $\V$-category is small. We do not assume this, and indeed it is not the case for key examples such as (the classes corresponding to) connected limits, products, or wide pullbacks.

We denote completions under $\Psi$-colimits and under $\Psi$-limits by
$\Psi\A$ and $\Psi^\dagger\A$ respectively. When $\Psi=\P$ is the
class of all weights, we recover the free completions $\P\A$ and
$\P^\dagger\A$ under small colimits and limits. We generally
write $Y\colon\A\to\Psi\A$ and $Z\colon\A\to\Psi^\dagger\A$ for the
respective (Yoneda) embeddings. 

\begin{observation}\label{virtual-col}
	Let $\B$ be a full subcategory of $\P^\dagger \A$ containing
        the representables, and $J\colon \A\to\B$ the inclusion.
        Then $\A$ admits the colimit $F*S$ for a $\V$-functor
        $S\colon\D\to\A$ and weight $F\colon\D\op\to\V$ just when the
        induced $\V$-functor $[\D\op,\V](F,\A(S,-))\colon\A\to\V$ is representable. On
        the other hand, to say that $\B$ admits the colimit $F*JS$ is
        to say that $[\D\op,\V](F,\A(S,-))$ lies in $\B$; in this
        case $[\D\op,\V](F,\A(S,-))$ {\em is} the colimit $F*JS$.
\end{observation}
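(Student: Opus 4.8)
The first assertion is simply the definition of a weighted colimit: $F*S$ exists in $\A$ precisely when there is an object $C\in\A$ and an isomorphism $\A(C,-)\cong[\D\op,\V](F,\A(S,-))$ natural in the second variable, i.e. precisely when $G:=[\D\op,\V](F,\A(S,-))\colon\A\to\V$ is representable. So all the content lies in the second assertion, and the plan is to reduce it to the Yoneda lemma together with the explicit description of $\P^\dagger\A$.

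Recall that $\P^\dagger\A$ is (equivalent to) $([\A,\V]_{\tx{small}})\op$, with $Z$ sending $A$ to the representable $\A(A,-)$; thus an object is a small $\V$-functor $\A\to\V$, a morphism $M\to N$ in $\P^\dagger\A$ is a $\V$-natural transformation $N\to M$, and in particular $\P^\dagger\A(M,ZA)\cong MA$ by the Yoneda lemma. First I would compute the hom $\P^\dagger\A(G,N)=[\A,\V](N,G)$ by a routine end-calculus/Fubini manipulation and identify it with $[\D\op,\V](F,\P^\dagger\A(ZS-,N))$, using that $[\A,\V](N,\A(Sd,-))\cong\P^\dagger\A(ZSd,N)$. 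This shows that whenever $G$ is small it carries the universal property of the weighted colimit $F*ZS$ in $\P^\dagger\A$, and in particular that $F*ZS=G$.

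Next I would transfer this across the full inclusion $J\colon\A\to\B\hookrightarrow\P^\dagger\A$. If $G$ lies in $\B$, then since $JS$ factors through the representables (which lie in $\B$) and $\B$ is full, restricting the universal property of $F*ZS$ to objects $K\in\B$ gives $\B(G,K)\cong[\D\op,\V](F,\B(JS-,K))$, so $G$ is also the colimit $F*JS$ in $\B$. Conversely, suppose $\B$ admits $F*JS$ with vertex $C$. Evaluating its universal property $\B(C,K)\cong[\D\op,\V](F,\B(JS-,K))$ at the representables $K=ZA$, and using $\B(C,ZA)=\P^\dagger\A(C,ZA)\cong CA$ together with $\B(JSd,ZA)\cong\A(Sd,A)$, yields $CA\cong G(A)$ naturally in $A$; hence $C\cong G$, so that $G$ is small, lies in $\B$, and is the colimit.

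The one genuine subtlety to flag is that the smallness of $G$ is \emph{not} automatic --- indeed it is exactly equivalent to the existence of the colimit $F*ZS$ in $\P^\dagger\A$, which is why this cannot be an unconditional existence result. The real content is therefore the \emph{identification} of the colimit object with $G$, and the only place demanding care is the variance bookkeeping in $\P^\dagger\A=([\A,\V]_{\tx{small}})\op$ and the end computation establishing $\P^\dagger\A(G,N)\cong[\D\op,\V](F,\P^\dagger\A(ZS-,N))$. Everything else is a direct application of the Yoneda lemma and of the elementary fact that a weighted colimit whose vertex happens to lie in a full subcategory containing the diagram is again a weighted colimit there.
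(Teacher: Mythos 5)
Your proposal is correct and follows exactly the argument the paper leaves implicit in recording this as an Observation without proof: the first part is the definition of weighted colimit, and the second part is the Yoneda lemma combined with the identification $\P^\dagger\A\simeq\P(\A\op)\op$, under which the colimit $F*JS$ is the pointwise-computed limit $[\D\op,\V](F,\A(S,-))$, with homming into representables showing that any colimit in $\B$ must coincide with it. Your flagging of the variance bookkeeping and of the fact that smallness of $G$ is precisely the existence condition is accurate and consistent with how the paper uses the statement.
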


For any such $\B$, we therefore obtain a relaxed cocompleteness
condition on $\A$ saying that $\B$ admits weighted colimits of
diagrams landing in $\A$. We shall consider various such weakenings
throughout the paper. The first type of these is the topic of this
section, in which we work with those $\V$-functors $\A\to\V$ which are
{\em $\Psi$-precontinuous}, as defined in Definition~\ref{Psi-precont}
below. One can think of $\Psi$-precontinuity as a way
of defining preservation of $\Psi$-limits without assuming the
existence of these limits in $\A$. We use it in Theorem~\ref{relative-quasiflattheorem} below to give a first characterization of accessible $\V$-categories with $\Psi$-limits. 

To begin with, consider a {\em small} $\V$-category $\A$; we are interested in those $\V$-functors $M\colon\A\op\to\V$ for which $\tx{Lan}_YM\colon[\A,\V]\to\V$ preserves $\Psi$-limits of diagrams landing in $\A\op$, where $Y\colon\A\op\to[\A,\V]$ is the Yoneda embedding. When $\A$ is $\Psi$-cocomplete, and so $\A\op$ is $\Psi$-complete, this is just requiring that $M$ be a $\Psi$-continuous $\V$-functor.

Since $\tx{Lan}_YM\cong M*-$, the condition above is saying that $M$-weighted colimits commute in $\V$ with $\Psi$-limits of representable $\V$-functors. That is, for any $N\colon\D\to\V$ in $\Psi$ and $H\colon\D\to\A\op$ the canonical map defines an isomorphism
\begin{align}\label{commutativity}
	M*\{N, YH\}\cong\{N,M*YH\}.
\end{align}
This approach turns out still to be useful when the $\V$-category $\A$
is not assumed to be small, but $M\colon\A\op\to\V$ is a small
$\V$-functor. In fact, in this case the collection of $\V$-functors
$[\A,\V]$ does not form a $\V$-category in general, but colimits of
arbitrary $\V$-functors $\A\to\V$ weighted by $M$ do exist since $M$
is small, and weighted limits of $\V$-functors are always defined pointwise in $\V$. Therefore, both sides of (\ref{commutativity}) still exist, as does the canonical comparison, and we can give the following definition:

\begin{Def}\label{Psi-precont}
	Let $\Psi$ be a class of weights, $\A$ be a $\V$-category, and
        $M\colon\A\op\to\V$ a small $\V$-functor. We say that $M$ is
        {\em $\Psi$-precontinuous} if $M$-weighted colimits commute in
        $\V$ with $\Psi$-limits of representable $\V$-functors; in
        other words, if the canonical map defines an
        isomorphism~\eqref{commutativity} for all $N\colon\D\to\V$ in
        $\Psi$ and all $H\colon\D\to\A\op$. We write
        $\Psi\tx{-PCts}(\A\op,\V)$ for the $\V$-category of all small $\Psi$-precontinuous $\V$-functors. 
\end{Def}

Note that we have inclusions
$\A\subseteq\Psi\tx{-PCts}(\A\op,\V)\subseteq\P\A$ so that
$\Psi\tx{-PCts}(\A\op,\V)$ is indeed a $\V$-category. 

In the rest of this section we consider what happens when we weaken
{\em representability} of various functors to {\em
  $\Psi$-precontinuity}.

\begin{es}
	If $\Psi$ is a locally small and sound class of weights in the sense of \cite[Definition~3.2]{LT22:virtual} then $\Psi$-precontinuous and $\Psi$-flat $\V$-functors on a small $\V$-category $\A$ coincide by definition; hence $\Psi\tx{-PCts}(\A\op,\V)=\Psi^+\A$ for any small $\A$, where $\Psi^+$ is the class of all the $\Psi$-flat weights. The case where $\A$ is any large $\V$-category is treated in Section~\ref{sound-companion}.
\end{es}

\begin{prop}\label{precont=cont}
	If $M\colon\A\op\to\V$ is $\Psi$-precontinuous then it preserves any existing $\Psi$-limits. On the other hand, if $\A$ is $\Psi$-cocomplete, then a small $M$ is $\Psi$-precontinuous if and only if it is $\Psi$-continuous. 
\end{prop}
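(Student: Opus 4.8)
The plan is to prove the two implications of Proposition~\ref{precont=cont} separately, in each case unwinding the commutativity isomorphism~\eqref{commutativity} that defines $\Psi$-precontinuity.

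For the first assertion, suppose $M\colon\A\op\to\V$ is $\Psi$-precontinuous and that a $\Psi$-limit $\{N,H\}$ of some $H\colon\D\to\A\op$ actually exists in $\A\op$, with $N\colon\D\to\V$ in $\Psi$. I want to show $M$ sends this limit to the corresponding limit in $\V$, i.e.\ that the canonical comparison $M(\{N,H\})\to\{N,MH\}$ is an isomorphism. The key observation is that $M\cong M*Y(-)$, since $M*YK\cong MK$ for any object $K\in\A\op$ by the defining property of the Yoneda embedding and the representability of the weight; more precisely $M*YH\cong MH$ pointwise. Thus the right-hand side $\{N,M*YH\}$ of~\eqref{commutativity} is (isomorphic to) $\{N,MH\}$, while the left-hand side $M*\{N,YH\}$ becomes $M(\{N,H\})$ once we use that $Y$ preserves the existing $\Psi$-limit — which it does, since Yoneda embeddings $\A\op\to[\A,\V]$ preserve all existing limits. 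So the precontinuity isomorphism specializes exactly to the statement that $M$ preserves this limit. The first implication therefore reduces to identifying the two canonical maps, which is a diagram-chase I would not grind through here.

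For the second assertion, assume $\A$ is $\Psi$-cocomplete, so that $\A\op$ is $\Psi$-complete and every diagram $H\colon\D\to\A\op$ weighted by $N\in\Psi$ has an honest limit $\{N,H\}$ in $\A\op$. One direction is immediate from the first part: a $\Psi$-precontinuous $M$ preserves all existing $\Psi$-limits, and here they all exist, so $M$ is $\Psi$-continuous. Conversely, if $M$ is $\Psi$-continuous then $M\{N,H\}\cong\{N,MH\}$ for every such diagram; combining this with the pointwise identifications $M*YH\cong MH$ and $M*\{N,YH\}\cong M\{N,H\}$ (the latter using that $Y$ preserves the limit $\{N,H\}$, which exists by $\Psi$-completeness of $\A\op$) gives back precisely the isomorphism~\eqref{commutativity}, so $M$ is $\Psi$-precontinuous. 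Both directions thus hinge on the same two identifications.

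The main obstacle, and the step deserving the most care, is justifying the identification $M*\{N,YH\}\cong M\{N,H\}$ and, relatedly, that the Yoneda embedding $Y\colon\A\op\to[\A,\V]$ preserves whatever $\Psi$-limits are present. For the first implication this is the crux, since there $\{N,H\}$ is merely assumed to exist rather than guaranteed by cocompleteness; I must check that the small functor $M*-$ (equivalently $\mathrm{Lan}_Y M$, as noted in the paragraph preceding Definition~\ref{Psi-precont}) sends the limit cone $Y\{N,H\}\cong\{N,YH\}$ to the comparison map under consideration, and that the two canonical maps — the precontinuity comparison and the limit-preservation comparison of $M$ — genuinely coincide rather than merely both being isomorphisms. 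This amounts to verifying naturality of the canonical maps against the Yoneda isomorphism, which is routine but is the place where a sign or direction error would hide.
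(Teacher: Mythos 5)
Your proposal is correct and follows essentially the same route as the paper's own (very terse) proof: both rest on the two facts that the Yoneda embedding $Y\colon\A\op\to[\A,\V]$ preserves any existing $\Psi$-limits and that $M*Y-\cong M$, so that the precontinuity isomorphism~\eqref{commutativity} specializes to the limit-preservation comparison, with the converse in the cocomplete case coming from $\Psi$-limits of representables being representable. The naturality check you flag (that the canonical maps coincide, not merely that both sides are isomorphic) is left implicit in the paper as well, so this is not a gap relative to the paper's own standard of detail.
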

\begin{proof}
	The Yoneda embedding $Y\colon\A\op\hookrightarrow[\A,\V]$ preserves any existing $\Psi$-limits in $\A\op$ and $M*Y-\cong M$. Therefore if $M$ is $\Psi$-precontinuous then it preserves any $\Psi$-limit that happens to exist. If $\A$ is $\Psi$-cocomplete then $\Psi$-limits of representables are still representables so $\Psi$-continuity implies $\Psi$-precontinuity.
\end{proof}

\begin{cor}
	The inclusion $V\colon\A\to\Psi\tx{-PCts}(\A\op,\V)$ preserves any existing $\Psi$-colimits.
\end{cor}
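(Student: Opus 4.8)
The plan is to verify directly that $V$ sends a colimit cocone in $\A$ to a colimit cocone in $\B:=\Psi\tx{-PCts}(\A\op,\V)$, without assuming a priori that the latter colimit exists. So I would fix a $\Psi$-colimit $F*S$ that happens to exist in $\A$, with weight $F\colon\D\op\to\V$ in $\Psi$ and diagram $S\colon\D\to\A$, and let $VS\Rightarrow V(F*S)$ be the image under $V$ (the Yoneda embedding $A\mapsto\A(-,A)$, corestricted to $\B$) of the universal cocone. The aim is to show that this exhibits $V(F*S)=\A(-,F*S)$ as the weighted colimit $F*VS$ computed in $\B$, which is exactly what preservation of this colimit asserts. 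Note that this is genuinely something to prove, since the Yoneda embedding $\A\to\P\A$ itself preserves no colimits; the point is that colimits in the subcategory $\B$ differ from those in $\P\A$.

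Since $\B$ is a full subcategory of $\P\A=[\A\op,\V]$, its hom-objects are those of $[\A\op,\V]$, and the universal property to check is that for every $M\in\B$ the canonical comparison
\[
	\B(V(F*S),M)\longrightarrow [\D\op,\V]\big(F,\B(VS-,M)\big)
\]
is an isomorphism. The key simplification is to evaluate both sides by the enriched Yoneda lemma: on the one hand $\B(V(F*S),M)=[\A\op,\V](\A(-,F*S),M)\cong M(F*S)$, and on the other hand $\B(VSd,M)\cong M(Sd)$, so the right-hand side becomes $[\D\op,\V](F,M(S-))=\{F,M(S-)\}$. Thus the universal property holds precisely when
\[
	M(F*S)\cong \{F,M(S-)\}
\]
naturally in $M$, and one checks routinely that under the Yoneda identifications the comparison map is the one induced by the image cocone.

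Now comes the only real content. Viewing $F*S$ inside $\A\op$, it is the $\Psi$-limit $\{F,S\op\}$, and the displayed isomorphism $M(F*S)\cong\{F,M(S-)\}$ is exactly the assertion that $M\colon\A\op\to\V$ preserves this $\Psi$-limit. Since $M$ lies in $\B$ it is $\Psi$-precontinuous, so by Proposition~\ref{precont=cont} it preserves every $\Psi$-limit that happens to exist; as $\{F,S\op\}$ exists by hypothesis, the required isomorphism holds for all $M\in\B$, and naturality in $M$ is automatic from the naturality of the Yoneda isomorphisms.

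I expect no serious obstacle: essentially all the weight is carried by Proposition~\ref{precont=cont}, and the remainder is careful bookkeeping of variances --- in particular the dualization $F*S=\{F,S\op\}$ and the two applications of Yoneda --- together with the routine verification that the Yoneda isomorphisms are compatible with the canonical comparison map, i.e.\ that they send the image of the universal cocone to the canonical cocone exhibiting $F*VS$.
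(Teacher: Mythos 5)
Your proof is correct and is essentially the paper's own argument: the paper likewise observes that preservation of existing $\Psi$-colimits by $V$ amounts to each $\Psi\tx{-PCts}(\A\op,\V)(V-,M)\cong M$ preserving existing $\Psi$-limits, and then concludes by Proposition~\ref{precont=cont}. Your version just makes explicit the Yoneda identifications and the comparison-map bookkeeping that the paper leaves implicit.
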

\begin{proof}
	This says that for any $M\in\Psi\tx{-PCts}(\A\op,\V)$ the $\V$-functor $\Psi\tx{-PCts}(\A\op,\V)(V-,M)$ preserves any existing $\Psi$-limits. But $\Psi\tx{-PCts}(\A\op,\V)(V-,M)\cong M$, so that follows by the proposition above.
\end{proof}

\begin{cor}
	For any $\Psi$-cocomplete $\V$-category $\A$ we have an equality
	$$ \Psi\tx{-PCts}(\A\op,\V)=\P\A\cap \Psi\tx{-Cont}(\A\op,\V) $$%
	so that $\Psi$-precontinuous $\V$-functors out of $\A\op$ coincide with the small $\Psi$-continuous $\V$-functors. Moreover $V\colon\A\to\Psi\tx{-PCts}(\A\op,\V)$ is $\Psi$-cocontinuous.
\end{cor}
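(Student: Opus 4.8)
The plan is to treat this corollary as a repackaging of Proposition~\ref{precont=cont} together with the preceding corollary, so I expect very little genuinely new work. There are two assertions to establish: the displayed equality of full subcategories of $\P\A$, and the $\Psi$-cocontinuity of $V$.

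For the equality, I would first unwind both sides. By definition the left-hand side $\Psi\tx{-PCts}(\A\op,\V)$ consists of the \emph{small} $\Psi$-precontinuous $\V$-functors $\A\op\to\V$. On the right, $\P\A$ is precisely the $\V$-category of small $\V$-functors $\A\op\to\V$, so the intersection $\P\A\cap\Psi\tx{-Cont}(\A\op,\V)$ is exactly the $\V$-category of small $\Psi$-continuous $\V$-functors. (Here I use that $\A$ being $\Psi$-cocomplete makes $\A\op$ be $\Psi$-complete, so that $\Psi$-continuity of functors out of $\A\op$ is meaningful in the first place.) The two descriptions then coincide by the second assertion of Proposition~\ref{precont=cont}, which under the hypothesis of $\Psi$-cocompleteness identifies, among small $M$, the $\Psi$-precontinuous functors with the $\Psi$-continuous ones. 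This yields both the displayed equality and its stated reformulation.

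For the final claim, I would invoke the corollary immediately preceding, which already shows that the inclusion $V\colon\A\to\Psi\tx{-PCts}(\A\op,\V)$ preserves any $\Psi$-colimit that happens to exist in $\A$. Since $\A$ is now assumed $\Psi$-cocomplete, \emph{every} $\Psi$-colimit exists in $\A$, and hence $V$ preserves all of them; that is exactly the statement that $V$ is $\Psi$-cocontinuous. Note that this argument does not require $\Psi\tx{-PCts}(\A\op,\V)$ itself to be $\Psi$-cocomplete: preservation of a colimit $N*S$ already exhibits $V(N*S)$ as the colimit $N*VS$ in the codomain.

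If there is any obstacle at all, it is purely bookkeeping: making sure that the ambient size conventions are consistent, i.e.\ that $\P\A$ is read as the $\V$-category of small presheaves so that intersecting with it is the correct way to cut $\Psi\tx{-Cont}(\A\op,\V)$ down to its small members, and that ``$\Psi$-cocontinuous'' is understood as preservation of existing $\Psi$-colimits rather than as a cocompleteness assertion about the target.
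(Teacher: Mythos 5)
Your proof is correct and takes essentially the same route as the paper: the displayed equality is precisely the second assertion of Proposition~\ref{precont=cont}, and the $\Psi$-cocontinuity of $V$ follows from the preceding corollary (preservation of existing $\Psi$-colimits) once $\Psi$-cocompleteness guarantees they all exist. The paper's own proof is just a one-line citation of Proposition~\ref{precont=cont}, leaving the cocontinuity step implicit in exactly the way you spell it out.
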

\begin{proof}
	By Proposition~\ref{precont=cont}, if $\A$ is $\Psi$-cocomplete then a small $\V$-functor $M\colon\A\op\to\V$ is $\Psi$-precontinuous if and only if it is $\Psi$-continuous.
\end{proof}

As anticipated at the beginning of the section, we shall now consider a relaxed cocompleteness condition for a $\V$-category $\A$: 

\begin{Def} 
	Given a $\V$-category $\A$, a weight $M\colon\C\op\to\V$ with small domain, and $H\colon\C\to\A$, we say that the {\em $\psicheck$-virtual colimit} of $H$ weighted by $M$ exists in $\A$ if $[\C\op,\V](M,\A(H,-))$ is a small $\Psi$-precontinuous $\V$-functor. We say that $\A$ is {\em $\psicheck$-virtually cocomplete} if it has all $\psicheck$-virtual colimits.
\end{Def}

By Observation~\ref{virtual-col} applied to
$\B=\Psi\tx{-PCts}(\A,\V)\op$, a $\V$-category $\A$ is
$\psicheck$-virtually cocomplete if and only if
$\Psi\tx{-PCts}(\A,\V)\op$ has colimits of representables. When
$\Psi=\emptyset$ is the class with no weights, a
$\emptyset$-precontinuous $\V$-functor is simply a small $\V$-functor;
thus we recover the notion of {\em virtual cocompleteness} defined in
\cite[Definition~4.15]{LT22:virtual}: this says that $\P^\dagger\A$
has colimits of representables.

\begin{prop}\label{cocomplete+psicomplete}
  The following are equivalent for a $\Psi$-complete
  $\V$-category $\A$:
  \begin{enumerate}\setlength\itemsep{0.25em}
  \item $\A$ is virtually cocomplete;
  \item $\P^\dagger\A$ is cocomplete;
  \item $\Psi\tx{-PCts}(\A,\V)\op$ is cocomplete;
  \item $\A$ is $\psicheck$-virtually cocomplete.
  \end{enumerate}
\end{prop}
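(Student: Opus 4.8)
The plan is to prove the cycle of implications $(1)\Rightarrow(2)\Rightarrow(3)\Rightarrow(4)\Rightarrow(1)$. Three of these are essentially formal; the genuine content sits in $(1)\Rightarrow(2)$, which I would cite, and in $(2)\Rightarrow(3)$, which is where the hypothesis that $\A$ be $\Psi$-complete is actually used. The overall idea is to realise all four categories as living inside $[\A,\V]\op=\P^\dagger\A$ and to translate ``cocompleteness'' into closure under pointwise limits of small functors $\A\to\V$.

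First the formal steps. For $(4)\Rightarrow(1)$, note that by the remark following the definition of $\psicheck$-virtual colimit (i.e.\ Observation~\ref{virtual-col} applied to $\B=\Psi\tx{-PCts}(\A,\V)\op$) condition (4) asserts that for every weight $M\colon\C\op\to\V$ and every $H\colon\C\to\A$ the comparison functor $[\C\op,\V](M,\A(H,-))\colon\A\to\V$ is small and $\Psi$-precontinuous; in particular it is small, and by Observation~\ref{virtual-col} applied to $\B=\P^\dagger\A$ this is exactly condition (1), since the two conditions range over the same family of comparison functors. Likewise $(3)\Rightarrow(4)$ is immediate: a cocomplete $\Psi\tx{-PCts}(\A,\V)\op$ certainly has colimits of representables, and we already observed that this is precisely $\psicheck$-virtual cocompleteness.

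For $(1)\Rightarrow(2)$ I would appeal to the general theory of free limit completions. Since $\P^\dagger\A$ is the free completion of $\A$ under small limits, it is the opposite of the $\V$-category of small presheaves on $\A\op$, and as such is always complete; by the results on small functors in \cite{DL07} (see also the treatment of virtual cocompleteness in \cite{LT22:virtual}) such a completion is in fact cocomplete as soon as it admits colimits of representables. Thus virtual cocompleteness (1) upgrades to genuine cocompleteness of $\P^\dagger\A$, which is (2); the converse is trivial, and I use only this direction for the cycle.

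The substantive step is $(2)\Rightarrow(3)$, and here $\Psi$-completeness enters. Because $\A\op$ is $\Psi$-cocomplete, the second Corollary above identifies $\Psi\tx{-PCts}(\A,\V)$ with $\P(\A\op)\cap\Psi\tx{-Cont}(\A,\V)$, i.e.\ the small $\Psi$-\emph{continuous} functors $\A\to\V$. This replacement is the crux: $\Psi$-continuity is manifestly closed under small limits in $[\A,\V]$ (limits commute with $\Psi$-limits), whereas the a priori colimit-flavoured condition of $\Psi$-precontinuity is not obviously so. Assuming (2), smallness is also closed under pointwise limits (condition (2) says exactly that pointwise limits of small functors $\A\to\V$ stay small). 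Hence the intersection $\Psi\tx{-PCts}(\A,\V)$ is closed under small limits in $[\A,\V]$, so $\Psi\tx{-PCts}(\A,\V)\op$ is closed under small colimits in $\P^\dagger\A=[\A,\V]\op$; as $\P^\dagger\A$ is cocomplete those colimits exist and remain in the subcategory, giving (3). The main obstacle is really $(1)\Rightarrow(2)$ in point of depth — it is the completeness criterion for free limit completions — but since it is external I simply cite it, and the design point internal to this section is that $\Psi$-completeness is precisely what lets one trade $\Psi$-precontinuity for the limit-closed notion of $\Psi$-continuity in the closure argument.
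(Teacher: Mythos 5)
Your proof is correct and takes essentially the same route as the paper: $(1)\Leftrightarrow(2)$ by citing the Day--Lack completeness result for categories of small functors, the trivial implications $(3)\Rightarrow(4)\Rightarrow(1)$, and, as the crux, the use of $\Psi$-completeness of $\A$ to identify $\Psi\tx{-PCts}(\A,\V)$ with the small $\Psi$-continuous $\V$-functors, whence it is closed under the (pointwise) colimits of $\P^\dagger\A$ and inherits cocompleteness from it. The only blemish is the harmless abuse of notation $\P^\dagger\A=[\A,\V]\op$ (the left-hand side consists only of the \emph{small} functors $\A\to\V$), which plays no role in your argument since you only use that colimits there are computed pointwise.
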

\begin{proof}
 
  For any $\V$-category $\A$, the equivalence $(1)\Leftrightarrow (2)$
  holds by the dual of \cite[Theorem~3.8]{DL07}, while
  $(3)\Rightarrow(4)$ trivially holds since if
  $\Psi\tx{-PCts}(\A,\V)\op$ has colimits it has colimits of
  representables.

  If $\A$ is $\Psi$-complete, then $\Psi\tx{-PCts}(\A,\V)\op$ consists
  of the small and $\Psi$-continuous $\V$-functors, and so is closed
  in  $\P^\dagger\A$  under colimits. Thus $(2)\Rightarrow(3)$ and
  $(4)\Rightarrow(1)$.
\end{proof}

A $\V$-functor $F\colon\A\to\K$ has a left adjoint if each $\K(X,F-)\colon\A\to\V$ is representable; while it has a virtual left adjoint if each $\K(X,F-)$ is small. Similarly:

\begin{Def}
	We say that a $\V$-functor $F\colon\A\to\K$ has a $\psicheck$-virtual left adjoint if, for each $X\in\K$, the $\V$-functor $\K(X,F-)$ is $\Psi$-precontinuous. If $F$ is fully faithful we then say that $\A$ is $\psicheck$-virtually reflective in $\K$.
\end{Def}

In other words, $F$ has a $\psicheck$-virtual left adjoint if and only if, for each $X\in\K$, the $\V$-functor $\K(X,F-)$ has a relative left $V$-adjoint, where $V\colon\A\hookrightarrow\Psi\tx{-PCts}(\A,\V)\op$ is the inclusion. If $\Psi=\emptyset$ is the empty class, $\emptyset^\vee$-virtual left adjoints coincide with virtual left adjoints.

\begin{prop}
	The following are equivalent for a $\Psi$-complete and virtually cocomplete $\V$-category $\A$, and a $\V$-functor $F\colon\A\to\V$:\begin{enumerate}
		\item $F$ is small and $\Psi$-continuous;
		\item $F$ has a $\psicheck$-virtual left adjoint.
	\end{enumerate}
\end{prop}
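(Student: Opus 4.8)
The plan is to prove the two implications separately; $(2)\Rightarrow(1)$ will be immediate, and essentially all the content lies in $(1)\Rightarrow(2)$, specifically in a smallness argument.

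For $(2)\Rightarrow(1)$, I would simply specialise the hypothesis to the unit object $X=I$. Since $\V$ is closed, $\V(I,-)\cong\tx{id}_\V$ and so $\V(I,F-)\cong F$; thus the assumption that $\V(I,F-)$ be $\Psi$-precontinuous says precisely that $F$ is $\Psi$-precontinuous. A $\Psi$-precontinuous functor is small by definition, and because $\A$ is $\Psi$-complete its opposite is $\Psi$-cocomplete, so Proposition~\ref{precont=cont} (applied to $\A\op$) identifies the small $\Psi$-precontinuous functors $\A\to\V$ with the small $\Psi$-continuous ones. Hence $F$ is small and $\Psi$-continuous.

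For $(1)\Rightarrow(2)$, fix $X\in\V$; by the same instance of Proposition~\ref{precont=cont} it suffices to show that $\V(X,F-)$ is small and $\Psi$-continuous. The continuity is routine: $\V(X,-)\colon\V\to\V$ is right adjoint to $-\otimes X$ and hence preserves all limits, while $F$ preserves the existing $\Psi$-limits of the $\Psi$-complete $\A$, so the composite $\V(X,F-)$ preserves $\Psi$-limits. The smallness is the real issue. The key observation is that $\V(X,F-)$ is nothing but the cotensor $\{X,F\}$ of $F$ by $X$, computed pointwise in the functor category. Now $F$ lies in $\Psi\tx{-PCts}(\A,\V)$, and since $\A$ is $\Psi$-complete and virtually cocomplete, Proposition~\ref{cocomplete+psicomplete} shows that $\Psi\tx{-PCts}(\A,\V)\op$ is cocomplete --- equivalently, $\Psi\tx{-PCts}(\A,\V)$ is complete. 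Moreover, as in the proof of that proposition, $\Psi\tx{-PCts}(\A,\V)\op$ is closed under the pointwise colimits of the cocomplete $\P^\dagger\A$, so that the limits of $\Psi\tx{-PCts}(\A,\V)$ which exist are created pointwise in the functor category. The cotensor $\{X,F\}$ therefore exists in $\Psi\tx{-PCts}(\A,\V)$ and agrees with its pointwise value $\V(X,F-)$; in particular $\V(X,F-)$ is small, which is what remained to be shown.

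I expect the smallness of $\V(X,F-)$ to be the main obstacle, and it is exactly the point at which virtual cocompleteness is used: for large $\A$ the pointwise cotensor of a small functor by an object $X$ need not be small --- this already fails for representables when $\A$ lacks cotensors --- so one cannot conclude smallness without first knowing that $\Psi\tx{-PCts}(\A,\V)$ is complete with pointwise limits. The remaining verifications (the case $X=I$, and the $\Psi$-continuity of $\V(X,F-)$) are short and present no difficulty.
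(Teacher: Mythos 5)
Your proof is correct and takes essentially the same route as the paper's: $(2)\Rightarrow(1)$ by evaluating at $X=I$, and $(1)\Rightarrow(2)$ by deducing $\Psi$-continuity of $[X,F-]$ from the fact that $[X,-]$ preserves all limits, and smallness by realizing $[X,F-]$ as a pointwise cotensor in a complete $\V$-category of small functors --- the paper forms this cotensor (as a copower) in $\P^\dagger\A$, whereas you form it in $\Psi\tx{-PCts}(\A,\V)$ via Proposition~\ref{cocomplete+psicomplete}, which makes no real difference since both are available under the standing hypotheses. One cosmetic slip in your closing heuristic aside: the pointwise cotensor $[X,\A(A,-)]$ of a covariant representable is representable exactly when the copower $X\cdot A$ exists in $\A$, so the relevant missing colimits there are copowers (tensors) rather than cotensors; but this aside plays no role in the proof itself.
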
	
\begin{proof}
	A $\psicheck$-virtual left adjoint $L\colon\V\to \Psi\tx{-PCts}(\A,\V)\op$ exists if and only if  the $\V$-functor $[X,F-]\colon\A\to\V$ is small and $\Psi$-precontinuous, and in that case is given by $LX:=[X,F-]$. Now, if $F$ is small and $\Psi$-continuous it is in particular $\Psi$-precontinuous; thus $[X,F-]$ is $\Psi$-precontinuous too (since $[X,-]$ preserves all limits) and small (being the copower of $F$ by $X$ in $\P^\dagger\A$, which is cocomplete). Conversely, if $F$ has a $\psicheck$-virtual left adjoint then $F\cong[I,F-]$ is in $\Psi\tx{-PCts}(\A,\V)\op$, and therefore it is small and $\Psi$-continuous.
\end{proof}

In the accessible case we then obtain:

\begin{cor}\label{psi-cont}
	The following are equivalent for a $\V$-functor $F\colon\A\to\K$ between $\Psi$-complete accessible $\V$-categories:\begin{enumerate}
		\item $F$ is accessible and $\Psi$-continuous;
		\item $F$ has a $\psicheck$-virtual left adjoint.
	\end{enumerate}
\end{cor}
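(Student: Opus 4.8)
The plan is to reduce the corollary to the results already established for $\V$-valued functors by testing $F$ against the representables $\K(X,-)$, $X\in\K$. By definition, $F$ has a $\psicheck$-virtual left adjoint precisely when each $\V$-functor $\K(X,F-)\colon\A\to\V$ is $\Psi$-precontinuous, and in particular small. Since $\A$ is $\Psi$-complete, Proposition~\ref{precont=cont} identifies the small $\Psi$-precontinuous $\V$-functors on $\A$ with the small $\Psi$-continuous ones; thus $F$ has a $\psicheck$-virtual left adjoint if and only if every $\K(X,F-)$ is small and $\Psi$-continuous. This is the exact analogue, for a general codomain, of the preceding Proposition, which treats the case $\K=\V$.

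I would then split the condition into its two clauses. For $\Psi$-continuity, the representables $\K(X,-)\colon\K\to\V$ preserve all limits and are jointly conservative; since $\K$ is $\Psi$-complete the limit $\{N,FH\}$ exists there, and a routine Yoneda argument shows that $F$ preserves a given $\Psi$-limit exactly when each $\K(X,F-)$ does. Hence $F$ is $\Psi$-continuous if and only if every $\K(X,F-)$ is $\Psi$-continuous. For smallness, the requirement that every $\K(X,F-)$ be small is, word for word, the statement that $F$ admits a virtual left adjoint.

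The crux is therefore to identify ``$F$ admits a virtual left adjoint'' with ``$F$ is accessible'' for a $\V$-functor between accessible $\V$-categories; this is the characterisation of accessible $\V$-functors from \cite{LT22:virtual}, which I would invoke directly. Granting it, both implications are immediate. If $F$ is accessible and $\Psi$-continuous, then every $\K(X,F-)$ is small (by accessibility) and $\Psi$-continuous (by $\Psi$-continuity of $F$), so $F$ has a $\psicheck$-virtual left adjoint. Conversely, if $F$ has a $\psicheck$-virtual left adjoint, then every $\K(X,F-)$ is small and $\Psi$-continuous, so $F$ is simultaneously $\Psi$-continuous and equipped with a virtual left adjoint, that is, accessible. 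The only point needing care is to confirm that the cited equivalence requires nothing beyond the accessibility of $\A$ and $\K$ that is already assumed here.
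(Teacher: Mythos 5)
Your proof is correct and follows essentially the same route as the paper's: both reduce the statement to the representables $\K(X,F-)$, identify $\Psi$-precontinuity of these with smallness plus $\Psi$-continuity using the $\Psi$-completeness of $\A$ (Proposition~\ref{precont=cont}), handle $\Psi$-continuity of $F$ via joint reflection of limits by representables, and convert smallness of all the $\K(X,F-)$ into accessibility of $F$ via \cite{LT22:virtual}. The paper's two-line proof cites the preceding Proposition together with \cite[Proposition~4.9]{LT22:virtual} (accessible $=$ small for $\V$-valued functors on an accessible $\V$-category), which is exactly the content you spell out, so the only difference is citation granularity rather than substance.
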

\begin{proof}
	This follows from the proposition above plus the fact that every accessible $\V$-category is virtually cocomplete, and that, for $\A$ an accessible $\V$-category, $\K(X,F-)\colon\A\to\V$ is accessible if and only if it is small (\cite[Proposition~4.9]{LT22:virtual}).
\end{proof}

  \begin{prop}\label{prop:what-is-A}
    Let $\K$ be an accessible $\V$-category and $\A$ an accessible and
    accessibly-embedded subcategory of $\K$. Then there exists a regular
    cardinal $\alpha$ such that the following are equivalent for any $L\in\K$
    \begin{enumerate}
    \item $L$ is in $\A$;
    \item $ZL$ is orthogonal in $\P^\dagger\K$, for all $K\in\K_\alpha$, to the map $ZK \to
      \{\K(K,J-),ZJ\}$ induced by $Z\colon
      \K(K,J-)\to(\P^\dagger\K)(ZK,ZJ-)$; 
    \item the canonical $\K(K,J-)*\K(J-,L)\to\K(K,L)$ is invertible
      for all $K\in\K_\alpha$.
    \end{enumerate}
  \end{prop}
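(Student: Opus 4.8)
The plan is to fix one regular cardinal $\alpha$ reconciling all the accessibility data, dispatch $(1)\Rightarrow(3)$ and $(2)\Leftrightarrow(3)$ by direct computation, and then concentrate the real work on $(3)\Rightarrow(1)$, which I would prove by exhibiting $L$ as an $\alpha$-filtered colimit of objects of $\A$. Concretely, I would choose $\alpha$ so that $\K$ and $\A$ are both $\alpha$-accessible, the inclusion $J\colon\A\to\K$ preserves $\alpha$-filtered colimits, $\A$ is closed under $\alpha$-filtered colimits in $\K$, and $J$ carries $\A_\alpha$ into $\K_\alpha$; all of these hold together for arbitrarily large $\alpha$ by the usual sharpness arguments for accessible functors. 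Write $i\colon\A_\alpha\hookrightarrow\A$ and $J_\alpha:=Ji\colon\A_\alpha\to\K_\alpha$.

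For $(2)\Leftrightarrow(3)$ I would unwind the weighted limit using the description of $\P^\dagger\K$ as the opposite of the small $\V$-functors $\K\to\V$. A short coend manipulation from the universal property identifies $\{\K(K,J-),ZJ\}$ with the small functor $\int^{A\in\A}\K(K,JA)\otimes\K(JA,-)$, viewed as an object of $\P^\dagger\K$. Mapping into $ZL=\K(L,-)$ and applying Yoneda then turns the comparison $(\P^\dagger\K)(\{\K(K,J-),ZJ\},ZL)\to(\P^\dagger\K)(ZK,ZL)$ into exactly the canonical map $\int^{A}\K(K,JA)\otimes\K(JA,L)\to\K(K,L)$, so that orthogonality of $ZL$ is literally its invertibility, which is $(3)$. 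The implication $(1)\Rightarrow(3)$ is then immediate: if $L=JA_0$ then $\K(JA,L)=\A(A,A_0)$, and the density (co-Yoneda) formula gives $\int^{A}\K(K,JA)\otimes\A(A,A_0)\cong\K(K,JA_0)$ with the comparison as the density isomorphism.

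The substantial direction is $(3)\Rightarrow(1)$. Since $K\in\K_\alpha$ and $J$ preserves $\alpha$-filtered colimits, $\K(K,J-)\colon\A\to\V$ is left Kan extended from $\A_\alpha$; density of $\A_\alpha$ in $\A$ then lets me reduce the coend over $\A$ to one over $\A_\alpha$, so that $(3)$ says that the counit $\tx{Lan}_{J_\alpha}\big(\K(J_\alpha-,L)\big)\to\K(-,L)$ is invertible on $\K_\alpha$ (this is precisely why $(3)$ is imposed only on $\K_\alpha$). Setting $W:=\K(J_\alpha-,L)\colon\A_\alpha\op\to\V$, I would show that $W$ is $\alpha$-flat; then $W*J_\alpha$ exists in $\K$ as an $\alpha$-filtered colimit of the $JA$ with $A\in\A_\alpha$, is computed in $\A$ by closure under such colimits, and is preserved by each $\K(K,-)$ with $K\in\K_\alpha$. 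Condition $(3)$ says exactly that the canonical map $W*J_\alpha\to L$ is inverted by every such $\K(K,-)$; since $\K_\alpha$ is dense in $\K$ this forces $W*J_\alpha\cong L$, whence $L\in\A$.

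The crux is the $\alpha$-flatness of $W$. When $\V=\bo{Set}$ this is the assertion that the comma category $(J_\alpha\downarrow L)$ is $\alpha$-filtered, which I would verify by hand: the content of $(3)$ at $\alpha$-presentable $K$ is that every map $K\to L$ factors through some $JA$ with $A\in\A_\alpha$ and that such factorizations are suitably connected, and combining this with the $\alpha$-filteredness of the canonical diagram $(\K_\alpha\downarrow L)$, the full faithfulness of $J$, and the containment $J(\A_\alpha)\subseteq\K_\alpha$ produces all the required cocones and coequalizations. The genuine obstacle is to do this $\V$-enrichedly, where ``the category of elements is $\alpha$-filtered'' must be upgraded to ``$W$ is $\alpha$-flat''; here I would transfer the $\alpha$-flatness of the restriction of $\K(-,L)$ to $\K_\alpha$ (which holds because $L$ corresponds to an $\alpha$-flat weight on $\K_\alpha$ under $\K\simeq\alpha\tx{-Flat}(\K_\alpha\op,\V)$) along $J_\alpha$, using the Kan-extension reformulation of $(3)$ and the theory of $\alpha$-flat weights in place of an elementwise argument.
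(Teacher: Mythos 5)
Your proposal is correct, but it takes a genuinely different route from the paper, which disposes of the hard part by citation: there, the equivalence $(1)\Leftrightarrow(2)$ is quoted from (the proof of) \cite[Proposition~4.29]{LT22:virtual}, and only $(2)\Leftrightarrow(3)$ is actually proved, by exactly the translation you give --- your identification of $\{\K(K,J-),ZJ\}$ with the small functor $\int^{A\in\A}\K(K,JA)\otimes\K(JA,-)$ and the subsequent application of $(\P^\dagger\K)(-,ZL)$ is what the paper's ``by Yoneda this is just (3)'' abbreviates. What you do differently is to prove $(3)\Rightarrow(1)$ from scratch: you reduce the coend to $\A_\alpha$ using that $\K(K,J-)$ is the left Kan extension of its restriction to $\A_\alpha$, set $W=\K(J_\alpha-,L)$, show $W$ is $\alpha$-flat, and then identify $L$ with the colimit $W*J_\alpha$, which lies in $\A$ by closure, using density of $\K_\alpha$. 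The paper's route buys brevity at the price of depending on its predecessor; yours buys a self-contained argument and isolates the correct crux, namely the $\alpha$-flatness of $W$.

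Two remarks on that crux. In the $\bo{Set}$ case your ``by hand'' step is the classical fact that a \emph{full} subcategory of an $\alpha$-filtered category into which every object admits a map is itself $\alpha$-filtered; fullness of $(J_\alpha\downarrow L)$ in $(\K_\alpha\downarrow L)$ (coming from full faithfulness of $J$ and $J(\A_\alpha)\subseteq\K_\alpha$) is essential here, since ``final into filtered'' alone does not imply filtered (consider a parallel pair mapping to the terminal category). Note also that only the surjectivity half of $(3)$ is needed for this filteredness; the injectivity half is what makes the comparison $W*J_\alpha\to L$ invertible. In the enriched case, the flatness transfer you leave as a plan does go through: writing $V$ for the ($\alpha$-flat) restriction of $\K(-,L)$ to $\K_\alpha$ and $\rho$ for precomposition with $J_\alpha$, one has natural isomorphisms $W*(-)\cong V*\tx{Lan}_{J_\alpha}(-)$ and, by the Kan-extension form of $(3)$, $V*(-)\cong W*\rho(-)$; since $J_\alpha$ is fully faithful, $\rho$ inverts the canonical comparison $\tx{Lan}_{J_\alpha}\{N,F\}\to\{N,\tx{Lan}_{J_\alpha}F\}$ for any $\alpha$-small weight $N$, hence $V$ inverts it too, and $\alpha$-flatness of $V$ then yields $\alpha$-flatness of $W$ (after checking the resulting isomorphism is the canonical map). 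So there is no genuine gap, though this lemma is the one piece you would still need to write out in full --- and, throughout, ``$\alpha$-filtered colimits'' should be read as ``$\alpha$-flat colimits,'' since that is the notion of accessibility the paper uses.
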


  \begin{proof}
    The equivalence of (1) and (2) holds by (the proof of)
    \cite[Proposition~4.29]{LT22:virtual}. Condition (2) says that the
    induced map
    \[ (\P^\dagger\K)(\{\K(K,J-),ZJ\},ZL)\to (\P^\dagger\K)(ZK,ZL) \]
    is invertible, but by Yoneda this is just (3).
  \end{proof}

\begin{teo}\label{strong-quasiflattheorem}
	Let $\K$ be an accessible $\V$-category with $\Psi$-limits and $\A$ a full subcategory of $\K$. The following are equivalent:\begin{enumerate}\setlength\itemsep{0.25em}
		\item $\A$ is accessible, accessibly embedded, and closed under $\Psi$-limits;
		\item $\A$ is accessibly embedded and $\psicheck$-virtually reflective.
	\end{enumerate}
\end{teo}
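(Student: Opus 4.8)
The plan is to read $\psicheck$-virtual reflectivity as the conjunction of two conditions on the functors $\K(X,J-)\colon\A\to\V$, where $J\colon\A\hookrightarrow\K$ is the inclusion: that each is \emph{small}, and that each satisfies the remaining $\Psi$-precontinuity commutativity~\eqref{commutativity}. The first condition is exactly virtual reflectivity, and I expect it to correspond to accessibility; the second I expect to correspond to closure under $\Psi$-limits. Throughout, one applies Definition~\ref{Psi-precont} and Proposition~\ref{precont=cont} with the role of ``$\A$'' played by $\A\op$, since our functors have domain $\A$ rather than $\A\op$; under this substitution ``$\A\op$ is $\Psi$-cocomplete'' becomes ``$\A$ is $\Psi$-complete''. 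The case $\Psi=\emptyset$ of the theorem is the statement that $\A$ is accessible and accessibly embedded if and only if it is accessibly embedded and virtually reflective, which I will take from \cite{LT22:virtual}; the new content is the interaction with $\Psi$-limits.

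For $(1)\Rightarrow(2)$: assuming $\A$ accessible and accessibly embedded, each $\K(X,J-)=\K(X,-)\circ J$ is a composite of accessible functors ($\K(X,-)$ because $X$ is presentable in the accessible $\K$, and $J$ because $\A$ is accessibly embedded), hence accessible, hence small by \cite[Proposition~4.9]{LT22:virtual}; this gives virtual reflectivity. For the commutativity, closure under $\Psi$-limits makes $\A$ $\Psi$-complete with $J$ preserving $\Psi$-limits, so $\K(X,J-)$ is $\Psi$-continuous (representables preserve all limits); by Proposition~\ref{precont=cont} a small $\Psi$-continuous functor on a $\Psi$-complete domain is $\Psi$-precontinuous, so each $\K(X,J-)$ is $\Psi$-precontinuous and $\A$ is $\psicheck$-virtually reflective.

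For $(2)\Rightarrow(1)$: forgetting the commutativity, $\psicheck$-virtual reflectivity yields virtual reflectivity, so by the $\Psi=\emptyset$ base case $\A$ is accessible, and it is accessibly embedded by hypothesis. I then fix the cardinal $\alpha$ produced by Proposition~\ref{prop:what-is-A}. Given a weight $N\in\Psi$ and a diagram $H\colon\D\to\A$, I form the $\Psi$-limit $L:=\{N,JH\}$ in $\K$ and must show $L\in\A$, for which it suffices to verify condition~(3) of Proposition~\ref{prop:what-is-A}, namely that the canonical $\K(K,J-)*\K(J-,L)\to\K(K,L)$ is invertible for every $K\in\K_\alpha$. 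Since $J$ is fully faithful and $\K(Ja,-)$ preserves limits, $\K(Ja,L)\cong\{N,\A(a,H-)\}$, so the coend $\K(K,J-)*\K(J-,L)$ is precisely $\K(K,J-)*\{N,\A(-,H)\}$; applying $\Psi$-precontinuity of $\K(K,J-)$ (available since the hypothesis covers all $X\in\K$, in particular $K$) and then the density isomorphism $\K(K,J-)*\A(-,Hd)\cong\K(K,JHd)$, this becomes $\{N,\K(K,JH-)\}\cong\K(K,\{N,JH\})=\K(K,L)$. Hence the two sides agree, $L\in\A$, and $\A$ is closed under $\Psi$-limits.

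The main obstacle is the bookkeeping at the end of $(2)\Rightarrow(1)$: I must check that the chain of isomorphisms just described composes to the \emph{specific} comparison map of Proposition~\ref{prop:what-is-A}(3), rather than merely exhibiting the two objects as abstractly isomorphic. This is a naturality and coherence verification, tracing the composition map through the $\Psi$-precontinuity isomorphism~\eqref{commutativity}, the density isomorphism, and the limit-preservation by $\K(K,-)$. A secondary point requiring care is the variance: since the functors in play have domain $\A$, Definition~\ref{Psi-precont} and Proposition~\ref{precont=cont} must be invoked with $\A\op$ in place of ``$\A$'', and I should confirm that the coend appearing in $\K(K,J-)*\{N,\A(-,H)\}$ matches the one defining condition~(3).
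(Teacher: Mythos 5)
Your proposal is correct and follows essentially the same route as the paper: for $(1)\Rightarrow(2)$ you verify smallness and $\Psi$-precontinuity of each $\K(X,J-)$ directly (which is exactly the content of Corollary~\ref{psi-cont}, here inlined via Proposition~\ref{precont=cont} and \cite[Proposition~4.9]{LT22:virtual}), and for $(2)\Rightarrow(1)$ you deduce accessibility from virtual reflectivity and then establish closure under $\Psi$-limits by the very same chain of isomorphisms --- limit-definition, full fidelity of $J$, the commutativity~\eqref{commutativity}, and density --- feeding into condition (3) of Proposition~\ref{prop:what-is-A}. The ``bookkeeping'' worry you raise (that the composite isomorphism is the canonical comparison) is handled no more explicitly in the paper's proof than in yours; each step in the chain is the canonical map, so the verification is routine.
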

\begin{proof}
  $(1)\Rightarrow (2)$. This follows by Corollary~\ref{psi-cont} above.
  
  $(2)\Rightarrow (1)$. Since $\A$ is $\psicheck$-virtually
  reflective it is also virtually reflective, and so accessible
  by \cite[Corollary~4.24]{LT22:virtual}. Thus we need only
  prove that $\A$ is closed in $\K$ under $\Psi$-limits.

Suppose then that $N\colon\D\to\V$ is in $\Psi$ and
$S\colon\D\to\A$. We must prove that the limit $\{N,JS\}$ lies
in $\A$. Now
$\K(K,J-)$ is small and $\Psi$-precontinuous for all $K\in\K$, so 
\begin{align*}
    \K(K,J-)*\K(J-,\{N,JS\}) 
          &\cong \K(K,J-)*\{N,\K(J,JS)\} \tag{defn of limit} \\
          &\cong \K(K,J-)*\{ N, YS\} \tag{$J$ fully faithful} \\
          &\cong \{N,\K(K,JS) \} \tag{by \eqref{commutativity}} \\
          &\cong \K(K,\{N,JS\}) \tag{defn of limit} 
\end{align*}
and so $\{N,JS\}\in\A$ by Proposition~\ref{prop:what-is-A}.
\end{proof}

\begin{teo}\label{relative-quasiflattheorem}
	The following are equivalent for a $\V$-category $\A$: \begin{enumerate}\setlength\itemsep{0.25em}
		\item $\A$ is accessible and $\Psi$-complete;
		\item $\A$ is accessible and $\Psi\tx{-PCts}(\A,\V)\op$ is cocomplete;
		\item $\A$ is accessible and $\psicheck$-virtually cocomplete;
		\item $\A$ is accessibly embedded and $\psicheck$-virtually reflective in $[\C,\V]$ for some $\C$.
	\end{enumerate}
	In that case $\Psi\tx{-PCts}(\A,\V)$ consists of the small $\Psi$-continuous $\V$-functors.
\end{teo}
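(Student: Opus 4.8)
The plan is to establish the cycle $(1)\Rightarrow(2)\Rightarrow(3)\Rightarrow(4)\Rightarrow(1)$ and then read off the final sentence. The first two implications are immediate from what is already available. For $(1)\Rightarrow(2)$: every accessible $\V$-category is virtually cocomplete (as recalled in the proof of Corollary~\ref{psi-cont}), so when $\A$ is moreover $\Psi$-complete, Proposition~\ref{cocomplete+psicomplete} applies and its condition~(3) gives exactly that $\Psi\tx{-PCts}(\A,\V)\op$ is cocomplete. For $(2)\Rightarrow(3)$: a cocomplete $\V$-category certainly has colimits of representables, and by the remark following the definition of $\psicheck$-virtual cocompleteness (an application of Observation~\ref{virtual-col} to $\B=\Psi\tx{-PCts}(\A,\V)\op$) this says precisely that $\A$ is $\psicheck$-virtually cocomplete; accessibility is retained throughout. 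Note that we cannot simply invert Proposition~\ref{cocomplete+psicomplete} to obtain $(3)\Rightarrow(1)$, since that proposition assumes $\Psi$-completeness from the outset; recovering $\Psi$-completeness is genuinely the content that must be supplied.

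The crux is $(3)\Rightarrow(4)$. I would fix a regular cardinal $\lambda$ for which $\A$ is $\lambda$-accessible, write $i\colon\A_\lambda\hookrightarrow\A$ for the (essentially small) inclusion of the $\lambda$-presentable objects, and set $\C:=\A_\lambda\op$. The restricted Yoneda embedding $E\colon\A\to[\C,\V]$, $A\mapsto\A(i-,A)$, is then fully faithful and accessible, exhibiting $\A$ as accessibly embedded. The key computation is the identification, for each $X\in[\C,\V]$, of the hom-$\V$-functor $[\C,\V](X,E-)\colon\A\to\V$ with the end $\int_{c}[Xc,\A(ic,-)]\cong[\A_\lambda\op,\V](X,\A(i-,-))$; but this is exactly the $\V$-functor whose smallness and $\Psi$-precontinuity witness the existence in $\A$ of the $\psicheck$-virtual colimit of $i$ weighted by $X$. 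Since $\A_\lambda$ is small, every such $X$ is a (small) weight, so $\psicheck$-virtual cocompleteness guarantees that $[\C,\V](X,E-)$ is small and $\Psi$-precontinuous for all $X$. This says precisely that $E$ has a $\psicheck$-virtual left adjoint, and as $E$ is fully faithful, that $\A$ is $\psicheck$-virtually reflective in $[\C,\V]$.

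For $(4)\Rightarrow(1)$ I would take $\K=[\C,\V]$, which is locally presentable (hence accessible) and complete, and so in particular accessible with $\Psi$-limits. Condition~(4) is exactly hypothesis~(2) of Theorem~\ref{strong-quasiflattheorem} for this $\K$, so that theorem yields that $\A$ is accessible, accessibly embedded, and closed under $\Psi$-limits in $[\C,\V]$; since $[\C,\V]$ has all $\Psi$-limits, $\A$ is $\Psi$-complete, giving~(1). Finally, under these equivalent conditions $\A$ is $\Psi$-complete, hence $\A\op$ is $\Psi$-cocomplete, and Proposition~\ref{precont=cont} (applied with $\A\op$ in place of $\A$) shows that a small $\V$-functor $\A\to\V$ is $\Psi$-precontinuous if and only if it is $\Psi$-continuous; thus $\Psi\tx{-PCts}(\A,\V)$ consists of the small $\Psi$-continuous $\V$-functors.

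The one step requiring genuine care is the identification in $(3)\Rightarrow(4)$: one must match variances correctly — the $\C=\A_\lambda\op$ appearing in the target presheaf category is opposite to the domain $\A_\lambda$ of the diagram $i$ used in the $\psicheck$-virtual colimit — and check that the weights $X$ arising from objects of $[\C,\V]$ are precisely those covered by the universal quantifier in the definition of $\psicheck$-virtual cocompleteness. Everything else is a matter of assembling Propositions~\ref{cocomplete+psicomplete} and~\ref{precont=cont} and Theorem~\ref{strong-quasiflattheorem}.
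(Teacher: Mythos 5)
Your proposal is correct and follows essentially the same route as the paper's proof: the same cycle $(1)\Rightarrow(2)\Rightarrow(3)\Rightarrow(4)\Rightarrow(1)$, with $(1)\Rightarrow(2)$ from Proposition~\ref{cocomplete+psicomplete}, $(4)\Rightarrow(1)$ from Theorem~\ref{strong-quasiflattheorem}, and the same choice $\C=\A_\alpha\op$ with the restricted Yoneda embedding for $(3)\Rightarrow(4)$. The only cosmetic difference is that in $(3)\Rightarrow(4)$ you verify the defining condition of a $\psicheck$-virtual left adjoint pointwise, by identifying each $[\C,\V](X,E-)$ with the functor witnessing a $\psicheck$-virtual colimit, whereas the paper packages exactly the same data as the essentially unique cocontinuous extension $L\colon[\C,\V]\to\Psi\tx{-PCts}(\A,\V)\op$ of the inclusion of $\A_\alpha$ --- the two formulations agree by Observation~\ref{virtual-col}.
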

\begin{proof}
	Since $\A$ is $\psicheck$-virtually cocomplete just
        when $\Psi\tx{-PCts}(\A,\V)\op$ has colimits of
        representables, $(2)\Rightarrow(3)$ is trivial. $(4)\Rightarrow (1)$ is a consequence of Theorem~\ref{strong-quasiflattheorem}.
	
	$(1)\Rightarrow (2)$. $\A$ is accessible and therefore virtually cocomplete; thus it follows from Proposition~\ref{cocomplete+psicomplete} that $\Psi\tx{-PCts}(\A,\V)\op$ is cocomplete.
	
	$(3)\Rightarrow (4)$. Let $\alpha$ be such that $\A$ is
        $\alpha$-accessible; then take $\C=\A_{\alpha}\op$ so that we
        have an accessible embedding $J\colon
        \A\hookrightarrow[\C,\V]$. There is a $\V$-functor $G\colon
        \C\op\to \Psi\tx{-PCts}(\A,\V)\op$ which, up to isomorphism,
        is just the inclusion $\A_{\alpha}\subseteq\A\subseteq
        \Psi\tx{-PCts}(\A,\V)\op$. Since $\Psi\tx{-PCts}(\A,\V)\op$
        has colimits of objects of $\A$ and $G$ lands in $\A$ by
        construction, it has an essentially unique cocontinuous extension $L\colon [\C,\V]\to \Psi\tx{-PCts}(\A,\V)\op$ which is a left $V$-adjoint to $J$, as desired.
\end{proof}

\section{Main results}\label{main-psi}

\subsection{Colimit types and companions}\label{colimit-types}

Theorem~\ref{relative-quasiflattheorem} bears some resemblance to
Theorem~\ref{companioin-intro}. Nonetheless, the notion of
$\Psi$-precontinuous $\V$-functor is not enough to capture the known
characterization theorems for locally polypresentable and weakly
locally presentable categories. To obtain them we need some more
explicitly determined class of presheaves for our relaxed notion of
representability.

Towards this end, we shall introduce notions of {\em colimit type} and
{\em companion} related to a given class $\Psi$ of limits. Given such
a companion $\mt{C}$ for $\Psi$, we shall construct, for each
$\V$-category $\A$, a full subcategory 
$\mt{C}_1\A$ of $\Psi\tx{-PCts}(\A\op,\V)$ containing the representables; in fact $\mt{C}_1\A$  will turn out to be the same as
$\Psi\tx{-PCts}(\A\op,\V)$ when $\A$ is accessible and $\Psi$-cocomplete, but not in general.

These notions of colimit type and companion will allow us to capture things like colimits of free groupoid actions which, as we saw in the introduction, arise in the characterization theorem of locally polypresentable categories.

\begin{Def} A {\em colimit type} $\mt{C}$ is the data of a full replete subcategory 
	$$\mt{C}_{ M }\hookrightarrow[\C,\V]$$%
	for any weight $ M \colon\C\op\to\V$. This may equivalently be given by specifying the class 
	$$\mt C =\{ (M,H)\ |\ H\in\mt C_M \}.$$%
\end{Def}

Let us see some examples.

\begin{es}\label{soundcompanion}
	If $\Phi$ is a class of weights there is a colimit type $\mt C^\Phi$ with
	\[ \mt C^\Phi_M = \begin{cases} 
		[\C,\V] & \tx{if } M\in[\C\op,\V]\tx{ is in }\Phi, \\
		\emptyset & \tx{otherwise.} 
	\end{cases}
	\]
\end{es}

\begin{es}\label{freegroupoids}
	For $\V=\bo{Set}$, consider the colimit type $\mt F$ defined by: $H\in\mt F_M$, for $M\colon\C\op\to\bo{Set}$, if and only if $\C$ is a groupoid, $M=\Delta 1$, and $H\colon\C\to\bo{Set}$ is free, in the sense that 
	\begin{center}
		
		\begin{tikzpicture}[baseline=(current  bounding  box.south), scale=2]
			
			\node (a) at (-0.6,0) {$0$};
			\node (b) at (0.1,0) {$HA$};
			\node (c) at (1,0) {$HB$};
			
			\path[font=\scriptsize]
			
			(a) edge [->] node [above] {} (b)
			([yshift=1.5pt]b.east) edge [->] node [above] {$Hf$} ([yshift=1.5pt]c.west)
			([yshift=-1.5pt]b.east) edge [->] node [below] {$Hg$} ([yshift=-1.5pt]c.west);
		\end{tikzpicture}
	\end{center}
	is an equalizer for any $f,g\colon A\to B$ with $f\neq g$. 
\end{es}

\begin{es}\label{pseudoequivalencerelation}
	For $\V=\bo{Set}$, consider the colimit type $\mt R$ defined by: $H\in\mt R_M$, for $M\colon\C\op\to\bo{Set}$, if and only if $\C=\{x\rightrightarrows y\}$ is the free category on a pair of arrows, $M=\Delta 1$, and $H\colon \C\to\bo{Set}$ is a pseudo equivalence relation, in the sense that the pair of functions identified by $H$ factors as
	\begin{center}
		
		\begin{tikzpicture}[baseline=(current  bounding  box.south), scale=2]
			
			\node (a) at (-0.6,0) {$Hx$};
			\node (b) at (0.1,0) {$Z$};
			\node (c) at (0.9,0) {$Hy$};
			
			\path[font=\scriptsize]
			
			(a) edge [->>] node [above] {$e$} (b)
			([yshift=1.5pt]b.east) edge [->] node [above] {$h$} ([yshift=1.5pt]c.west)
			([yshift=-1.5pt]b.east) edge [->] node [below] {$k$} ([yshift=-1.5pt]c.west);
		\end{tikzpicture}
	\end{center}
	an epimorphism $e$ followed by a kernel pair $(h,k)$. Such a factorization, when it exists, is unique since it will be given by the epi-mono factorization of the induced $Hx\to Hy\times Hy$. An equivalent definition is \cite[Definition~6]{CV98:articolo} which explains why these are called pseudo equivalence relations. 
	
\end{es}

Given a class of weights $\Psi$ and a colimit type $\mt C$, we express
the commutativity of $\Psi$-limits with colimits of diagrams of
type $\mt C$ as follows:

\begin{Def}
	Let $\Psi$ be a class of weights and $\mt{C}$ be a colimit
        type; we say that {\em $\mt{C}$ is compatible with $\Psi$} if,
        for any presheaf $M\colon\C\op\to\V$ with $\mt C_M$
        non-empty, $\mt C_M\subseteq[\C,\V]$ is closed under $\Psi$-limits and the composite
	\begin{center}
		
		\begin{tikzpicture}[baseline=(current  bounding  box.south), scale=2]
			
			\node (a) at (-0.8,0) {$\mt C_M$};
			\node (b) at (0,0) {$[\C,\V]$};
			\node (c) at (1,0) {$\V$};
			
			\path[font=\scriptsize]
			
			(a) edge [right hook->] node [above] {} (b)
			(b) edge [->] node [above] {$M*-$} (c);
		\end{tikzpicture}
	\end{center}
	preserves them.
\end{Def}

\begin{obs}
	For the purposes of this section it would be enough to require
        that $M*-$ preserve $\Psi$-limits of diagrams landing in $\mt
        C_M$; however, since the condition above is satisfied by all
        our examples and will be required anyway for the main results
        of Section~\ref{sketches-C}, we opted for that. See in
        particular Proposition~\ref{phisketchisasketch}. As we have
        seen in the examples above, $\mt C_M$ is often the empty
        $\V$-category; since these are not always $\Psi$-complete (for
        instance when $\Psi$-limits imply the existence of a terminal
        object), we could not require the closure of $\mt C_M$ under $\Psi$-limits for the non-empty $\mt C_M$. 
\end{obs}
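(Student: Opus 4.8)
The observation records two design choices built into the definition of compatibility, and I would justify them separately.

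I would dispose first of the claim about empty fibres, which is the more elementary. I would begin by noting that an empty $\mt C_M$ is the typical situation in our examples: in Example~\ref{soundcompanion} the fibre $\mt C^\Phi_M$ is empty exactly when $M\notin\Phi$, while in Examples~\ref{freegroupoids} and~\ref{pseudoequivalencerelation} it is empty unless $\C$ and $M$ take the special shapes prescribed there. The key point is then that the empty $\V$-category need not be $\Psi$-complete: it has no terminal object, and so fails $\Psi$-completeness as soon as $\Psi$ forces a terminal object to exist --- as the class for products does, via the empty product. Closure under $\Psi$-limits therefore cannot be imposed on the empty $\mt C_M$ without discarding exactly the examples we are after, so restricting that clause to the non-empty fibres is forced.

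For the second claim I would trace how compatibility actually enters the characterization theorems of this section. The only channel is the commutativity isomorphism~\eqref{commutativity}, and in the arguments for Theorems~\ref{C-strong-charact} and~\ref{relatice-C-charact} that isomorphism is invoked solely for $\Psi$-diagrams whose objects already lie in $\mt C_M$; so what those proofs consume is no more than the bare requirement that $M*-$ preserve $\Psi$-limits of such diagrams, computed pointwise in $[\C,\V]$. The extra demand folded into compatibility --- that the limit land back inside $\mt C_M$, i.e.\ that $\mt C_M$ be closed under $\Psi$-limits in $[\C,\V]$ --- is invisible to these proofs. It becomes indispensable only in Section~\ref{sketches-C}, where a $\Psi$-limit of a $\mt C$-diagram must itself be recognised as a $\mt C$-diagram for the passage to sketches to close up, which is precisely the content flagged at Proposition~\ref{phisketchisasketch}.

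The step I expect to be the genuine obstacle is the second one. Unlike the empty-fibre claim, which reduces to the single remark about terminal objects together with a glance at the examples, confirming that the weaker preservation condition suffices for the present section is not a one-line check: it requires auditing every appeal to compatibility in the proofs of the two characterization theorems and verifying, case by case, that the $\Psi$-limit is only ever taken of a diagram landing in $\mt C_M$ and that no argument covertly relies on $\mt C_M$ being $\Psi$-complete in its own right.
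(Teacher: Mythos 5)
Your proposal is correct and takes essentially the same route as the paper's (implicit) justification: compatibility enters this section only through the step \eqref{eq:compatible} in the proof that $\mt C_1\A\subseteq\Psi\tx{-PCts}(\A\op,\V)$, where $M*-$ is applied to a $\Psi$-limit of a diagram landing in $\mt C_M$, so the weaker preservation condition indeed suffices there, with full closure needed only for Proposition~\ref{phisketchisasketch}. Your terminal-object argument for why closure cannot be imposed on the empty fibres is exactly the paper's reason.
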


\begin{es}$ $
	\begin{enumerate}
		\item If $\mt C=\mt C^\Phi$, for a class of weights $\Phi$, the compatibility condition says that $\Phi$ consists of (some or all) $\Psi$-flat weights (see Section~\ref{sound-companion}).
		\item The colimit type $\mt F$ of free groupoid actions is compatible with wide pullbacks (see Section~\ref{widepbks}).
		\item The colimit type $\mt R$ of pseudo equivalence relations is compatible with small products (see Section~\ref{products+G-powers}).
	\end{enumerate}
\end{es}

With the following definition we introduce the $\V$-categories
$\mt{C}_1 \A$ and $\mt{C}_1^\dagger \A$ which  will be used to
define our $\mt{C}$-virtual notions.

\begin{Def}\label{C_1A}
	Let $\mt C$ be a colimit type and $\A$ be a $\V$-category. We define $\mt{C}_1\A$ to be the full subcategory of $\P\A$ consisting of:\begin{enumerate}
		\item the representables;
		\item $M*YH$ for any $M\colon\C\op\to\V$ and $H\colon\C\to\A$ for which $\A(A,H-)\colon\C\to\V$ lies in $\mt C_M$ for all $A\in\A$.
	\end{enumerate}
	Dually, let $\mt{C}_1^\dagger \A=\mt{C}_1(\A\op)\op$;  this consists of certain $\V$-functors $F\colon\A\to\V$. 
\end{Def}

\begin{prop}
	If $\mt C$ is a colimit type that is compatible with $\Psi$ then 
	$$\mt C_1\A\subseteq\Psi\tx{-PCts}(\A\op,\V).$$%
	In particular, every $F\colon\A\op\to\V$ in $\mt C_1\A$ preserves any existing $\Psi$-limits, and the inclusion $\A\hookrightarrow\mt C_1\A$ preserves any existing $\Psi$-colimits.
\end{prop}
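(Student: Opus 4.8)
The plan is to reduce to the two kinds of generators of $\mt C_1\A$. The representables cause no trouble: each representable $\A(-,A)$ lies in $\A$, and we already have the inclusion $\A\subseteq\Psi\tx{-PCts}(\A\op,\V)$. So it remains to treat an object of the form $P=M*YH$, where $M\colon\C\op\to\V$ is a weight, $H\colon\C\to\A$, and $\A(A,H-)$ lies in $\mt C_M$ for every $A\in\A$. Since $M$ has small domain, $P$ is a small colimit of representables and hence small; the content is to show that $P$ is $\Psi$-precontinuous.

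First I would record a pointwise formula for the functor $P*-\colon[\A,\V]\to\V$. Computing the colimit $M*YH$ pointwise gives $PA\cong M*\A(A,H-)=\int^{c}Mc\otimes\A(A,Hc)$, so for $G\colon\A\to\V$ a Fubini interchange of the two coends followed by the density (co-Yoneda) isomorphism $\int^{A}GA\otimes\A(A,Hc)\cong G(Hc)$ yields a natural isomorphism $P*G\cong M*GH$. In other words $P*-$ is isomorphic to the composite $(M*-)\circ H^{*}$, where $H^{*}\colon[\A,\V]\to[\C,\V]$ is restriction along $H$, and all the isomorphisms involved are the canonical comparison maps.

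The crux is then to verify the commutation \eqref{commutativity} for $P$. Given $N\colon\D\to\V$ in $\Psi$ and $K\colon\D\to\A\op$, I would apply the formula above to the $\Psi$-limit of representables $\{N,YK\}$. Since $H^{*}$ is a restriction functor it preserves the pointwise limit $\{N,YK\}$, giving $H^{*}\{N,YK\}\cong\{N,\Phi\}$, where $\Phi\colon\D\to[\C,\V]$ is the diagram $\Phi(d)=\A(Kd,H-)$. The key point --- and the one place the hypothesis is used --- is that each $\Phi(d)$ lies in $\mt C_M$ (this is exactly the membership condition defining $P$, applied to $A=Kd$), so $\Phi$ factors through $\mt C_M$. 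Compatibility of $\mt C$ with $\Psi$ now supplies both that $\mt C_M$ is closed under the $\Psi$-limit $\{N,\Phi\}$ and that $M*-$ preserves it, i.e. $M*\{N,\Phi\}\cong\{N,M*\Phi\}$. Unwinding through $P*-\cong(M*-)\circ H^{*}$ identifies the left-hand side with $P*\{N,YK\}$ and the right-hand side with $\{N,P*YK\}$; since every isomorphism used is the canonical one, the canonical map \eqref{commutativity} is invertible and $P$ is $\Psi$-precontinuous. I expect this bookkeeping --- keeping straight the two Yoneda embeddings and checking that a composite of canonical isomorphisms is again the canonical comparison --- to be the main obstacle, rather than any deep difficulty.

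Finally, the two ``in particular'' assertions follow formally. Every $F\in\mt C_1\A$ is $\Psi$-precontinuous by the containment just proved, hence preserves any existing $\Psi$-limits by Proposition~\ref{precont=cont}. For the last claim, the inclusion $\A\hookrightarrow\Psi\tx{-PCts}(\A\op,\V)$ preserves any existing $\Psi$-colimits (the corollary to Proposition~\ref{precont=cont}); since $\mt C_1\A$ is a full subcategory of $\Psi\tx{-PCts}(\A\op,\V)$ containing the representables, and an existing $\Psi$-colimit of objects of $\A$ is sent to the representable on the colimit taken in $\A$ and so lands in $\mt C_1\A$, the universal property is inherited and $\A\hookrightarrow\mt C_1\A$ preserves it as well.
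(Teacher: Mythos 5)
Your proposal is correct and takes essentially the same approach as the paper: your identity $P*-\cong(M*-)\circ H^{*}$ is precisely the paper's two ``Yoneda'' steps, and applying compatibility to the diagram $d\mapsto\A(Kd,H-)$, which lands in $\mt C_M$ by the membership hypothesis, is exactly the paper's key step \eqref{eq:compatible}. The only difference is that you spell out the two ``in particular'' assertions, which the paper leaves as immediate consequences of Proposition~\ref{precont=cont} and its corollary; your arguments for these are also correct.
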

\begin{proof}
	Consider $X\in\mt C_1\A$; if $X$ is representable it is $\Psi$-precontinuous, so suppose that $X\cong M*YH$ for some $M\colon\C\op\to\V$ and $H\colon\C\to\A$ for which $\A(A,H-)\in\mt C_M$ for all $A\in\A$. We need to show that $X$-weighted colimits commute with $\Psi$-limits of representables. For that, consider $N\colon\D\to\V$ in $\Psi$ and $S\colon\D\to\A\op$; then:
	\begin{align} 
		X*\{N,YS\} &\cong M*YH*\{N,YS\} \tag{defn $X$} \\
			&\cong
                   M-*\{N\square,\A(S\square,H-)\}\tag{Yoneda} \\
			&\cong \{N\square,
                   M-*\A(S\square,H-)\} \label{eq:compatible} \\
			&\cong \{N,(M*YH)*YS)\}\tag{Yoneda}\\
			&\cong \{N,X*YS\}\tag{defn $X$} 
	\end{align}
	where \eqref{eq:compatible} holds since $\A(SD,H-)$ lies in $\mt C_M$ by hypothesis (for any $D\in\D$) and $M*-$ preserves $\Psi$-limits of diagrams landing in $\mt C_M$.
\end{proof}

\begin{cor}
	If $\mt C$ is a colimit type compatible with $\Psi$ and $\A$ is $\Psi$-cocomplete, then any $F\in\mt C_1\A$ is $\Psi$-continuous and small.
\end{cor}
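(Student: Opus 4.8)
The plan is to deduce this directly from the preceding proposition together with Proposition~\ref{precont=cont}, so the argument will be short: essentially all of the substantive content has already been extracted in establishing the inclusion $\mt C_1\A\subseteq\Psi\tx{-PCts}(\A\op,\V)$, and what remains is a clean application of two cited facts.

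First I would recall that, by the preceding proposition, compatibility of $\mt C$ with $\Psi$ yields $\mt C_1\A\subseteq\Psi\tx{-PCts}(\A\op,\V)$. Hence any $F\in\mt C_1\A$ is a small $\Psi$-precontinuous $\V$-functor $\A\op\to\V$: the property of being $\Psi$-precontinuous is exactly the content of that inclusion, and smallness is automatic, since $\mt C_1\A$ is by construction a full subcategory of $\P\A$ and every object of $\P\A$ is a small presheaf. This already disposes of the ``small'' half of the statement.

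It then remains only to upgrade $\Psi$-precontinuity to genuine $\Psi$-continuity, and here I would invoke the standing hypothesis that $\A$ is $\Psi$-cocomplete, equivalently that $\A\op$ is $\Psi$-complete, which is precisely what makes $\Psi$-continuity of an $F\colon\A\op\to\V$ a meaningful condition. By the second assertion of Proposition~\ref{precont=cont}, when $\A$ is $\Psi$-cocomplete a small $\V$-functor $\A\op\to\V$ is $\Psi$-precontinuous if and only if it is $\Psi$-continuous; applying this to our $F$ gives that $F$ is $\Psi$-continuous, completing the argument.

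Since each step is a direct citation, I do not anticipate any real obstacle. The only points requiring a little care are bookkeeping ones: reading off smallness from the ambient category $\P\A$ rather than re-proving it, and making sure that $\Psi$-continuity is being asserted for functors defined on $\A\op$, where the relevant $\Psi$-limits exist exactly because $\A$ has been assumed $\Psi$-cocomplete. Neither of these is a genuine difficulty, which is why the result is phrased as a corollary rather than a proposition.
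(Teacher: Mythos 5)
Your proof is correct and is exactly the intended argument: the paper states this as an immediate corollary (with no written proof) of the preceding proposition, which gives $\mt C_1\A\subseteq\Psi\tx{-PCts}(\A\op,\V)$ (hence smallness and $\Psi$-precontinuity), combined with the second assertion of Proposition~\ref{precont=cont} to upgrade precontinuity to $\Psi$-continuity under $\Psi$-cocompleteness of $\A$. Your bookkeeping remarks (smallness from $\P\A$, and $\Psi$-completeness of $\A\op$ making continuity meaningful) are also accurate.
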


The following definition identifies when, for a given class of weights $\Psi$, a colimit type $\mt C$ is rich enough to capture results in the spirit of Theorem~\ref{strong-quasiflattheorem} and Theorem~\ref{relative-quasiflattheorem}.

\begin{Def}\label{companion}
	We say that a colimit type $\mt{C}$ is a {\em companion} for $\Psi$ if: \begin{enumerate}\setlength\itemsep{0.25em}
		\item[(I)] $\mt{C}$ is compatible with $\Psi$;
		\item[(II)] for any $\Psi$-complete and virtually cocomplete $\A$, each small $\Psi$-continuous $\V$-functor $F\colon\A\to\V$ lies in $\mt C_1^\dagger\A$.
	\end{enumerate}
\end{Def}

Assuming (I), condition (II) is equivalent to saying that for any $\Psi$-complete and virtually cocomplete $\A$ we have $\mt C_1^\dagger\A=\Psi\tx{-PCts}(\A,\V)\op$; that is, $\mt C_1^\dagger\A$ consists of the small $\Psi$-continuous $\V$-functors $F\colon\A\to\V$.

\begin{ese}\label{totalex} $ $\\
	\vspace{-10pt}
	\begin{enumerate}\setlength\itemsep{0.25em}
		\item All weakly sound classes in the sense of Definition~\ref{sound-def}, where the classes of diagrams are actually classes of weights (see Example~\ref{soundcompanion}), give examples of companions --- see Section~\ref{sound-companion}, which also looks at many particular examples.
		
		\item For $\V=\bo{Set}$, the colimit type
                  $\mt F$ given by the free groupoid actions
                  (Example~\ref{freegroupoids}) is a companion for
                  wide pullback diagrams --- see Section~\ref{widepbks}.
		
		\item For $\V=\bo{Set}$, the colimit type $\mt R$ given by the pseudo equivalence relations (Example~\ref{pseudoequivalencerelation}) is a companion for small products --- see Example~\ref{products} and Section~\ref{wrsnc}.
		
		\item More generally, we may consider enriched colimit types similar to the class $\mt R$ above when the class of weights is given by small products and powers by a dense generator --- see Sections~\ref{products+G-powers} and \ref{wrsnc}.
		
		\item For $\V=\bo{Cat}$, the colimit type
                  $\mt P$ of Definition~\ref{2-companion}  is a
                  companion for the class of flexible limits --- see
                  Sections~\ref{flexible} and \ref{wrsnc}. 
		
		\item For $\V=\bo{Set}$, the colimit type $\mt{S}^\lambda$ (including $\lambda=\infty$) of $\lambda$-sifted diagrams is a companion for the class of $\lambda$-small powers --- see Section~\ref{powers}.
	\end{enumerate}
\end{ese}

Proposition~\ref{cocomplete+psicomplete}, Corollary~\ref{psi-cont},
Theorem~\ref{strong-quasiflattheorem}, and
Theorem~\ref{relative-quasiflattheorem} each proves the equivalence of
various conditions involving, either explicitly or implicitly, the
$\V$-category \linebreak $\Psi\tx{-PCts}(\A,\V)\op$.~We now prove a
series of analogues for these results involving $\mt{C}_1^\dagger\A$
in place of $\Psi\tx{-PCts}(\A,\V)\op$. In each case,
$\Psi\tx{-PCts}(\A,\V)\op$ and $\mt{C}_1^\dagger\A$ will turn out to
be equal since $\A$ will be $\Psi$-complete and virtually
cocomplete --- see Remark~\ref{rmk:Cdag-PCts} below --- but we will not know this beforehand. Thus despite the similarity, these new results do not flow automatically from the previous ones.

The first step is to introduce the notion of {\em $\mt C$-virtually cocomplete} $\V$-category $\A$. This generalizes the dual of \cite[Definition~4.3]{karazeris2009representability} from a doctrine to a colimit type $\mt C$.

\begin{Def}\label{C-virtual-col}
	Let $\mt C$ be a colimit type. Given a $\V$-category $\A$, a weight $M\colon\C\op\to\V$ with small domain, and $H\colon\C\to\A$, we say that the {\em $\mt C$-virtual colimit} of $H$ weighted by $M$ exists in $\A$ if $[\C\op,\V](M,\A(H,-))\colon\A\to\V$ lies in $\mt C_1^\dagger\A$. We say that $\A$ is {\em $\mt C$-virtually cocomplete} if it has all $\mt C$-virtual colimits.
\end{Def}

When $\mt C=\emptyset$ is the empty colimit type, a $\V$-category is $\emptyset$-virtually cocomplete if and only if it is cocomplete. When $\V=\bo{Set}$ and $\mt C=\tx{Fam}$ is the class of weights for small products, a category is $\tx{Fam}$-virtually cocomplete if and only if it is multicocomplete \cite{Die80:articolo}. Similarly, we shall see that if $\mt C=\mt F$ is the colimit type of free groupoid actions, then a category is $\mt F$-virtually cocomplete if and only if it is polycocomplete (dual of Proposition~\ref{polycomplete}).

\begin{prop}\label{cocomplete}
	Let $\mt C$ be a companion for $\Psi$. The following are equivalent for a $\Psi$-complete $\V$-category $\A$:\begin{enumerate}\setlength\itemsep{0.25em}
		\item $\A$ is virtually cocomplete;
		\item $\P^\dagger\A$ is cocomplete;
		\item $\mt{C}_1^\dagger\A$ is cocomplete;
		\item $\A$ is $\mt C$-virtually cocomplete.
	\end{enumerate}
\end{prop}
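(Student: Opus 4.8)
The plan is to mirror the structure of Proposition~\ref{cocomplete+psicomplete}, whose four-way equivalence has exactly the same shape, and whose proof I may invoke piecewise. The key leverage is condition~(II) in the definition of companion, which says that for a $\Psi$-complete and virtually cocomplete $\A$ the subcategory $\mt C_1^\dagger\A$ coincides with $\Psi\tx{-PCts}(\A,\V)\op$. This is the bridge that lets me transfer everything already known about $\Psi\tx{-PCts}(\A,\V)\op$ to $\mt C_1^\dagger\A$; but the subtlety, as the excerpt warns, is that I cannot apply~(II) until I already know $\A$ is virtually cocomplete, so I must be careful about the order of the implications.

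First I would record the equivalence $(1)\Leftrightarrow(2)$, which holds for any $\V$-category by the dual of \cite[Theorem~3.8]{DL07} and is independent of $\mt C$ and $\Psi$. The implication $(3)\Rightarrow(4)$ is immediate: if $\mt C_1^\dagger\A$ is cocomplete then in particular it admits colimits of representables, and by Observation~\ref{virtual-col} (applied to $\B=\mt C_1^\dagger\A$, which contains the representables by Definition~\ref{C_1A}) this is exactly the assertion that every $\mt C$-virtual colimit $[\C\op,\V](M,\A(H,-))$ lands in $\mt C_1^\dagger\A$, i.e.\ that $\A$ is $\mt C$-virtually cocomplete. So far no use of~(II) has been made.

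The two implications that close the cycle are $(2)\Rightarrow(3)$ and $(4)\Rightarrow(1)$, and here I invoke the companion hypothesis together with Proposition~\ref{cocomplete+psicomplete}. For $(4)\Rightarrow(1)$: suppose $\A$ is $\mt C$-virtually cocomplete; I want to deduce it is virtually cocomplete. Since $\mt C$ is compatible with $\Psi$, the preceding proposition gives $\mt C_1^\dagger\A\subseteq\Psi\tx{-PCts}(\A,\V)\op$, so each $\mt C$-virtual colimit $[\C\op,\V](M,\A(H,-))$ is in particular small (being $\Psi$-precontinuous, hence a small $\V$-functor). Taking $\C$ discrete recovers arbitrary representables-in-$\P^\dagger\A$ as colimits, which is enough to conclude that $\P^\dagger\A$ has colimits of representables, i.e.\ $\A$ is virtually cocomplete by $(1)\Leftrightarrow(2)$. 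For $(2)\Rightarrow(3)$: if $\P^\dagger\A$ is cocomplete then by $(2)\Rightarrow(1)$ the category $\A$ is virtually cocomplete, so now~(II) applies and gives $\mt C_1^\dagger\A=\Psi\tx{-PCts}(\A,\V)\op$; by Proposition~\ref{cocomplete+psicomplete}, condition~(2) there forces $\Psi\tx{-PCts}(\A,\V)\op$ to be cocomplete, hence so is $\mt C_1^\dagger\A$.

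The main obstacle I anticipate is the $(4)\Rightarrow(1)$ step, specifically extracting genuine virtual cocompleteness from the weaker-looking $\mt C$-virtual cocompleteness without circularly assuming~(II). The containment $\mt C_1^\dagger\A\subseteq\Psi\tx{-PCts}(\A,\V)\op$ guarantees smallness of the relevant functors, but one must check that the family of $\mt C$-virtual colimits that $\A$ is assumed to possess is rich enough to certify that \emph{all} colimits of representables exist in $\P^\dagger\A$ --- in other words, that ranging $M$ and $H$ over all weights and diagrams (not merely those adapted to $\mt C$) still yields smallness of $[\C\op,\V](M,\A(H,-))$. I expect this to follow because $\mt C$-virtual cocompleteness already asserts landing in $\mt C_1^\dagger\A$ for \emph{every} weight $M$ and \emph{every} $H$, and $\mt C_1^\dagger\A$ sits inside the small functors; so the smallness is automatic and the only real content is the bookkeeping through Observation~\ref{virtual-col}. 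If a direct argument proves awkward, the safe fallback is to route everything through Proposition~\ref{cocomplete+psicomplete} by first establishing $(4)\Rightarrow(3)$ directly and then using $(3)\Rightarrow(4)$ plus the chain $(3)\Rightarrow\cdots\Rightarrow(1)$ inherited from that proposition once the identification of the two subcategories is in force.
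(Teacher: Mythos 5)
Your proof is correct and takes essentially the same route as the paper's: the implications $(3)\Rightarrow(4)\Rightarrow(1)$ are handled by the purely formal facts that $\mt{C}_1^\dagger\A$ contains the representables and consists of small functors (so lies in $\P^\dagger\A$), while the remaining implications are routed through Proposition~\ref{cocomplete+psicomplete} after using companion condition (II) to identify $\mt{C}_1^\dagger\A$ with $\Psi\tx{-PCts}(\A,\V)\op$, and your care to invoke (II) only once virtual cocompleteness is already established is exactly the paper's own ordering. The only blemish is the aside about ``taking $\C$ discrete'' in $(4)\Rightarrow(1)$, which is unnecessary --- smallness of every $[\C\op,\V](M,\A(H,-))$ follows at once from $\mt{C}_1^\dagger\A\subseteq\P^\dagger\A$, as your final paragraph correctly observes.
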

\begin{proof}
	Note that, since $\mt{C}_1^\dagger \A$ contains the representables, all existing colimits in $\mt{C}_1^\dagger \A$ are computed pointwise, so that the inclusion $\mt{C}_1^\dagger  \A\hookrightarrow\P^\dagger \A$ always preserves any existing colimits. Thus the implications $(3)\Rightarrow (4)\Rightarrow (1)$ are trivial, while $(1)\Rightarrow(2)\Rightarrow(3)$ follow from Proposition~\ref{cocomplete+psicomplete} since in this case $\mt C_1^\dagger\A=\Psi\tx{-PCts}(\A,\V)\op$.
\end{proof}

Once again we can relax the notion of left adjoint by refining
the notion of virtual left adjoint to the case of a colimit type: 

\begin{Def}\label{C-virtual-left}
	We say that a $\V$-functor $F\colon\A\to\K$ has a {\em $\mt
          C$-virtual left adjoint} if the $\V$-functor $\K(X,F-)$
        lies in $\mt C_1^\dagger\A$ for each $X\in\K$. If $F$ is fully faithful we then say that $\A$ is {\em $\mt{C}$-virtually reflective} in $\K$.
\end{Def}

In other words, $F$ has a $\mt C$-virtual left adjoint if and only if it has a relative left $V$-adjoint, where $V\colon\A\hookrightarrow\mt{C}_1^\dagger \A$ is the inclusion.

\begin{obs}\label{rmk:Cdag-PCts}
	Since the inclusion $\mt C_1^\dagger\A\subseteq\Psi\tx{-PCts}(\A,\V)\op$ always holds, any $\V$-functor $F\colon\A\to\K$ having a $\mt C$-virtual left adjoint also has a $\psicheck$-virtual left adjoint, and any $\mt C$-virtually reflective subcategory of a $\V$-category $\K$ is $\psicheck$-virtually reflective. The converse holds whenever $\A$ is $\Psi$-complete and virtually cocomplete since then $\mt C_1^\dagger\A=\Psi\tx{-PCts}(\A,\V)\op$.
\end{obs}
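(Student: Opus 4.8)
The plan is to reduce the whole statement to two ingredients: the inclusion $\mt C_1^\dagger\A\subseteq\Psi\tx{-PCts}(\A,\V)\op$, and a routine unwinding of the two definitions of virtual left adjoint. First I would establish the inclusion. Since $\mt C$ is a companion for $\Psi$, condition (I) of Definition~\ref{companion} says that $\mt C$ is compatible with $\Psi$, so the Proposition preceding Definition~\ref{companion} (which asserts $\mt C_1\B\subseteq\Psi\tx{-PCts}(\B\op,\V)$ whenever $\mt C$ is compatible with $\Psi$) applies. Applying it to $\B=\A\op$ in place of $\A$ gives $\mt C_1(\A\op)\subseteq\Psi\tx{-PCts}((\A\op)\op,\V)=\Psi\tx{-PCts}(\A,\V)$, and passing to opposite categories yields $\mt C_1^\dagger\A=\mt C_1(\A\op)\op\subseteq\Psi\tx{-PCts}(\A,\V)\op$. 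The point requiring care here is that forming the opposite only flips the hom-objects and leaves the underlying objects -- certain small $\V$-functors $\A\to\V$ -- unchanged, so this is genuinely an inclusion of full subcategories with the same objects; in particular membership as an object is unaffected by the various $(-)\op$'s.

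Next, the two forward implications are immediate from the definitions. By Definition~\ref{C-virtual-left}, a $\V$-functor $F\colon\A\to\K$ has a $\mt C$-virtual left adjoint exactly when $\K(X,F-)\in\mt C_1^\dagger\A$ for every $X\in\K$. By the inclusion just established, each such $\K(X,F-)$ is then an object of $\Psi\tx{-PCts}(\A,\V)\op$, that is, a small $\Psi$-precontinuous $\V$-functor $\A\to\V$; and this is precisely the condition for $F$ to have a $\psicheck$-virtual left adjoint. The corresponding statement about reflectivity is then just the specialization of this to the case where $F$ is fully faithful, since both ``$\mt C$-virtually reflective'' and ``$\psicheck$-virtually reflective'' are defined as the fully faithful instances of the respective virtual-adjoint notions.

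Finally, for the converse I would invoke condition (II) of Definition~\ref{companion}. As recorded in the remark immediately following that definition, conditions (I) and (II) together force the equality $\mt C_1^\dagger\A=\Psi\tx{-PCts}(\A,\V)\op$ whenever $\A$ is $\Psi$-complete and virtually cocomplete. Under this equality the two membership conditions on each $\K(X,F-)$ coincide verbatim, so $F$ has a $\mt C$-virtual left adjoint if and only if it has a $\psicheck$-virtual left adjoint, and correspondingly the two notions of virtual reflectivity agree. There is no substantive obstacle in this argument: everything follows by assembling the earlier Proposition and condition (II), and the only thing demanding attention throughout is the bookkeeping with opposites, since the companion machinery is phrased in terms of $\mt C_1\A$ (small presheaves on $\A$) while the adjoint notions are phrased in terms of $\mt C_1^\dagger\A$ (small covariant functors on $\A$).
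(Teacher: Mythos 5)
Your proposal is correct and follows essentially the same route as the paper: the inclusion $\mt C_1^\dagger\A\subseteq\Psi\tx{-PCts}(\A,\V)\op$ is obtained from the compatibility proposition (applied to $\A\op$, with the $(-)\op$ bookkeeping you describe), the two forward implications are purely definitional, and the converse invokes the equality $\mt C_1^\dagger\A=\Psi\tx{-PCts}(\A,\V)\op$ recorded in the remark following Definition~\ref{companion} for $\Psi$-complete and virtually cocomplete $\A$. Nothing is missing.
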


If in place of the solution-set condition we assume accessibility, the following result captures several relative versions of the adjoint functor theorem \cite{freyd1964abelian,kainen1971weak,diers1977categories,Lam89:PhD, solian1990pluri,BLV:articolo}. See Sections~\ref{Examples-companions} and~\ref{wrsnc} where we compare our left $\mt C$-adjoints with those appearing in the literature.

\begin{prop}
	Let $\mt C$ be a companion for $\Psi$. The following are equivalent for a $\V$-functor $F\colon\A\to\K$ between $\Psi$-complete accessible $\V$-categories:\begin{enumerate}
		\item $F$ is accessible and $\Psi$-continuous;
		\item $F$ has a left $\mt C$-adjoint.
	\end{enumerate}
\end{prop}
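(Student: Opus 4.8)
The plan is to reduce this statement to its $\psicheck$-virtual counterpart, Corollary~\ref{psi-cont}, using the identification of $\mt C_1^\dagger\A$ with $\Psi\tx{-PCts}(\A,\V)\op$ supplied by the companion hypothesis.

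First I would observe, via Corollary~\ref{psi-cont}, that condition (1) is equivalent to $F$ having a $\psicheck$-virtual left adjoint; that is, to $\K(X,F-)$ being $\Psi$-precontinuous---equivalently, lying in $\Psi\tx{-PCts}(\A,\V)\op$---for every $X\in\K$. It therefore suffices to show that, under the standing hypotheses, $F$ has a $\mt C$-virtual left adjoint if and only if it has a $\psicheck$-virtual left adjoint.

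By Definition~\ref{C-virtual-left}, $F$ has a $\mt C$-virtual left adjoint precisely when each $\K(X,F-)$ lies in $\mt C_1^\dagger\A$. Since the inclusion $\mt C_1^\dagger\A\subseteq\Psi\tx{-PCts}(\A,\V)\op$ always holds, a $\mt C$-virtual left adjoint is automatically a $\psicheck$-virtual left adjoint (Remark~\ref{rmk:Cdag-PCts}). For the converse, I would note that $\A$ is $\Psi$-complete by hypothesis and, being accessible, is virtually cocomplete (every accessible $\V$-category is virtually cocomplete, as used in the proof of Corollary~\ref{psi-cont}). Condition (II) in the definition of companion (Definition~\ref{companion}) then yields the equality $\mt C_1^\dagger\A=\Psi\tx{-PCts}(\A,\V)\op$, so that membership in one subcategory is membership in the other; hence a $\psicheck$-virtual left adjoint is also a $\mt C$-virtual left adjoint.

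Combining the two steps gives the desired equivalence. There is no real obstacle here beyond checking that the hypotheses needed to invoke condition (II)---namely $\Psi$-completeness and virtual cocompleteness of $\A$---are genuinely available; the former is assumed outright and the latter is automatic from accessibility. The substance of the result is thus already encoded in the companion axioms and in Corollary~\ref{psi-cont}.
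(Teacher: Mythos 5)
Your proof is correct and is essentially the paper's own argument: the paper likewise deduces the result from Corollary~\ref{psi-cont} together with the equality $\mt C_1^\dagger\A=\Psi\tx{-PCts}(\A,\V)\op$, which holds since $\A$ is $\Psi$-complete and (being accessible) virtually cocomplete, by condition (II) of Definition~\ref{companion} (as recorded in Remark~\ref{rmk:Cdag-PCts}). You have merely spelled out the two inclusions that the paper leaves implicit.
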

\begin{proof}
	This is a consequence of Corollary~\ref{psi-cont} plus the fact that $\mt C_1^\dagger\A=\Psi\tx{-PCts}(\A,\V)\op$ under these assumptions.
\end{proof}

\begin{teo}\label{C-strong-charact}
	Let $\mt C$ be a companion for $\Psi$, and $\K$ be an accessible $\V$-category with $\Psi$-limits. The following are equivalent for a full subcategory $\A$ of $\K$:\begin{enumerate}\setlength\itemsep{0.25em}
		\item $\A$ is accessible, accessibly embedded, and closed under $\Psi$-limits;
		\item $\A$ is accessibly embedded and $\mt{C}$-virtually reflective.
	\end{enumerate}
\end{teo}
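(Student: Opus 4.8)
The proof will proceed by comparing Theorem~\ref{C-strong-charact} directly with Theorem~\ref{strong-quasiflattheorem}, using the fact that for $\Psi$-complete virtually cocomplete $\A$ the two relevant subcategories coincide, namely $\mt C_1^\dagger\A=\Psi\tx{-PCts}(\A,\V)\op$ (Remark~\ref{rmk:Cdag-PCts}). The plan is to show that each of the two conditions in the present statement is equivalent to the correspondingly-numbered condition in Theorem~\ref{strong-quasiflattheorem}, from which the result follows immediately. For condition (1) there is nothing to prove, since it is literally identical in both theorems. The work is therefore all in condition (2): I must argue that ``$\A$ is accessibly embedded and $\mt C$-virtually reflective'' is equivalent to ``$\A$ is accessibly embedded and $\psicheck$-virtually reflective.''

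For the direction $(2)\Rightarrow(1)$ of the present theorem, I would observe that $\mt C$-virtual reflectivity implies $\psicheck$-virtual reflectivity by the first sentence of Remark~\ref{rmk:Cdag-PCts} (which holds without any completeness hypothesis on $\A$, since the inclusion $\mt C_1^\dagger\A\subseteq\Psi\tx{-PCts}(\A,\V)\op$ is always valid by the Proposition following Definition~\ref{C_1A}). Then Theorem~\ref{strong-quasiflattheorem} $(2)\Rightarrow(1)$ gives immediately that $\A$ is accessible, accessibly embedded, and closed under $\Psi$-limits. This direction is essentially automatic.

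For $(1)\Rightarrow(2)$, I would first apply Theorem~\ref{strong-quasiflattheorem} $(1)\Rightarrow(2)$ to conclude that $\A$ is accessibly embedded and $\psicheck$-virtually reflective. To upgrade $\psicheck$-virtual reflectivity to $\mt C$-virtual reflectivity, I invoke the converse clause of Remark~\ref{rmk:Cdag-PCts}: the two notions agree whenever $\A$ is $\Psi$-complete and virtually cocomplete. So it remains to verify these two hypotheses for $\A$. Since condition (1) tells us $\A$ is closed under $\Psi$-limits in the $\Psi$-complete category $\K$, it is itself $\Psi$-complete; and since $\A$ is accessible, it is virtually cocomplete by \cite[Remark~3.5]{DL07} (equivalently, by the accessibility fact cited in the proof of Corollary~\ref{psi-cont}). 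With both hypotheses in hand, the equality $\mt C_1^\dagger\A=\Psi\tx{-PCts}(\A,\V)\op$ holds, and hence the $\psicheck$-virtual left adjoint is already a $\mt C$-virtual left adjoint, giving $\mt C$-virtual reflectivity.

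The main subtlety to be careful about is precisely the asymmetry noted in the paragraph before Definition~\ref{C-virtual-col}: one cannot simply transport Theorem~\ref{strong-quasiflattheorem} through the equality $\mt C_1^\dagger\A=\Psi\tx{-PCts}(\A,\V)\op$ as though it were known from the outset, because that equality is only available \emph{after} one has established $\Psi$-completeness and virtual cocompleteness of $\A$. In the $(2)\Rightarrow(1)$ direction these properties are not yet known, which is exactly why I must route that implication through the always-valid inclusion rather than the equality. Sequencing the two directions so that each uses only the containment or the equality that is genuinely available at that point is the one place where care is required; the rest is bookkeeping against the two cited results.
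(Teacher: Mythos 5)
Your proposal is correct and follows essentially the same route as the paper: the paper proves $(1)\Rightarrow(2)$ by the Proposition on left $\mt C$-adjoints (itself Corollary~\ref{psi-cont} plus the equality $\mt C_1^\dagger\A=\Psi\tx{-PCts}(\A,\V)\op$, available because $\A$ inherits $\Psi$-completeness from $\K$ and virtual cocompleteness from accessibility), and $(2)\Rightarrow(1)$ by Theorem~\ref{strong-quasiflattheorem} together with the always-valid inclusion $\mt C_1^\dagger\A\subseteq\Psi\tx{-PCts}(\A,\V)\op$. Your careful sequencing of when the inclusion versus the equality is available is exactly the point the paper's proof relies on, just spelled out more explicitly.
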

\begin{proof}
	$(1)\Rightarrow(2)$ by the proposition above, while $(2)\Rightarrow(1)$ is a consequence of Theorem~\ref{strong-quasiflattheorem} plus the fact that $\mt C^\dagger\A \subseteq \Psi\tx{-PCts}(\A,\V)\op$.
\end{proof}

\begin{teo}\label{relatice-C-charact}
	Let $\mt C$ be a companion for the class $\Psi$. The following are equivalent for a $\V$-category $\A$:\begin{enumerate}\setlength\itemsep{0.25em}
		\item $\A$ is accessible and $\Psi$-complete;
		\item $\A$ is accessible and $\mt{C}_1^\dagger \A$ is cocomplete;
		\item $\A$ is accessible and $\mt C$-virtually cocomplete;
		\item $\A$ is accessibly embedded and $\mt{C}$-virtually reflective in $[\C,\V]$ for some $\C$.
	\end{enumerate}
\end{teo}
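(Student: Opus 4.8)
The plan is to run the cycle $(1)\Rightarrow(2)\Rightarrow(3)\Rightarrow(4)\Rightarrow(1)$, mirroring the proof of Theorem~\ref{relative-quasiflattheorem} but systematically replacing $\Psi\tx{-PCts}(\A,\V)\op$ by $\mt{C}_1^\dagger\A$. The essential caution, flagged before the statement, is that we may \emph{not} assume the equality $\mt{C}_1^\dagger\A=\Psi\tx{-PCts}(\A,\V)\op$ except where the hypotheses actually license it, namely once $\A$ is known to be $\Psi$-complete and virtually cocomplete (companion condition (II)). So the implications that carry $\A$ \emph{towards} condition (1) must be argued directly inside $\mt{C}_1^\dagger\A$, whereas those starting from (1) may freely use the identification.

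For $(1)\Rightarrow(2)$ I would observe that an accessible $\V$-category is automatically virtually cocomplete, and $\A$ is $\Psi$-complete by hypothesis; hence Proposition~\ref{cocomplete} applies and yields that $\mt{C}_1^\dagger\A$ is cocomplete. This is precisely the implication that leans on companion condition (II), but it is available here since (1) supplies both $\Psi$-completeness and virtual cocompleteness. For $(2)\Rightarrow(3)$ it suffices to note that $\mt{C}_1^\dagger\A$ contains the representables, so if it is cocomplete it in particular has colimits of representables; by Observation~\ref{virtual-col} applied to $\B=\mt{C}_1^\dagger\A$ this is exactly the assertion that $\A$ is $\mt C$-virtually cocomplete.

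The substantive step is $(3)\Rightarrow(4)$, which I would carry out as in Theorem~\ref{relative-quasiflattheorem}. Taking $\alpha$ with $\A$ $\alpha$-accessible and $\C=\A_\alpha\op$, we get an accessibly embedded $J\colon\A\hookrightarrow[\C,\V]$ together with a $\V$-functor $G\colon\C\op\to\mt{C}_1^\dagger\A$ that is, up to isomorphism, the inclusion $\A_\alpha\subseteq\A\subseteq\mt{C}_1^\dagger\A$. The key input is that $\mt C$-virtual cocompleteness says, again via Observation~\ref{virtual-col}, exactly that $\mt{C}_1^\dagger\A$ admits colimits of diagrams landing in $\A$; since $G$ factors through $\A$ and $[\C,\V]$ is the free cocompletion of $\C\op$, the left Kan extension $L=\tx{Lan}_Y G\colon[\C,\V]\to\mt{C}_1^\dagger\A$ exists, is cocontinuous, and a Yoneda computation identifies $L$ as a relative left $V$-adjoint to $J$, exhibiting $\A$ as $\mt C$-virtually reflective in $[\C,\V]$. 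I expect this to be the main obstacle: unlike the other implications, it must be executed entirely within $\mt{C}_1^\dagger\A$, since the equality with $\Psi\tx{-PCts}(\A,\V)\op$ is not yet at our disposal, so one must check that the colimits needed to build $L$ are genuinely among those guaranteed by $\mt C$-virtual cocompleteness.

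Finally, for $(4)\Rightarrow(1)$ I would apply Theorem~\ref{C-strong-charact} to the inclusion of $\A$ into $\K=[\C,\V]$. The presheaf $\V$-category $[\C,\V]$ is locally presentable, hence accessible and complete, so in particular has $\Psi$-limits, and $\A$ is by hypothesis accessibly embedded and $\mt C$-virtually reflective; thus Theorem~\ref{C-strong-charact} gives that $\A$ is accessible, accessibly embedded, and closed under $\Psi$-limits in $[\C,\V]$. Closure under the $\Psi$-limits of the complete $\V$-category $[\C,\V]$ makes $\A$ itself $\Psi$-complete, which is (1). This closes the cycle.
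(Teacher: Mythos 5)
Your proposal is correct and follows essentially the same route as the paper, whose proof simply says to apply Theorem~\ref{C-strong-charact} and argue as in Theorem~\ref{relative-quasiflattheorem}: you use Proposition~\ref{cocomplete} for $(1)\Rightarrow(2)$, the Kan-extension construction of a relative left $V$-adjoint for $(3)\Rightarrow(4)$, and Theorem~\ref{C-strong-charact} for $(4)\Rightarrow(1)$, exactly as intended. Your explicit caution about not invoking $\mt{C}_1^\dagger\A=\Psi\tx{-PCts}(\A,\V)\op$ before $\Psi$-completeness and virtual cocompleteness are available is precisely the subtlety the paper flags in Remark~\ref{rmk:Cdag-PCts} and the surrounding discussion.
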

\begin{proof}
	Use Theorem~\ref{C-strong-charact} and argue as in the proof of Theorem~\ref{relative-quasiflattheorem}.
\end{proof}

\vspace{5pt}

\subsection{Sketches}\label{sketches-C}

In this section we treat a notion of model of a sketch which differs
from the usual one; to justify and better understand this notion it
might be helpful to see models of sketches as {\em morphisms} in the
category of sketches. This can be described as the category whose
objects are sketches $\S=(\B,\mathbb{L},\mathbb{C})$ and morphisms
$F\colon\S\to\S'$ are functors $F\colon\B\to\B'$ which send the
classes of cylinders $\mathbb{L}$ and cocylinders $\mathbb{C}$ to the classes $\mathbb{L'}$ and $\mathbb{C'}$ respectively. 

Denote by $\V_\P$ the {\em large} sketch based on $\V$ itself and with the two specified classes given by all the limiting and colimiting cylinders in $\V$ (here we are allowing the base of our sketch to be large). Then, under this notation, a model of a sketch $\S=(\B,\mathbb{L},\mathbb{C})$ is just a morphism of sketches $F\colon\S\to\V_\P$. 

Consider now a colimit type $\mt C$ and recall that, given a weight $M\colon \D\op\to\V$, we denote by $\mt{C}_{M}\subseteq[\D,\V]$ the full subcategory of those $H$ for which $(M,H)\in\mt{C}$. To introduce the notion of $\mt C$-model consider instead of $\V_\P$ the sketch $\V_{\mt C}$ given by $\V$ together with the class $\mathbb{L}_\P$ of all limiting cylinders and the class  $\mathbb{C}_\mt C$ of all colimiting cylinders $\eta\colon  M \Rightarrow\V(H-,X)$ for which $H\in\mt C_M$. Then we define a $\mt C$-model of a sketch $\S=(\B,\mathbb{L},\mathbb{C})$ to be a morphism $F\colon\S\to\V_{\mt C}$ in the category of sketches. More explicitly:

\begin{Def}\label{C-models}
	Let $\mt{C}$ be a colimit type and $\S=(\B,\mathbb{L},\mathbb{C})$ a sketch.
	A {\em $\mt C$-model} of $\S$ is a $\V$-functor $F\colon \B\to\V$ satisfying the following conditions:\begin{enumerate}\setlength\itemsep{0.25em}
		\item[(i)] for every $\gamma$ in $\mathbb{L}$, its image $F\gamma$ is a limiting cylinder in $\V$;
		\item[(ii)] for every $\eta$ in $\mathbb{C}$, its image $F\eta$ is a colimiting cocylinder in $\V$;
		\item[(iii)] for every $\eta\colon  M \Rightarrow\B(H-,B)$ in $\mathbb{C}$, the functor $FH$ lies in $\mt{C}_{ M }$.
	\end{enumerate}
	Denote by $\tx{Mod}_\mt{C}(\S)$ the full subcategory of $[\B,\V]$ spanned by the $\mt C$-models of $\S$ in $\V$.
\end{Def}

In other words a $\mt C$-model is a model of $\S$ (in the usual sense) which in addition satisfies condition (iii).

\begin{obs}\label{Phi-colim-sketch}
	When $\mt C=\mt C^\Phi$ is the colimit type defined by a class of weights $\Phi$, then $\V$-categories of $\mt C^\Phi$-models of (general) sketches and $\V$-categories of models of limit/$\Phi$-colimit sketches are the same. In fact, given a sketch $\S=(\B,\mathbb{L},\mathbb{C})$, if $\mathbb{C}$ contains a weight which is not in $\Phi$, then $\tx{Mod}_{\mt C^\Phi}(\S)=\emptyset$; while, if all weights appearing in $\mathbb{C}$ lie in $\Phi$, then $\tx{Mod}_{\mt C^\Phi}(\S)=\tx{Mod}(\S)$. 
\end{obs}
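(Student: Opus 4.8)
The plan is to reduce the statement to a direct inspection of condition~(iii) in Definition~\ref{C-models}, exploiting the fact that for the colimit type $\mt C^\Phi$ of Example~\ref{soundcompanion} the subcategory $\mt C^\Phi_M$ is the maximal possible one when $M\in\Phi$ and empty otherwise. In each case condition~(iii) is therefore either automatically satisfied or impossible to satisfy, and the whole observation is a two-case bookkeeping exercise.

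First I would unwind the definitions. A $\mt C^\Phi$-model of a sketch $\S=(\B,\mathbb{L},\mathbb{C})$ is a $\V$-functor $F\colon\B\to\V$ satisfying conditions (i), (ii), and (iii) of Definition~\ref{C-models}, while an ordinary model of $\S$ is one satisfying just (i) and (ii). Condition~(iii) asks that for each cocylinder $\eta\colon M\Rightarrow\B(H-,B)$ in $\mathbb{C}$, with $M\colon\C\op\to\V$ and $H\colon\C\to\B$, the composite $FH\colon\C\to\V$ lie in $\mt C^\Phi_M\subseteq[\C,\V]$. By the definition of $\mt C^\Phi$ we have $\mt C^\Phi_M=[\C,\V]$ when $M\in\Phi$, and $\mt C^\Phi_M=\emptyset$ otherwise.

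Next I would carry out the two-case analysis. Suppose some cocylinder in $\mathbb{C}$ has weight $M\notin\Phi$; then $\mt C^\Phi_M=\emptyset$, so no $\V$-functor $F$ can satisfy (iii) for that cocylinder, whence $\tx{Mod}_{\mt C^\Phi}(\S)=\emptyset$. Suppose instead that every weight appearing in $\mathbb{C}$ lies in $\Phi$; then $\mt C^\Phi_M=[\C,\V]$ for each of them, so (iii) is satisfied by every $F$ and imposes no constraint. In this case the $\mt C^\Phi$-models are exactly the $F$ satisfying (i) and (ii), that is $\tx{Mod}_{\mt C^\Phi}(\S)=\tx{Mod}(\S)$.

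Finally I would identify the two classes of $\V$-categories. A limit/$\Phi$-colimit sketch is precisely a sketch whose colimit cocylinders all carry weights in $\Phi$; for such a sketch the second case applies, so its $\mt C^\Phi$-models coincide with its ordinary models, and thus every $\V$-category of models of a limit/$\Phi$-colimit sketch arises as a $\V$-category of $\mt C^\Phi$-models of a (general) sketch. Conversely, an arbitrary sketch either already is a limit/$\Phi$-colimit sketch (second case), or falls under the first case and has empty $\V$-category of $\mt C^\Phi$-models. I do not expect a genuine obstacle here; the only point needing a word is that the empty $\V$-category is itself realized as the category of models of a limit/$\Phi$-colimit sketch (e.g.\ one whose limit constraints admit no model), so that passing between general sketches and limit/$\Phi$-colimit sketches neither gains nor loses any $\V$-category.
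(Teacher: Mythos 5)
Your proof of the two displayed identities is correct and is exactly the paper's own argument: the paper's justification is precisely the two-case inspection of condition (iii) of Definition~\ref{C-models}, using that $\mt C^\Phi_M$ is all of $[\C,\V]$ when $M\in\Phi$ and empty otherwise. The one place where you go beyond the paper --- securing the identification of the two \emph{classes} of $\V$-categories by realizing the empty $\V$-category as $\tx{Mod}(\S')$ for a limit/$\Phi$-colimit sketch $\S'$ --- contains a flawed step. Your suggestion ``one whose limit constraints admit no model'' cannot work: a sketch with only limit constraints always has at least one model, namely the constant functor $\Delta 1$ at the terminal object of $\V$, since $\{M,\Delta 1 H\}\cong[\C,\V](M,\Delta 1)\cong 1$ for every weight $M$ and the comparison $1\to 1$ is invertible. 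To produce an empty model category one must instead play a $\Phi$-colimit constraint against limit constraints; for instance, for $\V=\bo{Set}$ and $\Phi$ containing the weight for binary coproducts, take objects $a,b_1,b_2$, force $Fb_1\cong Fb_2\cong Fa\cong 1$ by empty limit cones, and force $Fa\cong Fb_1+Fb_2$ by a colimit cocone. This repair is available for any reasonably nontrivial $\Phi$, but not for all $\Phi$: when $\Phi=\emptyset$ (or $\Phi$ consists only of weights whose constraints $\Delta 1$ satisfies), every limit/$\Phi$-colimit sketch has the terminal model, so the empty category lies in the class of $\mt C^\Phi$-model categories but not in the class of limit/$\Phi$-colimit model categories. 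This edge case is one the paper's remark itself passes over in silence, so your argument is no worse off than the original, but the parenthetical as you stated it is false and should either be corrected along the lines above or flagged as a degenerate-$\Phi$ caveat.
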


Accessible $\V$-categories with $\Psi$-limits can be seen as $\V$-categories of $\mt C$-models:

\begin{prop}\label{strong-psi-sketch}
	Let $\mt C$ be a companion for $\Psi$, and let $\A$ be
        accessible, accessibly embedded, and closed under
        $\Psi$-limits in $[\C,\V]$. Then there exist a fully faithful
        $H\colon \C\hookrightarrow\B$ and a sketch $\S$ on $\B$ for
        which $\tx{Mod}_\mt{C}(\S)=\tx{Mod}(\S)$ and  $\tx{Ran}_H$ induces an equivalence $$\A\simeq\tx{Mod}_\mt{C}(\S).$$
\end{prop}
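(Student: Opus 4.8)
The plan is to read the result off the $\mt C$-virtual reflectivity already obtained and to convert the abstract membership criterion of Proposition~\ref{prop:what-is-A} into the cocylinders of an explicit sketch; this parallels the classical proof that accessible $\V$-categories are sketchable, with the $\Psi$-limits and the colimit type $\mt C$ woven in. Write $\K=[\C,\V]$, which is locally presentable and so accessible with all limits; hypothesis (1) of Theorem~\ref{C-strong-charact} is exactly our assumption on $\A$, so that theorem gives that $\A$ is $\mt C$-virtually reflective in $\K$, i.e.\ $\K(X,J-)$ lies in $\mt C_1^\dagger\A$ for every $X\in\K$. I would then fix a regular cardinal $\alpha$ witnessing $\alpha$-accessibility of $\A$ and satisfying Proposition~\ref{prop:what-is-A}, so that for $L\in\K$ one has $L\in\A$ if and only if $\K(K,J-)*\K(J-,L)\to\K(K,L)$ is invertible for all $K$ in the small set $\K_\alpha$. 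The key structural input is that, by Definition~\ref{C_1A} read in $\A\op$, each $\K(K,J-)$ is presented as a $\mt C$-type limit of representables $\A(A_i,-)$ over a small diagram $G$ landing in a dense generator $\G\subseteq\A$ (which may be taken to be $\A_\alpha$), with $\A(G-,A_i)\in\mt C_M$; these presentations are the raw material for the colimit part of the sketch.

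I would then build $\B$ from $\C$ by adjoining a small family of objects --- the generator $\G$, and one object $b_K$ for each $K\in\K_\alpha$ --- with cross-homs defined by evaluation so that, as functors $\C\to\V$, $\B(g,H-)=g$ and $\B(b_K,H-)=K$. With $H\colon\C\hookrightarrow\B$ the (fully faithful) inclusion, $\tx{Ran}_H A$ then takes the values $A(c)$ on $c\in\C$, $\A(g,A)$ on $g\in\G$, and $\K(K,JA)$ on $b_K$; in particular $\tx{Ran}_H$ is fully faithful since $H$ is. The sketch $\S=(\B,\mathbb L,\mathbb C)$ takes for $\mathbb L$ the $\Psi$-limit cylinders together with the tautological cones exhibiting each adjoined object as the relevant weighted limit (so that a model is forced to be right Kan extended from $\C$ and to respect $\Psi$-limits), and for $\mathbb C$ the cocylinders $M\Rightarrow\B(G-,b_K)$ coming from the $\mt C$-presentations of the $\K(K,J-)$; these are of type $\mt C$ by construction.

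Next I would verify that $\tx{Ran}_H$ restricts to a functor $\A\to\tx{Mod}_{\mt C}(\S)$. Conditions (i) and (ii) of Definition~\ref{C-models} hold because $A\in\A$ respects $\Psi$-limits and sends the designated cocylinders to colimits --- the latter being exactly the invertibility in Proposition~\ref{prop:what-is-A}, which is valid since $A\in\A$. Condition (iii) holds because $\tx{Ran}_H A$ takes the value $\A(G-,A)$ on the indexing diagram $G$, and this lies in $\mt C_M$ by the defining condition of $\mt C_1^\dagger\A$ (Definition~\ref{C_1A}) together with the compatibility clause of Definition~\ref{companion}; crucially, this same computation holds for \emph{every} model whose restriction is known to lie in $\A$, so ordinary models automatically satisfy (iii) and we obtain the asserted equality $\tx{Mod}_{\mt C}(\S)=\tx{Mod}(\S)$. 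For essential surjectivity I would take a model $F$: the cones in $\mathbb L$ force $F\cong\tx{Ran}_H(FH)$, while preservation of the cocylinders in $\mathbb C$ says precisely that $\K(K,J-)*\K(J-,FH)\to\K(K,FH)$ is invertible for all $K\in\K_\alpha$, whence $FH\in\A$ by Proposition~\ref{prop:what-is-A}. Together with full faithfulness this yields the equivalence $\A\simeq\tx{Mod}_{\mt C}(\S)$.

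The step I expect to be the main obstacle is the design of the colimit cocylinders $\mathbb C$: they must be chosen so that a model's sending them to colimits is \emph{equivalent} to the membership criterion of Proposition~\ref{prop:what-is-A}, while their indexing diagrams match the colimit type $\mt C$ exactly --- this is what makes condition (iii) automatic and forces $\tx{Mod}_{\mt C}(\S)=\tx{Mod}(\S)$. Reconciling the ``accessibility'' colimit condition of Proposition~\ref{prop:what-is-A} with the shape prescribed by $\mt C$, and arranging the evaluation cross-homs and adjoined objects so that $\B$ stays small, is where compatibility (clause (I)) and condition (II) of Definition~\ref{companion} are genuinely used; the remaining checks (full faithfulness of $\tx{Ran}_H$ and preservation of the individual cylinders) are essentially formal.
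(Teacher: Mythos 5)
Your proposal follows essentially the same route as the paper's proof: membership in $\A$ is tested by Proposition~\ref{prop:what-is-A}; the $\mt C$-virtual reflectivity coming from Theorem~\ref{C-strong-charact} provides, for each $K\in\K_\alpha$, a presentation of $\K(K,J-)$ as a $\mt C$-colimit of representables, say $\tx{Lan}_{H_K}M_K$ with $\A(H_K-,A)\in\mt C_{M_K}$ for all $A\in\A$; these presentations become the cocylinders of a sketch on a small full subcategory $\B$ of $\K\op$ containing $\C$, the images of the $H_K$, and the objects of $\K_\alpha$; and condition (iii) of Definition~\ref{C-models} then holds automatically for every model, giving $\tx{Mod}_{\mt C}(\S)=\tx{Mod}(\S)$. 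The only structural difference is in the limit part: the paper takes $\B=\K_\beta\op$ and lets $\mathbb{L}$ be all $\beta$-small limiting cylinders, so that models are exactly the restricted representables $\K(J'-,L)$ by standard locally presentable theory, whereas you pin down the same class of models with tautological cones; both devices work. Note, however, that your $\B$ must in any case be a \emph{full} subcategory of $\K\op$: your cocylinders involve hom-objects such as $\B(g,b_K)$ between adjoined objects, which ``cross-homs to $\C$ defined by evaluation'' alone do not supply.

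Two inaccuracies should be fixed. First, the parenthetical claim that the $\mt C$-presentations land in $\A_\alpha$ is unjustified: membership of $\K(K,J-)$ in $\mt C_1^\dagger\A$ (Definition~\ref{C_1A}) only yields diagrams $H_K$ valued in arbitrary objects of $\A$, and for a general companion there is no re-presentation argument forcing them into a prescribed dense generator. This does not damage the proof, but the order of quantifiers matters: choose the presentations first, and only then choose the small set of adjoined objects to contain all their images --- this is exactly what the paper's passage from $\alpha$ to a larger $\beta$ with $JH_K(\D_K\op)\subseteq\K_\beta$ accomplishes. Second, the ``$\Psi$-limit cylinders'' you add to $\mathbb{L}$ should simply be dropped: closure of $\A$ under $\Psi$-limits is nowhere encoded in the sketch, being already subsumed in the criterion of Proposition~\ref{prop:what-is-A}, that is, in the cocylinder conditions; as stated they are underspecified, and they are harmless only if taken to be genuine limiting cylinders of $\K\op$ over diagrams in $\B$, which the relevant models preserve because each $\K(J'-,L)$ is the restriction of a representable on $\K\op$.
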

\begin{proof}
    Let $\K=[\C,\V]$ and write $J\colon\A\to\K$ for the inclusion.
    By Proposition~\ref{prop:what-is-A} we know that
there is a regular cardinal $\alpha$ such that $L\in\K$ lies in $\A$
if and only if the canonical $\K(K,J-)*\K(J-,L)\to\K(K,L)$ is
invertible, for all $K\in\K_\alpha$. By
Theorem~\ref{strong-quasiflattheorem} we know that $\K(K,J-)$ is
$\Psi$-precontinuous, and so has the form $\tx{Lan}_{H_K}M_K$ for some $M_K\colon\D\op_K\to\V$ and some
$H_K\colon\D\op_K\to\A$ with $\A(H_K-,A)\in\mt{C}_{M_K}$ for all
$A\in\A$. Choose $\beta\ge\alpha$ such that $\K_\beta$ contains the
image of $JH_K\colon\D\op_K\to\K$ for all $K\in\K_\alpha$.

Let $\B=\K\op_\beta$ with $J'\colon\B\to\K\op$ the inclusion, and let $\mathbb{L}$ consist of all the
$\beta$-small limiting cylinders of $\B$. Then $\tx{Ran}_H$ induces an
equivalence between $\K=[\C,\V]$ and $\tx{Mod}(\B,\mathbb{L})$. Since the
composite $JH_K\colon\D\op_K\to\K$ lands in $\K_\beta$, it induces a $\V$-functor
$H'_K\colon \D_K\to\B$, and now the isomorphism $\tx{Lan}_{H_K}M_K\cong
\K(K,J-)$ corresponds to a composite
\[ M_K \rightarrow \K(K,JH_K-) \cong \B(H'_K-,K) \]
which we call $\eta_K$. We define $\mathbb{C}$ to consist of these
$\eta_K\colon M_K\to \B(H_K-,K)$.

The models of $(\B,\mathbb{L})$ have the form $\K(J'-,L)$ for some
$L\in\K$. Such a $\K(J'-,L)$  sends $\eta_K$ to a colimit in $\V$ just
when the composite
\[ M_K \rightarrow \K(K,JH_K-) \rightarrow [\K(JH_K-,L),\K(K,L)] \]
corresponds to an isomorphism
\[ M_K * \K(JH_K,L) \cong \K(K,L) \]
and so to an isomorphism
\[ \tx{Lan}_{H_K}M_K * \K(J-,L)\cong \K(K,L) \]
which, given that $\tx{Lan}_{H_K}M_K\cong \K(K,J-)$ says that $L\in\A$.

Thus $\tx{Ran}_H$ induces an equivalence $\A\simeq\tx{Mod}(\S)$. The
fact that every model is a $\mt{C}$-model follows from the fact that
each $\A(H_K-,A)\in\mt{C}_{M_K}$. 
\end{proof}

Note that, for arbitrary $\mt C$ and $\S$, it is not necessarily true that $\tx{Mod}_\mt{C}(\S)$ is accessible since it may not be a $\V$-category of models in the usual sense.

\begin{obs}
	For each $M$, the $\V$-category $\mt{C}_M$ itself can be expressed as $\tx{Mod}_\mt{C}(\S)$ for a sketch $\S$: given $M\colon \D\op\to\V$, consider it as the colimit $M\cong M*Y$ in $[\D\op,\V]$ and denote by $\B$ the full subcategory of $[\D\op,\V]$ spanned by the representables and $M$; let $W\colon\D\hookrightarrow\B$ be the inclusion. If we consider the colimit cocylinder $\eta\colon  M \to\B(W-,M)$, then
	$$ \mt{C}_M\simeq \tx{Mod}_\mt{C}(\S)$$%
	where $\S=(\B,\mathbb{L}=\emptyset,\mathbb{C}=\{\eta\})$ and the equivalence is obtained left Kan extending along the inclusion $W$.
\end{obs}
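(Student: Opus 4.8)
The plan is to exhibit the asserted equivalence directly as the left Kan extension functor $\tx{Lan}_W\colon[\D,\V]\to[\B,\V]$, restricted to $\mt C_M$, and to prove it is fully faithful and essentially surjective onto $\tx{Mod}_\mt C(\S)$. First I would unpack what the sketch $\S=(\B,\emptyset,\{\eta\})$ asks of a $\mt C$-model $F\colon\B\to\V$: condition (i) of Definition~\ref{C-models} is vacuous since $\mathbb L=\emptyset$; condition (iii) applied to the single cocylinder $\eta\colon M\Rightarrow\B(W-,M)$ (so that $H=W$ and $B=M$) says exactly that $FW\in\mt C_M$; and condition (ii) says that $F\eta$ is a colimiting cocylinder, i.e.\ that the canonical comparison $M*FW\to FM$ is invertible. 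It is worth recording at the outset that $\eta$ is the Yoneda isomorphism $M\cong\B(W-,M)$ in $[\D\op,\V]$: for $d\in\D$ one has $\B(Wd,M)=[\D\op,\V](\D(-,d),M)\cong Md$, and this exhibits the object $M\in\B$ as the weighted colimit $M*W$, which is what makes $\eta$ a genuine colimit cocylinder.

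Next I would set up $\tx{Lan}_W$. Since $\D$ is small, so is $\B$ (its objects are the representables together with the single extra object $M$), and since $\V$ is cocomplete the pointwise left Kan extension $\tx{Lan}_WG$ exists for every $G\colon\D\to\V$, with $\tx{Lan}_W$ left adjoint to restriction along $W$. As $W$ is fully faithful the unit $G\to(\tx{Lan}_WG)W$ is invertible, so $\tx{Lan}_W$ is fully faithful. The one genuine computation is the value at the extra object $M$: the coend formula for the Kan extension gives
\[
 (\tx{Lan}_WG)(M)\;=\;\B(W-,M)*G\;\cong\;M*G,
\]
where the last isomorphism is induced by $\eta\colon M\cong\B(W-,M)$.

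With this in hand both inclusions are short. For $G\in\mt C_M$, put $F=\tx{Lan}_WG$; then $FW\cong G\in\mt C_M$, so (iii) holds, and the displayed isomorphism is precisely the comparison $M*FW\to FM$, which is therefore invertible, so (ii) holds; thus $\tx{Lan}_W$ sends $\mt C_M$ into $\tx{Mod}_\mt C(\S)$. Conversely, for $F\in\tx{Mod}_\mt C(\S)$ we have $FW\in\mt C_M$ by (iii), and I claim the counit $\tx{Lan}_W(FW)\to F$ is invertible: on each representable $Wd$ it is invertible because $W$ is fully faithful, while on $M$ it is exactly the comparison $M*FW\to FM$, invertible by (ii). Since the objects of $\B$ are the representables together with $M$, the counit is invertible, so $F\cong\tx{Lan}_W(FW)$ lies in the essential image of $\mt C_M$. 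Being fully faithful (inherited from full faithfulness on all of $[\D,\V]$) and essentially surjective, $\tx{Lan}_W$ restricts to the desired equivalence $\mt C_M\simeq\tx{Mod}_\mt C(\S)$.

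The only point needing real care is the bookkeeping of the main obstacle: identifying condition (ii) for this particular sketch with invertibility of the coend comparison $M*FW\to FM$, and checking that this one comparison map is simultaneously the map produced by the colimit formula for $(\tx{Lan}_WG)(M)$ and the component at $M$ of the counit of $\tx{Lan}_W\dashv(-\circ W)$. Once the cocylinder $\eta$ is pinned down as the Yoneda isomorphism and all three comparisons are seen to agree, everything else is formal. Note also that no hypothesis on $\mt C$ (such as compatibility with $\Psi$, or being a companion) is used: the argument is purely about the definitions of $\mt C_M$, of $\mt C$-model, and of left Kan extension.
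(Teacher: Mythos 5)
Your proof is correct and takes exactly the route the paper intends: the remark's own justification is just that ``the equivalence is obtained left Kan extending along the inclusion $W$,'' and your argument fills in precisely that --- identifying $\eta$ as the Yoneda isomorphism, using full faithfulness of $W$ to get the unit isomorphism, and matching condition (ii) with invertibility of the counit at the one non-representable object $M$, with (iii) handling membership in $\mt{C}_M$. No gap; the compatibility of the three comparison maps that you flag is the standard coherence of the pointwise Kan extension formula with the unit, which is the only computation the paper leaves implicit as well.
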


Thus, if we want any $\tx{Mod}_\mt{C}(\S)$ to be accessible for any
sketch $\S$, we should at least ask $\mt{C}_M$ to be accessible and
accessibly embedded in its ambient $\V$-category, for each weight
$M$. And it turns out that this suffices: 

\begin{Def}
	Let $\mt C$ be a companion for $\Psi$; we say that $\mt C$ is
        an {\em accessible companion} for $\Psi$ if for each weight $M\colon \D\op\to\V$ the $\V$-category $\mt{C}_{ M }$ is accessible and accessibly embedded in $[\D,\V]$.
\end{Def}

Recall that $\mt C_M$, when non-empty, is assumed to be closed under $\Psi$-limits in $[\C,\V]$.

\begin{es}\label{acccomp} The following are examples of accessible companions:
  \begin{enumerate}\setlength\itemsep{0.25em}
  \item for every weakly sound class $\Psi$, the companion $\mt C^{\Psi^+}$ given by the $\Psi$-flat weights: indeed $\mt C^{\Psi^+}_M$ is either empty or the whole presheaf $\V$-category.
  \item $\V=\bo{Set}$ and the companion $\mt F$, of free
    groupoid diagrams, for the class of wide pullbacks:
    for each groupoid $\G$ consider the category $\G'$
    obtained from $\G$ by adding an initial
    object $0$. On $\G'$ consider the sketch with limit
    diagrams in $\mathbb{L}$ the pairs $g,h\colon G\to
    H$, for every morphism  $g\neq h$ in $\G$, and with cone specification the unique arrow $0\to G$. The only colimit diagram in $\mathbb{C}$ is the empty one with empty cocone given by the object $0$. Let $J\colon \G\to \G'$ be the inclusion; then restriction along $J$ induces an equivalence 
    \[ \tx{Mod}(\G', \mathbb{L},\mathbb{C})\simeq\mt F_\G. \]
		On one hand, if $F\colon \G'\to\bo{Set}$
                is a model then $FJ$ is clearly a free groupoid
                action; conversely, if $F\colon \G\to\bo{Set}$ is in
                $\mt F_\G$ then define $F'\colon \G'\to\bo{Set}$
                by  extending $F$ with $F'(0)=\emptyset$. This $F'$
                is a model of the sketch since $F$ is a free
                groupoid action. It follows that each $\mt F_\G$ is accessible and closed under filtered colimits in $[\G,\bo{Set}]$. Since the only colimit specification in the sketch is an initial object, $\mt F_\G$ is closed under (all connected limits, and in particular) wide pullbacks.
		
		\item $\V=\bo{Set}$ and the companion $\mt R$, of pseudo-equivalence relations, for the class of products: let $\C$ be the category with two parallel non-identity arrows $f,g\colon X\to Y$, then $\mt R_M$ is non empty only for $M=\Delta 1\colon\C\op\to\bo{Set}$. Consider the category $\C'$ in $\P\C$ spanned by: $\C$, the coequalizer $q$ of $(f,g)$, the kernel  pair $h,k\colon Z\to Y$ of $q$, and the kernel pair $(h',k')$ of the map $e\colon X\to Z$ induced by the kernel pair $(h,k)$. Then define the sketch on $\C'$ with limit conditions $\mathbb{L}$ saying that $(h,k)$ and $(h',k')$ are the kernel pairs of $q$ and $e$ respectively; the only colimit conditions $\mathbb{C}$ are saying that $q$ and $e$ are the coequalizers of $(h,k)$ and $(h',k')$ respectively. It's then easy to see that 
		$$\mt R_{\Delta 1}\simeq \tx{Mod}(\C',\mathbb{L},\mathbb{C}),$$ 
		and as a consequence it is accessible and closed under filtered colimits and products in $[\C,\bo{Set}]$. 
		
		\item See also examples from Section~\ref{products+G-powers} and \ref{flexible}.
	\end{enumerate}
\end{es}

\begin{prop}\label{phisketchisasketch}
	Let $\mt C$ be an accessible companion for $\Psi$. For any sketch $\S=(\B,\mathbb{L},\mathbb{C})$ the $\V$-category $\tx{Mod}_\mt{C}(\S)$ of $\mt C$-models of $\S$ is accessible, accessibly embedded, and closed under $\Psi$-limits in $[\B,\V]$.
\end{prop}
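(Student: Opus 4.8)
The plan is to exhibit $\tx{Mod}_\mt{C}(\S)$ as a small intersection of accessible, accessibly embedded subcategories of the locally presentable $\V$-category $[\B,\V]$, and then to verify closure under $\Psi$-limits by a direct pointwise computation in which the compatibility of $\mt C$ with $\Psi$ does the essential work. Conditions (i) and (ii) alone cut out the usual $\V$-category of models $\tx{Mod}(\S)$, which is accessible and accessibly embedded in $[\B,\V]$ by the enriched sketch theorem \cite{BQR98}; so it remains to intersect this with the locus of condition (iii). For each cocylinder $\eta\colon M\Rightarrow\B(H-,B)$ in $\mathbb{C}$, with $H\colon\D\to\B$, I would consider the restriction $\V$-functor $[H,\V]\colon[\B,\V]\to[\D,\V]$ sending $F\mapsto FH$. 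Being precomposition with $H$ it is computed pointwise, hence preserves all limits and colimits and is in particular accessible; and condition (iii) for this cocylinder says precisely that $F$ lies in the inverse image $[H,\V]^{-1}(\mt C_M)$.

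Since $\mt C$ is an \emph{accessible} companion, each $\mt C_M$ is accessible, accessibly embedded, and replete in $[\D,\V]$. The standard closure of accessible $\V$-categories under inverse images along accessible $\V$-functors (realised as a pseudo-pullback, which collapses to the replete inverse image because the inclusion $\mt C_M\hookrightarrow[\D,\V]$ is fully faithful and replete) then gives that $[H,\V]^{-1}(\mt C_M)$ is accessible and accessibly embedded in $[\B,\V]$; see \cite{LT22:virtual}. As $\mathbb{C}$ is small, $\tx{Mod}_\mt{C}(\S)$ is the intersection of $\tx{Mod}(\S)$ with this small family of subcategories, and such a small intersection --- an iterated pseudo-pullback over $[\B,\V]$ --- is again accessible and accessibly embedded. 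This settles accessibility and accessible embedding.

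It remains to prove closure under $\Psi$-limits, which are computed pointwise in $[\B,\V]$. Let $F\cong\{N,T\}$ be a $\Psi$-limit of a diagram $T\colon\E\to\tx{Mod}_\mt{C}(\S)$ weighted by some $N$ in $\Psi$. Condition (i) survives because limits commute with limits. For condition (iii), $[H,\V]$ preserves $\Psi$-limits, so $FH\cong\{N,TH\}$ where the diagram $TH:=[H,\V]\circ T$ lands in $\mt C_M$; compatibility makes $\mt C_M$ closed under $\Psi$-limits, whence $FH\in\mt C_M$. For condition (ii) the naive argument fails, since colimits do not in general commute with $\Psi$-limits; this is the heart of the statement. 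The resolution is that $TH$ lands in $\mt C_M$ and, again by compatibility, $M*-\colon\mt C_M\to\V$ preserves $\Psi$-limits, so
\[ M*FH\cong M*\{N,TH\}\cong\{N,M*TH\}\cong\{N,TB\}\cong FB, \]
where the second isomorphism is exactly where compatibility is invoked and the third uses that each component of $T$ is a genuine model.

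I expect condition (ii) to be the main obstacle: for ordinary models closure under $\Psi$-limits would simply fail, and it is precisely the companion structure --- membership in $\mt C_M$ together with the fact that $M*-$ preserves $\Psi$-limits of diagrams landing in $\mt C_M$ --- that rescues it. This is also exactly the reason the definition of compatibility asks for more than preservation of $\Psi$-limits of diagrams already in $\mt C_M$ (cf.\ the remark following that definition). By contrast, the accessibility assertions are routine once condition (iii) is recognised as an inverse-image condition along the accessible functor $[H,\V]$.
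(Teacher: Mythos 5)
Your proposal is correct, and its accessibility half rests on the same idea as the paper's proof: realise condition (iii) as a pullback of the accessibly embedded $\mt C_M\hookrightarrow[\D,\V]$ along the restriction $\V$-functor, and take a small intersection. The genuine difference is in how closure under $\Psi$-limits is obtained. The paper writes $\tx{Mod}_\mt{C}(\S)$ as the intersection of the $\tx{Mod}_\mt{C}(\S_\eta)$ for $\eta\in\mathbb{C}$, exhibits each of these as a pullback of $\mt C_M\hookrightarrow[\C,\V]$ along $-\circ H\colon\tx{Mod}(\B,\mathbb{L})\to[\C,\V]$, and then invokes \cite[Corollary~5.7]{LT22:virtual} --- a pullback theorem for accessible, $\Psi$-complete $\V$-categories and accessible, $\Psi$-continuous $\V$-functors --- so that accessibility, accessible embedding \emph{and} $\Psi$-limit closure all come out of a single citation, compatibility of $\mt C$ with $\Psi$ entering only through the hypotheses of that corollary. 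You instead split the argument: accessibility and accessible embedding via $\Psi$-free pseudo-pullback and intersection results applied to $\tx{Mod}(\S)$ and the inverse images $[H,\V]^{-1}(\mt C_M)$, and then $\Psi$-limit closure by a direct pointwise computation. What your route buys is explicitness at the one delicate point: the paper's pullback square, whose lower-left vertex is $\tx{Mod}(\B,\mathbb{L})$ rather than $\tx{Mod}(\S_\eta)$, read literally encodes only conditions (i) and (iii), so the survival of the colimit condition (ii) under $\Psi$-limits --- precisely where the preservation half of compatibility is needed, via $M*FH\cong M*\{N,TH\}\cong\{N,M*TH\}\cong\{N,TB\}\cong FB$ --- is left implicit there, whereas you isolate it and correctly identify it as the heart of the statement (with the closure half of compatibility handling (iii), so both halves of the definition get used). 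What the paper's route buys is brevity and uniformity, since the same corollary also handles the final intersection. The only point you should add is that your composite isomorphism $M*FH\cong FB$ is the canonical comparison induced by $F\eta$, a routine naturality check needed to conclude that $F\eta$ is genuinely colimiting.
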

\begin{proof}
	First note that we can see $\tx{Mod}_\mt{C}(\S)$ as the intersection in $[\B,\V]$ of all the $\tx{Mod}_\mt{C}(\S_\eta)$, for all $\eta\in\mathbb{C}$ and $\S_\eta=(\B,\mathbb{L},\{\eta\})$. Now, each $\tx{Mod}_\mt{C}(\S_\eta)$ can be seen as the pullback
	\begin{center}
		\begin{tikzpicture}[baseline=(current  bounding  box.south), scale=2]
			
			\node (a0) at (0,0.8) {$\tx{Mod}_\mt{C}(\S_\eta)$};
			\node (b0) at (1.3,0.8) {$\mt C_M$};
			\node (c0) at (0,0) {$\tx{Mod}(\B,\mathbb{L})$};
			\node (d0) at (1.3,0) {$[\C,\V]$};
			\node (e0) at (0.16,0.6) {$\lrcorner$};
			
			\path[font=\scriptsize]
			
			(a0) edge [->] node [above] {} (b0)
			(a0) edge [->] node [left] {} (c0)
			(b0) edge [right hook->] node [right] {} (d0)
			(c0) edge [->] node [below] {$-\circ H$} (d0);
		\end{tikzpicture}	
	\end{center}
	with $\eta\colon  M \Rightarrow\B(H-,B)$, $M\colon\C\op\to\V$, and $H\colon\C\to\B$. Since the inclusion of $\mt C_M$ in $[\C,\V]$ is an isofibration, the $\V$-categories involved are accessible and $\Psi$-complete, and the $\V$-functors are accessible and $\Psi$-continuous, it follows by \cite[Corollary~5.7]{LT22:virtual} that each $\tx{Mod}_\mt{C}(\S_\eta)$ is accessible, accessibly embedded, and closed under $\Psi$-limits in $[\B,\V]$. For the same reason $\tx{Mod}_\mt{C}(\S)$ is also an accessible and $\Psi$-complete $\V$-category.
\end{proof}

Hence we can characterize accessible $\V$-categories with $\Psi$-limits as $\V$-categories of $\mt C$-models:

\begin{teo}\label{Psi-sketch}
	Let $\Psi$ be a class of weights, $\mt C$ be an accessible companion for $\Psi$, and $\A$ be a $\V$-category; the following are equivalent:\begin{enumerate}\setlength\itemsep{0.25em}
		\item $\A$ is accessible with $\Psi$-limits;
		\item $\A$ is equivalent to the $\V$-category of $\mt C$-models of a sketch.
	\end{enumerate}
\end{teo}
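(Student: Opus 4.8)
The plan is to prove Theorem~\ref{Psi-sketch} as a two-way implication, with each direction assembling results already established in the excerpt. For the implication $(2)\Rightarrow(1)$, suppose $\A$ is equivalent to $\tx{Mod}_\mt{C}(\S)$ for some sketch $\S=(\B,\mathbb{L},\mathbb{C})$. Here the work has essentially already been done: Proposition~\ref{phisketchisasketch} tells us precisely that, since $\mt C$ is an \emph{accessible} companion, $\tx{Mod}_\mt{C}(\S)$ is accessible, accessibly embedded, and closed under $\Psi$-limits in $[\B,\V]$. Being closed under $\Psi$-limits in a $\Psi$-complete $\V$-category (the presheaf $\V$-category $[\B,\V]$ has all limits), $\A$ is itself $\Psi$-complete; combined with accessibility this gives that $\A$ is accessible with $\Psi$-limits, which is exactly (1).

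For the converse $(1)\Rightarrow(2)$, suppose $\A$ is accessible with $\Psi$-limits. The strategy is to realize $\A$ as an accessibly embedded, $\Psi$-closed subcategory of some presheaf $\V$-category and then invoke Proposition~\ref{strong-psi-sketch}. First I would use Theorem~\ref{relatice-C-charact}: since $\A$ is accessible and $\Psi$-complete, condition~(4) of that theorem provides a $\V$-category $\C$ together with an accessible embedding $J\colon\A\hookrightarrow[\C,\V]$ exhibiting $\A$ as $\mt C$-virtually reflective. By Theorem~\ref{C-strong-charact} applied with $\K=[\C,\V]$ (which is certainly accessible with all limits, hence with $\Psi$-limits), this is equivalent to $\A$ being accessible, accessibly embedded, and closed under $\Psi$-limits in $[\C,\V]$. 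Now $\A$ sits inside $[\C,\V]$ exactly as required by the hypotheses of Proposition~\ref{strong-psi-sketch}, which produces a sketch $\S$ on some $\B$ with $\tx{Ran}_H$ inducing an equivalence $\A\simeq\tx{Mod}_\mt{C}(\S)$, giving (2).

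The main obstacle, such as it is, lies not in any single calculation but in correctly threading together the two characterization theorems to land in the precise situation demanded by Proposition~\ref{strong-psi-sketch}. In particular one must be careful that the accessibility of the companion is used only where needed: it is genuinely required for the $(2)\Rightarrow(1)$ direction (to invoke Proposition~\ref{phisketchisasketch}), whereas the $(1)\Rightarrow(2)$ direction relies instead on $\mt C$ being a companion so that the identification $\mt C_1^\dagger\A=\Psi\tx{-PCts}(\A,\V)\op$ holds and the chain Theorem~\ref{relatice-C-charact}$\,\to\,$Theorem~\ref{C-strong-charact}$\,\to\,$Proposition~\ref{strong-psi-sketch} goes through. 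Once the correct presheaf $\V$-category $[\C,\V]$ is in hand with $\A$ closed under $\Psi$-limits, the existence of the sketch is immediate from Proposition~\ref{strong-psi-sketch}, and no further delicate verification is needed.
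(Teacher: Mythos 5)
Your proof is correct and follows essentially the same route as the paper, whose entire proof is ``put together Proposition~\ref{strong-psi-sketch} and Proposition~\ref{phisketchisasketch}''; you simply make explicit the step the paper leaves implicit, namely using Theorems~\ref{relatice-C-charact} and \ref{C-strong-charact} to place $\A$ as an accessibly embedded, $\Psi$-limit-closed subcategory of some $[\C,\V]$ before invoking Proposition~\ref{strong-psi-sketch}. Your observation that accessibility of the companion is only needed for the direction $(2)\Rightarrow(1)$ is also accurate.
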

\begin{proof}
	Put together Proposition~\ref{strong-psi-sketch} and \ref{phisketchisasketch}.
\end{proof}

\subsubsection{$\mt C$-sketches:}\label{D-sketch}

Instead of using the notion of $\mt C$-model of a general sketch, we can also introduce a notion of $\mt C$-sketch whose $\V$-categories of (standard) models characterize accessible $\V$-categories with $\Psi$-limits. In general this notion is more technical than that of $\mt C$-model (as we see below); however in the case of pseudo-equivalence relations and free groupoid actions we recover the notions of limit/epi and galoisian sketches.

Let $\mt C$ be an accessible companion for $\Psi$. By accessibility of $\mt C$, for any $M\colon\D\op\to\V$ we can fix a fully faithful $W_M\colon\D\hookrightarrow\D_M$ and a sketch $\S_M=(\D_M,\mathbb{L}_M,\mathbb{C}_M)$ on $\D_M$ together with an equivalence
\begin{equation}\label{sketch-for-C}
	-\circ W_M\colon\tx{Mod}(\S_M)\longrightarrow \mt C_M.
\end{equation}
Using this we define the notion of $\mt C$-sketch as follows:

\begin{Def}\label{C-sketch-model}
	A {\em $\mt C$-sketch} $\T$ is determined by a sketch $\S=(\B,\mathbb{L},\mathbb{C})$ for which every cocylinder in $\mathbb{C}$ is of the form 
	$$\eta\colon N\Rightarrow\B(H-,B)\colon\D_M\op\to\V $$%  
	and such that $N\cong\tx{Lan}_{W_M\op}M$, for some weight $M$. 
	Then $\T$, as a sketch, is given by $\S$ together with the additional classes of cylinders $H(\mathbb{L}_M)$ and of cocylinders $H(\mathbb{C}_M)$, for each $\eta\in\mathbb{C}$ and $M$ as above. A model of $\T$ is then a model of
	$$ (\B, \mathbb{L}\sqcup_{\eta\in\mathbb{C}} H(\mathbb{L}_M), \mathbb{C}\sqcup_{\eta\in\mathbb{C}} H(\mathbb{C}_M)) $$%
	in the standard sense. 
\end{Def}

The following lemma will be important for the characterization theorem.

\begin{lema}\label{extendmodels}
	The following are equivalent for an accessible $\V$-category $\A$, a weight $M\colon\D\op\to\V$, and a diagram $H\colon\D\op\to \A$: \begin{enumerate}\setlength\itemsep{0.25em}
		\item $\A(H-,A)\in\mt C_M$ for any $A\in\A$;
		\item $H\op$ can be extended to a model $\widehat{H}\op\colon\D_M\to\P(\A\op)$ of $\S_M$ in $\P(\A\op)$.
	\end{enumerate} 
\end{lema}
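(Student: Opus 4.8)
The plan is to prove the two implications separately, with the easy direction $(2)\Rightarrow(1)$ handled by evaluation and the substantive content lying in $(1)\Rightarrow(2)$, where the only real difficulty is a smallness check. Throughout I write $y\colon\A\op\hookrightarrow\P(\A\op)$ for the Yoneda embedding, so that $y(H\op d)\cong\A(Hd,-)$, and I use the two standing facts that colimits in $\P(\A\op)$ are computed pointwise (hence agree with those in $[\A,\V]$), and that a limit cone in $[\A,\V]$ all of whose vertices are small is automatically a limit cone in the full subcategory $\P(\A\op)$. Together these show that a $\V$-functor $G\colon\D_M\to\P(\A\op)$ is a model of $\S_M$ if and only if $\tx{ev}_A\circ G\colon\D_M\to\V$ is a model of $\S_M$ for every $A\in\A$, since $\tx{ev}_A$ preserves both limits and colimits.

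For $(2)\Rightarrow(1)$, suppose $\widehat{H}\op\colon\D_M\to\P(\A\op)$ is a model of $\S_M$ extending $y\circ H\op$, i.e. $\widehat{H}\op\circ W_M\cong y\circ H\op$. For each $A$ the functor $G_A:=\tx{ev}_A\circ\widehat{H}\op$ is then a model of $\S_M$ in $\V$, and $G_A\circ W_M=\tx{ev}_A\circ\widehat{H}\op\circ W_M\cong\tx{ev}_A\circ y\circ H\op$ sends $d$ to $\A(Hd,A)$, so $G_A\circ W_M\cong\A(H-,A)$. Since the equivalence~\eqref{sketch-for-C} takes every model of $\S_M$ into $\mt C_M$, we conclude $\A(H-,A)\in\mt C_M$ for all $A$, which is (1).

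For $(1)\Rightarrow(2)$, I would first build the candidate functor. Assumption (1) says that $\Phi\colon\A\to[\D,\V]$, $A\mapsto\A(H-,A)$, corestricts to $\mt C_M$; composing with a pseudo-inverse $E$ of the equivalence~\eqref{sketch-for-C} gives $\Theta:=E\circ\Phi\colon\A\to\tx{Mod}(\S_M)\hookrightarrow[\D_M,\V]$, and transposing $\Theta$ yields $\widehat{H}\op\colon\D_M\to[\A,\V]$ defined by $\widehat{H}\op(X)(A)=(\Theta A)(X)$. Two of the three required properties are then immediate: each $\tx{ev}_A\circ\widehat{H}\op=\Theta A$ is a model of $\S_M$ in $\V$, so by the evaluation criterion $\widehat{H}\op$ is a model of $\S_M$ in $[\A,\V]$; and since $E$ inverts $-\circ W_M$ we get $\widehat{H}\op(W_Md)(A)\cong(\Theta A\circ W_M)(d)\cong\A(Hd,A)$, i.e. $\widehat{H}\op\circ W_M\cong y\circ H\op$, so $\widehat{H}\op$ extends the required diagram.

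The crux — and the step I expect to be the main obstacle — is showing that $\widehat{H}\op$ actually lands in the \emph{small} presheaves $\P(\A\op)\subseteq[\A,\V]$, since a sketch with both limit and colimit specifications need not preserve smallness in any naive way. I would handle this through accessibility rather than by analysing the objects of $\D_M$ directly. Because $\D$ is small and every object of the accessible $\V$-category $\A$ is presentable, one can choose a single $\lambda$ for which all $\A(Hd,-)$ are $\lambda$-accessible; as filtered colimits in $[\D,\V]$ are pointwise, $\Phi$ is accessible, and since $\mt C$ is an \emph{accessible} companion the corestriction $\A\to\mt C_M$, the equivalence $E$, and the accessible embedding $\tx{Mod}(\S_M)\hookrightarrow[\D_M,\V]$ are all accessible; hence $\Theta\colon\A\to[\D_M,\V]$ is accessible. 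Then for each $X\in\D_M$ the functor $\widehat{H}\op(X)=\tx{ev}_X\circ\Theta\colon\A\to\V$ is accessible (composite with the cocontinuous $\tx{ev}_X$), and therefore small by \cite[Proposition~4.9]{LT22:virtual}. Thus $\widehat{H}\op$ factors through $\P(\A\op)$, and since the images of the $\mathbb{L}_M$- and $\mathbb{C}_M$-cylinders are pointwise limits and colimits with all vertices small, they remain limit and colimit cones in $\P(\A\op)$; so $\widehat{H}\op$ is a model of $\S_M$ in $\P(\A\op)$, establishing (2).
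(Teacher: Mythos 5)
Your proof is correct and follows essentially the same route as the paper's: the easy direction by evaluating the model at each $A\in\A$, and the converse by transposing $YH\op$ to get $\A\to\mt C_M$, composing with the equivalence~\eqref{sketch-for-C}, and transposing back, with accessibility guaranteeing that the resulting functor lands in the small presheaves $\P(\A\op)$. The only difference is that you spell out the accessibility-implies-smallness bookkeeping (via \cite[Proposition~4.9]{LT22:virtual}) that the paper leaves implicit.
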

\begin{proof}
	It's clear that if $\widehat H\op$ extends $H\op$ and is a model of $\S_M$ then $H$ lies representably in $\mt C_M$. Conversely, if $H\colon\D\op\to \A$ lies representably in $\mt C_M$, then the transpose $S\colon\A\to[\D,\V]$ of $YH\op\colon\D\to\P(\A\op)$ is accessible and lands in $\mt C_M$. Then we can compose it with the given equivalence to obtain an accessible $\V$-functor $T\colon\A\to[\D_M,\V]$ that lands in $\tx{Mod}(\S_M)$. Transposing again we obtain a $\V$-functor $\widehat H\op\colon \D_M\to\P(\A\op)$ which extends $H\op$ and is a model of $\S_M$.
\end{proof}

We are now ready to prove that accessible $\V$-categories with $\Psi$-limits can equivalently be described as $\V$-categories of models of a $\mt C$-sketch:

\begin{teo}\label{C-sketch}
	Let $\Psi$ be a class of weights, and $\mt C$ be an accessible companion with a fixed sketch presentation as in (\ref{sketch-for-C}). A $\V$-category $\A$ is accessible with $\Psi$-limits if and only if it is equivalent to the $\V$-category of models of a $\mt C$-sketch.
\end{teo}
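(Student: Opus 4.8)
The statement to prove (Theorem~\ref{C-sketch}) asserts the equivalence: $\A$ is accessible with $\Psi$-limits if and only if $\A$ is equivalent to the $\V$-category of models of a $\mt C$-sketch. Since Theorem~\ref{Psi-sketch} already characterizes accessible $\V$-categories with $\Psi$-limits as exactly the $\V$-categories of $\mt C$-models of a sketch, my strategy is to reduce the present statement to that one by showing that the two notions---$\mt C$-models of a sketch and standard models of a $\mt C$-sketch---yield the same class of $\V$-categories. In other words, I would prove the two inclusions: every $\tx{Mod}_\mt{C}(\S)$ arises as the models of some $\mt C$-sketch, and every $\V$-category of models of a $\mt C$-sketch arises as $\tx{Mod}_\mt{C}(\S')$ for some sketch $\S'$.

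\noindent\textbf{Forward direction.} Suppose $\A$ is accessible with $\Psi$-limits. By Theorem~\ref{Psi-sketch} (or directly Proposition~\ref{strong-psi-sketch}), $\A\simeq\tx{Mod}_\mt{C}(\S)$ for a sketch $\S=(\B,\mathbb{L},\mathbb{C})$ whose cocylinders $\eta\colon M\Rightarrow\B(H-,B)$ satisfy condition (iii) of Definition~\ref{C-models}, namely $FH\in\mt C_M$ for each model $F$. First I would arrange, using the given sketch presentation \eqref{sketch-for-C}, that the data of $\S$ fits the shape required by Definition~\ref{C-sketch-model}: I replace each indexing $\D$ by the $\D_M$ coming from the accessible presentation of $\mt C_M$, and replace each weight $M$ by $N\cong\tx{Lan}_{W_M\op}M$. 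The key point is that asking $FH\in\mt C_M$ (a property living in $\mt C_M\subseteq[\D,\V]$) is, via the equivalence $-\circ W_M\colon\tx{Mod}(\S_M)\xrightarrow{\sim}\mt C_M$, equivalent to asking that $FH$ extend to a model of the sketch $\S_M$. Lemma~\ref{extendmodels} is precisely the tool that makes this translation rigorous: it says that $\A(H-,A)\in\mt C_M$ for all $A$ if and only if $H\op$ extends to a model $\widehat H\op$ of $\S_M$ in $\P(\A\op)$. Adding the cylinders $H(\mathbb{L}_M)$ and cocylinders $H(\mathbb{C}_M)$ to $\S$ then produces a $\mt C$-sketch $\T$ whose standard models are exactly the $\mt C$-models of $\S$, so $\A\simeq\tx{Mod}(\T)$.

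\noindent\textbf{Converse direction.} Conversely, suppose $\A\simeq\tx{Mod}(\T)$ for a $\mt C$-sketch $\T$, determined by an underlying sketch $\S=(\B,\mathbb{L},\mathbb{C})$ together with the extra cylinders $H(\mathbb{L}_M)$ and cocylinders $H(\mathbb{C}_M)$. A model $F$ of $\T$ is by definition a standard model sending all of $\mathbb{L}\sqcup_\eta H(\mathbb{L}_M)$ to limits and all of $\mathbb{C}\sqcup_\eta H(\mathbb{C}_M)$ to colimits. I would argue that sending the added diagrams $H(\mathbb{L}_M),H(\mathbb{C}_M)$ correctly is exactly the condition that $FH$ lie in $\mt C_M$: this is again \eqref{sketch-for-C} read in the fibre $\V$, since a functor $\D_M\to\V$ sends $(\mathbb{L}_M,\mathbb{C}_M)$ to (co)limits precisely when it is a model of $\S_M$, i.e.\ corresponds under $-\circ W_M$ to an object of $\mt C_M$. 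Hence $F$ is a model of $\T$ exactly when $F$ is a $\mt C$-model of $\S$ in the sense of Definition~\ref{C-models}, so $\tx{Mod}(\T)=\tx{Mod}_\mt{C}(\S)$, and accessibility with $\Psi$-limits follows from Theorem~\ref{Psi-sketch}.

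\noindent\textbf{Main obstacle.} The routine part is the bookkeeping of (co)cylinders; the genuinely delicate step is verifying that condition (iii), ``$FH\in\mt C_M$,'' is faithfully captured by the \emph{standard-model} conditions imposed by $\S_M$ along the fixed presentation \eqref{sketch-for-C}. The subtlety is that $\mt C_M$ is only accessibly embedded in $[\D,\V]$, not defined by a single (co)limit condition, so one must keep track of the auxiliary objects of $\D_M$ and ensure that extending $H$ to $\D_M$ is both possible (existence of $\widehat H$) and harmless (it does not over-constrain $F$ on the original $\B$). The weight-reparametrization $N\cong\tx{Lan}_{W_M\op}M$ is what guarantees compatibility: it ensures the colimit condition on $N$ over $\D_M\op$ restricts back to the intended colimit condition on $M$ over $\D\op$. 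I expect this reparametrization-plus-extension argument, mediated by Lemma~\ref{extendmodels}, to be where the real content lies.
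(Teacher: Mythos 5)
Your proposal follows essentially the same route as the paper: both directions are reduced to Theorem~\ref{Psi-sketch} by translating between standard models of a $\mt C$-sketch and $\mt C$-models of an associated sketch, with Lemma~\ref{extendmodels} supplying the extension of diagrams and the reparametrization $N\cong\tx{Lan}_{W_M\op}M$ matching the colimit conditions. However, two concrete points need repair before the outline becomes a proof.

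First, in your converse direction the claimed equality $\tx{Mod}(\T)=\tx{Mod}_{\mt C}(\S)$ is false when $\S$ is taken to be the \emph{underlying} sketch of $\T$, as you state it: the cocylinders of that sketch are weighted by $N=\tx{Lan}_{W_M\op}M$ on $\D_M\op$, and condition (iii) of Definition~\ref{C-models} would then demand $FH\in\mt C_N$, where $\mt C_N$ is typically empty (for the free-groupoid companion $\mt F$, for instance, the only weights with non-empty component are $\Delta 1$ on groupoids, and $\D_M$ is not a groupoid); so $\tx{Mod}_{\mt C}(\S)$ would be empty whenever $\mathbb{C}\neq\emptyset$. The paper instead compares $\tx{Mod}(\T)$ with $\tx{Mod}_{\mt C}(\S')$ for $\S'=(\B,\mathbb{L},\mathbb{C}')$, where $\mathbb{C}'=\{\eta\circ W_M\op\}_{\eta\in\mathbb{C}}$ consists of the cocylinders \emph{restricted along} $W_M\op$, so that they are again weighted by $M$; this is the fix your own reparametrization remark points to, but it must be made explicit. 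Second, in the forward direction the extensions $\widehat H$ produced by Lemma~\ref{extendmodels} land in $\P^\dagger\A$, not in $\A$ nor in the base $\B$ of the sketch of Proposition~\ref{strong-psi-sketch}; before one can even write down the new cocylinders $\tx{Lan}_{W_{M_K}\op}M_K\to\B(\widehat H'_K-,K)$, one must enlarge $\C$ (so that the composites $\P^\dagger J\circ\widehat H_K$ land in $\K=[\C,\V]$) and then enlarge $\B$ so that it contains the images of all the $\widehat H_K$. You correctly flag this as the main obstacle but leave it unresolved; the resolution in the paper is exactly this (harmless, since everything stays small) enlargement step.
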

\begin{proof}
	Let $\T$ be a $\mt C$-sketch as in Definition~\ref{C-sketch-model}; then a $\V$-functor $F\colon\B\to\V$ is a model of $\T$ if and only if it is a $\mt C$-model of the sketch $\S=(\B,\mathbb{L},\mathbb{C}')$, where $\mathbb{C}'=\{\eta\circ W_M\op\}_{\eta\in\mathbb{C}}$ (with $\eta$ as in Definition~\ref{C-sketch-model}). Thus $\tx{Mod}(\T)=\tx{Mod}_\mt C(\S)$ is accessible with $\Psi$-limits by Theorem~\ref{Psi-sketch}. 
	
        Conversely, given an accessible $\V$-category $\A$ with $\Psi$-limits, we consider a fully faithful and accessible $J\colon\A\to\K=[\C,\V]$ and work in the setting of the proof of Proposition~\ref{strong-psi-sketch}. The diagrams $H_K\colon\D_K\op\to\A$ considered there satisfy condition $(1)$ of Lemma~\ref{extendmodels}; thus they can be extended to some $\widehat{H}_K\colon\D_{M_K}\op\to\P^\dagger\A$ such that $\widehat{H}_K\op$ is a model of $\S_{M_K}$. Then, by possibly replacing $\C$ with a larger (but still small) $\V$-category, we can assume that every $\P^\dagger J\circ \widehat{H}_K\colon \D_{M_K}\op\to\P^\dagger\K$ lands in $\K$.
	
	In addition, we take the $\V$-category $\B\op\subseteq \K$ (considered in the proof) to contain the images of all the $\widehat{H}_K$; call the resulting $\V$-functors $\widehat{H}_K'\colon\D_{M_K}\to\B$. Now consider the sketch $\S=(\B,\mathbb{L},\mathbb{C})$ given in Proposition~\ref{strong-psi-sketch}, and define the $\mt C$-sketch $\T$ with same base $\V$-category $\B$, same set of cylinders $\mathbb{L}$, and cocylinders
	$$ \widehat\gamma_K\colon \tx{Lan}_{W_{M_K}\op}M_K\to\B(\widehat H'_K-,K)$$%
	induced by the $\gamma_X$ specified in \ref{strong-psi-sketch} for each given $K$. It's then easy to see that $\tx{Mod}_{\mt C}(\S)= \tx{Mod}(\T)$, and hence $\A$, being equivalent to $\tx{Mod}_{\mt C}(\S)$, is the $\V$-category of models of a $\mt C$-sketch. 
\end{proof}

\begin{es}\label{galoisian}
	In the case of the companion $\mt F$ for wide pullbacks ($\V=\bo{Set}$), the remark above says that the sketches characterizing accessible categories with wide pullbacks have the following restrictions on the colimit cocones:\begin{enumerate}
		\item there is always an empty cocone, whose vertex will be denoted by $0$;
		\item any other cocone $H\to \Delta C$ is indexed on a groupoid $\G$ and comes together with a cone specification
		\begin{center}
			
			\begin{tikzpicture}[baseline=(current  bounding  box.south), scale=2]
				
				\node (a) at (0.5,0.7) {$0$};
				\node (b) at (0,0) {$HA$};
				\node (c) at (1,0) {$HB$};
				
				\path[font=\scriptsize]
				
				(a) edge [dashed, ->] node [above] {} (b)
				(a) edge [dashed, ->] node [above] {} (c)
				([yshift=1.5pt]b.east) edge [->] node [above] {$Hf$} ([yshift=1.5pt]c.west)
				([yshift=-1.5pt]b.east) edge [->] node [below] {$Hg$} ([yshift=-1.5pt]c.west);
			\end{tikzpicture}
		\end{center}
		for any parallel pair $f\neq g$ in $\G$. 
	\end{enumerate}
	By seeing every groupoid as (equivalent to) a coproduct of
        groups, these sketches can easily be recognized as the {\em galoisian} sketches of Ageron \cite[Definition~4.14]{Age92:articolo}.
\end{es}

\begin{es}\label{limit-epi}
	We can apply the same arguments to the companion $\mt R$ for products ($\V=\bo{Set}$). By the remark above, the only restriction for colimit cocones appearing in the sketches for accessible categories with products is that they need to be coequalizers of kernel pairs. Since we are working in $\bo{Set}$, this is equivalent to the specification of a set of maps which need to be sent to epimorphisms. That's exactly the data of a limit/epi sketch where all the cocones are as below.
	\begin{center}
		\begin{tikzpicture}[baseline=(current  bounding  box.south), scale=2]
			
			\node (a) at (0,0) {$B$};
			\node (b) at (0.8,0) {$C$};
			\node (a') at (0,-0.8) {$C$};
			\node (b') at (0.8,-0.8) {$C$};

			\path[font=\scriptsize]
			
			(a) edge[->] node [above] {$e$} (b)
			(a') edge[dashed, ->] node [below] {$1_C$} (b')
			(a) edge[->] node [left] {$e$} (a')
			(b) edge[dashed, ->] node [right] {$1_C$} (b');
			
		\end{tikzpicture}
	\end{center}
	Thus we recover the characterization of \cite[Theorem~4.13]{AR01}. The same argument can be applied in the more general setting of Section~\ref{products+G-powers}.
\end{es}

\subsection{The $\bo{Set}$-case}\label{set-comp}

For this section we take $\V$ to be $\bo{Set}$ and give a further characterization of companions.
To do so we need to make the following assumption:

\begin{as}\label{acccon}
	All the components $\mt C_M$ of the colimit type $\mt C$ are assumed to be accessible, accessibly embedded, and such that the sketches defining them involve only colimits and connected limits.
\end{as}

All the colimit types in Example~\ref{acccomp} satisfy this condition.

The following lemma is a straightforward extension of the well-known special case where $\A$ is small.

\begin{lema}\label{small-slice}
	Let $F\colon\A\to\bo{Set}$ be a small functor and $\tx{El}(F)$ be its category of elements, so that we have a projection $q\colon\tx{El}(F)\to\A$. Then 
	$$ \P(\tx{El}(F)\op)\simeq\P(\A\op)/F $$%
	and, under this equivalence,
        $\tx{Lan}_q\colon\P(\tx{El}(F)\op)\to\P(\A\op)$ corresponds to the projection $Q\colon\P(\A\op)/F\to\P(\A\op)$.
\end{lema}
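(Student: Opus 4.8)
The plan is to exhibit the equivalence as a suitably corestricted left Kan extension along $q$. First I would observe that, since $F$ is covariant, the projection $q\colon\tx{El}(F)\to\A$ is a discrete opfibration whose fibre over $A$ is the set $FA$; consequently the pointwise formula for $\tx{Lan}_q$ collapses to the fibrewise coproduct
\[ \tx{Lan}_q G(A)\cong\coprod_{x\in FA}G(A,x), \]
the projection to $FA$ being the evident one. Taking $G=\Delta 1$ the constant functor at the singleton yields $\tx{Lan}_q(\Delta 1)\cong F$, so every $G$ acquires, from its unique map $G\to\Delta 1$, a canonical map $\tx{Lan}_q G\to F$. Since $\tx{Lan}_q$ is cocontinuous and the same formula shows it sends the representable $Y_{(A,x)}:=\tx{El}(F)((A,x),-)$ to $\A(A,-)$ equipped with the map $\A(A,-)\to F$ classifying $x\in FA$, it preserves smallness; thus $\Phi:=\tx{Lan}_q$ defines a functor $\P(\tx{El}(F)\op)\to\P(\A\op)/F$, and by construction $Q\circ\Phi=\tx{Lan}_q$, which will give the second assertion once $\Phi$ is shown to be an equivalence.

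For full faithfulness I would argue directly from the fibrewise formula: a morphism $\tx{Lan}_q G\to\tx{Lan}_q G'$ lying over $F$ is, at each $A$, a function $\coprod_{x}G(A,x)\to\coprod_{x}G'(A,x)$ respecting the projection to $FA$, hence a family of maps $G(A,x)\to G'(A,x)$; naturality over $\A$ compatible with the $F$-components is precisely naturality over $\tx{El}(F)$. This identifies $(\P(\A\op)/F)(\Phi G,\Phi G')$ with $[\tx{El}(F),\bo{Set}](G,G')$. This is the step that reproduces the classical small-$\A$ computation, and being a pointwise argument in $\bo{Set}$ it is insensitive to the size of $\A$.

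For essential surjectivity, given $(P,\phi)$ with $P$ small I would write $P\cong\colim_{j\in\J}\A(A_j,-)$ as a small colimit of representables; the composites $\A(A_j,-)\to P\to F$ classify elements $x_j\in FA_j$, that is, objects $(A_j,x_j)$ of $\tx{El}(F)$. Setting $G:=\colim_{j\in\J}Y_{(A_j,x_j)}$, a small colimit of representables and hence an object of $\P(\tx{El}(F)\op)$, cocontinuity of $\Phi$ together with $\Phi(Y_{(A_j,x_j)})\cong(\A(A_j,-),x_j)$ gives $\Phi G\cong(P,\phi)$, colimits in the slice being computed in $\P(\A\op)$. Combined with full faithfulness this proves $\Phi$ is an equivalence, and the identity $Q\circ\Phi=\tx{Lan}_q$ then finishes the lemma.

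I expect the only real obstacle to be the bookkeeping with smallness: checking that $\tx{Lan}_q$ and the colimit presentations above stay within small functors on either side, so that all the constructions land in $\P(\tx{El}(F)\op)$ and $\P(\A\op)/F$ rather than in the larger functor categories. The genuinely categorical content --- the fibrewise description of $\tx{Lan}_q$ and the resulting bijection on hom-sets --- is exactly the classical argument and is size-independent.
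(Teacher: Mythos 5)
Your proof is correct, but it takes a genuinely more concrete route than the paper's. The paper never computes $\tx{Lan}_q$ pointwise: it identifies $\tx{El}(F)\op$ with the slice $\A\op/F$, notes that $\P(\A\op)/F$ is cocomplete, that every object of it is canonically a colimit of objects of $\A\op/F$ (representables equipped with a map to $F$), and that homming out of such objects is cocontinuous, and then invokes the recognition principle for free cocompletions to conclude $\P(\A\op)/F\simeq\P(\A\op/F)$; the identification of $\tx{Lan}_q$ with $Q$ then comes from uniqueness of cocontinuous extensions rather than from any formula. You instead build the equivalence by hand, from the discrete-opfibration formula $\tx{Lan}_qG(A)\cong\coprod_{x\in FA}G(A,x)$, a direct verification of full faithfulness, and an explicit lifting of a colimit presentation for essential surjectivity. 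What the paper's argument buys is brevity and independence of pointwise computations (the same pattern works verbatim for other free cocompletions); what yours buys is an explicit description of the equivalence, so that $Q\circ\Phi=\tx{Lan}_q$ holds by construction, and a transparent reason why smallness is preserved. One point to tighten in your essential-surjectivity step: you lift only the objects $(A_j,x_j)$, but you must also check that the morphisms of the indexing diagram lift, i.e.\ that each connecting map $\A(A_j,-)\to\A(A_{j'},-)$ compatible with the cocone into $P$ (hence with the maps to $F$) comes, via Yoneda, from a morphism $(A_{j'},x_{j'})\to(A_j,x_j)$ in $\tx{El}(F)$; this is a routine computation with classified elements, but without it the colimit $\colim_{j\in\J}Y_{(A_j,x_j)}$ defining $G$ is not yet a $\J$-indexed diagram.
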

\begin{proof}
  	For the proof it's easier to see $\tx{El}(F)\op$
        as the slice category $\A\op/F$, which comes together a fully
        faithful inclusion $J\colon\A\op/F\to\P(\A\op)/F $.
Since $\P(\A\op)$ is cocomplete, so too is $\P(A\op)/F$, while the
projection $Q$ is cocontinuous. Since any object of $\P(\A\op)$ is
canonically a colimit of representables, so too any object of
$\P(\A\op)/F$ is canonically a colimit of a diagram in the image of
$J$. Likewise since homming out of representables is cocontinuous, so
too homming out of objects in the image of $\V$ is cocontinuous. This
implies that $\P(\A\op)/F$ is the free cocompletion of $\A\op/F$ and
so is equivalent to $\P(\A\op/F)$.
\end{proof}

The following then provides an easy way to recognize companions:

\begin{prop}\label{companion-terminal}
	Let $\mt C$ be a colimit type as in Assumption~\ref{acccon} and $\Psi$ a class of weights. Then $\mt C$ is a companion for $\Psi$ if and only if:
	\begin{enumerate}\setlength\itemsep{0.25em}
		\item[(i)] $\mt{C}$ is compatible with $\Psi$;
		\item[(ii)] for any $\Psi$-complete and virtually cocomplete $\B$, the category $\mt C_1^\dagger\B$ has an initial object.
	\end{enumerate}
\end{prop}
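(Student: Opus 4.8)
The plan is to prove the two implications separately. First note that, since condition~(i) is literally condition~(I) of Definition~\ref{companion}, the remark following that definition lets us replace~(II) by the assertion that $\mt C_1^\dagger\B=\Psi\tx{-PCts}(\B,\V)\op$ for every $\Psi$-complete and virtually cocomplete $\B$. Thus, in the presence of~(i), the whole statement reduces to the equivalence of this equality with~(ii). The implication (II)$\Rightarrow$(ii) is immediate: given such a $\B$, Proposition~\ref{cocomplete+psicomplete} tells us $\Psi\tx{-PCts}(\B,\V)\op$ is cocomplete, hence has an initial object, and under~(II) this category is $\mt C_1^\dagger\B$.

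For the substantial direction (ii)$\Rightarrow$(II), fix a $\Psi$-complete and virtually cocomplete $\A$ and a small $\Psi$-continuous $F\colon\A\to\bo{Set}$. Since the inclusion $\mt C_1^\dagger\A\subseteq\Psi\tx{-PCts}(\A,\V)\op$ always holds, it suffices to prove $F\in\mt C_1^\dagger\A$. I would form the category of elements $\tx{El}(F)$ with its projection $q\colon\tx{El}(F)\to\A$ and first check that $\tx{El}(F)$ again satisfies the hypotheses of~(ii): it is $\Psi$-complete because $q$ creates $\Psi$-limits (as $\A$ is $\Psi$-complete and $F$ is $\Psi$-continuous), and it is virtually cocomplete because, by Lemma~\ref{small-slice}, $\P(\tx{El}(F)\op)\simeq\P(\A\op)/F$ is a slice of a complete category, hence complete, so that $\P^\dagger(\tx{El}(F))\simeq(\P(\A\op)/F)\op$ is cocomplete and in particular has colimits of representables.

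Applying~(ii) to $\tx{El}(F)$ then yields an initial object of $\mt C_1^\dagger(\tx{El}(F))$. Under the equivalence of Lemma~\ref{small-slice} the representables of $\tx{El}(F)$ correspond to representables of $\A$ equipped with a map to $F$, and, since $\tx{Lan}_q$ corresponds to the cocontinuous projection $Q\colon\P(\A\op)/F\to\P(\A\op)$, every $\mt C$-colimit of representables in $\tx{El}(F)$ pushes forward to a $\mt C$-colimit of representables in $\A$ carrying a canonical map to $F$. Hence the initial object of $\mt C_1^\dagger(\tx{El}(F))$ is a terminal object $(T,t\colon T\to F)$ of the slice of $\mt C_1(\A\op)$ over $F$, with $T\in\mt C_1(\A\op)=\mt C_1^\dagger\A$. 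To finish, I would use that every element of $F$ is a map $\A(A,-)\to F$ from a representable and that these exhibit $F$ as their colimit: each factors uniquely through $t$, so the resulting section $s\colon F\to T$ satisfies $ts=1_F$, and terminality of $(T,t)$ forces $st=1_T$. Thus $t$ is invertible and $F\cong T\in\mt C_1^\dagger\A$, which gives~(II).

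The main obstacle is the identification, via Lemma~\ref{small-slice}, of $\mt C_1(\tx{El}(F)\op)$ with the slice of $\mt C_1(\A\op)$ over $F$ — equivalently, the claim that membership of a diagram in a component $\mt C_M$ is preserved under passage between $\A$ and its category of elements. This is precisely where Assumption~\ref{acccon} enters: because each $\mt C_M$ is cut out by a sketch involving only colimits and connected limits, and because the projection $\P(\A\op)/F\to\P(\A\op)$ is cocontinuous and creates connected limits, the defining conditions of $\mt C_M$ transfer across the equivalence. Making this transfer precise, together with confirming that $\tx{El}(F)$ genuinely meets the hypotheses required to invoke~(ii), is the only real work; the remaining steps are formal.
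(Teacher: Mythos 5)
Your proposal is correct and follows essentially the same route as the paper's proof: reduce the statement to comparing (ii) with condition (II) of Definition~\ref{companion}, pass to the category of elements $\tx{El}(F)$ (verifying its $\Psi$-completeness and virtual cocompleteness via Lemma~\ref{small-slice}), apply (ii) there, and transport the resulting initial object along $\tx{Lan}_q$, which preserves the sketch conditions defining each $\mt C_M$ because, being equivalent to the slice projection, it is cocontinuous and preserves connected limits --- exactly where Assumption~\ref{acccon} enters in the paper as well. Two minor differences: the paper shortcuts your closing section/retraction argument by observing that an initial object of $\mt C_1^\dagger(\tx{El}(F))$ can only be $\Delta 1$, whose image under $\tx{Lan}_q$ is already $F$ itself; and your claimed identification of $\mt C_1(\tx{El}(F)\op)$ with the full slice of $\mt C_1(\A\op)$ over $F$ is stronger than what the cocontinuity-plus-connected-limits mechanism justifies (it gives only the push-forward direction), but your proof survives because you only invoke terminality of $(T,t)$ against representables over $F$ and against $(T,t)$ itself, whose preimages under the equivalence are known to lie in $\mt C_1(\tx{El}(F)\op)$.
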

\begin{proof}
  Condition (i) is identical to condition (I) in the definition of
  companion, so we just need to compare (ii) with (II).
  
  First observe that an initial object in either $\mt{C}^\dagger_1\B$ or $\P^\dagger\B$ can only be $\Delta 1\colon\B\to\bo{Set}$ and will exist just when this $\Delta 1$ lies
  in the category in question.
    
  If $\mt C$ is a companion for $\Psi$, so that (II) holds, observe that $\Delta 1\colon\B\to\bo{Set}$
  preserves any existing limits, so is $\Psi$-continuous; while since $\B$
  is virtually cocomplete,  $\mt{C}^\dagger_1\B$ has an initial
  object, which must be $\Delta 1$, which is therefore small. Thus
  $\Delta 1$ is small and $\Psi$-continuous so, by 
  condition (II) in the definition of companion, lies in  $\mt{C}^\dagger_1\B$. 

Suppose conversely that (i) and (ii) hold, that 
$\A$ is $\Psi$-complete and virtually cocomplete, and that
$F\colon\A\to\bo{Set}$ is small and $\Psi$-continuous. We must show
that $F\in\mt{C}^\dagger_1\A$. 

Since $\A$ is $\Psi$-complete and $F$ is $\Psi$-continuous, $\tx{El}(F)$
is $\Psi$-complete and the projection $q\colon\tx{El}(F)\to\A$ is
$\Psi$-continuous. On the other hand, since $\A$ is virtually cocomplete, $\P(\A\op)$ is
cocomplete, hence so too is $\P(\A\op)/F\simeq\P(\tx{El}(F)\op)$, and thus
$\tx{El}(F)$ is virtually cocomplete. It follows by (ii) that 
$\mt{C}^\dagger_1(\tx{El}(F))$ has an initial object, necessarily
equal to $\Delta 1\colon \tx{El}(F)\to\bo{Set}$. Thus we can write
$\Delta 1$ as $M*YS$ for some $M\colon\D\to\bo{Set}$ and $S\colon
\D\op\to\tx{El}(F)\op$, where $\tx{El}(F)(S-,E)\in\mt{C}_M$ for all
$E\in\tx{El}(F)$; equivalently, where the composite
$YS\colon\D\op\to[\tx{El}(F),\bo{Set}]$ is a model of the sketch $\S_M$
corresponding to $\mt{C}_M$.

Now $F=M*YqS$, and so it will suffice to
show that $\A(qS-,A)\in\mt{C}_M$ for all $A\in\A$, or equivalently
that $YqS\colon\D\op\to[\A,\bo{Set}]$ is a model of the sketch
$\S_M$. But $YqS=\tx{Lan}_q.YS$, so it will suffice to show that
$\tx{Lan}_q\colon[\tx{El}(F),\bo{Set}]\to[\A,\bo{Set}]$ preserves models
of $\S_M$.  This $\tx{Lan}_q$ is of course cocontinuous, but it also
preserves connected limits since up to equivalence it is given by the
projection $[\A,\bo{Set}]/F\to [\A,\bo{Set}]$. We have assumed that
the sketch $\S_M$ can be chosen so that the only limits it uses are
connected ones, thus $\tx{Lan}_q$ does indeed preserve models of $\S_M$.

\end{proof}

\section{Examples}\label{Examples-companions}

\subsection{The weakly sound case}\label{sound-companion}

In this section we focus on the case where our companion $\mt C$, for a class of weights $\Psi$, is determined by a class of weights $\Phi$; that is $\mt C$ coincides with the colimit type $\mt C^\Phi$ defined by $\mt C^\Phi_M=[\C,\V]$ if $M\colon\C\op\to\V$ is in $\Phi$, and $\mt C^\Phi_M=\emptyset$ otherwise (Example~\ref{soundcompanion}). For simplicity we denote the colimit type simply by $\Phi$ instead of $\mt C^\Phi$, but note that whenever we say that $\Phi$ is a companion we mean that the corresponding $\mt C^\Phi$ is one.

We sometimes write $\Phi^\dagger_1\A$ or $(\mt C^\Phi)^\dagger_1\A$
for the full subcategory of $\P^\dagger\A$ consisting of the
representables and the $\Phi$-limits of representables; in general
this is smaller than the free completion $\Phi^\dagger\A$ of $\A$
under $\Phi$-limits, but they agree if $\Phi$ is {\em presaturated},
in the sense that $\Phi^\dagger_1\A$ is closed in $\P^\dagger\A$ under
$\Phi$-limits, as will be the case for all examples considered in this
section, and which for simplicity we henceforth suppose.

\begin{obs}
	The only place in this section where presaturation is actually needed is Proposition~\ref{phi-virtual-adj}, which is not used elsewhere. In fact, if $\Phi$ is not presaturated then Theorems~\ref{sound-strongcharact} and \ref{sound-weakcharacterization} below still hold; one just has to replace $\Phi^\dagger\A$ with $\Phi^\dagger_1\A$ in all the definitions and statements.\\
	The reason why we did assume our $\Phi$ to be presaturated is that, in the context of classes of weights, it seems more natural to work with free completions under all $\Phi$-limits, rather than with 1-step completions (see for instance Proposition~\ref{soundcompchar}). 
\end{obs}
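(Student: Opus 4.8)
The plan is to deduce the observation from the fact that the whole treatment of the weakly sound case is merely an instance of the general theory of Section~\ref{main-psi}, which is phrased throughout in terms of the $\V$-category $\mt C_1^\dagger\A$ and never in terms of a free completion. First I would record the relevant identity: for the colimit type $\mt C=\mt C^\Phi$ of Example~\ref{soundcompanion}, Definition~\ref{C_1A} gives at once
\[ (\mt C^\Phi)^\dagger_1\A=\Phi^\dagger_1\A, \]
the full subcategory of $\P^\dagger\A$ on the representables and the $\Phi$-weighted limits of representables, and this holds for \emph{any} class $\Phi$, with no presaturation hypothesis. The sole function of presaturation is that $\Phi^\dagger_1\A$ is then closed in $\P^\dagger\A$ under $\Phi$-limits, so that it coincides with the genuine free completion $\Phi^\dagger\A$; without presaturation the two may differ, but $\Phi^\dagger_1\A$ continues to make perfect sense.

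Next I would observe that the characterization results to which the observation refers are specialisations of Theorem~\ref{C-strong-charact} and Theorem~\ref{relatice-C-charact} to $\mt C=\mt C^\Phi$. Those general statements, together with the notions on which they rest --- $\mt C$-virtual cocompleteness (Definition~\ref{C-virtual-col}) and companion (Definition~\ref{companion}) --- are expressed entirely through $\mt C_1^\dagger\A$, and none of them invokes a completion under $\Phi$-limits. Substituting the identity above, each of these results therefore holds verbatim with $\Phi^\dagger_1\A$ in the role of the relevant completion, whether or not $\Phi$ is presaturated; and one checks along the way that $\mt C^\Phi$ is a companion for the class in question using only conditions (I) and (II) of Definition~\ref{companion}, neither of which mentions presaturation. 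When $\Phi$ \emph{is} presaturated one simply rewrites $\Phi^\dagger_1\A$ as $\Phi^\dagger\A$ and recovers the familiar formulation. This is the only role presaturation plays, and it establishes the second assertion of the observation, namely that Theorems~\ref{sound-strongcharact} and~\ref{sound-weakcharacterization} survive the replacement of $\Phi^\dagger\A$ by $\Phi^\dagger_1\A$.

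For the first assertion --- that presaturation is genuinely needed only in Proposition~\ref{phi-virtual-adj} --- I would run through each result of the section and verify that its statement and proof mention $\Phi^\dagger_1\A$, or equivalently the $\mt C^\Phi$-virtual notions, and never require $\Phi^\dagger_1\A$ to be closed under $\Phi$-limits. The one exception is Proposition~\ref{phi-virtual-adj}, whose formulation refers to the free completion $\Phi^\dagger\A$ itself and to its closure under $\Phi$-limits (so that a relative adjoint into it upgrades to a genuine one), and this does require $\Phi^\dagger_1\A=\Phi^\dagger\A$. I expect the main obstacle to be precisely this bookkeeping: confirming that presaturation does not slip in implicitly elsewhere. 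The critical thing to watch is any place where a $\Phi$-limit is formed of objects of $\Phi^\dagger_1\A$ rather than merely of representables, since that is exactly where the distinction between $\Phi^\dagger_1\A$ and $\Phi^\dagger\A$ becomes visible; the claim to be verified is that such a construction is called for only in the proof of Proposition~\ref{phi-virtual-adj}.
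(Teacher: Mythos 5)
Your proposal is correct and matches the paper's (implicit) justification of this remark: the paper itself defines $\Phi^\dagger_1\A$ as $(\mt C^{\Phi})^\dagger_1\A$, so the general results of Section~\ref{main-psi}, phrased entirely in terms of $\mt C^\dagger_1\A$, apply without any presaturation hypothesis, and presaturation serves only to identify $\Phi^\dagger_1\A$ with $\Phi^\dagger\A$ --- exactly the role you isolate, with Proposition~\ref{phi-virtual-adj} the sole exception. Your final bookkeeping step is also the right one, since conditions such as $\Phi$-virtual orthogonality and the sketch condition are not literal specializations of the general theorems but are handled by the section's direct arguments, which (as you anticipate) never form $\Phi$-limits of non-representables.
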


In this context Definition~\ref{companion} translates into the following: let $\Psi$ and $\Phi$ be classes of weights; then $\Phi$ is a companion for $\Psi$ if and only if:\begin{itemize}
	\item[(I)] $\Phi$-colimits commute in $\V$ with $\Psi$-limits;
	\item[(II)] for any $\Psi$-complete and virtually cocomplete
          $\A$, each small $\Psi$-continuous $\V$-functor
          $F\colon\A\to\V$ is a $\Phi$-colimit of representables.
\end{itemize}

Recall that, unlike in the case of the $\Psi$-accessible $\V$-categories of \cite{LT22:virtual}, we don't assume our class $\Psi$ to be locally small; in other words, completions under $\Psi$-limits of small $\V$-categories might be large.

\begin{obs}\label{small-psi}
	If $\Psi$ is a {\em locally small} class of weights and $\Phi$ is a class of weights compatible with $\Psi$, then it's enough to check the companion property (II) above only for small $\V$-categories. Indeed, suppose that the condition holds for any small $\V$-category, let $\A$ be any (possibly large) $\Psi$-complete $\V$-category, and suppose that $F\colon\A\to\V$ is small and $\Psi$-continuous. Then $F=\tx{Lan}_J(FJ)$, with $J\colon\C\hookrightarrow\A$ small; since $\Psi$ is locally small we can suppose that $\C$ is closed in $\A$ under $\Psi$-limits, and hence that $FJ$ is $\Psi$-continuous. By hypothesis then $FJ$ is a $\Phi$-colimit of representables, but $\tx{Lan}_J(-)$ preserves colimits as well as the representables, so $F$ is a $\Phi$-colimit of representables too.
\end{obs}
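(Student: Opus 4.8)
The "final statement" here is Observation~\ref{small-psi}, which is an assertion rather than a formally numbered theorem with a proof environment. I treat it as a claim to be justified and sketch the reasoning below.

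The plan is to reduce the companion property (II) for a large $\Psi$-complete $\V$-category $\A$ to the same property for a small $\V$-category, using the fact that a small $\V$-functor $F\colon\A\to\V$ factors through a small subcategory via a left Kan extension. First I would invoke smallness of $F$ to write $F\cong\tx{Lan}_J(FJ)$ for some small full subcategory $J\colon\C\hookrightarrow\A$; this is the defining property of a small $\V$-functor. The key move is then to enlarge $\C$, using the hypothesis that $\Psi$ is \emph{locally small}, so that the free completion of $\C$ under $\Psi$-limits is again small. This lets me assume without loss of generality that $\C$ is closed in $\A$ under $\Psi$-limits while remaining small, so that $\C$ is itself $\Psi$-complete and the inclusion $J$ preserves $\Psi$-limits.

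The next step is to check that the restriction $FJ\colon\C\to\V$ is $\Psi$-continuous. Since $F$ is $\Psi$-continuous on $\A$ and $J$ preserves $\Psi$-limits (by the closure arranged above), the composite $FJ$ preserves $\Psi$-limits computed in $\C$; here I would note that $\Psi$-limits in $\C$ agree with those in $\A$ precisely because $\C$ is closed under them. Applying the assumed small-case version of property (II) to the small $\Psi$-complete $\C$ and the small $\Psi$-continuous $FJ$, I conclude that $FJ$ is a $\Phi$-colimit of representables in $\P\C$.

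Finally I would transport this conclusion back along $\tx{Lan}_J$. The functor $\tx{Lan}_J\colon\P\C\to\P\A$ (equivalently $[\C\op,\V]\to[\A\op,\V]$ restricted to small functors) is cocontinuous, hence preserves $\Phi$-colimits, and it carries representables to representables since $J$ is fully faithful. Therefore $F\cong\tx{Lan}_J(FJ)$ is exhibited as a $\Phi$-colimit of representables in $\P\A$, which is exactly property (II) for $\A$. The main obstacle, and the only place the hypotheses are genuinely used, is the enlargement step: one must know that $\C$ can be taken both small and $\Psi$-closed simultaneously, which is precisely what local smallness of $\Psi$ guarantees and what fails for the key examples (connected limits, products, wide pullbacks) not covered by this observation. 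Everything else is the routine compatibility of left Kan extension with colimits and representables.
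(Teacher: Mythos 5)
Your proposal is correct and follows essentially the same route as the paper's own argument: write $F\cong\tx{Lan}_J(FJ)$ with $\C$ small, use local smallness of $\Psi$ to enlarge $\C$ so that it is closed under $\Psi$-limits in $\A$ (hence $FJ$ is $\Psi$-continuous), apply the small case to $FJ$, and transport the $\Phi$-colimit presentation back along $\tx{Lan}_J$, which is cocontinuous and preserves representables. The extra justifications you supply (why restriction preserves $\Psi$-continuity, why the enlargement step is the only place the hypotheses are genuinely used) are accurate elaborations of what the paper leaves implicit.
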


We begin by extending the definition of flat $\V$-functors and soundness to a possibly large class of weights $\Psi$.

\begin{Def}
	Let $\Psi$ be a class of weights. We say that a small presheaf $M\colon\A\op\to\V$ is {\em $\Psi$-flat} if $M$-colimits commute in $\V$ with $\Psi$-limits. We call {$\Psi$-flat weight} a $\Psi$-flat presheaf with small domain. Denote by $\Psi^+$ the class given by the $\Psi$-flat weights.
\end{Def}

As noted at the beginning of Section~\ref{general-psi}, when
$M\colon\A\op\to\V$ is a small $\V$-functor, $M$-weighted colimits
exist in any cocomplete $\V$-category, making the notion above well defined.

\begin{Def}\label{sound-def}
	A class of weights $\Psi$ is called {\em weakly sound} if every $\Psi$-continuous and small $\V$-functor $ M \colon\A\to\V$, from a virtually cocomplete and $\Psi$-complete $\A$, is $\Psi$-flat.
\end{Def}

When $\Psi$ is locally small, thanks to the same argument as in Remark~\ref{small-psi}, we recover the usual notion of weakly sound class of \cite{LT22:virtual} that only involves small $\Psi$-complete $\V$-categories.

The relationship between $\Psi$ being weakly sound and it having a companion identified by a class of weights is explained by the following proposition. 

\begin{prop}\label{wsound comp}
	If a class $\Phi$ of weights is a companion for $\Psi$ then $\Psi$ is a weakly sound class and $\Phi\subseteq\Psi^+$. Conversely, if $\Psi$ is a weakly sound class then $\Psi^+$ is a companion for $\Psi$.
\end{prop}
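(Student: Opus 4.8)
The plan is to prove the two implications separately, exploiting throughout the interplay between $\Psi$-flatness of a small presheaf and its being a $\Phi$-colimit of representables. Two preliminary observations drive everything: representables are $\Psi$-flat (since $\A(A,-)*S\cong SA$ and evaluation at a fixed object preserves all limits), and condition (I) for the colimit type $\mt C^\Phi$ says exactly that each $W\in\Phi$ is $\Psi$-flat, i.e. $\Phi\subseteq\Psi^+$.

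For the first implication, suppose $\Phi$ is a companion for $\Psi$. Then $\Phi\subseteq\Psi^+$ is immediate from (I) as just noted, so it remains to show $\Psi$ is weakly sound. Let $\A$ be virtually cocomplete and $\Psi$-complete, and let $F\colon\A\to\V$ be small and $\Psi$-continuous; by companion condition (II) we may write $F\cong W*R$ as a $\Phi$-colimit of representables, with $W\in\Phi$ and $R\colon\D\to[\A,\V]$ a diagram of representables. I would then conclude that $F$ is $\Psi$-flat by an interchange computation: for any $N$ in $\Psi$ and any $\Psi$-limit $\{N,P\}$,
\begin{align*}
  F*\{N,P\} &\cong (W*R)*\{N,P\} \cong W*\bigl(R(-)*\{N,P\}\bigr)\\
  &\cong W*\{N,R(-)*P\} \cong \{N,W*(R(-)*P)\} \cong \{N,F*P\},
\end{align*}
where the isomorphisms are, in turn, the presentation of $F$, the Fubini theorem for iterated colimits, the $\Psi$-flatness of each $R(d)$, condition (I) for $W\in\Phi$, and Fubini again. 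As $\A$ and $F$ were arbitrary, $\Psi$ is weakly sound.

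For the converse, suppose $\Psi$ is weakly sound; I must verify that $\Phi=\Psi^+$ satisfies (I) and (II). Condition (I) is immediate: by definition each $W\in\Psi^+$ is $\Psi$-flat, so $\Psi^+$-colimits commute with $\Psi$-limits, and the compatibility of $\mt C^{\Psi^+}$ holds because each nonempty component $\mt C^{\Psi^+}_M=[\C,\V]$ is trivially closed under $\Psi$-limits while $M*-$ preserves them. For (II), take $\A$ virtually cocomplete and $\Psi$-complete and $F\colon\A\to\V$ small and $\Psi$-continuous; weak soundness gives that $F$ is $\Psi$-flat, so the whole content reduces to showing that a small $\Psi$-flat presheaf is a $\Psi^+$-colimit of representables.

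This last reduction is the main obstacle. The natural attempt is to present $F$ on a small full subcategory, writing $F\cong\tx{Lan}_i(Fi)$ for an inclusion $i\colon\C\hookrightarrow\A$, which exhibits $F$ as the colimit of the representables $\A(i-,-)$ weighted by $Fi$; one would then be done if the weight $Fi$ were itself $\Psi$-flat. The delicate point is that restriction along $i$ does not preserve $\Psi$-limits, so $\Psi$-flatness of $F$ need not descend to $Fi$ for an arbitrary $\C$. When $\Psi$ is locally small this is circumvented by choosing $\C$ closed under $\Psi$-limits in $\A$: then $Fi$ is $\Psi$-continuous, hence $\Psi$-flat by weak soundness applied to the small $\Psi$-complete $\C$, so $F=\tx{Lan}_i(Fi)$ is visibly a $\Psi^+$-colimit of representables --- this is the reduction of Remark~\ref{small-psi}, compatible with the identification of small $\Psi$-flat presheaves recorded in the Example after Definition~\ref{Psi-precont}. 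Without local smallness no such closure is available, and I would instead present $F$ via its (enriched) category of elements and argue that $\Psi$-flatness of $F$ makes that diagram suitably $\Psi$-filtered, so that the associated conical weight lands in $\Psi^+$; making this precise for a general weakly sound $\Psi$ is the technical heart of the proof.
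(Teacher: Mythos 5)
Your proof of the first implication is correct and is essentially the paper's: the paper likewise uses condition (II) to write $F$ as a $\Phi$-colimit of representables, observes that $F*-$ is then a $\Phi$-colimit of evaluation functors (which preserve all limits), and applies condition (I); your Fubini computation just makes that interchange explicit, and both treatments read $\Phi\subseteq\Psi^+$ directly off condition (I). The problem is in the converse.

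There, your reduction of (II) to the claim that every small $\Psi$-flat $F\colon\A\to\V$ is a $\Psi^+$-colimit of representables is right, and your locally small argument (choosing $\C$ closed under $\Psi$-limits, as in Remark~\ref{small-psi}) works; but the proposition has no local smallness hypothesis, and the motivating examples --- connected limits, products, wide pullbacks --- are precisely the non-locally-small ones. For that general case you offer only an undeveloped sketch (an ``enriched category of elements'' and a $\Psi$-filteredness argument, which you yourself defer as ``the technical heart''), so the proof is incomplete exactly where the content lies. Moreover, your diagnosis of the obstacle is mistaken: $\Psi$-flatness, unlike $\Psi$-continuity, \emph{does} descend along the canonical presentation $F\cong\tx{Lan}_i(Fi)$ for an \emph{arbitrary} small full subcategory $i\colon\C\hookrightarrow\A$. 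This is Lemma~3.3 of \cite{LT22:virtual}, and it is exactly what the paper invokes to finish: weak soundness makes $F$ $\Psi$-flat, the lemma makes $Fi$ $\Psi$-flat, so $Fi\in\Psi^+$, and the presentation $F\cong\tx{Lan}_i(Fi)$ itself exhibits $F$ as an $Fi$-weighted, hence $\Psi^+$-weighted, colimit of representables --- with no closure of $\C$ under $\Psi$-limits and no local smallness. The point you miss is that flatness of $Fi$ is a statement about $\Psi$-limits of diagrams of presheaves on $\C$, which are computed pointwise in $\V$; since $F\cong\tx{Lan}_i(Fi)$ gives $F*S\cong Fi*(Si\op)$ for every diagram $S$ on $\A\op$, and restriction along $i$ commutes with pointwise limits, both sides of the flatness isomorphism for $Fi$ can be identified with instances of the flatness isomorphism for $F$. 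Preservation of $\Psi$-limits by the inclusion $i$ --- your stated worry --- plays no role.
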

\begin{proof}
	For the first assertion consider $M\colon \A\to\V$ to be small and $\Psi$-continuous (with $\A$ virtually cocomplete and $\Psi$-complete); then $M$ is a $\Phi$-colimit of representables since $\Phi$ is a companion for $\Psi$. As a consequence $M*-$ is a $\Phi$-colimit of evaluation functors, which are continuous. Then, since $\Phi$-colimits commute with $\Psi$-limits in $\V$, it follows at once that $M*-$ preserves $\Psi$-limits, and hence that $\Psi$ is weakly sound.
	
	For the second part we already know, by definition, that $\Psi^+$-colimits commute in $\V$ with $\Psi$-limits; therefore consider again $M\colon \A\to\V$ small and $\Psi$-continuous, we need to prove that it is a $\Psi^+$-colimit of representables. By assumption, $M\cong \tx{Lan}_J(MJ)$ for some small full subcategory $J\colon \C\to\A$. Therefore, since $M$ is $\Psi$-flat (being $\Psi$-continuous), $MJ$ is $\Psi$-flat as well by \cite[Lemma 3.3]{LT22:virtual} and hence $M\cong MJ*ZJ$ (where $Z$ is the inclusion of $\A\op$ in $\P(\A\op)$) is a $\Psi^+$-colimit of representables. This shows that $\Psi^+$ is a companion for $\Psi$. 
\end{proof}

As a consequence, given a companion $\Phi$ for $\Psi$ and any $\V$-category $\A$ the following inclusions hold:
$$\Phi\A \subseteq\Psi^+\A ;$$%
and that becomes an equality whenever $\A\op$ is $\Psi$-complete and virtually cocomplete. If in addition every $\Psi$-flat $\V$-functor is a $\Phi$-colimit of representables, then the equality 
$$ \Phi\A = \Psi^+\A $$%
always holds.

\begin{obs}
	It's not true in general, given a companion $\Phi$ for $\Psi$, that all $\Psi$-flat $\V$-functors are $\Phi$-colimits of representables. For example, when $\V=\bo{Ab}$, the filtered colimits form a companion for the class of finite limits, but flat $\bo{Ab}$-functors are not just filtered colimits of representables. In fact they are filtered colimits of finite sums of representables (see \cite[Theorem~3.2]{ObRo70:articolo} and \cite{LT21:articolo}).
\end{obs}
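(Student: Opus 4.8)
The plan is to verify the two assertions of the remark separately: first that filtered colimits genuinely form a companion $\Phi$ for the class $\Psi$ of finite limits over $\V=\bo{Ab}$, and then to exhibit a single small $\bo{Ab}$-category on which a $\Psi$-flat weight fails to be a filtered colimit of representables. Throughout I would work with the one-object $\bo{Ab}$-category $\C$ attached to a ring $R$, using that weights $\C\op\to\bo{Ab}$ are then exactly right $R$-modules, that the representable is $R$ itself, that $M*-$ is $-\otimes_R M$, and hence that the $\Psi$-flat weights are precisely the flat $R$-modules.

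For the companion claim I would check conditions (I) and (II) in the weakly-sound reformulation of Definition~\ref{companion}. Condition (I), that filtered colimits commute with finite limits in $\bo{Ab}$, is classical. For (II), let $\A$ be $\Psi$-complete and virtually cocomplete and let $F\colon\A\to\bo{Ab}$ be small and finite-continuous. The additive Gabriel--Ulmer/Oberst--R\"ohrl description (\cite[Theorem~3.2]{ObRo70:articolo}, \cite{LT21:articolo}) presents such an $F$ as a filtered colimit of \emph{finite sums of representables}; but a finite sum of representables is $\bigoplus_i\A(A_i,-)\cong\A(\bigoplus_i A_i,-)$, again representable because the finitely complete $\bo{Ab}$-category $\A$ has biproducts. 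Thus on a $\Psi$-complete $\A$ the finite sums are absorbed and $F$ is a filtered colimit of representables, i.e.\ lies in $\Phi^\dagger_1\A$. This absorption is exactly the mechanism that makes $\Phi$ a companion despite the strict inclusion we are about to exhibit.

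For the failure of equality I would take $\C$ to be the one-object $\bo{Ab}$-category on $R=\mathbb Z$, so that weights are abelian groups, representables are copies of $\mathbb Z$, and the $\Psi$-flat weights are the torsion-free groups. The free group $\mathbb Z\oplus\mathbb Z$ is flat, hence lies in $\Psi^+\C$, and is visibly a (finite, a fortiori filtered) colimit of finite sums of representables. I claim it is not a filtered colimit of representables: in a filtered colimit $\colim_i\mathbb Z_i$ any two elements $x,y$ factor through a common stage $\mathbb Z_j$ by filteredness, say as the images of $a,b\in\mathbb Z_j$, whence $b\cdot x-a\cdot y=0$; so the colimit has rank at most one, whereas $\mathbb Z\oplus\mathbb Z$ has rank two. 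Therefore $\mathbb Z\oplus\mathbb Z\in\Psi^+\C\smallsetminus\Phi\C$, and the inclusion $\Phi\C\subseteq\Psi^+\C$ is strict, which is precisely the failure asserted in the remark.

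The main obstacle is not the counterexample but pinning down condition (II) carefully: I must ensure the cited additive flatness theorem is invoked in the enriched-weight formulation used here (flatness as commutation of $M*-$ with $\Psi$-limits, not merely exactness of a tensor functor), and that the biproduct-absorption step is legitimate on an arbitrary $\Psi$-complete $\A$ rather than only on module categories. Once that is secured, the contrast between the complete case --- where finite sums of representables collapse to representables --- and the non-complete $\C$ attached to $\mathbb Z$ --- where they genuinely do not --- is exactly what realizes the strict inclusion and settles the remark.
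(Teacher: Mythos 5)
Your proposal is correct, and it is worth noting that the paper gives no argument at all for this remark: companion-ness of filtered colimits for finite limits is delegated to Example~\ref{soundcomp-examples}(3) (resting on \cite{LT21:articolo}), and the failure of equality is attributed to \cite[Theorem~3.2]{ObRo70:articolo} without an explicit witness. You reconstruct both halves. Your biproduct-absorption argument for condition (II) is exactly the mechanism underlying the cited results: since an $\bo{Ab}$-category with finite weighted limits has finite biproducts, $\bigoplus_i\A(A_i,-)\cong\A(\bigoplus_i A_i,-)$, so the Oberst--R\"ohrl presentation of a flat functor as a filtered colimit of finite sums of representables collapses to a filtered colimit of representables. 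The one detail you flag but should resolve explicitly is the passage from small to large domains: Oberst--R\"ohrl applies to small additive categories, and for an arbitrary $\Psi$-complete, virtually cocomplete $\A$ you should invoke Remark~\ref{small-psi} --- the class of finite weights is locally small, so a small $F$ is a left Kan extension of its restriction to a small full subcategory that may be taken closed under finite limits (hence under biproducts), after which your absorption step applies verbatim. Your counterexample is also sound and more concrete than anything in the paper: over the one-object $\bo{Ab}$-category on $\mathbb{Z}$, a conical filtered colimit of representables is computed pointwise, hence is an ordinary filtered colimit of copies of $\mathbb{Z}$, and your common-stage argument (nonzero $x,y$ lift to $a,b\in\mathbb{Z}_j$ with $bx-ay=0$) correctly bounds the rank by one, so the flat weight $\mathbb{Z}\oplus\mathbb{Z}$ lies in $\Psi^+\C\smallsetminus\Phi\C$. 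In short, where the paper buys the remark by citation, your route buys a self-contained, elementary verification; the only substantive addition needed is the explicit reduction to small subcategories for condition (II).
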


\begin{obs}\label{Rmk:why-not-Psi+} 
  Thus if $\Psi$ is weakly sound, we could just work with
  $\Psi^+$. But using a smaller companion can give better results. For
  a particularly simple example, if $\Psi=\P$ is the class of all
  small limits, then rather than using $\Psi^+=\Q$, we can use
  $\Phi=\emptyset$. Then we recover directly the fact that an
  accessible category is complete if and only if it is cocomplete,
  rather than saying that it is complete if and only if its Cauchy
  completion is cocomplete. See Examples~\ref{soundcomp-examples} for
  further examples. 
\end{obs}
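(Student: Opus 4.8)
My plan is to reduce the remark to the single assertion that the empty colimit type $\emptyset$ is a companion for $\Psi=\P$; everything else is then formal bookkeeping. First I would record that, by Definition~\ref{C_1A}, all components $\mt C_M$ of the empty colimit type are empty, so $\emptyset_1^\dagger\A$ consists of nothing but the representables and hence $\emptyset_1^\dagger\A\simeq\A$ for every $\A$. Granting that $\emptyset$ is a companion, Theorem~\ref{relatice-C-charact} then specialises to the equivalence of ``$\A$ accessible and complete'' with ``$\A$ accessible and $\emptyset_1^\dagger\A\simeq\A$ cocomplete'', i.e.\ with ``$\A$ accessible and cocomplete'' (using also that $\emptyset$-virtual cocompleteness is just cocompleteness, as noted after Definition~\ref{C-virtual-col}). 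No Cauchy completion intervenes. By contrast, with the companion $\Q=\P^+$ one has $\Q_1^\dagger\A$ equal to the Cauchy completion of $\A$, which is exactly why that route phrases the condition as the Cauchy completion being cocomplete.

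To see that $\emptyset$ is a companion I would check conditions (I) and (II) of Definition~\ref{companion}. Condition (I) is vacuous: since every $\mt C_M$ is empty there is no non-empty component on which compatibility must be imposed, so the empty colimit type is trivially compatible with any class of weights.

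The substance lies entirely in condition (II): for each complete and virtually cocomplete $\A$, every small continuous $\V$-functor $F\colon\A\to\V$ must lie in $\emptyset_1^\dagger\A$, i.e.\ must be \emph{representable}. Here I would use completeness of $\A$ decisively. Since $\P$ is weakly sound with $\P^+=\Q$, Proposition~\ref{wsound comp} gives that $\Q$ is a companion for $\P$, so $F\in\Q_1^\dagger\A$; that is, $F$ is an absolute (Cauchy) limit of representables, computed in $\P^\dagger\A$. On the other hand a complete $\V$-category is Cauchy complete, hence closed in $\P^\dagger\A$ under absolute limits of representables, so this limit already lands in $\A$ and $F$ is representable. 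Equivalently, and in the ``direct'' spirit of the remark, one may invoke the enriched representability theorem: smallness of $F$ supplies the solution-set/accessibility hypothesis, while completeness of $\A$ together with continuity of $F$ forces $F\cong\A(A,-)$.

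The main obstacle will be precisely this implication ``small continuous $\Rightarrow$ representable'', and it is worth flagging that it does \emph{not} come for free from Section~\ref{general-psi}. Over a complete $\A$ the $\P$-precontinuous functors coincide with the small continuous ones (Proposition~\ref{precont=cont}), so the $\psicheck$-virtual machinery only places $F$ in $\Psi\tx{-PCts}(\A,\V)\op$, and not in the strictly smaller class of representables. The extra ingredient -- that over a complete, hence Cauchy complete, base such an $F$ is genuinely representable -- is exactly the classical fact that an accessible complete category is cocomplete, now repackaged as the companion property of the empty colimit type.
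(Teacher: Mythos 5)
Your proposal is correct and takes essentially the route the paper intends: the remark rests on the assertion of Examples~\ref{soundcomp-examples}(1) that $\Phi=\emptyset$ is a companion for $\P$, fed into Theorem~\ref{relatice-C-charact} (equivalently Theorem~\ref{sound-weakcharacterization}), and your verification of companion condition (II) --- a small continuous functor on a complete, virtually cocomplete $\A$ is a Cauchy colimit of representables by weak soundness of $\P$ and Proposition~\ref{wsound comp}, hence representable since a complete $\V$-category is Cauchy complete --- supplies exactly the justification the paper leaves implicit (compare the introduction's use of the fact that accessible categories are Cauchy complete). Your closing caveat is also accurate: the $\psicheck$-virtual machinery of Section~\ref{general-psi} only places $F$ in $\Psi\tx{-PCts}(\A,\V)\op$, so the Cauchy-completeness step (or, alternatively, a representability argument in the style of Kelly's Theorem~4.80, as used in the proof of Proposition~\ref{soundcompchar}) is genuinely needed and does not come for free.
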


\begin{es}\label{soundcomp-examples} In the following list we give
  examples of weakly sound classes $\Psi$ and spell out when the
  companion can be taken to consist of some, but not all, the
  $\Psi$-flat weights. See \cite[Example~3.6]{LT22:virtual} for
  explanations of why most of these classes are sound. In cases where
  $\V=\bo{Set}$ and we speak of a class of categories rather than a class of weights, we are referring to the corresponding unweighted limits (in other words, weighted by the terminal weight). 
	\begin{enumerate}\setlength\itemsep{0.25em}
		\item $\Psi=\P$ is the class of all small weights. Then $\Psi^+$ is the class of the Cauchy (or absolute) weights. The class $\Phi=\emptyset$ is also a companion for $\P$. 
		\item $\Psi=\emptyset$. Then $\Psi^+=\P$ is the class of all weights. When $\V=\bo{Set}$, the class $\Phi$ of all small categories is also a companion for $\Psi$.  
		\item $\V$ locally $\alpha$-presentable as a closed category, $\Psi$ is the class of $\alpha$-small weights. Then $\Psi^+$ consists of the $\alpha$-flat $\V$-functors. The class $\Phi$ of the $\alpha$-filtered categories is also a companion for $\Psi$; in general not every $\alpha$-flat $\V$-functor is a filtered colimit of representables (see \cite{LT21:articolo}).
		\item $\V$ symmetric monoidal closed finitary quasivariety \cite{LT20:articolo}, $\Psi$ is the class for finite products and finitely presentable projective powers. The class $\Phi$ of sifted categories is a companion for $\Psi$; we do not know whether $\Psi$-flat $\V$-functors are always sifted colimits of representables.
		\item $\V$ cartesian closed, $\Psi$ is the class of finite discrete diagrams. Then $\Psi^+$ consists of those $M$ for which $\tx{Lan}_\Delta M \cong M \times M $ (\cite[Lemma~2.3]{KL93FfKe:articolo}).
		\item $\V=\bo{Set}$, $\Psi$ is the class of connected categories. The class $\Phi$ of discrete categories is a companion for $\Psi$; every element of $\Psi^+$ is a split subobject of coproducts of representables. 
		\item $\V=\bo{Set}$, $\Psi=\{\emptyset\}$. Then $\Psi^+$ is generated by the class of connected categories.
		\item $\V=\bo{Set}$, $\Psi$ consists of the finite connected categories. Then $\Psi^+$ is generated by coproducts of filtered categories.
		\item $\V=\bo{Set}$, $\Psi$ is the class of finite non empty categories. Then $\Psi^+$ is generated by the filtered categories plus the empty category.
		\item $\V=\bo{Set}$, $\Psi$ is the class of finite discrete non empty categories. Then $\Psi^+$ is generated by the sifted categories plus the empty category.
		\item $\V=\bo{Cat}$, $\Psi$ is the class generated by connected conical limits and powers by connected categories. The class $\Phi$ of discrete categories is a companion for $\Psi$; every $\Psi$-flat weight is a split subobject of coproducts of representables.
		\item $\V=([\C\op,\bo{Set}],\otimes,I)$ with the representables closed under the monoidal structure, $\Psi$ is the class defined by powers by representables. The class $\Phi$ of all conical weights is a companion for $\Psi$.
	\end{enumerate}
\end{es}

In the weakly sound context, companions of locally small classes of weights can be characterized as follows. When $\Phi=\Psi^+$ the equivalence of (1), (2), and (3) was first proved in \cite[Theorem~8.11]{KS05:articolo}.

\begin{prop}\label{soundcompchar}
	Let $\Phi$ and $\Psi$ be a pair of classes of weights with $\Psi$ locally small. The following are equivalent:
	\begin{enumerate}\setlength\itemsep{0.25em}
		\item $\Phi$ is a companion for $\Psi$.
		\item $\Phi$-colimits commute in $\V$ with $\Psi$-limits, and for any $\V$-category $\A$ every object of $\P\A$ is a $\Phi$-colimit of objects from $\Psi\A$.
		\item $\tx{Lan}_ZW\colon \Phi(\Psi(-))\to\P(-)$ is an equivalence of endofunctors on $\V\tx{-}\bo{CAT}$, where $W\colon \Psi(-)\to\P(-)$ and $Z\colon \Psi(-)\to\Phi(\Psi(-))$ are the inclusions. 
		\item For each $\Psi$-cocomplete $\A$ the inclusion $Z\colon \A\to\Phi\A$ is $\Psi$-cocontinuous, and freely adding $\Phi$-colimits induces a pseudofunctor $$\Phi(-)\colon \Psi\tx{-}\bo{Coct}\tx{-}\bo{CAT}\longrightarrow\bo{Coct}\tx{-}\bo{CAT}$$%
		from the 2-category of $\Psi$-cocomplete $\V$-categories to that of cocomplete $\V$-categories.
	\end{enumerate}
\end{prop}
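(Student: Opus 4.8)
The plan is to reduce every condition to the case of a small $\V$-category and then read all four statements off a single structural picture. For small $\C$ the free cocompletion $\P\C$ is equivalent, via the nerve $N=\P\C(\iota-,-)$, to the $\V$-category of $\Psi$-continuous small presheaves on $\Psi\C$, where $\iota\colon\Psi\C\hookrightarrow\P\C$ is the fully faithful and $\Psi$-cocontinuous inclusion; moreover the left adjoint $L\dashv N$ is the cocontinuous ``take the colimit'' functor, which sends each representable on $\Psi\C$ to the corresponding object of $\Psi\C\subseteq\P\C$. Since $\Psi$ is locally small, $\Psi\C$ is small and this is legitimate; the reduction to small $\C$ proceeds by a left Kan extension argument as in Remark~\ref{small-psi}, using that left Kan extension preserves colimits and representables, so that both the companion property and the generation statement are detected on small full subcategories.

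For $(2)\Leftrightarrow(3)$ I would first note that $\tx{Lan}_Z W$ is, up to isomorphism, the restriction of $L$ to the full subcategory $\Phi(\Psi\C)\subseteq\P(\Psi\C)$ of $\Phi$-colimits of representables, since both functors are $\Phi$-cocontinuous and agree with $W$ on $\Psi\C$. Essential surjectivity of this functor then says exactly that every object of $\P\C$ is the $L$-image of a $\Phi$-colimit of representables, that is, a $\Phi$-colimit of objects of $\Psi\C$, which is the second clause of $(2)$. Full faithfulness is equivalent to the unit of $L\dashv N$ being invertible on $\Phi(\Psi\C)$, hence to every $\Phi$-colimit of representables in $\P(\Psi\C)$ being $\Psi$-continuous; as the representables themselves are $\Psi$-continuous, this holds precisely when $\Phi$-colimits preserve $\Psi$-continuity, i.e. when $\Phi$-colimits commute with $\Psi$-limits in $\V$, which is the first clause of $(2)$. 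This is the step in which the commutativity condition is genuinely used.

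For $(1)\Leftrightarrow(2)$ I would unwind Definition~\ref{companion} into the two clauses (I) and (II) recalled just before the statement. Clause (I) is literally the commutativity clause of $(2)$. For the generation clause I would again use the nerve equivalence: taking $\A=(\Psi\C)\op$, which is $\Psi$-complete and virtually cocomplete, the small $\Psi$-continuous functors $\A\to\V$ are exactly the objects of $\P\C$ transported by $N$, and (II) for this $\A$ says that each is a $\Phi$-colimit of representables in $\P(\Psi\C)$; applying the cocontinuous $L$ turns this into the generation clause of $(2)$ at $\C$. Conversely, given generation together with (I), commutativity ensures that $N$ preserves the relevant $\Phi$-colimits, because homming out of an object of $\Psi\C$ is a $\Psi$-limit of evaluations while $\Phi$-colimits are computed pointwise, so a $\Phi$-colimit presentation in $\P\C$ is carried by $N$ to one in $\P(\Psi\C)$, giving (II). The delicate point is the variance bookkeeping and the mismatch of quantifiers: (II) ranges over all $\Psi$-complete virtually cocomplete $\A$, whereas generation ranges over all $\C$. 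The forward direction needs only the categories $(\Psi\C)\op$, but the converse must treat a general such $\A$ by passing to $\C=\A\op$ and then using $\Psi$-continuity of the functor together with $\Psi$-cocompleteness of $\A\op$ to replace $\Psi$-colimits of representables by representables.

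Finally, for $(3)\Leftrightarrow(4)$ I would package the object-level equivalence $\Phi(\Psi-)\simeq\P(-)$ into a biadjunction. Condition $(4)(a)$, the $\Psi$-cocontinuity of $Z\colon\A\to\Phi\A$, is exactly what exhibits $\Phi(-)$ as left biadjoint to the inclusion $\bo{Coct}\tx{-}\bo{CAT}\hookrightarrow\Psi\tx{-}\bo{Coct}\tx{-}\bo{CAT}$, since it forces restriction along $Z$ to send cocontinuous functors out of $\Phi\A$ to $\Psi$-cocontinuous functors out of $\A$. Composing this biadjunction with the free $\Psi$-cocompletion biadjunction $\Psi\dashv U$ yields the free cocompletion, so $\Phi(\Psi\A)\simeq\P\A$ and $(3)$ follows; conversely $(3)$ makes $\Phi(\Psi\A)\simeq\P\A$ cocomplete, and since a $\Psi$-cocomplete $\A$ is a $\Psi$-cocontinuous retract of $\Psi\A'$ for a small dense $\A'$, the category $\Phi\A$ becomes a retract of a cocomplete category through $\Phi$-cocontinuous functors, which one then upgrades to cocompleteness and to a pseudofunctorial assignment. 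I expect this last step to be the main obstacle: upgrading ``$\Phi$-cocontinuous retract of a cocomplete category'' to genuine cocompleteness of $\Phi\A$, and verifying that the assignment is pseudofunctorial rather than merely object-wise, requires controlling colimits created outside the class $\Phi$ and is precisely the part not already contained in the $\Phi=\Psi^+$ case of \cite[Theorem~8.11]{KS05:articolo}.
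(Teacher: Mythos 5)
Your reductions to the small case, your $(1)\Rightarrow(2)$, and your $(2)\Rightarrow(3)$ agree in substance with the paper's proof, and your $(4)\Rightarrow(3)$ by composing biadjunctions is a reasonable alternative to the paper (which gets this implication only by going around the cycle). The fatal problems are in the two ``backward'' directions you rely on. The crucial one is the return to condition (1). For a small $\Psi$-complete $\A$ and $\Psi$-continuous $F\colon\A\to\V$, you propose to take a generation presentation $F\cong M*WH$ at $\C=\A\op$, with $H\colon\D\to\Psi\C$ and $M\in\Phi$, and then ``replace $\Psi$-colimits of representables by representables''. Carrying this out with the $\Psi$-colimit functor $L\colon\Psi\C\to\C$ (left adjoint to the inclusion $V$), $\Psi$-continuity of $F$ gives isomorphisms $\P\C(WHd,F)\cong\P\C(Y(LHd),F)$, so the colimiting cocone transposes to a map $\theta\colon M*Y(LH)\to F$ whose composite with the canonical comparison $F\cong M*WH\to M*Y(LH)$ (induced by the units $Hd\to VLHd$) is the identity. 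This exhibits $F$ only as a retract --- a splitting of an idempotent --- of a $\Phi$-colimit of representables, not as such a colimit, and $\Phi$-colimits of representables are in general not closed under retracts. The example $\Phi=\emptyset$, $\Psi=\P$, which the paper insists on covering (Remark~\ref{Rmk:why-not-Psi+}), shows the gap is real: there condition (II) asserts that every small continuous functor on a complete, virtually cocomplete $\A$ is representable, an adjoint-functor-theorem-type statement, while your rewriting yields only a retract of a representable, and splitting that idempotent requires invoking limits in $\A$, not formal manipulation of presentations. The paper never proves $(2)\Rightarrow(1)$ directly; it closes the cycle with $(4)\Rightarrow(1)$, where (II) is obtained by applying the representability criterion of \cite[Theorem~4.80]{Kel82:libro} to $G=\tx{Ran}_ZF\colon\Phi^\dagger\A\to\V$: condition (4) makes $\Phi^\dagger\A$ complete and $G$ continuous, one checks that $\{G,1_{\Phi^\dagger\A}\}$ exists and is preserved by $G$, so $G$ is representable, and by Yoneda the representing object must be $F$ itself, whence $F\in\Phi^\dagger\A$. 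An argument of this kind is the key idea missing from your proposal.

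The second gap is the one you flag yourself in $(3)\Rightarrow(4)$, and the remedy is simpler than you anticipate: the retraction is an \emph{adjoint} retraction. Since $\A$ is $\Psi$-cocomplete, the inclusion $V\colon\A\to\Psi\A$ has a left adjoint $L$ with invertible counit, and pseudofunctors preserve adjunctions, so $\Phi L\dashv\Phi V$ exhibits $\Phi\A$ as a reflective full subcategory of $\Phi(\Psi\A)\simeq\P\A$; reflective subcategories of cocomplete $\V$-categories are cocomplete, and the same (pseudonatural) adjunction yields cocontinuity of $\Phi F$ for $\Psi$-cocontinuous $F$. There is thus no need to ``control colimits created outside the class $\Phi$'' --- though note that clause (a) of (4), the $\Psi$-cocontinuity of $Z$, also needs its own argument from (3), which the paper supplies by comparing $Z$ with the inclusion $\Psi\A\hookrightarrow\P\A$. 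Finally, your $(3)\Rightarrow(2)$ rests on the unproved assertion that $\Psi$-continuity of all $\Phi$-colimits of representables on the categories $\Psi\C$ forces commutativity in $\V$; this can be made to work, but requires realizing an arbitrary diagram $P\colon\D\otimes\E\to\V$ as a hom-diagram $\Psi\C'(S-,H-)$, for instance by taking $\C'$ to be the collage of $P$, and you give no such construction. Both of these backward directions become unnecessary if, as in the paper, you arrange the proof as the single cycle $(1)\Rightarrow(2)\Rightarrow(3)\Rightarrow(4)\Rightarrow(1)$.
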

\begin{proof}
  $(1)\Rightarrow (2)$. The first part is already in the definition of
  companion. For the second, assume first that $\A$ is small, so that
  $\Psi\A$ is small as well, and consider $F\colon \A\op\to\V$ (an
  object of $\P\A$). Let $J\colon \A\to\Psi\A=\Psi^\dagger (\A\op)\op$
  be the inclusion; then $\tx{Ran}_{J\op}F$ is $\Psi$-continuous (and
  small, since $\Psi\A$ is). The fact that $\Phi$ is a companion for
  $\Psi$ implies that $\tx{Ran}_{J\op}F$ is a $\Phi$-colimit of
  representables: there exist $ M $ in $\Phi\C$ and
  $H\colon \C\to\Psi\A$ such that $\tx{Ran}_{J\op}F\cong M *YH$, where
  $Y\colon\Psi\A\to\P(\Psi\A)$ is the Yoneda embedding. Since
  pre-composition with $J$ defines a cocontinuous $\V$-functor
  $\P(\Psi\A)\to\P\A$ whose composite with
  $Y\colon\Psi\A\to\P(\Psi\A)$ is the inclusion
  $W\colon\Psi\A\to\P\A$, it follows that
  $F\cong (M *YH)J\cong M *WH$, as desired.
	
  Consider now the case of a general $\A$; let $F\colon \A\op\to\V$ be
  small, so that $F\cong \tx{Lan}_H(FH)$ for some small
  $H\colon \B\op\to\A\op$. By the argument above we know that
  $FH\cong M *W_{\B}K$ is a $\Phi$-colimit of objects from $\Psi\B$;
  thus
	$$ F\cong \tx{Lan}_{H}( M *W_{\B}K)\cong  M *\tx{Lan}_{H}W_{\B}K$$%
	but $\tx{Lan}_HW_\B$ takes values in $\Psi\A$, thus $F$ is a
        $\Phi$-colimit of objects from $\Psi\A$.
	
	$(2)\Rightarrow (3)$. Let $\tx{Lan}_ZW\colon
        \Phi(\Psi\A)\to\P\A$ be as in $(3)$. The fact that every
        object of $\P\A$ is a $\Phi$-colimit of objects from $\Psi\A$
        is equivalent to the requirement that $\tx{Lan}_ZW$ be
        essentially surjective. Similarly, we will now show that
        commutativity of $\Phi$-colimits with $\Psi$-limits in $\V$ is
        equivalent to $\tx{Lan}_ZW$ being fully faithful; this will be
        enough to show (3).
	
	Since $\Phi$-colimits commute in $\V$ with $\Psi$-limits, the
        $\Psi$-continuous $M\colon\Psi(\A)\op\to\V$ are closed under
        $\Phi$-colimits. Of course the representables are also
        $\Psi$-continuous; thus any $N\colon\Psi(\A)\op\to\V$ in
        $\Phi(\Psi\A)$ is $\Psi$-continuous, and so the canonical
        $N\to \tx{Ran}_{J\op}(NJ\op)$ is invertible, where
        $J\colon\A\to\Psi\A$ is the inclusion. Note moreover that
        $\tx{Lan}_ZW$ acts by pre-composition with $J\op$; indeed, by
        the arguments above, this defines a $\Phi$-cocontinuous
        $\V$-functor $\Phi(\Psi\A)\to\P\A$ which coincides with $W$
        when restricted to $\Psi\A$. Now for any
        $M,N\colon\Psi(\A)\op\to\V$ in $\Phi(\Psi\A)$ we have
	\begin{equation*}
          \begin{split}
            \Phi(\Psi\A)(M,N)&\cong  [\Psi(\A)\op,\V](M,N)\\
            &\cong  [\Psi(\A)\op,\V](M,\tx{Ran}_{J\op}(NJ\op))\\
            &\cong  [\A\op,\V](MJ\op,NJ\op)\\
          \end{split}
	\end{equation*}
	giving the fully faithfulness of $\tx{Lan}_ZW$.
	
	$(3)\Rightarrow (4)$. Let $\A$ be $\Psi$-cocomplete and
        consider the induced square
	\begin{center}
          \begin{tikzpicture}[baseline=(current bounding box.south),
            scale=2]
			
            \node (c) at (0,0) {$\Phi\A$}; \node (d) at (1.2,0)
            {$\Phi\Psi\A$}; \node (e) at (0.6,0.1) {$\perp$}; \node
            (f) at (0,-0.8) {$\A$}; \node (g) at (1.2,-0.8)
            {$\Psi\A$}; \node (h) at (0.6,-0.7) {$\perp$};
			
            \path[font=\scriptsize]
			
            (c) edge [bend left, <-] node [above] {$\Phi L$} (d) (c)
            edge [right hook->] node [below] {$\Phi J$}(d) (f) edge
            [right hook->] node [left] {$Z$} (c) (g) edge [right
            hook->] node [right] {$Z'$} (d) (f) edge [right hook->]
            node [below] {$J$}(g) (f) edge [bend left, <-] node
            [above] {$L$} (g);
			
          \end{tikzpicture}
	\end{center}
	where $L$ exists since $\A$ is $\Psi$-cocomplete. Note that
        $Z'$ is $\Psi$-cocontinuous because it coincides, up to
        equivalence, with the inclusion
        $\Psi\A\hookrightarrow\P\A$. Thus it is easy to see that $Z$
        must be $\Psi$-cocontinuous as well.
	
	For the second statement, consider the pseudofunctors
        $$U,\Psi(-)\colon \Psi\tx{-}\bo{Coct}\tx{-}\bo{CAT}\to\V\tx{-}\bo{CAT}$$% 
	given respectively by the forgetful functor and by freely
        adding $\Psi$-colimits. Since we are restricted to
        $\Psi$-cocomplete categories, the inclusion
        $V\colon U\hookrightarrow \Psi(-)$ has a left adjoint
        $L\colon\Psi(-)\to U$; by applying $\Phi(-)$ this induces an
        adjunction
	\begin{center}
          \begin{tikzpicture}[baseline=(current bounding box.south),
            scale=2]
			
            \node (a) at (0.3,0.8) {$\Phi(-)$}; \node (a') at
            (0.5,0.8) {}; \node (b) at (1.8,0.8) {$\Phi(\Psi(-))$};
            \node (b') at (1.4,0.8) {}; \node (e) at (0.95,0.87)
            {$\perp$};
			
            \path[font=\scriptsize]
			
            ([yshift=3pt]a'.east) edge [bend left, <-] node [above] {}
            ([yshift=3pt]b'.west) ([yshift=-1pt]a.east) edge [right
            hook->] node [below] {} ([yshift=-1pt]b.west);
			
          \end{tikzpicture}
	\end{center}
	but $\Phi(\Psi(-))\simeq\P(-)$. Thus $\Phi\A$ is cocomplete
        whenever $\A$ is $\Psi$-cocomplete, and $\Phi F$ is
        cocontinuous whenever $F$ is $\Psi$-cocontinuous.
	
	$(4)\Rightarrow (1)$. Let us prove first that $\Phi$-colimits
        commute in $\V$ with $\Psi$-limits. Let $ M \colon \C\op\to\V$
        be a weight in $\Phi$, we need to prove that
        $ M *-\colon [\C,\V]\to\V$ is $\Psi$-continuous. For this,
        consider $\A=[\C,\V]\op$; by the assumptions
        $Z\colon \A\to\Phi\A$ is $\Psi$-cocontinuous and hence for
        each $X\in\Phi\A$ the functor $\Phi\A(Z-,X)\colon \A\op\to\V$
        is $\Psi$-continuous. Take now $X:= M *ZY$, where
        $Y\colon \C\to [\C,\V]\op=\A$ is the Yoneda embedding, then
        $$ \Phi\A(Z-,X)\cong  M *\A(-,Y)\cong  M *[\C,\V](Y,-)\cong  M *- $$
	is $\Psi$-continuous, as required.

        It remains to prove property (II) from the definition of
        companion; by Remark \ref{small-psi} we can reduce it to the
        case when $\A$ is small and $\Psi$-complete. Let $\A$ be small
        and $\Psi$-complete, and $F\colon\A\to\V$ be
        $\Psi$-continuous. We need to show that $F$ lies in
        $\Phi^\dagger\A$. To do so, we shall show that the right Kan
        extension $G:=\tx{Ran}_ZF\colon\Phi^\dagger\A\to\V$ is
        representable; then by Yoneda, the representing object must be
        just $F$, which is therefore in $\Phi^\dagger\A$.

        By $(4)$ the $\V$-category $\Phi^\dagger \A=\Phi(\A\op)\op$ is
        complete and $\Phi^\dagger F$ is continuous. Now $G$ is the
        composite of $\Phi^\dagger F$ with the right adjoint to the
        inclusion $Z\colon\B\to\Phi^\dagger\B$, thus $G$ is also
        continuous. 
        
        By \cite[Theorem~4.80]{Kel82:libro}, $G$ will be representable
        provided that $\{G,1_{\Phi^\dagger \A}\}$ exists in $\Phi^\dagger \A$ and is
        preserved by $G$. Since $\A$ is small,  the limit $\{GZ,Z\}$
        exists in $\Phi^\dagger \A$ and
        \[ \{GZ,Z\}\cong\{G,\tx{Ran}_ZZ\}\cong\{G,1_{\Phi^\dagger
            \A}\}.  \]
	Likewise
	\[ G\{G,1_{\Phi^\dagger \A}\}\cong G\{GZ,Z\}\cong \{GZ,GZ\}\cong \{G,\tx{Ran}_Z(GZ)\} \cong \{G,G\}. \]

        Let $G=\tx{Ran}_ZF\colon \Phi^\dagger\A\to\V$. This can be
        constructed as the composite of $\Phi^\dagger F$ and the right
        adjoint to $Z\colon\V\to\Phi^\dagger\V$, and so is also
        continuous.
      \end{proof}

\begin{obs}
	As a consequence of the proof, $\Phi$-colimits commute in $\V$ with $\Psi$-limits if and only if $\tx{Lan}_ZW\colon \Phi(\Psi(-))\to\P(-)$ is pointwise fully faithful.
\end{obs}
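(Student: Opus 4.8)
The plan is to read both implications off the proof of Proposition~\ref{soundcompchar} by isolating the r\^ole that full faithfulness plays there. In the equivalence $(2)\Leftrightarrow(3)$, full faithfulness of $\tx{Lan}_ZW$ is the ``$\Phi$-limits half'' of condition~(3), while essential surjectivity is the separate density condition ``every object of $\P\A$ is a $\Phi$-colimit of objects from $\Psi\A$''. The forward implication then needs nothing new: the computation already carried out in the step $(2)\Rightarrow(3)$ shows that if $\Phi$-colimits commute in $\V$ with $\Psi$-limits, then every $N\in\Phi(\Psi\A)$ is $\Psi$-continuous, so $N\cong\tx{Ran}_{J\op}(NJ\op)$, and the displayed chain of isomorphisms there yields $\Phi(\Psi\A)(M,N)\cong\P\A(\tx{Lan}_ZW M,\tx{Lan}_ZW N)$ for every $\A$. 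That is exactly pointwise full faithfulness.

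For the converse I would first reinterpret pointwise full faithfulness. Since $\tx{Lan}_ZW$ acts by restriction along $J\op\colon\A\op\to(\Psi\A)\op$, and the representables lie in $\Phi(\Psi\A)$, testing full faithfulness with $M$ representable and applying Yoneda forces the unit $N\to\tx{Ran}_{J\op}(NJ\op)$ to be invertible for every $N\in\Phi(\Psi\A)$. Thus pointwise full faithfulness is equivalent to the assertion that \emph{every object of $\Phi(\Psi\A)$ is $\Psi$-continuous}, for every $\A$.

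The next step is to transport this statement from $\Phi(\Psi\A)$ to $\Phi\A$. Given $X\in\Phi\A$, the induced $\Phi$-cocontinuous functor sends it to $\Phi J(X)\in\Phi(\Psi\A)$, where $\Phi J:=\tx{Lan}_{Z}(Z'J)$ exists purely by the universal property of $\Phi\A$ (so that the pseudofunctoriality of $\Phi(-)$, which belongs to condition~(4), is not needed). Because $\tx{Lan}_ZW\circ\Phi J\simeq\tx{id}_{\Phi\A}$ — both are $\Phi$-cocontinuous and agree on representables — we get $X\cong\Phi J(X)\circ J\op$. Since $J\op$ preserves any $\Psi$-limits existing in $\A\op$ and $\Phi J(X)$ is $\Psi$-continuous, $X$ is $\Psi$-continuous too; by Yoneda $\Phi\A(Z-,X)\cong X$, so this says precisely that $Z\colon\A\to\Phi\A$ is $\Psi$-cocontinuous, for every $\A$. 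With that in hand, commutativity follows verbatim from the argument used in $(4)\Rightarrow(1)$: for a weight $M\colon\C\op\to\V$ in $\Phi$ one takes $\A=[\C,\V]\op$ and $X:=M*ZY$ with $Y\colon\C\to\A$ the Yoneda embedding, and computes $\Phi\A(Z-,X)\cong M*-$, which is therefore $\Psi$-continuous.

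I expect the main obstacle to be the transfer step: keeping the variance straight while verifying that restriction along $J\op$ carries $\Psi$-continuous presheaves on $\Psi\A$ to $\Psi$-continuous presheaves on $\A$, and confirming $\tx{Lan}_ZW\circ\Phi J\simeq\tx{id}$ at the level of the bare free cocompletions, before any of the stronger structure of condition~(4) is available. Everything else is bookkeeping around the identity ``$\tx{Lan}_ZW$ acts by precomposition with $J\op$'' that is already recorded in the proof of the proposition.
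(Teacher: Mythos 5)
Your forward direction is indeed just the paper's computation in $(2)\Rightarrow(3)$, and your reinterpretation of pointwise full faithfulness as ``every object of $\Phi(\Psi\A)$ is $\Psi$-continuous'' is correct (though the step from invertibility of the unit $N\to\tx{Ran}_{J\op}(NJ\op)$ to $\Psi$-continuity deserves a word: it holds because $\tx{Ran}_{J\op}(NJ\op)$ is the functor $P\mapsto[\A\op,\V](P,NJ\op)$, which takes the pointwise $\Psi$-colimits of $\Psi\A$ to limits in $\V$). The genuine gap is your transport step. You assert that ``$J\op$ preserves any $\Psi$-limits existing in $\A\op$'', i.e.\ that the unit $J\colon\A\to\Psi\A$ of the free $\Psi$-cocompletion preserves existing $\Psi$-colimits. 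This is false: the unit of a free cocompletion does not preserve the colimits being freely added. For $\Psi=\P$ it would say that the Yoneda embedding preserves small colimits; and for the one category you actually need, $\A=[\C,\V]\op$, it would say $[\C,\V](\{N,G\},B)\cong N*[\C,\V](G-,B)$ for all $B$, which fails already for $\V=\bo{Set}$ and $N$ a product weight. Consequently, from $X\cong\Phi J(X)\circ J\op$ and $\Psi$-continuity of $\Phi J(X)$ you cannot conclude that $X$ is $\Psi$-continuous, and your claim that ``$Z\colon\A\to\Phi\A$ is $\Psi$-cocontinuous for every $\A$'' is left unproved. (That claim is in fact \emph{equivalent} to commutativity, so proving it is exactly the content of the statement; it cannot be obtained by formal transport along $J$.)

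The repair is to avoid colimits of $\A$ altogether and work only with colimits of $\Psi\A$, which exist and are computed pointwise. Apply your first step at $\A=[\C,\V]\op$ to the single object $N':=M*Y_{\Psi\A}(JY)\in\Phi(\Psi\A)$, where $Y\colon\C\to\A$ is as in $(4)\Rightarrow(1)$; thus $N'P\cong M*\Psi\A(P,JY-)$ and, by hypothesis, $N'$ is $\Psi$-continuous. Given $N\colon\D\to\V$ in $\Psi$ and $T\colon\D\to[\C,\V]$, write $T'\colon\D\op\to\A$ for the corresponding functor and form the $\Psi$-colimit $N*(JT')$ in $\Psi\A$. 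Homming out of this colimit gives $\Psi\A(N*JT',JYc)\cong\{N,\Psi\A(JT'-,JYc)\}\cong\{N,T\}(c)$ by Yoneda, whence $N'(N*JT')\cong M*\{N,T\}$; on the other hand, $\Psi$-continuity of $N'$ gives $N'(N*JT')\cong\{N,N'(JT'-)\}\cong\{N,M*T-\}$. The resulting isomorphism $M*\{N,T\}\cong\{N,M*T-\}$ is the canonical comparison, so $\Phi$-colimits commute with $\Psi$-limits. Note that in this argument no functor is ever required to preserve a colimit across the embedding $J$; that is precisely the point your version misses.
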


\begin{obs}
	Consider the 2-category $\V\tx{-}\bo{CAT}$ of $\V$-categories, $\V$-functors, and $\V$-natural transformations. Then a consequence of point (3) above, is that $\Phi$ is a companion for $\Psi$ if and only if the context $\Phi(-)\to\P(-)$ together with the KZ doctrine $\Psi(-)$ form a GU envelope in the sense of \cite[Definition~3.1]{di2023accessibility}; see also Example~4.4 of the same paper.
\end{obs}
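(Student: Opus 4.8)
The plan is to deduce this Observation directly from the equivalence of conditions (1) and (3) in Proposition~\ref{soundcompchar}, so that essentially no new mathematical argument is needed beyond matching definitions. First I would turn to \cite[Definition~3.1]{di2023accessibility} and unwind what it means for the context $\Phi(-)\to\P(-)$, together with the KZ doctrine $\Psi(-)$, to form a GU envelope. Translating that definition into the notation of the present paper, the context is the canonical pseudonatural inclusion $\Phi(-)\hookrightarrow\P(-)$ of the free $\Phi$-cocompletion into the free small-cocompletion pseudomonad on $\V\tx{-}\bo{CAT}$, the doctrine is the free $\Psi$-cocompletion $\Psi(-)$, and I expect its defining requirement to be precisely that the canonical comparison
\[ \tx{Lan}_Z W\colon \Phi(\Psi(-))\longrightarrow\P(-) \]
be an equivalence, where $W\colon\Psi(-)\to\P(-)$ and $Z\colon\Psi(-)\to\Phi(\Psi(-))$ are the inclusions appearing in Proposition~\ref{soundcompchar}.

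Granting this identification, the proof is immediate: Proposition~\ref{soundcompchar} shows that $\Phi$ is a companion for $\Psi$ if and only if $\tx{Lan}_Z W$ is an equivalence of endofunctors on $\V\tx{-}\bo{CAT}$, which is exactly the condition just isolated. Being a companion and forming a GU envelope are therefore the same property, and I would cite Example~4.4 of \cite{di2023accessibility} to confirm that the dictionary between the two frameworks is the one intended.

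The step requiring genuine care is checking that the equivalence supplied by Proposition~\ref{soundcompchar} carries the full structure demanded by a GU envelope, and is not merely a levelwise equivalence of endo-pseudofunctors. Concretely, one must verify that $\tx{Lan}_Z W$ is a morphism of KZ pseudomonads, respecting units and multiplications up to coherent isomorphism, rather than only an objectwise equivalence. I expect this to follow formally: since $W$ and $Z$ are the canonical inclusions into free cocompletions, any pseudonatural transformation between KZ pseudomonads that commutes with the units and is a levelwise equivalence is automatically a pseudomonad morphism, by the universal properties underlying the constructions of $\Phi(-)$, $\Psi(-)$ and $\P(-)$. The main obstacle is thus essentially one of transcription: faithfully importing the 2-categorical data of \cite[Definition~3.1]{di2023accessibility} and confirming that its coherence cells are the canonical ones arising here from the free-cocompletion pseudomonads. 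Once that dictionary is fixed, the Observation reduces to a restatement of Proposition~\ref{soundcompchar}.
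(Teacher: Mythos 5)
Your proposal matches the paper's intended justification exactly: the Observation is stated as a direct consequence of condition (3) of Proposition~\ref{soundcompchar}, and the paper offers no argument beyond identifying the GU-envelope condition of \cite[Definition~3.1]{di2023accessibility} with the requirement that $\tx{Lan}_ZW\colon \Phi(\Psi(-))\to\P(-)$ be an equivalence, which is precisely the identification you make. Your extra check that the levelwise equivalence respects the pseudomonad structure is reasonable diligence, and, as you anticipate, it holds formally because $W$ and $Z$ are the canonical inclusions arising from the universal properties of the free cocompletions.
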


\begin{cor}
	Let $\Psi$ be locally small and $\Phi$ be a companion for $\Psi$. For each $\Psi$-cocomplete $\A$ the $\V$-category $\Phi\A$ is the free cocompletion of $\A$ relative to $\Psi$-colimits.
	In other words the following is a bi-adjunction
	\begin{center}
		\begin{tikzpicture}[baseline=(current  bounding  box.south), scale=2]
			
			\node (a) at (0,0.8) {$\bo{Coct}\tx{-}\bo{CAT}$};
			\node (a') at (0.5,0.8) {};
			\node (b) at (2,0.8) {$\Psi\tx{-}\bo{Coct}\tx{-}\bo{CAT}$};
			\node (b') at (1.4,0.8) {};
			\node (e) at (0.95,0.9) {$\perp$};
			
			\path[font=\scriptsize]
			
			([yshift=3pt]a'.east) edge [bend left, <-] node [above] {$\Phi(-)$} ([yshift=3pt]b'.west)
			([yshift=-1pt]a.east) edge [->] node [below] {$U$} ([yshift=-1pt]b.west);
			
		\end{tikzpicture}	
	\end{center} 
	where $U$ is the forgetful functor. 
\end{cor}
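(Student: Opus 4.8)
The plan is to produce the biadjunction by establishing, for each $\Psi$-cocomplete $\A$ and each cocomplete $\B$, an equivalence of categories
\[ \bo{Coct}\tx{-}\bo{CAT}(\Phi\A,\B)\;\simeq\;\Psi\tx{-}\bo{Coct}\tx{-}\bo{CAT}(\A,\B) \]
pseudonatural in $\A$ and $\B$ and given by restriction along the inclusion $Z\colon\A\to\Phi\A$; this is exactly a biadjunction $\Phi(-)\dashv U$ with unit $Z$, and its object-level content is the stated universal property. By Proposition~\ref{soundcompchar}(4) (which applies since $\Phi$ is a companion, hence condition~(1) holds) we already know that $\Phi\A$ is cocomplete, that $Z$ is $\Psi$-cocontinuous, and that $\Phi(-)$ is a pseudofunctor into $\bo{Coct}\tx{-}\bo{CAT}$; thus the forward functor $G\mapsto GZ$ is well defined, since a cocontinuous $G$ is in particular $\Psi$-cocontinuous and $Z$ is $\Psi$-cocontinuous. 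The candidate inverse sends a $\Psi$-cocontinuous $F\colon\A\to\B$ to its $\Phi$-cocontinuous extension $\bar F\colon\Phi\A\to\B$, which exists and is essentially unique because $\B$, being cocomplete, is $\Phi$-cocomplete.

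The main obstacle is to verify that $\bar F$ is genuinely cocontinuous, not merely $\Phi$-cocontinuous. For this I would reuse the reflection built in the proof of $(3)\Rightarrow(4)$: writing $L\dashv J$ for the reflection of $\A$ in $\Psi\A$, applying the pseudofunctor $\Phi$ yields $\Phi L\dashv\Phi J$, exhibiting $\Phi\A$ as a reflective subcategory of $\Phi\Psi\A\simeq\P\A$ with reflector $\Phi L$ and $\Phi L\circ\Phi J\cong 1$. Since colimits in a reflective subcategory are computed by reflecting colimits of the ambient cocomplete category, a functor $G\colon\Phi\A\to\B$ is cocontinuous as soon as $G\circ\Phi L\colon\P\A\to\B$ is. Taking $G=\bar F$ and using naturality of $Z$ at $L$ (so that $\Phi L\circ Z'\cong Z\circ L$, where $Z'\colon\Psi\A\to\Phi\Psi\A$) together with $\bar F Z\cong F$, one computes that $\bar F\circ\Phi L$ restricts along $Z'$ to $\bar F Z L\cong FL$, which is $\Psi$-cocontinuous because $L$ is a left adjoint and $F$ is $\Psi$-cocontinuous.

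This reduces the problem to the key fact: for cocomplete $\B$, a $\Phi$-cocontinuous $K\colon\Phi\Psi\A\simeq\P\A\to\B$ is cocontinuous precisely when $KZ'\colon\Psi\A\to\B$ is $\Psi$-cocontinuous. The forward implication is immediate, as $Z'$ is $\Psi$-cocontinuous by Proposition~\ref{soundcompchar}(4) applied to the $\Psi$-cocomplete $\Psi\A$. For the converse I would compare $K$ with $\tx{Lan}_Y(KZ'J)$, the cocontinuous extension along the Yoneda embedding $Y\colon\A\to\P\A$, which under Proposition~\ref{soundcompchar}(3) is identified with $Z'\circ J$. Both $K$ and $\tx{Lan}_Y(KZ'J)$ are $\Phi$-cocontinuous, so by the universal property of the free $\Phi$-cocompletion $\Phi(\Psi\A)$ each is the $\Phi$-cocontinuous extension of its restriction to $\Psi\A$; these two restrictions are $\Psi$-cocontinuous and agree with $KZ'J$ on $\A$, hence coincide by uniqueness of $\Psi$-cocontinuous extensions out of the free $\Psi$-cocompletion $\Psi\A$. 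Therefore $K\cong\tx{Lan}_Y(KZ'J)$ is cocontinuous, and applying this with $K=\bar F\circ\Phi L$ gives that $\bar F$ is cocontinuous.

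Finally, the two assignments are mutually inverse: $\bar F Z\cong F$ by construction, while for cocontinuous $G$ both $\overline{GZ}$ and $G$ are $\Phi$-cocontinuous extensions of $GZ$, hence isomorphic by uniqueness. The equivalence is pseudonatural in $\B$ (postcomposition with a cocontinuous $\B\to\B'$ commutes with restriction along $Z$ and with the extension process) and in $\A$ (via pseudonaturality of the unit $Z$), so the hom-equivalences package into the desired biadjunction $\Phi(-)\dashv U$. I expect the only delicate step to be the cocontinuity of $\bar F$ treated in the second and third paragraphs; the remaining verifications are formal consequences of the universal properties of the completions $\Psi\A$, $\Phi\C$, and $\P\A$.
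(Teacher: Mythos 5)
Your proposal is correct and takes essentially the route the paper intends: the corollary is presented as a direct consequence of Proposition~\ref{soundcompchar}, and your argument obtains the hom-equivalence $\bo{Coct}\tx{-}\bo{CAT}(\Phi\A,\B)\simeq\Psi\tx{-}\bo{Coct}\tx{-}\bo{CAT}(\A,\B)$ exactly from items (3) and (4) of that proposition together with the reflection $\Phi L\dashv \Phi J$ constructed in the proof of $(3)\Rightarrow(4)$. The one substantive point the paper leaves implicit --- that the $\Phi$-cocontinuous extension $\bar{F}$ of a $\Psi$-cocontinuous $F$ is genuinely cocontinuous --- is precisely what your second and third paragraphs supply, and that verification (reducing along the reflector $\Phi L$ and comparing with the left Kan extension along Yoneda via uniqueness of $\Psi$-cocontinuous and $\Phi$-cocontinuous extensions) is sound.
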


It is now time to introduce the ingredients of the characterization theorem for accessible $\V$-categories with $\Psi$-limits. We begin by introducing $\Phi$-virtual orthogonality classes which generalize the usual notion of orthogonality and that of virtual orthogonality class of \cite{LT22:virtual}.

\begin{Def}\label{Phi-orth}
	Let $\Phi$ be a companion for $\Psi$, and $\K$ be a $\V$-category with inclusion $Z\colon \K\hookrightarrow\Phi^\dagger\K$. Let $f\colon ZX\to P$ be a morphism in $\Phi^\dagger\K$ with representable domain. We say that an object $A$ of $\K$ is {\em orthogonal with respect to $f$} if
	$$ \Phi^\dagger\K(f,ZA)\colon \Phi^\dagger\K(P,ZA)\longrightarrow\Phi^\dagger\K(ZX,ZA) $$%
	is an isomorphism in $\V$. 
\end{Def}

Given an object $P$ in $\Phi^\dagger\K$, we can write it as a $\Phi$-limit of representables $P\cong \{ M ,ZH\}$. Thus to give $f\colon ZX\to P$ is the same as giving a cylinder $\bar{f}\colon  M \to \K(X,H-)$; moreover $\Phi^\dagger\K(ZX,ZA)\cong \K(X,A)$ and $\Phi^\dagger\K(P,ZA)\cong  M *\K(H-,A)$. As a consequence, an object $A$ of $\K$ is orthogonal with respect to $f\colon ZX\to P$ if and only if the map
$$  M *\K(H-,A)\to \K(X,A)$$%
induced by $\bar{f}\colon  M \to \K(X,H-)$ is an isomorphism.

\begin{Def}\label{Phi-virtualorth}
	Let $\Phi$ be a companion for $\Psi$. Given a $\V$-category
        $\K$ and a small collection $\M$ of morphisms in
        $\Phi^\dagger\K$ with representable domain, we
        denote by $\M^\perp$ the full subcategory of $\K$ spanned by
        the objects which are orthogonal with respect to each
        morphism in $\M$. We call {\em $\Phi$-virtual orthogonality class} any full subcategory of $\K$ which arises in this way.
\end{Def}

Equivalently, a $\Phi$-virtual orthogonality class in $\K$ is a
virtual orthogonality class of $\K$ for which the morphisms that
define it (can be chosen to) lie in $\Phi^\dagger\K\subseteq\P^\dagger \K$.

\begin{ese} Let $\K$ be a $\V$-category;
	\begin{itemize}
		\item if $\Psi=\P$ and $\Phi=\emptyset$, then a $\Phi$-virtual orthogonality class in $\K$ is an orthogonality class in the usual sense.
		\item if $\Psi=\emptyset$ and $\Phi=\P$, then a $\Phi$-virtual orthogonality class in $\K$ is a virtual orthogonality class in the sense of \cite[Section~4.5]{LT22:virtual}.
		\item if $\V=\bo{Set}$, $\Psi$ is the class of connected categories, and $\Phi$ that of the discrete ones, then a $\Phi$-virtual orthogonality class in $\K$ is a multiorthogonality class in the sense of \cite{Die80:articolo}.
	\end{itemize}
\end{ese}

Next we specialize the notions of $\mt C$-virtual completeness and $\mt C$-virtual left adjoint of Section~\ref{colimit-types} to the setting of this section; that is, for $\mt C=\mt C^\Phi$. Definition~\ref{C-virtual-col} becomes:

\begin{Def}
	Let $\Phi$ be a companion for $\Psi$. Given a $\V$-category $\A$, a weight $M\colon\C\op\to\V$ with small domain, and $H\colon\C\to\A$, we say that the {\em $\Phi$-virtual colimit} of $H$ weighted by $M$ exists in $\A$ if $[\C\op,\V](M,\A(H,-))\colon\A\to\V$ is a $\Phi$-colimit of representables. We say that $\A$ is {\em $\Phi$-virtually cocomplete} if it has all $\Phi$-virtual colimits.
\end{Def}

While similarly Definition~\ref{C-virtual-left} specializes to:

\begin{Def}
	Let $\Phi$ be a companion for $\Psi$. We say that a $\V$-functor $F\colon \A\to\K$ has a {\em $\Phi$-virtual left adjoint} if for each $X\in\K$ the $\V$-functor $\K(X,F-)$ is a $\Phi$-colimit of representables. If $F$ is fully faithful we say that $\A$ is {\em $\Phi$-virtually reflective} in $\K$.
\end{Def}

\begin{obs}
	This notion was considered before in the ordinary context. For instance, in \cite[Section~2]{tholen1984pro} Tholen says that a functor is a {\em right $\D$-pro adjoint} if, following our notation, it has a $\D$-virtual left adjoint; here $\D$ is a class of indexing categories.
\end{obs}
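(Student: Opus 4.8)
Although presented as a remark, the content to verify is that, in the unenriched setting, the paper's notion of $\Phi$-virtual left adjoint specializes exactly to Tholen's right $\D$-pro adjoint. The plan is to unwind both definitions into a single presheaf-theoretic condition and observe that they agree, including at the level of the adjoint data and not merely the existence of representations.

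First I would specialize the general framework to $\V=\bo{Set}$ and take $\Phi=\mt{C}^\Phi$ to be the colimit type determined by the class of weights associated with Tholen's class $\D$ of indexing categories; concretely $\Phi$ consists of the terminal weights $\Delta 1\colon\D_i\op\to\bo{Set}$ for the $\D_i$ in $\D$, so that a $\Phi$-colimit of representables in $[\A,\bo{Set}]$ is precisely a conical $\D$-indexed colimit of representables $\A(A_i,-)$. Under this dictionary the paper's definition reads: $F\colon\A\to\K$ has a $\Phi$-virtual left adjoint iff for each $X\in\K$ the functor $\K(X,F-)\colon\A\to\bo{Set}$ is a $\D$-colimit of representables, equivalently $\K(X,F-)\in\mt{C}_1^\dagger\A$.

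Next I would recall Tholen's definition from \cite[Section~2]{tholen1984pro}: $F$ is a right $\D$-pro adjoint when each $\K(X,F-)$ is representable by a $\D$-pro-object of $\A$. The key step is the standard fact that the canonical embedding of the category of $\D$-pro-objects into $[\A,\bo{Set}]$, sending a formal system $(A_i)$ to $\colim_i\A(A_i,-)$, is fully faithful with image exactly the $\D$-colimits of representables; in particular Tholen's hom-sets, computed as a $\lim\colim$ of hom-sets, coincide with those of $[\A,\bo{Set}]$. Hence ``$\K(X,F-)$ is $\D$-pro-representable'' is literally the condition $\K(X,F-)\in\mt{C}_1^\dagger\A$ above, and the functorial assignment $X\mapsto\K(X,F-)$ is exactly the relative left $V$-adjoint along $V\colon\A\hookrightarrow\mt{C}_1^\dagger\A$. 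The only real work, and the point most easily glossed over, is this bookkeeping reconciliation: one must check that $\D$-pro-objects and their morphisms embed faithfully as $\D$-colimits of representables and their morphisms in $[\A,\bo{Set}]$, so that the two formalisms carry the same adjoint data and not merely the same objectwise existence condition.
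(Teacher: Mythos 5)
Your verification is correct and amounts to precisely the justification the paper leaves implicit: this statement is a remark given without proof, and the identification rests, as you say, on the fully faithful embedding of the $\D$-pro-category into $[\A,\bo{Set}]$ (with Tholen's $\lim\colim$ hom formula matching natural transformations between conical $\D$-indexed colimits of representables), so that pro-representability of each $\K(X,F-)$ is literally membership in $\mt{C}_1^\dagger\A$ and the relative left $V$-adjoint data coincide. No further argument is needed beyond the bookkeeping you carry out.
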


To say that $\K(X,F-)$ is a $\Phi$-colimit of representables is to say
that it lies in $\Phi^\dagger\A$. The next result relies on our
assumption that $\Phi$ is presaturated.

\begin{prop}\label{phi-virtual-adj}
  For a $\V$-functor $F\colon\A\to\K$, the following are equivalent:
  \begin{enumerate}
  \item $F$ has a $\Phi$-virtual left adjoint;
  \item $F$ has relative left adjoint with respect to the inclusion
    $Z\colon\A\to\Phi^\dagger\A$;
  \item $\Phi^\dagger F\colon\Phi^\dagger\A\to\Phi^\dagger\K$ has a
    left adjoint. 
  \end{enumerate}
\end{prop}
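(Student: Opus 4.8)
The plan is to prove the cyclic chain of implications $(1)\Rightarrow(2)\Rightarrow(3)\Rightarrow(1)$, working throughout with the free completion $\Phi^\dagger\A=\Phi(\A\op)\op$ and exploiting the fact that $\Phi$ is presaturated, so that $\Phi^\dagger_1\A$ coincides with $\Phi^\dagger\A$ and the latter is genuinely closed under $\Phi$-limits of representables. The key observation tying everything together is that, via the Yoneda embedding $Z\colon\A\to\Phi^\dagger\A$, the statement ``$\K(X,F-)$ is a $\Phi$-colimit of representables'' is precisely the statement that $\K(X,F-)$ lies in $\Phi^\dagger\A$, as noted just before the proposition.

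\textbf{The implication $(1)\Leftrightarrow(2)$.} First I would unwind the definition of a relative left adjoint with respect to $Z$. To say $F$ has a relative left adjoint along $Z$ means that for each $X\in\K$ there is an object $\ell X\in\Phi^\dagger\A$ and a natural isomorphism
\begin{align*}
  \Phi^\dagger\A(\ell X,ZA)\cong\K(X,FA)
\end{align*}
in $A$. But since $Z$ is the Yoneda embedding into $\Phi^\dagger\A$, such a representing object $\ell X$ exists exactly when the $\V$-functor $\K(X,F-)\colon\A\to\V$ lies in the essential image of $\Phi^\dagger\A$ (under the identification of $\Phi^\dagger\A$ with a full subcategory of $\P^\dagger\A$, i.e.\ of small presheaves on $\A$). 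This is verbatim condition (1), that $\K(X,F-)$ be a $\Phi$-colimit of representables. So $(1)\Leftrightarrow(2)$ is essentially a translation via Yoneda, requiring only that one check naturality of the representation in $X$, which is routine.

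\textbf{The implication $(2)\Leftrightarrow(3)$.} This is where presaturation does the real work. Given a relative left adjoint $\ell\colon\K\to\Phi^\dagger\A$ along $Z$, I would extend it to a genuine left adjoint of $\Phi^\dagger F$ by freely adding $\Phi$-colimits: since $\Phi^\dagger\K$ is the free completion of $\K$ under $\Phi$-limits (equivalently, $\Phi(\K\op)$ is the free $\Phi$-cocompletion of $\K\op$), any $\V$-functor out of $\K$ into a $\Phi$-complete $\V$-category extends essentially uniquely to a $\Phi$-continuous $\V$-functor on $\Phi^\dagger\K$; dualizing, $\ell$ extends to a $\Phi^\dagger$-functor $\Phi^\dagger\A\leftarrow\Phi^\dagger\K$, and one checks this is left adjoint to $\Phi^\dagger F$ by verifying the hom-isomorphism on representables (where it reduces to the relative adjunction of (2)) and then propagating along $\Phi$-limits. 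Conversely, if $\Phi^\dagger F$ has a left adjoint $G$, then restricting along $Z\colon\A\to\Phi^\dagger\A$ and $Z\colon\K\to\Phi^\dagger\K$ recovers a relative left adjoint for $F$: one computes $\Phi^\dagger\A(G(ZX),ZA)\cong\Phi^\dagger\K(ZX,\Phi^\dagger F(ZA))\cong\Phi^\dagger\K(ZX,Z(FA))\cong\K(X,FA)$, using that $\Phi^\dagger F\circ Z\cong Z\circ F$ since $\Phi^\dagger F$ extends $F$. This shows $G(ZX)$ represents $\K(X,F-)$ relative to $Z$, giving (2).

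\textbf{The main obstacle} will be the forward direction $(2)\Rightarrow(3)$, namely producing the genuine adjoint on the completions from only the pointwise relative-adjoint data on representables. The subtlety is that $\Phi^\dagger\A$ need not be cocomplete, so I cannot simply left Kan extend $\ell$ in the naive way; instead the extension must be built using the universal property of $\Phi^\dagger\K$ as a free $\Phi$-limit completion, and verifying that the resulting $\Phi$-continuous extension is \emph{adjoint} (rather than merely some extension) requires checking the triangle identities or the hom-isomorphism beyond representables. This is exactly where presaturation of $\Phi$ is needed: it guarantees that $\Phi^\dagger\A$ is closed under $\Phi$-limits inside $\P^\dagger\A$, so that the extended functor genuinely lands in $\Phi^\dagger\A$ and the adjunction isomorphism, known on representables, extends along $\Phi$-limits because both $\Phi^\dagger F$ and the candidate left adjoint respect them.
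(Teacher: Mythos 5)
Your handling of $(1)\Leftrightarrow(2)$ and of $(3)\Rightarrow(2)$ is correct and agrees with the paper. The genuine gap is in your verification for $(2)\Rightarrow(3)$. Having built $L\colon\Phi^\dagger\K\to\Phi^\dagger\A$ as the $\Phi$-continuous extension of $\ell$ via the universal property, you claim that the adjunction isomorphism $\Phi^\dagger\A(LN,M)\cong\Phi^\dagger\K(N,\Phi^\dagger FM)$, known when $N$ and $M$ are representable, ``extends along $\Phi$-limits because both $\Phi^\dagger F$ and the candidate left adjoint respect them.'' This propagation is valid in the variable $M$ (hom into a $\Phi$-limit is the $\Phi$-limit of homs, and $\Phi^\dagger F$ is $\Phi$-continuous), but it fails in the variable $N$: a general $N\in\Phi^\dagger\K$ is a $\Phi$-\emph{limit} $\{G,ZS\}$ of representables, and both $\Phi^\dagger\A(L-,M)$ and $\Phi^\dagger\K(-,\Phi^\dagger FM)$ are contravariant homs, so knowing that $L$ preserves $\Phi$-limits only tells you $L\{G,ZS\}\cong\{G,\ell S\}$; it gives no way to compute maps \emph{out of} this limit from maps out of the objects $\ell S$. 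Homming out of a limit is not a limit (or colimit) of homs, so the isomorphism does not formally propagate in $N$, and this is exactly the step that needs an idea.

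The missing idea is the Yoneda reduction which is, in effect, the paper's entire proof. For representable $M=ZA$, both sides of the desired isomorphism are evaluations by Yoneda: $\Phi^\dagger\A(LN,ZA)\cong(LN)(A)$ and $\Phi^\dagger\K(N,Z(FA))\cong N(FA)$; these \emph{are} compatible with $\Phi$-limits in $N$, because by presaturation the $\Phi$-limits of $\Phi^\dagger\K$ and $\Phi^\dagger\A$ are computed pointwise as in $\P^\dagger$, and $L$ preserves them. Equivalently, and this is how the paper phrases it: by Yoneda any left adjoint to $\Phi^\dagger F$ must be given by restriction along $F$, so $(3)$ reduces to showing that $N\mapsto NF$ carries $\Phi^\dagger\K$ into $\Phi^\dagger\A$ --- which follows from $(1)$ plus presaturation, since restriction preserves the pointwise $\Phi$-colimits --- and adjointness is then inherited from the standard adjunction $\tx{Lan}_F\dashv(-\circ F)$ between the ambient presheaf categories rather than re-proved by propagation. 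With this repair your construction does produce the adjoint (indeed your $L$ coincides with restriction along $F$), but the Yoneda identification, not the limit-propagation, is the substance of the argument. A smaller point: presaturation is not needed ``so that the extended functor genuinely lands in $\Phi^\dagger\A$'' --- that is automatic from the universal property defining $L$ --- but rather for the pointwise computation of $\Phi$-limits just described, and to identify condition $(1)$ (``$\K(X,F-)$ is a $\Phi$-colimit of representables'') with membership in the free completion $\Phi^\dagger\A$ in the first place.
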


\begin{proof}
  To say that  $L\colon\K\to\Phi^\dagger\A$ is a relative left adjoint
  is to say that we have natural isomorphisms
  \[ (\Phi^\dagger\A)(LX,ZA) \cong \K(X,FA) \]
  so that by Yoneda $LX$ is necessarily given by $\K(X,F-)$, which is
  therefore in $\Phi^\dagger\A$; conversely, if $\K(X,F-)$ is in
  $\Phi^\dagger\A$ it provides the value at $X$ of a relative left
  adjoint. This proves $(1)\Leftrightarrow(2)$.

  By Yoneda, if $\Phi^\dagger\A\to\Phi^\dagger\K$ has a left adjoint,
  it must be given by restriction along $F$; conversely, if
  restriction along $F$ does send objects of $\Phi^\dagger\K$ to
  objects of $\Phi^\dagger\A$ then it defines a left adjoint.

  When this is the case, in particular the restriction $\K(X,F-)$ of
  $ZX$ along $F$ lies in $\Phi^\dagger\A$, giving $(3)\Rightarrow(1)$.

  The converse $(1)\Rightarrow(3)$ is where we use the assumption that
  $\Phi$ is presaturated. We must show that if $N\colon\K\to\V$ lies
  in $\Phi^\dagger\K$, then $NF\colon\A\to\V$ lies in
  $\Phi^\dagger\A$. As a $\V$-functor, $N$ is a $\Phi$-colimit of
  representables. Restriction along $F$ preserves colimits, thus it
  will suffice to show that the restriction $\K(X,F-)$ of a
  representable $\K(X,-)$ is a $\Phi$-colimit of representables, but
  this is true by (1). 
\end{proof}

\begin{ese}
	Let $F\colon\A\to\V$ be a $\V$-functor.
	\begin{itemize}
		\item if $\Psi=\P$ and $\Phi=\emptyset$, then a $\Phi$-virtual left adjoint for $F$ is a left adjoint.
		\item if $\Psi=\emptyset$ and $\Phi=\P$, then a $\Phi$-virtual left adjoint for $F$ is a virtual left adjoint.
		\item if $\V=\bo{Set}$, $\Psi$ is the class of connected categories, and $\Phi$ that of the discrete ones, then a $\Phi$-virtual left adjoint is a left multiadjoint \cite{Die80:articolo}.
		\item if $\V=\bo{Set}$, $\Psi$ is the class of finite categories, and $\Phi$ that of the filtered ones, then a $\Phi$-virtual left adjoint is a left pluri-adjoint \cite{solian1990pluri}.
	\end{itemize}
\end{ese}

\begin{teo}\label{sound-strongcharact}
	Let $\Phi$ and $\Psi$ be classes of weights, $\Phi$ be a companion for $\Psi$, and $\A$ be a full subcategory of some $[\C,\V]$. The following are equivalent: \begin{enumerate}\setlength\itemsep{0.25em}
		\item $\A$ is accessible, accessibly embedded, and closed under $\Psi$-limits in $[\C,\V]$;
		\item $\A$ is accessibly embedded and $\Phi$-virtually reflective in $[\C,\V]$;
		\item $\A$ is a $\Phi$-virtual orthogonality class in $[\C,\V]$.
	\end{enumerate}
\end{teo}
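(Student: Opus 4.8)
The plan is to read off the equivalence of (1) and (2) from the general Theorem~\ref{C-strong-charact}, and then to establish $(1)\Leftrightarrow(3)$ by hand using the explicit description of $\A$ from Proposition~\ref{prop:what-is-A}. For $(1)\Leftrightarrow(2)$ I would apply Theorem~\ref{C-strong-charact} with $\K=[\C,\V]$, which is complete and hence accessible with $\Psi$-limits, and with $\mt C=\mt C^\Phi$. Unwinding the specialised definition preceding this theorem, $\mt C^\Phi$-virtual reflectivity of $\A$ says exactly that each $\K(X,J-)$ lies in $(\mt C^\Phi)^\dagger_1\A=\Phi^\dagger\A$ (using presaturation), i.e.\ that $\A$ is $\Phi$-virtually reflective; thus (1) and (2) are precisely the two conditions of Theorem~\ref{C-strong-charact} in this instance, and it remains to link them with (3).

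For $(1)\Rightarrow(3)$ I would exploit that (1) already gives (2). Write $J\colon\A\hookrightarrow\K=[\C,\V]$ and $Z\colon\K\hookrightarrow\Phi^\dagger\K$ for the inclusions. By Proposition~\ref{prop:what-is-A} there is a regular cardinal $\alpha$ for which $L\in\K$ lies in $\A$ if and only if $ZL$ is orthogonal to the canonical maps $\eta_K\colon ZK\to\{\K(K,J-),ZJ\}$ for all $K\in\K_\alpha$. The crucial point is that these maps live in $\Phi^\dagger\K$: since (2) holds, $\K(K,J-)$ lies in $\Phi^\dagger\A$, so it is the $M_K$-weighted colimit in $[\A,\V]$ of representables $\A(H_K-,-)$ for some $M_K\in\Phi$ and $H_K\colon\D_K\to\A$; since weighted limits carry this colimit in the weight to a limit, $\{\K(K,J-),ZJ\}\cong\{M_K,ZJH_K\}$, which exhibits it as a $\Phi$-limit of representables, hence as an object of $\Phi^\dagger\K$. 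Thus each $\eta_K$ is a morphism of $\Phi^\dagger\K$ with representable domain $ZK$, and the family $\M=\{\eta_K\mid K\in\K_\alpha\}$ is small because $\K_\alpha$ is. Proposition~\ref{prop:what-is-A} then reads precisely as $\A=\M^\perp$, so $\A$ is a $\Phi$-virtual orthogonality class in the sense of Definition~\ref{Phi-virtualorth}.

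For $(3)\Rightarrow(1)$, suppose $\A=\M^\perp$ for a small family $\M$ of maps $f\colon ZX\to P$ of $\Phi^\dagger\K$ with representable domain. That $\A$ is accessible and accessibly embedded follows because a $\Phi$-virtual orthogonality class is in particular a virtual orthogonality class in the sense of \cite[Section~4.5]{LT22:virtual}, and such classes are accessible and accessibly embedded there. For closure under $\Psi$-limits I would argue one map at a time: writing $P\cong\{M,ZH\}$ with $M\in\Phi$, orthogonality of $A$ to $f$ amounts to the canonical $M*\K(H-,A)\to\K(X,A)$ being invertible, as recorded after Definition~\ref{Phi-orth}. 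Both $A\mapsto\K(X,A)$ and $A\mapsto M*\K(H-,A)$ preserve $\Psi$-limits: the former since representables preserve all limits, the latter since $\K(H-,-)$ preserves limits pointwise while $M*-$ preserves $\Psi$-limits because $M\in\Phi$ and, $\mt C^\Phi$ being compatible with $\Psi$, $\Phi$-colimits commute with $\Psi$-limits in $\V$ (condition (I) of Definition~\ref{companion}). Hence each single-map orthogonality subcategory is closed in $\K$ under $\Psi$-limits, and so is their intersection $\A$.

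I expect the main obstacle to be the step in $(1)\Rightarrow(3)$ verifying that the comparison maps of Proposition~\ref{prop:what-is-A} can be taken inside $\Phi^\dagger\K$: this rests on the identification $\{\K(K,J-),ZJ\}\cong\Phi^\dagger J(\K(K,J-))$, which requires care with the weight/limit-composition bookkeeping and the op-conventions, together with the input from (2) that $\K(K,J-)\in\Phi^\dagger\A$. The remaining work is either a direct invocation of the accessibility of virtual orthogonality classes from \cite{LT22:virtual} or the short compatibility computation securing $\Psi$-limit closure.
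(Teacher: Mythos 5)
Your proof is correct and follows essentially the same route as the paper: $(1)\Leftrightarrow(2)$ via Theorem~\ref{C-strong-charact}, then $(2)\Rightarrow(3)$ by showing the maps defining the virtual orthogonality class of \cite{LT22:virtual} actually lie in $\Phi^\dagger\K$, and $(3)\Rightarrow(1)$ by combining accessibility of virtual orthogonality classes with the commutativity of $\Phi$-colimits and $\Psi$-limits in $\V$. The differences are only presentational: where you expand $\{\K(K,J-),ZJ\}\cong\{M_K,ZJH_K\}$ directly from Proposition~\ref{prop:what-is-A} and the $\Phi$-colimit presentation of the weight, the paper identifies the same object as $Z_\K(\Phi^\dagger J)L'K$ via the unit of the adjunction between $\P^\dagger\K$ and $\P^\dagger\A$; and where you verify $\Psi$-limit closure one orthogonality map at a time, the paper packages the same computation as the pullback of the ordinary orthogonality class $\M^{\perp}\hookrightarrow\Phi^\dagger\K$ along the $\Psi$-limit-preserving inclusion $V_\K$.
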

\begin{proof}
	$(1)\Leftrightarrow (2)$ is a consequence of Theorem \ref{C-strong-charact}.
	
	$(2)\Rightarrow (3)$. Let $\K=[\C,\V]$ and denote by $L'$ the left $V_{\A}$-adjoint to $J$; in particular $Z_{\A}L'$ is a left adjoint to $J$ relative to $Z_{\A}V_{\A}$, which means that $\A$ is virtually reflective. Therefore we have:
	\begin{center}
		\begin{tikzpicture}[baseline=(current  bounding  box.south), scale=2]
			
			\node (a) at (2.2,0.8) {$\P^\dagger \A$};
			\node (b) at (2.2,0) {$\P^\dagger \K$};
			\node (c) at (1.1,0.8) {$\Phi^\dagger \A$};
			\node (d) at (1.1,0) {$\Phi^\dagger \K$};
			\node (e) at (2.28,0.4) {$\vdash$};
			\node (f) at (0,0.8) {$\A$};
			\node (g) at (0,0) {$\K$};
			
			\path[font=\scriptsize]
			
			(a) edge [bend left, <-] node [right] {$L$} (b)
			(a) edge [right hook->] node [left] {$R$} (b)
			(c) edge [right hook->] node [above] {$Z_{\A}$} (a)
			(d) edge [right hook->] node [below] {$Z_{\K}$} (b)
			(c) edge [right hook->] node [right] {$\Phi^\dagger J$}(d)
			(f) edge [right hook->] node [above] {$V_{\A}$} (c)
			(g) edge [right hook->] node [below] {$V_{\K}$} (d)
			(f) edge [right hook->] node [left] {$J$}(g)
			(g) edge [->] node [above] {$L'$}(c);
			
		\end{tikzpicture}	
	\end{center}
	where $Z_{\A}L'\cong LZ_{\K}V_{\K}$ since they are both left $(Z_{\A}V_{\A})$-adjoints to $J$.\\
	By the virtual case we know that $\A$ is the virtual orthogonality class defined by 
	$$ \M:=\{\eta_X\colon Z_{\K}V_{\K}X\to RLZ_{\K}V_{\K}X |\ X\in\K_{\alpha} \}. $$%
	for some $\alpha$, where $\eta$ is the unit of $L\dashv R$. We shall show that this is actually a $\Phi$-virtual orthogonality class. For each $X\in\K_{\alpha}$ we have 
	$$ RLZ_{\K}V_{\K}X\cong RZ_{\A}L'X\cong Z_{\K}(\Phi^\dagger J)L'X; $$%
	therefore $\M$ is contained in $\Phi^\dagger\K$ and coincides (up to isomorphism) with
	$$ \M':=\{\eta_X\colon V_{\K}X\to (\Phi^\dagger J)L'X |\ X\in\K_{\alpha} \}, $$%
	which exhibits $\A$ as a $\Phi$-virtual orthogonality class.
	
	$(3)\Rightarrow (1)$. $\A$ is accessible by \cite[Theorem~4.32]{LT22:virtual} because it is in particular a virtual orthogonality class. Let $\K=[\C,\V]$ and $\M$ be the set of arrows in $\Phi^\dagger \K$ defining $\A$; then $\A$ can be seen as the pullback
	\begin{center}
		\begin{tikzpicture}[baseline=(current  bounding  box.south), scale=2]
			
			\node (a0) at (0,0) {$\M^{\perp}$};
			\node (b0) at (0.9,0) {$\Phi^\dagger \K$};
			\node (c0) at (0,0.8) {$\A$};
			\node (d0) at (0.9,0.8) {$\K$};
			
			\path[font=\scriptsize]
			
			(a0) edge [right hook->] node [above] {} (b0)
			(c0) edge [right hook->] node [left] {} (a0)
			(d0) edge [right hook->] node [right] {$V_{\K}$} (b0)
			(c0) edge [right hook->] node [below] {} (d0);
			
		\end{tikzpicture}	
	\end{center}
	where $\M^{\perp}$ is the orthogonality class of $\Phi^\dagger \K$ in the usual sense. Now note that $\M^{\perp}$ is closed under all small limits that exist in $\Phi^\dagger \K$, and $V_{\K}$ preserves $\Psi$-limits since $\Phi$ is a companion for $\Psi$; it follows then that $\A$ is closed in $\K$ under $\Psi$-limits as well. \qedhere
	
\end{proof}

And for a general $\V$-category $\A$ we obtain the following.

\begin{teo}\label{sound-weakcharacterization}
	Let $\Phi$ and $\Psi$ be classes of weights for which $\Phi$ is a companion for $\Psi$. The following are equivalent for a $\V$-category $\A$:\begin{enumerate}\setlength\itemsep{0.25em}
		\item $\A$ is accessible with $\Psi$-limits;
		\item $\A$ is accessible and $\Phi^\dagger \A$ is cocomplete;
		\item $\A$ is accessible and $\Phi$-virtually cocomplete;
		\item $\A$ is accessibly embedded and $\Phi$-virtually reflective in some $[\C,\V]$;
		\item $\A$ is a $\Phi$-virtual orthogonality class in some $[\C,\V]$;
		\item $\A$ is the category of models of a limit/$\Phi$-colimit sketch.
	\end{enumerate}
\end{teo}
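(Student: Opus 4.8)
Most of the heavy lifting has already been done in the general theory, so the strategy is to reduce Theorem~\ref{sound-weakcharacterization} to the results already established. The six conditions split naturally into two groups. Conditions (1)--(4) are the specialisations to $\mt C = \mt C^\Phi$ of the four conditions in Theorem~\ref{relatice-C-charact}, once we know that $\Phi$ being a companion for $\Psi$ means exactly that $\mt C^\Phi$ is a companion for $\Psi$ (which is how the notation is set up in this section). So the equivalence $(1)\Leftrightarrow(2)\Leftrightarrow(3)\Leftrightarrow(4)$ should follow directly by invoking Theorem~\ref{relatice-C-charact}, after unwinding that $\mt C^\Phi{}_1^\dagger\A = \Phi^\dagger\A$ in this presaturated setting (this is the content of the opening remarks of Section~\ref{sound-companion}, where $\Phi^\dagger_1\A$ is identified with $\Phi^\dagger\A$), that $\mt C^\Phi$-virtual cocompleteness is $\Phi$-virtual cocompleteness, and that $\mt C^\Phi$-virtual reflectivity is $\Phi$-virtual reflectivity.

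\textbf{The new ingredients.} The genuinely new conditions are (5) and (6), which do not appear in Theorem~\ref{relatice-C-charact}. For $(5)$, I would bring in Theorem~\ref{sound-strongcharact}, which already establishes $(1)\Leftrightarrow(2)\Leftrightarrow(3)$ in the \emph{embedded} setting where $\A$ is a full subcategory of a fixed $[\C,\V]$; its condition (3) is precisely ``$\A$ is a $\Phi$-virtual orthogonality class in $[\C,\V]$''. So the tactic is: $(4)$ of the present theorem says $\A$ is accessibly embedded and $\Phi$-virtually reflective in \emph{some} $[\C,\V]$, which is exactly condition (2) of Theorem~\ref{sound-strongcharact} for that particular $\C$; then Theorem~\ref{sound-strongcharact} hands us $(5)$, and conversely. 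I expect to close the loop $(5)\Rightarrow(4)$ the same way, reading off condition (2) of Theorem~\ref{sound-strongcharact} from its condition (3).

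\textbf{The sketch condition.} For $(6)$, the sketch characterisation, I would route through the $\mt C$-model machinery of Section~\ref{sketches-C}. By Observation~\ref{Phi-colim-sketch}, when $\mt C = \mt C^\Phi$ the $\mt C^\Phi$-models of a general sketch coincide with the ordinary models of a limit/$\Phi$-colimit sketch, so ``category of models of a limit/$\Phi$-colimit sketch'' is literally ``$\V$-category of $\mt C^\Phi$-models of a sketch''. The equivalence $(1)\Leftrightarrow(6)$ should then follow from Theorem~\ref{Psi-sketch}, provided $\mt C^\Phi$ is an \emph{accessible} companion, which is exactly item (1) of Example~\ref{acccomp}, where $\mt C^\Phi_M$ is seen to be either empty or a whole presheaf $\V$-category and hence trivially accessible and accessibly embedded.

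\textbf{The main obstacle.} The one point requiring genuine care, rather than mere citation, is the cross-over from ``in \emph{some} $[\C,\V]$'' (conditions (4) and (5) quantify existentially over $\C$) to the intrinsic conditions (1)--(3). Theorem~\ref{sound-strongcharact} is stated with $\C$ fixed, so I must exhibit a concrete $\C$ realising the embedding. As in the proof of Theorem~\ref{relative-quasiflattheorem}, the natural choice when $\A$ is $\alpha$-accessible is $\C = \A_\alpha{}^{\tx{op}}$, giving the canonical accessible embedding $\A \hookrightarrow [\A_\alpha{}^{\tx{op}},\V]$; I would then verify that this embedding inherits $\Phi$-virtual reflectivity from the abstract data supplied by $(3)$ of the present theorem. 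Once this choice is pinned down the whole cycle $(1)\Rightarrow(2)\Rightarrow(3)\Rightarrow(4)\Rightarrow(5)\Rightarrow(6)\Rightarrow(1)$ closes by assembling the cited results, with no substantial computation remaining.
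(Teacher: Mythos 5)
Your proposal is correct and follows essentially the same route as the paper's proof: note that $\mt C^\Phi$ is an accessible companion (each $\mt C^\Phi_M$ being empty or all of $[\C,\V]$), get $(1)\Leftrightarrow(2)\Leftrightarrow(3)\Leftrightarrow(4)$ from Theorem~\ref{relatice-C-charact}, link $(4)$ and $(5)$ via Theorem~\ref{sound-strongcharact}, and get $(1)\Leftrightarrow(6)$ from Theorem~\ref{Psi-sketch} together with Remark~\ref{Phi-colim-sketch}. Your ``main obstacle'' is not actually an obstacle: the existential quantification over $\C$ already appears in condition (4) of Theorem~\ref{relatice-C-charact} (whose proof constructs the embedding into $[\A_\alpha\op,\V]$), so Theorem~\ref{sound-strongcharact} can simply be applied to whichever $\C$ witnesses it, exactly as the paper does.
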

\begin{proof}
	Note that $\Phi$ is an accessible companion for $\Psi$ since every $\mt C^\Phi$ is either empty or the whole presheaf $\V$-category. Thus $(1)\Rightarrow (2)\Rightarrow (3)\Rightarrow (4)$ are a consequence of Theorem~\ref{relatice-C-charact} and $(4)\Rightarrow (5)\Rightarrow (1)$ follow from Theorem~\ref{sound-strongcharact} above. Finally, $(1)\Leftrightarrow (6)$ follows from Theorem~\ref{Psi-sketch} and Remark~\ref{Phi-colim-sketch}.
\end{proof}

\begin{obs}
  Although on the face of it one could define virtual orthogonality
  with respect to any companion, we have not done so since it does not seem to be possible to generalize 
  Theorems~\ref{sound-strongcharact}
  and~\ref{sound-weakcharacterization} to the setting of more general
  companions. For example if we consider the colimit type $\mt F$ associated
  to wide pullbacks, then $\mt F$-virtual orthogonality in a locally
  presentable category would coincide with multiorthogonality since
  $\mt F_1^\dagger \K=\tx{Fam}^\dagger \K$ whenever $\K$ is a category
  of presheaves (if $\K$ has a terminal object, all free groupoid
  actions based in $\K$ must be indexed on the discrete
  groupoids). This would be in contrast with the existence of locally
  polypresentable categories which are not locally
  multipresentable. 
\end{obs}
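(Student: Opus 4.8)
The content to be justified is the displayed claim that $\mt F_1^\dagger\K=\tx{Fam}^\dagger\K$ whenever $\K$ is a presheaf category, from which the stated obstruction follows. The plan is first to unwind $\mt F_1^\dagger\K$ using Definition~\ref{C_1A} applied to $\K\op$. Since $\mt F_M$ is non-empty only when $M=\Delta 1$ for a groupoid $\C$ (Example~\ref{freegroupoids}), every non-representable object of $\mt F_1^\dagger\K$ is the dual of a conical colimit $\colim_\C YH$ of representables in $\P(\K\op)$, for some groupoid $\C$ and $H\colon\C\to\K\op$ such that the groupoid action $\K(H-,A)\colon\C\to\bo{Set}$ is free for every $A\in\K$.

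The key step is a terminal-object collapse. I would show that if $\K$ has a terminal object $1$, then freeness forces the indexing groupoid to have no distinct parallel pair, hence to be equivalent to a discrete category. Indeed, testing freeness at the single object $A=1$, for any $f\neq g\colon X\to X'$ in $\C$ the two induced maps $\K(HX,1)\rightrightarrows\K(HX',1)$ are maps of singletons, so their equalizer is the whole singleton rather than the empty set demanded by Example~\ref{freegroupoids}; this contradicts freeness. Thus $\C$ has at most one morphism between any two objects, and being a groupoid it is equivalent to the discrete category $\pi_0\C$. Over a discrete indexing the conical colimit $\colim_\C YH$ is a coproduct of representables, so its dual is a product of representables, that is, an object of $\tx{Fam}^\dagger\K$; this gives $\mt F_1^\dagger\K\subseteq\tx{Fam}^\dagger\K$. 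The reverse inclusion is immediate, since any coproduct of representables arises from a discrete groupoid, for which the freeness condition is vacuous. Hence $\mt F_1^\dagger\K=\tx{Fam}^\dagger\K$ for every $\K$ with a terminal object, and in particular for every presheaf category.

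It then remains to draw the consequence. Since $\tx{Fam}^\dagger\K=\Phi^\dagger\K$ for the companion $\Phi$ of discrete categories, the putative $\mt F$-virtual orthogonality classes of $\K=[\C,\V]$ would be exactly its $\Phi$-virtual orthogonality classes, namely its multiorthogonality classes. An $\mt F$-analogue of Theorems~\ref{sound-strongcharact} and~\ref{sound-weakcharacterization} would therefore identify the accessible categories with wide pullbacks (the locally polypresentable ones) with the accessible categories with connected limits (the locally multipresentable ones). As these two classes are known to differ, no such analogue can hold, which is precisely the reason orthogonality is not developed for general companions.

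The main obstacle I anticipate is purely bookkeeping: keeping the variance straight through the dualization $\mt F_1^\dagger\K=\mt F_1(\K\op)\op$ and the passage between colimits of representables in $\P(\K\op)$ and products of representables in $\P^\dagger\K$. Once the correct formulation of a \emph{free groupoid action based in $\K$} is fixed, the decisive observation---that demanding freeness already at the single test object $A=1$ forbids any non-identity parallel pair---is short.
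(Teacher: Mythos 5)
Your proposal is correct and follows the same route the paper indicates: the remark's parenthetical justification is exactly your terminal-object collapse, and your test of freeness at $A=1$ (singleton hom-sets force the equalizer of any distinct parallel pair to be nonempty, so the indexing groupoid has at most one morphism between any two objects and is equivalent to a discrete category) is the intended expansion of it. The remaining steps --- coproducts of representables giving $\mt F_1^\dagger\K=\tx{Fam}^\dagger\K$, hence $\mt F$-virtual orthogonality collapsing to multiorthogonality, contradicting the existence of locally polypresentable categories that are not locally multipresentable --- match the paper's argument verbatim.
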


\begin{obs}
	Given a class of weights $\Phi$ denote by $\Phi^-$ the class of weights whose indexed limits commute in $\V$ with $\Phi$-colimits. As observed in \cite[Example~5.8]{KS05:articolo}, it may not be the case that $\Psi^{+-}$ is equal to the saturation $\Psi^*$ of $\Psi$, since $\Psi^{+-}$ always contains the absolute weights, but $\Psi^*$ may not. In fact that is all that can go wrong when $\Psi$ is weakly sound, as we now show. 
	
	Let $\Psi$ be a locally small weakly sound class. Then $\Psi^+$ is both a companion for $\Psi$ and for $\Psi^{+-}$; therefore by Theorem~\ref{sound-weakcharacterization} above it follows that an accessible $\V$-category $\A$ is $\Psi$-complete if and only if it is $\Psi^{+-}$-complete. When $\A$ is small, by \cite[Theorem~3.7]{Ten2022continuity}, this is saying that a small $\V$-category is Cauchy complete and $\Psi$-complete if and only if it is $\Psi^{+-}$-complete (remember that $\Psi^{+-}$ already contains the absolute weights). Similarly, a $\V$-functor from such a $\V$-category into $\V$ is $\Psi$-continuous if and only if it is $\Psi^{+-}$-continuous (since they both correspond to $\Psi^+$-colimits of representables). Thus if we denote by $\Q$ the class of absolute weights we obtain the equality
	$$ \Psi^{+-}=(\Psi\cup \Q)^* $$% 
	expressing $\Psi^{+-}$ as the saturation of $\Psi$ together with $\Q$.
\end{obs}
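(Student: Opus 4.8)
The plan is to establish the two inclusions of $\Psi^{+-}=(\Psi\cup\Q)^*$ separately. One of them is immediate: absolute weights commute in $\V$ with all colimits, so $\Q\subseteq\Psi^{+-}$, and each $N\in\Psi$ has its limits commuting with $\Psi^+$-colimits by the very definition of $\Psi^+$ as the class of $\Psi$-flat weights, so $\Psi\subseteq\Psi^{+-}$ as well. Since a commutation class of the form $\Phi^-$ is always saturated, $\Psi^{+-}$ is saturated, and therefore the inclusion $\Psi\cup\Q\subseteq\Psi^{+-}$ already gives $(\Psi\cup\Q)^*\subseteq\Psi^{+-}$. The substance is the reverse inclusion, which I would obtain by showing that $\Psi^{+-}$ and $\Psi\cup\Q$ determine the same complete small $\V$-categories and the same continuous $\V$-functors into $\V$, and then appealing to the fact that a saturated class is pinned down by exactly this data \cite[Theorem~3.7]{Ten2022continuity}.

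The first step is to verify that $\Psi^+$ is a companion not only for $\Psi$ but also for $\Psi^{+-}$. Compatibility is automatic, since $\Psi^+$-colimits commute in $\V$ with $\Psi^{+-}$-limits by the definition of $\Psi^{+-}$. For condition (II), take $\A$ to be $\Psi^{+-}$-complete and virtually cocomplete and $F\colon\A\to\V$ small and $\Psi^{+-}$-continuous; as $\Psi\subseteq\Psi^{+-}$, the $\V$-category $\A$ is $\Psi$-complete and $F$ is $\Psi$-continuous, so $F$ is a $\Psi^+$-colimit of representables because $\Psi^+$ is already a companion for $\Psi$ (Proposition~\ref{wsound comp}). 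Hence $\Psi^+$ is a companion for $\Psi^{+-}$.

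Next I would apply Theorem~\ref{sound-weakcharacterization} to the two pairs $(\Psi,\Psi^+)$ and $(\Psi^{+-},\Psi^+)$. In both cases the third condition is literally the same --- that $\A$ be accessible and $\Psi^+$-virtually cocomplete --- so an accessible $\V$-category is $\Psi$-complete if and only if it is $\Psi^{+-}$-complete. Restricting to small $\A$, where accessibility is the same as Cauchy completeness, and noting that $\Q\subseteq\Psi^{+-}$ forces $\Psi^{+-}$-completeness to entail Cauchy completeness, this says that a small $\A$ is $\Psi$-complete and Cauchy complete if and only if it is $\Psi^{+-}$-complete; since $(\Psi\cup\Q)$-completeness is precisely $\Psi$-completeness together with Cauchy completeness, the two classes have the same complete small $\V$-categories. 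For continuity I would note that on a small Cauchy-complete $\Psi$-complete $\A$ --- which is then also $\Psi^{+-}$-complete and virtually cocomplete --- the companion property makes both $\Psi$-continuity and $\Psi^{+-}$-continuity of an $F\colon\A\to\V$ equivalent to $F$ being a $\Psi^+$-colimit of representables, while $\Q$-continuity is automatic; thus the two classes also share the same continuous $\V$-functors.

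With these two equivalences in hand, \cite[Theorem~3.7]{Ten2022continuity} --- which recognizes a saturated class from the complete small Cauchy-complete $\V$-categories and the continuous $\V$-functors it determines --- yields the remaining inclusion, and hence the equality $\Psi^{+-}=(\Psi\cup\Q)^*$. The step I expect to be most delicate is the bookkeeping around Cauchy completeness: one must check that the translation from the accessible statement to the small-category statement is exactly what inserts the class $\Q$, which is the discrepancy isolated in \cite[Example~5.8]{KS05:articolo}, and, since $\Psi$ is only assumed locally small rather than small, that the reductions to small $\V$-categories (as in Remark~\ref{small-psi}) are legitimate when invoking \cite[Theorem~3.7]{Ten2022continuity}.
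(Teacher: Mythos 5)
Your proposal is correct and follows essentially the same route as the paper: showing that $\Psi^+$ is a companion for both $\Psi$ and $\Psi^{+-}$, applying Theorem~\ref{sound-weakcharacterization} to both pairs (whose condition involving $\Psi^+$-virtual cocompleteness is identical for the two), restricting to small Cauchy complete $\V$-categories, and invoking \cite[Theorem~3.7]{Ten2022continuity} to pass from the completeness/continuity data to the equality of saturated classes. The only differences are presentational: you make explicit the verification of the companion conditions (compatibility being automatic from the definition of $\Psi^{+-}$, and condition (II) following from $\Psi\subseteq\Psi^{+-}$) and the easy inclusion $(\Psi\cup\Q)^*\subseteq\Psi^{+-}$ via saturatedness of commutation classes, both of which the paper leaves implicit.
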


\subsection{Wide Pullbacks}\label{widepbks}

In this section we let $\V=\bo{Set}$ and let $\Psi$ consist of the weights for wide pullbacks. The colimit type $\mt F$ we consider is that consisting of those groupoid-indexed diagrams in $\bo{Set}$ which induce a free action in the sense of Example~\ref{freegroupoids}. Equivalently, a groupoid indexed functor $G\colon \G\to\bo{Set}$ is in $\mt F$ if and only if, writing $\G$ as a coproduct of groups $(\G_i)_i$, for every non identity $g\in\G_i$ the function $G(g)$ has no fixed points.   

Given a category $\A$, the category $\mt F_1\A$ of Definition~\ref{C_1A} can be described as the full subcategory of $[\A\op,\bo{Set}]$ with objects those functors $F\colon\A\op\to\bo{Set}$ for which there exists a groupoid $\G$ and a diagram $H\colon\G\to\A$ such that $H$ is pointwise-free (that is, $\A(A,H-)$ is a free action for any $A\in\A$) and $F$ is the colimit of $YH\colon\G\to[\A\op,\bo{Set}]$; the representable functors are already included in such description (take $\G=1$ and $H$ pointing at the representing object). Equivalently, $\mt F_1\A$ is the closure under coproducts of the full subcategory of $[\A\op,\bo{Set}]$ spanned by the colimits of pointwise-free group actions on the representables. For further characterizations of $\mt F_1\A$, see Remark~\ref{poly-initial} below.

Recall the notion of polylimit in a category:

\begin{Def}{\cite[Definition~0.12]{Lam89:PhD}}
	Let $H\colon\C\to\A$ be a diagram in a category $\A$. A {\em polylimit} of $H$ is given by a family of cones $(c_i\colon\Delta A_i\to H)_{i\in I}$, with $A_i\in\A$, with the following property: for any cone $c\colon\Delta C\to H$ there exists a unique $i\in I$ and a map $f\colon C\to A_i$ in $\A$ such that $c=c_i\circ \Delta f$, moreover $f$ is unique up to unique automorphism of $A_i$.
\end{Def}

A more explicit description of $\mt F_1\A$ can be given as follows:

\begin{obs}\label{poly-initial}
	A functor $F\colon\A\op\to \bo{Set}$ lies in $\mt F_1\A$ if and only if its category of elements $\tx{El}(F)=\A/F$ has a poly-terminal family. Indeed, the objects and automorphism defining a poly-terminal family in $\tx{El}(F)$ form a groupoid which is a final subcategory of $\tx{El}(F)$, so that $F$ is the colimit of this groupoid action; moreover, such action is representably free by the polylimit property of the family (it can also be seen as a consequence of the arguments in Section~\ref{set-comp}). Conversely, let $F\in\mt F_1\A$ and consider the set of objects of $\A$ defining $F$ as the colimit of a free groupoid action between representables; then these determine a poly-terminal family in $\tx{El}(F)$; this is essentially the content of the proof of \cite[Proposition~4.2]{HT96qc:articolo} (see also Section~\ref{Hu-tholen}). For this reason, the name {\em polyrepresentable} was suggested in \cite{hebert1997syntactic} for presheaves lying in $\mt F_1\A$.
\end{obs}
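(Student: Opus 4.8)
The plan is to present $F$ as a colimit indexed by its own category of elements and then to translate the universal property of a poly-terminal family into pointwise-freeness of the resulting groupoid action. Throughout I would use the density presentation $F\cong\colim_{(A,x)\in\tx{El}(F)}\A(-,A)$, that is, $F$ is the colimit of $Yq\colon\tx{El}(F)\to\P\A$ with $q\colon\tx{El}(F)\to\A$ the projection; and I would recall that a poly-terminal family, being the polylimit of the empty diagram, is a family $(E_i)_{i\in I}$ of objects of $\tx{El}(F)$ such that each object $C$ admits a morphism $C\to E_i$ for a unique index $i$, this morphism being unique up to a unique automorphism of $E_i$.

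For the forward implication I would start from a poly-terminal family $(E_i)_{i\in I}$ and let $\G\hookrightarrow\tx{El}(F)$ be the full subcategory on the $E_i$. First I would check that $\G$ is a coproduct of groups: uniqueness of the index forces $\tx{El}(F)(E_i,E_j)=\emptyset$ for $i\neq j$, while comparing an endomorphism of $E_i$ with $\mathrm{id}_{E_i}$ through the uniqueness-up-to-automorphism clause shows every endomorphism is invertible. Next I would prove that $\G\hookrightarrow\tx{El}(F)$ is final, since for each $C$ the category of morphisms from $C$ into the family is nonempty and connected --- exactly the content of the poly-terminal property. Finality yields $F\cong\colim_\G YH$ for $H:=q|_\G\colon\G\to\A$, and it remains to see that $H$ is pointwise-free: given $\phi\colon A\to H(E_i)$ and an automorphism $\alpha$ of $E_i$ with $(H\alpha)\circ\phi=\phi$, I would regard $\phi$ as a morphism of $\tx{El}(F)$ out of $(A,(F\phi)(x_i))$ and invoke uniqueness of the factoring automorphism to force $\alpha=\mathrm{id}$. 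This is precisely the statement that $F\cong\Delta1*YH$ with $\A(A,H-)\in\mt F_{\Delta1}$ for all $A$, i.e. $F\in\mt F_1\A$.

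For the converse I would write $F\cong\colim_\G YH$ with $\G=\coprod_i\G_i$ a coproduct of groups, $A_i$ the value of $H$ on the $i$-th component, and $H$ pointwise-free. As colimits in $\P\A$ are pointwise, $FB\cong\coprod_i\A(B,A_i)/\G_i$, the quotient by the post-composition action. I would take $E_i:=(A_i,\xi_i)$ with $\xi_i$ the class of $\mathrm{id}_{A_i}$, and check poly-terminality directly: an element of $FB$ is an orbit $[\phi]$ of some $\phi\colon B\to A_i$, the morphisms $(B,[\phi])\to E_j$ exist only for $j=i$ and are precisely the $(Hg)\circ\phi$ with $g\in\G_i$, and pointwise-freeness makes $\G_i$ act freely and transitively on them --- which is the existence-and-uniqueness-up-to-unique-automorphism requirement (and also gives $\mathrm{Aut}_{\tx{El}(F)}(E_i)\cong\G_i$).

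I expect the main obstacle to be keeping straight the dictionary between automorphisms of $E_i$ inside $\tx{El}(F)$, the abstract group $\G_i$, and pointwise-freeness of $H$: both implications turn on reading ``unique up to a unique automorphism'' as ``a group acts freely and transitively on a hom-set'', and conversely. Once this is set up, finality and the pointwise computation of the colimit are routine. As noted in the statement, the converse is essentially \cite[Proposition~4.2]{HT96qc:articolo}; alternatively, both directions can be extracted from the analysis of initial objects carried out in Section~\ref{set-comp}.
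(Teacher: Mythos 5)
Your proposal is correct and takes essentially the same route as the paper's own sketched argument: the poly-terminal family forms a groupoid that is final in $\tx{El}(F)$, over which $F$ is the colimit of a representably free action of representables, and conversely the objects exhibiting $F\in\mt F_1\A$ as such a colimit yield a poly-terminal family, exactly as the paper indicates via \cite[Proposition~4.2]{HT96qc:articolo}. The details you supply --- the orbit computation $FB\cong\coprod_i\A(B,A_i)/\G_i$ and the dictionary between ``unique up to unique automorphism'' and free transitive actions of $\tx{Aut}(E_i)\cong\G_i$ --- are precisely what the paper leaves implicit or delegates to the cited reference.
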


\begin{obs}
	 An alternative characterization of polyrepresentable presheaves is possible \cite{Lam89:PhD} using the idea of strict generic objects. An object $G$ of a category $\C$ is said to be {\em strict generic} if $\C(G,f)$ is invertible, for all $f\colon A\to B$ in $\C$. The category $\C$ is said to have {\em enough strict generic objects} if there is a small set $\G$ of strict generic objects with the property that for any object $C$ in $\C$ there is a map $G\to C$ for some $G$ in $\G$. Then $\C$ has enough strict generic objects if and only if it has a polyinitial family. Thus $F\colon\A\op\to\bo{Set}$ is polyrepresentable if and only if $\tx{El}(F)\op$ has enough strict generic objects. 
\end{obs}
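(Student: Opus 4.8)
The plan is to prove the two biconditionals in turn, with the heart of the matter being a torsor-theoretic analysis of strict generic objects. Throughout, fix a category $\C$ and recall that $G$ is strict generic exactly when $\C(G,-)\colon\C\to\bo{Set}$ sends every morphism to a bijection. First I would extract the basic structure of such a $G$. Since $\C(G,-)$ inverts every endomorphism of $G$, left multiplication $L_e\colon x\mapsto e\circ x$ is a bijection of the monoid $\C(G,G)$ for each $e$; a one-line monoid argument (surjectivity of every left translation forces two-sided inverses) then shows $\C(G,G)=\tx{Aut}(G)$ is a group. Next, because $\C(G,-)$ inverts each morphism in a zigzag, $\C(G,C)\cong\C(G,G)$ is nonempty precisely when $C$ lies in the connected component of $G$, and empty otherwise. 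Finally, for such a $C$, picking any $g\colon G\to C$ the strict generic condition makes postcomposition $\C(G,g)\colon\C(G,G)\to\C(G,C)$ a bijection; as $\beta\mapsto g\circ\beta$ intertwines right multiplication on $\tx{Aut}(G)$ with precomposition on $\C(G,C)$, it exhibits $\C(G,C)$ as an $\tx{Aut}(G)$-torsor. This torsor description is exactly the ``map unique up to unique automorphism'' clause in the definition of a poly-initial family, and it is the technical core of everything that follows.

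For the direction from enough strict generic objects to a poly-initial family, I would first note that any two strict generic objects in the same connected component are isomorphic: the bijections above produce maps $u\colon G\to G'$ and $v\colon G'\to G$ whose composites are endomorphisms, hence automorphisms, so $u$ is invertible. Since every object receives a map from some member of the small set $\G$, every connected component contains a strict generic object; choosing one representative $A_i$ per component yields a small family. The torsor analysis then gives, for each object $C$, a unique index $i$ (the component of $C$) with $\C(A_i,C)\neq\emptyset$ and exhibits $\C(A_i,C)$ as an $\tx{Aut}(A_i)$-torsor, which is precisely poly-initiality.

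Conversely, given a poly-initial family $(A_i)_{i\in I}$, the assignment $C\mapsto\iota(C)$ sending $C$ to its unique index is constant on connected components, since a map $X\to Y$ transports a map $A_{\iota(X)}\to X$ to one $A_{\iota(X)}\to Y$; hence the components of $\C$ are indexed by $I$ with $A_i$ in the $i$-th. Each $A_i$ is then strict generic: for $f\colon X\to Y$ both $\C(A_i,X)$ and $\C(A_i,Y)$ are empty unless $X,Y$ lie in the $i$-th component, in which case $\C(A_i,f)$ is an $\tx{Aut}(A_i)$-equivariant map of torsors and therefore a bijection. Taking $\G=\{A_i\}$ gives enough strict generic objects.

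The last sentence of the remark is then immediate: by Observation~\ref{poly-initial}, $F\colon\A\op\to\bo{Set}$ is polyrepresentable iff $\tx{El}(F)$ has a poly-terminal family, which is the same as a poly-initial family in $\tx{El}(F)\op$, and by the equivalence just established this holds iff $\tx{El}(F)\op$ has enough strict generic objects. The main obstacle is the first paragraph --- correctly deducing the group and torsor structure from the bare strict-genericity condition and matching the torsor language with the ``unique up to unique automorphism'' phrasing; once that dictionary is in place, both biconditionals reduce to bookkeeping with connected components.
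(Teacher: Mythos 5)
Your proof is correct, but there is nothing in the paper to compare it against: the remark is stated without proof, as a known fact credited to Lamarche's thesis \cite{Lam89:PhD}, so your argument supplies a verification that the paper simply omits. The core of what you add is the torsor dictionary, and it is sound: strict genericity of $G$ makes $\C(G,G)$ a group (surjectivity of every left translation gives right inverses, hence two-sided ones), makes $\C(G,C)$ nonempty precisely when $C$ lies in the connected component of $G$, and makes each nonempty $\C(G,C)$ a free transitive right $\tx{Aut}(G)$-set under precomposition --- which is exactly the ``unique up to unique automorphism of $A_i$'' clause in the paper's definition of polylimit, read for the empty diagram. The bookkeeping in both directions is also handled correctly: the nonempty components of $\C$ form a small set because each one contains a member of the small set $\G$, so choosing one strict generic per component yields a small polyinitial family (the isomorphism of two strict generics in a common component is a pleasant observation but not actually needed here); conversely, the index assignment $\iota$ is constant on connected components by uniqueness of the index, each $A_i$ satisfies $\iota(A_i)=i$ so distinct indices give distinct components, and an $\tx{Aut}(A_i)$-equivariant map between torsors is automatically a bijection, so each $A_i$ is strict generic and the family witnesses enough strict generics. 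The final reduction via Remark~\ref{poly-initial} (polyrepresentable iff $\tx{El}(F)$ has a polyterminal family, i.e.\ iff $\tx{El}(F)\op$ has a polyinitial family) is precisely how the last sentence of the remark is meant to follow.
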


\begin{prop}\label{polycomplete}
	Let $\A$ be a category, $V\colon \A\hookrightarrow\mt F_1\A$ be the inclusion, and $H\colon \C\to\A$ be a diagram. Then $H$ has a polylimit in $\A$ if and only if $VH$ has a limit in $\mt F_1\A$. In particular then $\A$ has ($\alpha$-small) polylimits if and only if $\mt F_1\A$ has ($\alpha$-small) limits of representables.
\end{prop}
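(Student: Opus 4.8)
The plan is to compare the limit of $VH$ taken in $\mt F_1\A$ with the pointwise limit of $YH\colon\C\to[\A\op,\bo{Set}]$, and then to recognize the latter via Observation~\ref{poly-initial}. Write $F_H\colon\A\op\to\bo{Set}$ for the pointwise limit of $YH$ in the (complete) presheaf category $[\A\op,\bo{Set}]$; explicitly $F_H(A)=[\C,\bo{Set}](\Delta 1,\A(A,H-))$ is the set of cones over $H$ with vertex $A$, so that $\tx{El}(F_H)$ is exactly the category of cones over $H$, a morphism $(C,c)\to(C',c')$ being a map $C\to C'$ in $\A$ that is compatible with the cones. Under this identification a poly-terminal family in $\tx{El}(F_H)$ is literally a polylimit of $H$ in the sense of the definition above. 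Since $\mt F_1\A$ is a full (replete) subcategory of $\P\A\subseteq[\A\op,\bo{Set}]$ containing the representables, the whole statement will reduce to the single equivalence
\[ VH \text{ has a limit in }\mt F_1\A \iff F_H\in\mt F_1\A, \]
after which Observation~\ref{poly-initial} turns the right-hand side into ``$\tx{El}(F_H)$ has a poly-terminal family'', i.e.\ ``$H$ has a polylimit''.

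One direction of this equivalence is formal: if $F_H\in\mt F_1\A$ then, being the limit of $YH$ in the ambient category $[\A\op,\bo{Set}]$ while lying in the full subcategory $\mt F_1\A$, it is automatically the limit of $VH$ in $\mt F_1\A$. The step that needs care — and which I expect to be the main obstacle — is the converse: a limit computed in a full subcategory need not agree with the ambient pointwise limit, so I must show that whenever $L:=\lim^{\mt F_1\A}VH$ exists it is forced to be $F_H$. Here I would exploit that the diagram lands among the representables. For each $A\in\A$ the Yoneda lemma gives $\mt F_1\A(YA,L)\cong L(A)$ and $\mt F_1\A(YA,YHC)\cong\A(A,HC)$, so applying the universal property of $L$ to the representable test object $YA$ yields
\[ L(A)\cong\mt F_1\A(YA,L)\cong\lim\nolimits_{C}\mt F_1\A(YA,YHC)\cong\lim\nolimits_C\A(A,HC)=F_H(A), \]
naturally in $A$, since all these isomorphisms are induced by the universal cone. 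Thus $L\cong F_H$ in $[\A\op,\bo{Set}]$, and as $L\in\mt F_1\A$ with $\mt F_1\A$ replete we conclude $F_H\in\mt F_1\A$ (in particular $F_H$ is small, as it must be to lie in $\P\A$).

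With the displayed equivalence in hand the proof concludes quickly. By Observation~\ref{poly-initial}, $F_H\in\mt F_1\A$ if and only if $\tx{El}(F_H)$ admits a poly-terminal family; by the identification of $\tx{El}(F_H)$ with the category of cones, such a family is precisely a polylimit of $H$. Chaining the equivalences gives that $VH$ has a limit in $\mt F_1\A$ exactly when $H$ has a polylimit in $\A$, which is the first assertion. The final ``in particular'' is then immediate by quantification: letting $H$ range over all diagrams whose domain $\C$ is $\alpha$-small, $\A$ has all $\alpha$-small polylimits if and only if $\mt F_1\A$ has all $\alpha$-small limits of representables. A small point worth recording along the way is that a poly-terminal family, being indexed by a small set, already exhibits $F_H$ as a small colimit of representables, so no separate smallness verification is required.
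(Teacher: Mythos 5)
Your proof is correct, but it takes a genuinely different route from the paper's. The paper adapts Hu--Tholen's argument and works constructively in both directions: given a polylimit $(\Delta A_i\to H)_i$ it forms the groupoid $G\colon\sum_i\tx{Aut}(A_i)\to\A$, checks via the polylimit property that $\A(A,G-)$ is free, and verifies directly that the colimit of $VG$ has the universal property of $\lim VH$; conversely, it writes a limit $X$ of $VH$ in $\mt F_1\A$ as a colimit of a pointwise-free groupoid action $VG$ and extracts the polylimit family from the group components, using that $\mt F_1\A(VA,-)$ preserves that colimit. You instead make the proposition essentially formal: (i) since $\mt F_1\A$ is a full replete subcategory of the presheaf category containing the representables, any limit of $VH$ that exists must be pointwise (your Yoneda computation $L(A)\cong\lim_C\A(A,HC)$ is exactly right, and is the same standard fact the paper itself invokes, in dual form, in the proof of Proposition~\ref{cocomplete}), so the existence of $\lim VH$ is equivalent to the cone-functor $F_H$ lying in $\mt F_1\A$; (ii) Observation~\ref{poly-initial} then converts this into the existence of a poly-terminal family in $\tx{El}(F_H)$, which under the identification of $\tx{El}(F_H)$ with the category of cones over $H$ is literally a polylimit. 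There is no circularity, since Observation~\ref{poly-initial} precedes the proposition and is justified independently (via \cite[Proposition~4.2]{HT96qc:articolo} and Section~\ref{set-comp}). What your approach buys is brevity and a clear conceptual decomposition; what it costs is that the combinatorial content is offloaded onto Observation~\ref{poly-initial}, which the paper only sketches, whereas the paper's proof of the proposition is self-contained and its explicit groupoid constructions are reused later (e.g.\ in the comparison with quasi-discrete diagrams in Section~\ref{Hu-tholen}).
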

\begin{proof}
	(This is an adaptation of \cite[Proposition~3.4]{HT96qc:articolo}.) Assume first that $H$ has a polylimit $(\Delta A_i\to H)_{i\in I}$ in $\A$; then consider the groupoid indexed diagram $G\colon \textstyle\sum_i\tx{Aut}(A_i)\to\A$ given simply by the inclusions of the $A_i$'s together with their automorphisms. The polylimit property implies that $\A(A,G-)$ lies in $\mt F$ for any $A\in\A$: if $f\colon A\to A_i$ and $g\in\tx{Aut}(A_i)$ satisfy $gf=f$ then, since $f$ corresponds to a cone $\Delta A\to H$, such a $g$ must be unique, but the identity also satisfies the equality; thus $g=1_{A_i}$. 
	
	As a consequence the colimit $X$ of $VG$ is an object of $\mt F_1\A$. Moreover the maps $(c_i\colon\Delta A_i\to H)_{i\in I}$ define a cocone out of $VG$ in $\mt F_1\A$ which in turn induces a map $c\colon\Delta X\to VH$. It's now easy to see that, by the polylimit properties of the $A_i$'s, the map $c$ exhibits $X$ as the limit of $VH$ in $\mt F_1\A$.
	
	Conversely, let $X$ be the limit of $VH$ in $\mt F_1\A$; then we can write $X$ as the colimit of $VG$ where $G\colon \textstyle\sum_i\G_i\to\A$ is representably in $\mt F$ and each $\G_i$ is a group. Let $A_i$ be the image in $\A$ of each group component $\G_i$; then the limiting cone of $X$ induces a family $(\Delta A_i\to H)_{i\in I}$ of cones over $H$. We prove that these exhibit $(A_i)_i$ as the polylimit of $\A$. To give a cone for $A\in\A$ over $H$ is the same as giving an arrow $VA\to X$; note now that $\mt F_1\A(VA,-)$ preserves the colimit of $VG$ defining $X$, and therefore $\mt F_1\A(VA,X)\cong \tx{colim}\A(A,G-)$ in $\bo{Set}$. It follows that giving an arrow $VA\to X$ is the same as giving a map $A\to A_i$, for a unique $i$, determined up to composition with some $G(g)\colon A_i\to A_i$; finally this $g$ is also unique because $\A(A,G-)$ is a free groupoid action by hypothesis. It follows that the family $(A_i)_i$ is the polylimit of $H$ in $\A$.
\end{proof}

\begin{obs}
	The dual of this result implies that the polycolimit of a diagram $H\colon\C\to\A$ exists in $\A$ if and only if the $\mt F$-virtual colimit of $H$ exists, and in that case they coincide. Thus, a category is polycocomplete if and only if it is $\mt F$-virtually cocomplete.
\end{obs}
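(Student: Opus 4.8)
The plan is to obtain the statement as a direct dualization of Proposition~\ref{polycomplete}, combined with Observation~\ref{virtual-col} and Definition~\ref{C-virtual-col}. First I would apply Proposition~\ref{polycomplete} with $\A\op$ in place of $\A$, taking as input the diagram $H\op\colon\C\op\to\A\op$ determined by the given $H\colon\C\to\A$. Since a polylimit in $\A\op$ is by definition a polycolimit in $\A$, and since a limit in $\mt F_1(\A\op)$ is the same thing as a colimit in $\mt F_1(\A\op)\op=\mt F_1^\dagger\A$, the proposition yields the equivalence: the polycolimit of $H$ exists in $\A$ if and only if $VH$ admits a colimit in $\mt F_1^\dagger\A$, where $V\colon\A\to\mt F_1^\dagger\A$ is now the inclusion.

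Next I would connect this colimit in $\mt F_1^\dagger\A$ with the notion of $\mt F$-virtual colimit. Applying Observation~\ref{virtual-col} with $\B=\mt F_1^\dagger\A$ and with the conical weight $M=\Delta 1$, to say that $\mt F_1^\dagger\A$ admits the colimit of $VH$ is exactly to say that the functor $[\C\op,\bo{Set}](\Delta 1,\A(H,-))\colon\A\to\bo{Set}$, which sends $A$ to the set of cocones $H\Rightarrow\Delta A$, lies in $\mt F_1^\dagger\A$. By Definition~\ref{C-virtual-col} (with $M=\Delta 1$) this is precisely the statement that the $\mt F$-virtual colimit of $H$ exists in $\A$. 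Chaining the two equivalences gives the first assertion of the remark. For the clause ``in that case they coincide'', I would note that Observation~\ref{virtual-col} identifies the colimit, when it exists, with the presheaf $[\C\op,\bo{Set}](\Delta 1,\A(H,-))$ itself, while the explicit construction in (the dual of) the proof of Proposition~\ref{polycomplete} exhibits this same object as the formal colimit in $\mt F_1^\dagger\A$ of the free groupoid action $VG$, for $G\colon\textstyle\sum_i\tx{Aut}(A_i)\to\A$ built from the polycolimit family $(A_i)_i$ together with its automorphisms. Thus the $\mt F$-virtual colimit, viewed inside $\mt F_1^\dagger\A$, is exactly the formal colimit of the free groupoid action carried by the polycolimit family, which is the sense in which the two coincide.

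The final sentence then follows by quantifying over all small diagrams: a category is polycocomplete precisely when every small $H$ has a polycolimit, and is $\mt F$-virtually cocomplete precisely when every such $H$ has an $\mt F$-virtual colimit, so the two conditions agree by the first part. I expect the main difficulty to be bookkeeping rather than conceptual, namely keeping the several opposite categories straight through the dualization and checking that the variance appearing in Observation~\ref{virtual-col} matches the conical polycolimit (the weight being the terminal weight $\Delta 1$); the only genuinely delicate point is making the word ``coincide'' precise via the groupoid construction of Proposition~\ref{polycomplete}, rather than settling for an abstract isomorphism of the two existence conditions.
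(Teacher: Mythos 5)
Your argument is correct and is essentially the paper's own route: the remark is justified there only by the phrase ``the dual of this result,'' i.e.\ by dualizing Proposition~\ref{polycomplete} and then identifying colimits of $VH$ in $\mt F_1^\dagger\A$ with $\mt F$-virtual colimits via Observation~\ref{virtual-col} and Definition~\ref{C-virtual-col}, which is exactly what you spell out, including the correct reading of ``coincide'' through the groupoid construction in the proof of Proposition~\ref{polycomplete}. The only point you leave tacit is that Definition~\ref{C-virtual-col} quantifies over arbitrary weights $M$ while polycocompleteness is conical, but since $\V=\bo{Set}$ one has $[\C\op,\bo{Set}](M,\A(H,-))\cong[\tx{El}(M)\op,\bo{Set}](\Delta 1,\A(H\pi,-))$ for the projection $\pi\colon\tx{El}(M)\to\C$, so the weighted case reduces to the conical one you treat.
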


Recall the following lemma of Lamarche:

\begin{lema}[Lemma~0.13 of \cite{Lam89:PhD}]\label{Lamarche}
	Let $\B$ be a category with wide pushouts. Then $\B$ has a polyterminal object if and only if it has a weakly terminal family.
\end{lema}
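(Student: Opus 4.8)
The forward implication is immediate: reading ``polyterminal object'' as the polyterminal family (the polylimit of the empty diagram), such a family is weakly terminal by definition, since the factorization clause already guarantees that every object admits a map into some member of the family. So the whole content lies in the converse, and I would approach it by first recasting the polyterminal condition in terms of orthogonality.

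\emph{Reformulation.} The plan is to prove that a family $(A_i)_{i\in I}$ is polyterminal precisely when (a) it is weakly terminal, (b) it is \emph{irredundant}, meaning that there is no morphism between two distinct members, and (c) each $A_i$ is \emph{strict cogeneric}, i.e.\ $\B(f,A_i)\colon \B(Y,A_i)\to\B(X,A_i)$ is a bijection for every morphism $f\colon X\to Y$ of $\B$ (the dual of the strict generic objects recalled above). The passage from polyterminality to (a)--(c) is a direct diagram chase: given $f\colon X\to Y$ and $a\colon X\to A_i$, weak terminality produces some $c\colon Y\to A_{i'}$, uniqueness of the component forces $i'=i$, and the factorization-up-to-unique-automorphism clause yields a unique $b\colon Y\to A_i$ with $bf=a$. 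Conversely, (c) makes each $\B(C,A_i)$ a free transitive $\mathrm{Aut}(A_i)$-set whenever it is nonempty, which together with (a) and (b) is exactly the polyterminal property. Thus it suffices to build, from the given weakly terminal family, a weakly terminal family of strict cogeneric objects, after which one passes to an irredundant subfamily by choosing one representative per connected component.

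\emph{Construction.} First I would reflect each weakly terminal object $W_j$ into a strict cogeneric one. Being strict cogeneric means being orthogonal to every morphism of $\B$, so the natural candidate is an orthogonal reflection $\eta_j\colon W_j\to A_j$ obtained by a transfinite iteration that, at each stage, glues in the missing extensions along all morphisms; each such gluing is a wide pushout, which is exactly the colimit hypothesis available to us. The role of weak terminality is twofold: it guarantees that at every stage the partially-built object still admits a map into the family $(W_j)_j$, so the construction never leaves $\B$, and it supplies the bound that makes the chain stabilize at a genuine orthogonal object $A_j$. Weak terminality of $(W_j)_j$ together with the reflection maps $\eta_j$ then shows that $(A_j)_j$ is again weakly terminal, and discarding redundancy yields the desired polyterminal family.

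\emph{Main obstacle.} The delicate point, and the only place where wide pushouts (rather than mere weak terminality) are essential, is the construction of this orthogonal reflection: one must force not only the \emph{existence} of extensions along every morphism but also their \emph{uniqueness}, and one must ensure that the transfinite process terminates. I expect the uniqueness clause to be the hard part, since coequalizing a parallel pair is not literally a wide pushout; the way around it should be to use weak terminality to realize the required identifications as wide pushouts against members of the family, and to extract from weak terminality a cardinality bound on the iteration. Once the reflection is in place, the verification of the universal property is the routine chase indicated in the reformulation.
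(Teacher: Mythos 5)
The paper itself offers no proof of this lemma: it is quoted verbatim from Lamarche's thesis (Lemma~0.13 of \cite{Lam89:PhD}), so your proposal can only be judged on its own merits and against that source. Your preliminary analysis is correct: the forward direction is trivial, and your reformulation --- a family is polyterminal iff it is weakly terminal, irredundant, and each member $A_i$ is strict cogeneric (every $\B(f,A_i)$ bijective) --- is right; it is essentially the paper's own remark, following Remark~\ref{poly-initial}, identifying polyinitial families with ``enough strict generic objects''. The per-component reduction also works, since two strict cogeneric objects in the same connected component are necessarily isomorphic. But all of this is the easy part; the content of the lemma is the construction, and there your sketch has gaps I do not see how to close.

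Concretely, three things are missing. First, orthogonality must be forced against \emph{all} morphisms of $\B$, and in the situation where the paper applies the (dual) lemma, $\B$ is a large category (an arbitrary virtually cocomplete category with wide pullbacks); wide pushouts are \emph{small} colimits, and weak terminality bounds a set of objects, not the class of morphisms you propose to glue along, so ``glue in the missing extensions along all morphisms'' is not a legitimate step and there is no small generating set of maps to run a small-object-style induction over. Second, the transfinite iteration needs colimits of chains at limit stages; these are neither assumed nor derivable (connected colimits are generated by wide pushouts \emph{together with coequalizers}, and chain colimits genuinely use the latter), and your proposal does not say where they would come from. Third --- the point you flag yourself --- the injectivity half of strict cogenericity forces actual identifications: if $b\neq b'\colon Y\to A_\alpha$ satisfy $bf=b'f$, then \emph{any} map from $A_\alpha$ to a strict cogeneric object must coequalize $b$ and $b'$ (apply injectivity of $\B(f,-)$ at the target), and such failures can already occur at stage zero, i.e.\ in $W_j$ itself. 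So the reflection map cannot be produced by gluing alone, and coequalizers cannot be simulated by wide pushouts (all legs of a wide pushout emanate from a single object; presenting a coequalizer as a pushout needs binary coproducts or an initial object, which are unavailable). Your proposed fix --- realizing the identifications ``as wide pushouts against members of the family'' --- and the claim that weak terminality bounds the iteration are hopes rather than arguments. In short, the reduction to strict cogeneric objects is fine, but the orthogonal-reflection strategy appears structurally unavailable in a category that has only wide pushouts; whatever Lamarche's argument is, it must be of a different shape, and your proposal does not yet contain a proof.
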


Thanks to this and Proposition~\ref{companion-terminal} we can easily prove the following:

\begin{prop}
	The class $\mt F$ is an accessible companion for the class of wide pullbacks.
\end{prop}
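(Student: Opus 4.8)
The plan is to verify the two hypotheses of Proposition~\ref{companion-terminal}, which applies here since $\V=\bo{Set}$. Before doing so I would record that $\mt F$ satisfies Assumption~\ref{acccon}: by Example~\ref{acccomp}(2) each nonempty component $\mt F_\G$ is accessible and accessibly embedded in $[\G,\bo{Set}]$, and the sketch presenting it uses only the equalizer conditions (connected limits) together with the single empty colimit specifying the initial object $0$. This already supplies the ``accessible'' half of the statement, so that once $\mt F$ is shown to be a companion it is automatically an accessible one. It then remains to check conditions (i) and (ii) of Proposition~\ref{companion-terminal}.

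Condition (i) is that $\mt F$ is compatible with wide pullbacks. Since $\mt F_M$ is nonempty only for $M=\Delta 1$ on a groupoid $\G$, this amounts to two things: that $\mt F_\G\subseteq[\G,\bo{Set}]$ is closed under wide pullbacks, which is part of Example~\ref{acccomp}(2) as $\mt F_\G$ is closed under all connected limits; and that the colimit functor $\colim_\G\cong\Delta 1*-\colon[\G,\bo{Set}]\to\bo{Set}$ preserves wide pullbacks of diagrams landing in $\mt F_\G$. Writing $\G$ as a coproduct of groups, $\colim_\G$ is the orbit-set functor, and a direct computation from the explicit description of free actions shows that it carries wide pullbacks of free actions to wide pullbacks in $\bo{Set}$.

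The substantive part is condition (ii): for every wide-pullback-complete and virtually cocomplete $\B$ the category $\mt F_1^\dagger\B$ must have an initial object. As observed in the proof of Proposition~\ref{companion-terminal}, any such initial object can only be $\Delta 1\colon\B\to\bo{Set}$, so the task is to show $\Delta 1\in\mt F_1^\dagger\B$. By the dual of Observation~\ref{poly-initial}, applied to the constant functor $\Delta 1$ whose category of elements is a copy of $\B$, this holds exactly when $\B$ admits a polyinitial family. Since $\B$ has wide pullbacks, the dual of Lamarche's Lemma~\ref{Lamarche} reduces the existence of a polyinitial family to the existence of a merely weakly initial family.

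Finally I would extract the weakly initial family from virtual cocompleteness. By definition $\P^\dagger\B$ has colimits of representables; taking the colimit of the empty diagram shows $\P^\dagger\B$ has an initial object, which can only be $\Delta 1\colon\B\to\bo{Set}$, and hence $\Delta 1$ is a small functor. Writing it as a small colimit of representables $\B(b_d,-)$, the identity $1=\Delta 1(B)=\colim_d\B(b_d,B)$ forces $\B(b_d,B)$ to be nonempty for some index $d$ and every $B$, so the small set $\{b_d\}$ is weakly initial. I expect condition (ii) --- and within it the chain running from virtual cocompleteness, through smallness of $\Delta 1$ and Lamarche's Lemma, to $\Delta 1\in\mt F_1^\dagger\B$ --- to be the main obstacle, whereas compatibility and the accessibility of the components are comparatively routine.
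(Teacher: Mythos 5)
Your proposal is correct and follows essentially the same route as the paper: Assumption~\ref{acccon} via Example~\ref{acccomp}(2), then Proposition~\ref{companion-terminal}, with condition (ii) handled by reducing the initial object of $\mt F_1^\dagger\B$ to a polyinitial family, invoking the dual of Lamarche's Lemma~\ref{Lamarche}, and extracting a weakly initial family from the smallness of $\Delta 1$ guaranteed by virtual cocompleteness. The only differences are immaterial: you reduce to polyinitial families via (the dual of) Observation~\ref{poly-initial} where the paper uses Proposition~\ref{polycomplete}, and for compatibility you gesture at a ``direct computation'' where the paper simply cites \cite[Proposition~1.4]{HT96qc:articolo} --- the fact you are asserting is exactly that cited result.
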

\begin{proof}
	Accessibility of $\mt F$ is given by Example~\ref{acccomp}, which also shows that the limit specifications in the sketches defining $\mt F$ are all connected. Thus we can use Proposition~\ref{companion-terminal} to show that $\mt F$ is a companion for the class of wide pullbacks. That $\mt F$ is compatible with wide pullbacks is given by \cite[Proposition~1.4]{HT96qc:articolo} --- see Section~\ref{Hu-tholen} for a comparison of our work with that in \cite{HT96qc:articolo}. Thanks to Proposition~\ref{polycomplete}, to show property (ii) of Proposition~\ref{companion-terminal}, we need to prove that every virtually cocomplete category $\B$ with wide pullbacks has a polyinitial object. This follows at once by the dual of Lemma~\ref{Lamarche} since every virtually cocomplete category $\B$ has a weakly initial family. Indeed, by virtual cocompleteness, the functor $\Delta 1\colon\B\to\bo{Set}$ is small; thus it is the left Kan extension of its restriction to a small full subcategory $\C$ of $\B$. The elements of $\C$ then form a weakly initial family in $\B$.
\end{proof}

\begin{Def}
	We say that a functor $F\colon\A\to\B$ has a {\em left
          polyadjoint} if it has an $\mt F$-virtual left adjoint; if $F$ is fully faithful we say that $\A$ is {\em polyreflective} in $\B$.
\end{Def}

Traditionally, one says that $F\colon\A\to\B$ has a left polyadjoint if, for any $B\in\B$, the category $B/F=\tx{El}(\B(B,F-))\op$ has a polyinitial object \cite[Page~35]{Lam89:PhD}. By Remark~\ref{poly-initial}, this is equivalent to saying that $\B(B,F-)$ lies in  $\mt F_1^\dagger\A$.  Thus our definition coincides with the classical notion of left polyadjoint. 

It follows that a fully faithful inclusion $J\colon\A\to\K$  is polyreflective if and only if for any $X\in\K$ the slice $X/\A$ has a polyinitial object.

\begin{obs}
	In \cite[Section~1.2]{taylor1990trace} Taylor shows that $F\colon\A\to\B$ has a left polyadjoint if and only if the induced functor $F/A\colon\A/A\to\B/FA$ has a left adjoint for any $A\in\A$.
\end{obs}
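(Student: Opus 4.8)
The plan is to reformulate each side of the equivalence in terms of comma categories and then link them through a single natural isomorphism. By the definition of left polyadjoint together with (the dual of) Remark~\ref{poly-initial}, to say that $F\colon\A\to\B$ has a left polyadjoint is to say that for every $B\in\B$ the comma category $B/F=\tx{El}(\B(B,F-))$ has a polyinitial family, equivalently that $\B(B,F-)\in\mt F_1^\dagger\A$. On the other side, $F/A\colon\A/A\to\B/FA$ has a left adjoint exactly when each object $b\colon B\to FA$ of $\B/FA$ admits a universal arrow, i.e. when the comma category $b\downarrow(F/A)$ has an initial object. Thus both conditions concern initial (resp.\ polyinitial) objects in comma categories built from $F$.

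First I would establish the bridge: for each $b\colon B\to FA$ there is an isomorphism of categories
\[ b\downarrow (F/A)\ \cong\ (B/F)/(A,b). \]
This is a direct unwinding: an object on either side is a triple consisting of a map $g\colon A'\to A$ in $\A$ together with $\beta\colon B\to FA'$ satisfying $Fg\circ\beta=b$, and a morphism is a map $h\colon A'\to A''$ compatible with the $g$'s over $A$ and with the $\beta$'s (i.e.\ $Fh\circ\beta=\beta''$); the two descriptions coincide on the nose. Consequently $F/A$ has a left adjoint if and only if $(B/F)/(A,b)$ has an initial object for every $B$ and every $b\colon B\to FA$; letting $A$, $B$ and $b$ vary, this says precisely that every slice of every $B/F$ has an initial object.

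With the bridge in place the statement reduces to the formal equivalence, for $\C=B/F$: the category $\C$ has a polyinitial family if and only if every slice $\C/X$ has an initial object. For the forward implication I would feed an object $X=(A,b)$ of $\C$ to the polyinitial family $(A_i,b_i)_{i\in I}$, obtaining a unique index $i$ and a map $f_i\colon A_i\to A$ with $Ff_i\circ b_i=b$; the universal property of the family, together with the freeness of the groupoid action defining $\mt F$, then shows $f_i\colon(A_i,b_i)\to(A,b)$ is genuinely initial in $\C/X$ (freeness is exactly what upgrades ``unique up to automorphism'' to strict uniqueness once we work over the fixed base $(A,b)$). For the converse I would single out the \emph{strict generic} objects of $\C$, namely those $X$ for which $1_X$ is initial in $\C/X$; writing $I_C\to C$ for the initial object of $\C/C$, a short diagram chase shows $I_{I_C}\cong I_C$, so each $I_C$ is strict generic, and a further chase (splitting an idempotent using initiality) shows $\C(I_C,C)$ is a free transitive $\tx{Aut}(I_C)$-set. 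Hence the strict generic objects assemble into a polyinitial family for $\C$, which is the polyinitial family witnessing $\B(B,F-)\in\mt F_1^\dagger\A$.

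The main obstacle is the converse half of this last equivalence. The torsor computation --- transitivity and freeness of the automorphism action on $\C(I_C,C)$ --- is the delicate point, and it is precisely here that the free-groupoid-action character of $\mt F$ reappears: it is the mechanism by which initiality in all slices forces the rigidity built into the notion of polyinitial family. A secondary subtlety is smallness: one must check that the strict generic objects form a small set, so that the resulting family is indexed by a genuinely small groupoid and $\B(B,F-)$ lands in $\mt F_1^\dagger\A$ rather than merely in its large analogue; this is where any ambient accessibility or smallness hypothesis is used. Since this equivalence is essentially Lamarche's characterization via strict generic objects recalled in the Remark preceding the statement, I would expect the argument to be short once the comma-category bridge is in hand.
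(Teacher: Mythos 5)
First, a point of comparison: the paper does not prove this Remark at all --- it is a pointer to Taylor's work --- so your argument is not being measured against a proof in the paper; you are supplying one where the paper offers only a citation. Your strategy is the natural one (and, as far as one can tell from the citation, essentially Taylor's: his ``candidates'' for a morphism $b\colon B\to FA$ are precisely initial objects of $b\downarrow(F/A)$). The comma-category bridge $b\downarrow(F/A)\cong(B/F)/(A,b)$ is correct, and it does reduce the Remark to: $B/F$ has a polyinitial family if and only if every slice of $B/F$ has an initial object. Your forward implication is sound --- uniqueness of the mediating automorphism in the definition of polyinitial family is exactly what forces strict uniqueness of factorizations over a fixed base --- and so is your converse construction: writing $p_C\colon I_C\to C$ for an initial object of $(B/F)/C$, one checks that $p_{I_C}$ is invertible, that any two ``generic'' objects mapping to the same $C$ are isomorphic, and that $\tx{Aut}(I_C)$ acts freely and transitively on the maps $I_C\to C$, so that the $I_C$ form a polyinitial family.

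The genuine gap is your final paragraph, where smallness is deferred to ``any ambient accessibility or smallness hypothesis''. There is no such hypothesis in the statement, and the check you postpone is impossible in general: take $\A$ a large discrete category, $\B=1$, and $F$ the unique functor. Every $F/A$ is an isomorphism of terminal categories, hence has a left adjoint; but $B/F\cong\A$, a polyinitial family there must meet each of the proper class of isomorphism classes, and $\B(B,F-)=\Delta 1$ is not even a small functor, so it cannot lie in $\mt F_1^\dagger\A$. Thus, against the paper's own definition of left polyadjoint (membership in $\mt F_1^\dagger\A$, which has smallness built in via Remark~\ref{poly-initial}), the right-to-left implication is false as stated, and no proof can close the gap you set aside: the smallness is an extra hypothesis, not a verification. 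The correct resolution is one of two: either read the Remark for possibly class-indexed polyinitial families (Taylor's ``stable'' functors), in which case your argument as it stands is a complete proof and the last paragraph should simply be deleted; or add a solution-set condition --- a small weakly initial family in each $B/F$ --- under which your generics do form a small set (every generic receiving a map from a solution-set member $W$ is isomorphic to $I_W$; compare the dual of Lemma~\ref{Lamarche}, which is exactly how the paper itself handles smallness when proving $\mt F$ is a companion).
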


Now, since $\mt F$ is a companion for the class of wide pullbacks and thanks to Theorem~\ref{C-strong-charact} we obtain:

\begin{teo}
	Let $\K$ be an accessible category with wide pullbacks and $\A$ a full subcategory of $\K$. The following are equivalent:\begin{enumerate}\setlength\itemsep{0.25em}
		\item $\A$ is accessible, accessibly embedded, and closed under wide pullbacks in $\K$;
		\item $\A$ is accessibly embedded and polyreflective in $\K$.
	\end{enumerate}
\end{teo}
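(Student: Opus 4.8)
The plan is to obtain this theorem as the direct specialization of Theorem~\ref{C-strong-charact} to the case $\V=\bo{Set}$, with $\Psi$ the class of weights for wide pullbacks and $\mt C=\mt F$ the colimit type of free groupoid actions. The hypotheses of that theorem require $\mt F$ to be a companion for $\Psi$ and $\K$ to be an accessible $\V$-category with $\Psi$-limits. The former was just established in the preceding proposition (indeed $\mt F$ is an \emph{accessible} companion for wide pullbacks, though accessibility is not needed here), and the latter is exactly our standing assumption on $\K$. Thus Theorem~\ref{C-strong-charact} applies verbatim, and the whole argument reduces to matching the two conditions.

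First I would observe that condition (1) of the present statement is literally condition (1) of Theorem~\ref{C-strong-charact}, since for $\Psi$ the class of wide-pullback weights a $\Psi$-limit \emph{is} a wide pullback. Next, for condition (2), I would invoke the definition of polyreflectivity given immediately above the theorem: a fully faithful $J\colon\A\to\K$ is polyreflective precisely when it has a left polyadjoint, which by that definition means it has an $\mt F$-virtual left adjoint, i.e.\ that $\A$ is $\mt F$-virtually reflective in $\K$ in the sense of Definition~\ref{C-virtual-left}. Hence condition (2) here coincides on the nose with condition (2) of Theorem~\ref{C-strong-charact}, and the asserted equivalence follows.

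There is no genuine obstacle: the substantive content has already been packaged into Theorem~\ref{C-strong-charact} together with the verification that $\mt F$ is a companion for wide pullbacks. The only point that merits a remark is the terminological identification of $\mt F$-virtual reflectivity with the classical notion of polyreflectivity; this is underwritten by Remark~\ref{poly-initial}, which guarantees that $\K(X,J-)$ lies in $\mt F_1^\dagger\A$ exactly when the slice $X/\A=\tx{El}(\K(X,J-))\op$ admits a polyinitial object, so that our abstract $\mt F$-virtual left adjoint agrees with the usual polyadjoint. Accordingly I would keep the proof to a single sentence invoking Theorem~\ref{C-strong-charact} and the fact that $\mt F$ is a companion for wide pullbacks.
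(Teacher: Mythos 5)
Your proposal is correct and is exactly the paper's own argument: the theorem is stated there as an immediate consequence of Theorem~\ref{C-strong-charact}, using the preceding proposition that $\mt F$ is a companion for wide pullbacks and the fact that polyreflectivity is \emph{defined} in the paper as $\mt F$-virtual reflectivity (with Remark~\ref{poly-initial} supplying the identification with the classical polyinitial-object notion, just as you note). No gap; your single-sentence proof matches the intended one.
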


Regarding sketches, note that, by the presentation of $\mt F$ as in Example~\ref{galoisian}, the categories of $\mt F$-models of sketches are the same as the categories of models of galoisian sketches \cite{Age92:articolo}. Thus, thanks to Theorem~\ref{relatice-C-charact} and the dual of Proposition~\ref{polycomplete}, we obtain:

\begin{teo}
	Let $\A$ be a category; the following are equivalent: \begin{enumerate}\setlength\itemsep{0.25em}
		\item $\A$ is accessible with wide pullbacks;
		\item $\A$ is accessible and $\mt{F}_1^\dagger \A$ is cocomplete;
		\item $\A$ is accessible and polycocomplete;
		\item $\A$ is accessibly embedded and polyreflective in $[\C,\bo{Set}]$ for some $\C$;
		\item $\A$ is the category of models of a galoisian sketch.
	\end{enumerate}
\end{teo}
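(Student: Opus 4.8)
The plan is to obtain the equivalence of (1)--(4) as the instance $\V=\bo{Set}$, $\Psi$ the class of weights for wide pullbacks, and $\mt C=\mt F$ of Theorem~\ref{relatice-C-charact}, and then to deduce (5) from the sketch-theoretic characterization. The hypothesis needed to invoke Theorem~\ref{relatice-C-charact} is precisely that $\mt F$ be a companion for the class of wide pullbacks, which is the Proposition established immediately above; its accessibility, which will be needed for (5), is also recorded there and in Example~\ref{acccomp}. Under these choices the four conditions of Theorem~\ref{relatice-C-charact} read as: $\A$ accessible and $\Psi$-complete; $\A$ accessible and $\mt F_1^\dagger\A$ cocomplete; $\A$ accessible and $\mt F$-virtually cocomplete; and $\A$ accessibly embedded and $\mt F$-virtually reflective in some $[\C,\bo{Set}]$.

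It then remains only to translate this vocabulary into the classical ``poly'' terminology. Since $\Psi$ is by definition the class for wide pullbacks, being $\Psi$-complete is literally having wide pullbacks, which gives (1); condition (2) requires no translation. The identification of $\mt F$-virtual cocompleteness with polycocompleteness is the dual of Proposition~\ref{polycomplete} (as noted in the Remark following it), giving (3), and the identification of $\mt F$-virtual reflectivity with polyreflectivity follows from the definition of left polyadjoint together with the subsequent observation that it agrees with Lamarche's original notion, giving (4). Hence (1)$\Leftrightarrow$(2)$\Leftrightarrow$(3)$\Leftrightarrow$(4).

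For the equivalence with (5) I would apply Theorem~\ref{Psi-sketch}, which, using that $\mt F$ is an \emph{accessible} companion, characterizes the accessible categories with wide pullbacks as exactly the categories of $\mt F$-models of sketches. It then suffices to recognize these as the categories of models of galoisian sketches: this is the content of Example~\ref{galoisian}, where the presentation of $\mt F$---each groupoid being a coproduct of groups, with the displayed cones and cocones---is matched against Ageron's galoisian sketches \cite{Age92:articolo}. Alternatively one may route through the $\mt F$-sketches of Theorem~\ref{C-sketch}. Combining this identification with the sketch characterization yields (1)$\Leftrightarrow$(5).

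Since all the substantive work has already been front-loaded into the general machinery---the companion property of $\mt F$, the poly/virtual dictionary of Proposition~\ref{polycomplete}, and the sketch translations---the proof is an assembly argument carrying no new estimates. The one step that demands care rather than mere citation is the sketch equivalence (5): one must verify that the abstract $\mt F$-sketches reproduce Ageron's galoisian sketches exactly, but this matching was already carried out in Example~\ref{galoisian}.
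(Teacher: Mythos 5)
Your proposal is correct and follows essentially the same route as the paper: the paper obtains (1)--(4) by invoking Theorem~\ref{relatice-C-charact} for the companion $\mt F$ together with the dual of Proposition~\ref{polycomplete} (and the definition of polyreflectivity as $\mt F$-virtual reflectivity), and obtains (5) by identifying categories of $\mt F$-models of sketches with categories of models of galoisian sketches via Example~\ref{galoisian}. Your assembly of these ingredients, including the role of accessibility of the companion $\mt F$ in the sketch characterization, matches the paper's argument.
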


The equivalence of $(1)$ and $(3)$ was first given by Lamarche in \cite[Theorem~0.20]{Lam89:PhD}, then Ageron further added condition $(5)$ in \cite[Theorem~4.19]{Age92:articolo}.

\subsubsection{Quasi-coproducts}\label{Hu-tholen}

The notion appearing below has been used in \cite{HT96qc:articolo}:

\begin{Def}\label{qc}
	A groupoid indexed diagram $H\colon \G\to\B$ is called {\em quasi-discrete} if for each non-initial $B\in\B$ the functor $\B(B,H-)$ is a free action in $\bo{Set}$; that is, if $\B(B,H-)\in\mt F$ for any non-initial $B$. A {\em quasi-coproduct} is the colimit of such a diagram.
\end{Def}

Given a category $\B$ with quasi-coproducts, Hu and Tholen define $\B_q$ to be the full subcategory of $\B$ consisting of those objects $B$ for which $\B(B,-)$ preserves quasi-coproducts; they call $\B$ {\em quasi-based} if each object is a quasi-coproduct of objects from $\B_q$. 

The following shows that our notion of diagram representably in $\mt F$ is comparable with that of quasi-discrete diagram.

\begin{prop}
	Let $\A$ be a category, $V\colon \A\hookrightarrow\mt F_1\A$ be the inclusion, and $H\colon \G\to\A$ be a groupoid indexed diagram. Then $\A(A,H-)\in\mt F$ for any $A$ in $\A$ if and only if $VH$ is quasi-discrete in $\mt F_1\A$.
	
	Similarly, let $\B$ be a quasi-based category and $H\colon \G\to\B_q$ be a groupoid indexed diagram. Then $H$ is quasi-discrete in $\B$ if and only if $\B_q(B,H-)\in\mt F$ for any $B$ in $\B_q$.
\end{prop}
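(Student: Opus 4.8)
The plan is to reduce all four implications to a single elementary observation about $\mt F$, namely that \emph{a limit over a non-empty groupoid of free $\G$-actions is again free}. Precisely: if $\G'$ is a non-empty groupoid and $T$ is a functor assigning to each object $g$ of $\G'$ a functor $T(g,-)\colon\G\to\bo{Set}$ lying in $\mt F$, then the pointwise limit $c\mapsto\lim_{g}T(g,c)$ again lies in $\mt F$. First I would prove this directly from the equalizer description of freeness in Example~\ref{freegroupoids}: an element of $\lim_{g}T(g,c)$ is a compatible family $(s_g)_g$ with $s_g\in T(g,c)$, the $\G$-action sends it to $(T(g,\phi)(s_g))_g$ for $\phi\colon c\to c'$, and if $T(g,\phi)(s_g)=T(g,\psi)(s_g)$ for all $g$ then, choosing any object $g_0$ of $\G'$, the equality $T(g_0,\phi)(s_{g_0})=T(g_0,\psi)(s_{g_0})$ forces $\phi=\psi$ by freeness of $T(g_0,-)$. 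Non-emptiness of $\G'$ is exactly what guarantees that such a $g_0$ exists.

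For the first biconditional, the implication from ``$VH$ quasi-discrete'' to ``pointwise-free'' is immediate: each representable $VA=\A(-,A)$ is a non-initial object of $\mt F_1\A$, so $\mt F_1\A(VA,VH-)\cong\A(A,H-)$ lies in $\mt F$ by definition of quasi-discreteness. For the converse, let $X$ be a non-initial object of $\mt F_1\A$. By the description of $\mt F_1\A$ recalled at the start of this section, $X\cong\colim_{\G'}YK$ for some groupoid $\G'$ and some $K\colon\G'\to\A$ with $\A(A,K-)\in\mt F$ for all $A$; since $X$ is non-initial, $\G'$ is non-empty. Homming out of this colimit in $\P\A$ gives
\[ \mt F_1\A(X,VHc)\cong\lim_{g}\A(Kg,Hc), \]
naturally in $c$, so setting $T(g,c)=\A(Kg,Hc)$ we have $T(g,-)=\A(Kg,H-)\in\mt F$ by hypothesis, and the lemma yields $\mt F_1\A(X,VH-)\in\mt F$. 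Thus $VH$ is quasi-discrete.

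The second biconditional runs in parallel, with $\B$ playing the role of $\mt F_1\A$ and $\B_q$ that of the representables. First I would observe that every object of $\B_q$ is non-initial: in any non-degenerate $\B$ the initial object $0$ is not in $\B_q$, since $\B(0,-)\cong\Delta1$ sends the coproduct $A+A$ of a non-initial object $A$ to $1$ rather than to $2$, and so fails to preserve this (quasi-)coproduct. Granting this, the implication from ``$H$ quasi-discrete'' to ``$\B_q(B,H-)\in\mt F$ for all $B\in\B_q$'' is immediate, as $\B_q(B,H-)=\B(B,H-)$ and $B$ is non-initial. For the converse, take $C$ non-initial in $\B$; since $\B$ is quasi-based, $C\cong\colim_{\G''}K$ is a quasi-coproduct with $K\colon\G''\to\B_q$ quasi-discrete and $\G''$ non-empty (again because $C$ is non-initial). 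Then $\B(C,Hc)\cong\lim_{g}\B(Kg,Hc)$ naturally in $c$, and each $\B(Kg,H-)=\B_q(Kg,H-)\in\mt F$ by hypothesis, so the lemma gives $\B(C,H-)\in\mt F$; as $C$ was an arbitrary non-initial object, $H$ is quasi-discrete.

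The only genuinely delicate points are bookkeeping rather than conceptual. One must match ``non-initial object'' with ``non-empty indexing groupoid'' in each colimit presentation, since this is precisely what licenses the choice of $g_0$ in the lemma, and one must verify that $0\notin\B_q$ so that the easy direction of the second part is not vacuously obstructed. The computational heart --- that homming out of a groupoid-indexed colimit of representables, respectively a quasi-coproduct, turns into a groupoid-indexed limit to which the lemma applies --- is routine once the lemma is in place, so I expect the only real work to be stating and proving the freeness-of-limits lemma cleanly.
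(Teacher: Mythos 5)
Your proof is correct and is essentially the paper's argument in different packaging: the paper also reduces freeness at a non-initial $X$ to freeness at a representable (resp.\ an object of $\B_q$) admitting a map into $X$, and your lemma on non-empty-groupoid-indexed limits of free actions is exactly that precomposition step, since $\mt F_1\A(X,VH-)\cong\lim_g\A(Kg,H-)$ and restriction to an index $g_0$ is precomposition with the coprojection $VKg_0\to X$. Two trivial patches are worth making: the representables are non-initial in $\mt F_1\A$ because the initial object there is the empty presheaf computed as in $[\A\op,\bo{Set}]$ (you assert this without justification), and your argument that $0\notin\B_q$ should use the empty quasi-coproduct (so that $\B(0,-)$ sends $0$ to $1\neq\emptyset$) rather than $A+A$ for a non-initial $A$, which removes the spurious non-degeneracy hypothesis and matches the paper's own justification.
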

\begin{proof}
	Assume that $\A(A,H-)\in\mt F$ for any $A$ and that there exists a non-initial object $X\in \mt F_1\A$ such that $\mt F_1\A(X,VH-)$ is not free. Since $X$ is not initial and is a colimit of  elements from $\A$, there exists a map $VA\to X$ for some $A$ in $\A$. It then follows that $\A(A,H-)\cong\mt F_1\A(VA,VH-)$ is not free as well, leading to a contradiction. Conversely, assume that $VH$ is quasi-discrete in $\mt F_1\A$. Since the terminal object of $\mt F_1\A$ is computed as in $[\A\op,\bo{Set}]$ it can't lie in $\A$; thus $\A(A,H-)$ is in $\mt F$ for any $A\in\A$ by definition.
	
	The same proof applies to the second statement since the initial object $0$ of $\B$ doesn't lie in $\B_q$; indeed $\B(0,-)$ doesn't preserve coproducts, and hence can't preserve quasi-coproducts. 
\end{proof}

By the proposition above, it follows that a category $\B$ with quasi-coproducts is quasi-based if and only if $\B\simeq\mt F_1\B_q$. Thus, the content of \cite[Proposition~3.4]{HT96qc:articolo} coincides with that of our Proposition~\ref{polycomplete}.

The difference between our approach and that of Hu and Tholen is simply that, while in \cite{HT96qc:articolo} they are interested in recognising those categories that arise as free cocompletions under colimits of free groupoid actions, we want to construct such free cocompletions starting from a given category. In fact, when freely adding colimits of free groupoid actions to a category $\A$, one needs to consider those diagrams $H\colon\G\to\A$ that lie representably in $\mt F$; however, when determining if a category $\B$ is a free completion under colimits of free groupoid actions the right notion to consider is that of quasi-discrete diagram.

\subsection{Products and powers by a dense generator}\label{products+G-powers}

Let $\V$ be symmetric monoidal closed and locally presentable as
usual, and $\G$ be a (possibly large) dense generator of $\V_0$
containing the unit and closed under tensor product, and
satisfying Assumption~\ref{ass:E} below. 

\begin{Def}\label{E-maps}
	Let $\E\subseteq\V^\mathbbm{2}$ be the class of maps $e$ for which $G\pitchfork e$ is a regular epimorphism for any $G\in\G$. In particular every map in $\E$ is a regular epimorphism.
\end{Def}

\begin{as}\label{ass:E}
  We suppose that at least one of the following conditions holds:\begin{itemize}
	\item[(I)] the unit $I$ is regular projective;
	\item[(II)] if $f\circ g$ is a regular epimorphism in $\V$ then so is $f$, and $\E$ is closed under products in $\V^\mathbbm{2}$.
\end{itemize} 
\end{as}

\begin{lema}\label{pointwise}
Suppose that condition (I) holds. Then $e\in\E$ if and only if $\V_0(P,e)$ is surjective for any $P\in\G$. In particular $\E$ is closed under products and under composition.
\end{lema}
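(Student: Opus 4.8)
The plan is to establish the stated equivalence first, after which the closure of $\E$ under products and composition falls out formally. The two facts I will use repeatedly are the closed-structure isomorphisms $\V_0(P,Y)\cong\V_0(I,[P,Y])$ (valid since $I\otimes P\cong P$) and $\V_0(P,[G,Y])\cong\V_0(P\otimes G,Y)$, both natural in $Y$; together with the observation that condition~(I) of Assumption~\ref{ass:E}, that $I$ be regular projective, says precisely that $V=\V_0(I,-)$ carries regular epimorphisms to surjections.

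For the forward implication, suppose $e\in\E$ and fix $P\in\G$. Since $P\in\G$, the power $P\pitchfork e=[P,e]$ is a regular epimorphism by Definition~\ref{E-maps}. Applying $\V_0(I,-)$, which preserves regular epimorphisms by condition~(I), shows that $\V_0(I,[P,e])$ is surjective; under the natural isomorphism $\V_0(P,e)\cong\V_0(I,[P,e])$ this is exactly the surjectivity of $\V_0(P,e)$. For the converse, assume $\V_0(P,e)$ is surjective for every $P\in\G$ and fix $G\in\G$; I must show $[G,e]$ is a regular epimorphism. For each $P\in\G$ we have $\V_0(P,[G,e])\cong\V_0(P\otimes G,e)$, and $P\otimes G$ again lies in $\G$ because $\G$ is closed under tensor, so $\V_0(P,[G,e])$ is surjective for all $P\in\G$.

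It therefore suffices to prove the general density fact $(\ast)$: a morphism $g$ of $\V_0$ for which $\V_0(P,g)$ is surjective for all $P\in\G$ is a regular epimorphism. I would prove $(\ast)$ from density of $\G$: the nerve $N\colon\V_0\to[\G\op,\bo{Set}]$, $N(X)=\V_0(-,X)|_\G$, is then fully faithful, and since $\V_0$ is cocomplete it presents $\V_0$ as a reflective subcategory with reflector $L$, so that $LN\cong 1$. The hypothesis says $N(g)$ is pointwise surjective, hence a regular epimorphism in the presheaf category (where every epimorphism is regular); applying the colimit-preserving $L$ and using $LN\cong 1$ exhibits $g\cong LN(g)$ as a regular epimorphism. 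Note that it is genuinely density, not mere strong generation, that is needed here: were $g$ only a strong epimorphism the conclusion would fail, which also explains why the two directions are asymmetric, the forward one resting on projectivity of $I$ and the backward one on density of $\G$. Applying $(\ast)$ to $[G,e]$ completes the converse, and hence the equivalence.

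Given the equivalence, $\E=\{\,e\mid \V_0(P,e)\text{ surjective for all }P\in\G\,\}$. Since each representable $\V_0(P,-)$ preserves products, products in $\V^\mathbbm{2}$ are computed pointwise, and a product of surjections is a surjection, $\E$ is closed under products; and a composite of surjections being a surjection gives closure under composition. The one genuine obstacle is $(\ast)$, and specifically the case where $\G$ is large: then $[\G\op,\bo{Set}]$ must be handled with care, but the reflection argument still applies because $\V_0$ is locally presentable, so that the relevant (possibly large) density colimits exist and the realization $L$ is available. I expect this size bookkeeping, rather than any of the formal manipulations above, to be the only delicate point.
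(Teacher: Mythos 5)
Your proposal follows the paper's proof very closely: the forward direction (via $\V_0(P,e)\cong\V_0(I,P\pitchfork e)$ and regular projectivity of $I$) is identical, and your reduction of the converse to the density fact $(\ast)$, applied to $[G,e]$ through $\V_0(P,[G,e])\cong\V_0(P\otimes G,e)$ and closure of $\G$ under $\otimes$, is exactly the paper's argument, as is the derivation of the closure properties from the characterization. The one point of divergence is the one you flagged yourself --- the size issue when $\G$ is large --- and there your resolution is not just delicate but incorrect.

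The gap is this: $\G$ is allowed to be a proper class (indeed $\G=\V_0$ is one of the paper's main examples, giving products and small powers), and in that case your reflector $L\colon[\G\op,\bo{Set}]\to\V_0$ need not exist. Local presentability of $\V_0$ provides \emph{small} colimits, whereas the realization $L(F)$ of an arbitrary presheaf $F$ is the colimit of the possibly large diagram $\tx{El}(F)\to\G\hookrightarrow\V_0$, and locally presentable categories do not have large colimits ($\bo{Set}$ has no large coproducts); so the sentence ``the reflection argument still applies because $\V_0$ is locally presentable'' is a non sequitur, and with it goes your appeal to ``the colimit-preserving $L$.'' The paper avoids this by replacing $[\G\op,\bo{Set}]$ with $\P\G$, the free cocompletion of $\G$ under \emph{small} colimits: the nerve $J=\V_0(H,1)$ is fully faithful into $\P\G$ by density, and a left adjoint $L$ now exists merely because $\V_0$ is cocomplete. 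This forces a second adjustment you would also need: in $\P\G$ one cannot quote ``every epimorphism in a presheaf category is regular'' (limits in $\P\G$ are not automatic), so the paper checks by hand that the kernel pair of $Je$ exists there --- it is $J$ of the kernel pair of $e$ --- whence the pointwise surjection $Je$ is the coequalizer of its kernel pair, and cocontinuity of $L$ together with $LJ\cong 1$ finishes. Alternatively, your own argument can be repaired without any global $L$: writing $(k_1,k_2)$ for the kernel pair of $g$, the map $N(g)$ is the pointwise coequalizer of $N(k_1),N(k_2)$, and since all four objects lie in the image of the fully faithful $N$, testing the universal property against objects $N(T)$ transfers it back to $\V_0$, so $g$ is the coequalizer of $(k_1,k_2)$. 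Either repair saves the statement, but the step as you wrote it would fail precisely in the large-$\G$ cases the lemma is meant to cover.
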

\begin{proof}
	If $e\in\E$ and $P\in\G$ then $\V_0(P,e)\cong\V_0(I,P\pitchfork e)$ is surjective since $I$ is regular projective in $\V_0$ and $P\pitchfork e$ is a regular epimorphism.
	Conversely, assume that $\V_0(P,e)$ is surjective for any $P\in\G$. Let $H$ be the inclusion of $\G$ in $\V_0$; by hypothesis we have a fully faithful $J=\V_0(H,1)\colon\V_0\hookrightarrow \P\G$ which has a left adjoint $L$ since $\V_0$ is cocomplete. Then our hypothesis is saying that $Je$ is a regular epimorphism in $\P\G$: the kernel pair of $Je$ exists in $\P\G$ since it's the image through $J$ of the kernel pair of $e$ in $\V_0$; hence $Je$, being  by definition a pointwise surjection, is the coequalizer of its kernel pair. Thus $e\cong LJe$ is a regular epimorphism in $\V_0$ by cocontinuity of $L$. Now, given $G\in\G$, the morphism $G\pitchfork e$ still satisfies that $\V_0(P,G\pitchfork e)$ is surjective for any $P\in\G$ (since $\G$ is closed under tensor product); thus $G\pitchfork e$ is a regular epimorphism by the previous argument. It follows that $e\in\E$.
\end{proof}

\begin{Def}\label{D-E-maps}
	We say that a pair $f,g\colon X\to Y$ in $\V$ is a {\em
          $\G$-pseudo equivalence relation} if it factors as a map
        $e\colon X\twoheadrightarrow Z$ in $\E$ followed by the
        kernel pair $h,k\colon Z\to Y$ of some map in
        $\E$, necessarily the coequalizer of $h$ and $k$ (and so also
        of $f$ and $g$).
	Denote by $\mt C$ the colimit type generated by the $\G$-pseudo equivalence relations: $\mt C_M$ is non-empty only for $M=\Delta I:\C\op\to\V$, where $\C$ is the free $\V$-category on a pair of arrows, and in that case $\mt C_{\Delta I}$ is the full subcategory of $[\C,\V]$ spanned by the $\G$-pseudo equivalence relations in $\V$.
\end{Def}

\begin{obs}
	A $\V$-functor $F\colon\A\to\V$ lies in $\mt C^\dagger_1 \A$ if and only if there exists a fork
	\begin{center}
		
		\begin{tikzpicture}[baseline=(current  bounding  box.south), scale=2]
			
			\node (b) at (0.6,0) {$\A(B,-)$};
			\node (c) at (2,0) {$\A(A,-)$};
			\node (d) at (3,0) {$F$};

			\path[font=\scriptsize]
			
			([yshift=1.5pt]b.east) edge [->] node [above] {} ([yshift=1.5pt]c.west)
			([yshift=-1.5pt]b.east) edge [->] node [below] {} ([yshift=-1.5pt]c.west)
			(c) edge [->] node [above] {$q$} (d);
		\end{tikzpicture}
	\end{center}
	where both $q\colon\A(A,-)\to F$ and the induced map $\A(B,-)\to P$, into the kernel pair $P$ of $q$, are pointwise in $\E$.
\end{obs}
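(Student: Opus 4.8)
The plan is to unwind the definition of $\mt C^\dagger_1\A=\mt C_1(\A\op)\op$ for the colimit type $\mt C$ of $\G$-pseudo equivalence relations, and then to read off the membership condition pointwise. The key simplification is that $\mt C_M$ is non-empty only for $M=\Delta I\colon\C\op\to\V$, with $\C=\{x\rightrightarrows y\}$ the free $\V$-category on a parallel pair; hence the only colimits $M*YH$ occurring in Definition~\ref{C_1A} are conical coequalizers, and the relevant (co)limits are all computed pointwise in $\P(\A\op)$.

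First I would set up the dictionary. By the enriched Yoneda lemma, a $\V$-functor $H\colon\C\to\A\op$ is the same as a parallel pair $\A(B,-)\rightrightarrows\A(A,-)$ in $\P(\A\op)$, induced by two morphisms $\bar a,\bar b\colon A\to B$ in $\A$; and $\Delta I*YH$ is the coequalizer $q\colon\A(A,-)\to F$ of this pair. Since the colimit is computed pointwise, $F(D)$ is the coequalizer of $\A(B,D)\rightrightarrows\A(A,D)$ for each $D$, the kernel pair $P$ of $q$ is pointwise the kernel pair $P(D)$ of $q_D:=q(D)$, and the comparison $\A(B,-)\to P$ is pointwise the comparison $\A(B,D)\to P(D)$.

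For the forward implication I would start from $F\cong\Delta I*YH$ with $\A\op(D,H-)\in\mt C_{\Delta I}$ for all $D$ (the representable case being the degenerate instance $A=B$, $\bar a=\bar b=\mathrm{id}$, whose coequalizer is $\A(A,-)$). Membership in $\mt C_{\Delta I}$ says precisely that each pair $\A(B,D)\rightrightarrows\A(A,D)$ factors as a map $e_D$ in $\E$ followed by the kernel pair of some $p_D\in\E$, with $p_D$ the induced coequalizer. Matching $p_D$ against the pointwise coequalizer $F(D)$ identifies $p_D=q_D\in\E$ and the kernel pair with $P(D)$, so that $e_D$ is the comparison $\A(B,D)\to P(D)$; this is exactly the assertion that $q$ and $\A(B,-)\to P$ are pointwise in $\E$. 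Reversing these identifications yields the converse, once one knows that the given fork exhibits $F$ as the pointwise coequalizer.

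That last point is the only step that is not purely formal, and is where I expect the main work. Given a fork with $q$ and $\A(B,-)\to P$ pointwise in $\E$, I would argue pointwise: each $q_D\in\E$ is a regular epimorphism (as $I\in\G$ and $I\pitchfork q_D=q_D$, per Definition~\ref{E-maps}), and a regular epimorphism possessing a kernel pair is the coequalizer of its kernel pair; since the comparison $e_D\colon\A(B,D)\to P(D)$ is a (regular) epimorphism, that coequalizer agrees with the coequalizer of $\A(B,D)\rightrightarrows\A(A,D)$. Hence $F(D)=\mathrm{coeq}(\A(B,D)\rightrightarrows\A(A,D))$ naturally in $D$, giving $F\cong\Delta I*YH$; and the factorization of $\A(B,D)\rightrightarrows\A(A,D)$ through $e_D\in\E$ followed by the kernel pair of $q_D\in\E$ witnesses $\A\op(D,H-)\in\mt C_{\Delta I}$ for every $D$. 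Thus $F\in\mt C^\dagger_1\A$, completing the proof.
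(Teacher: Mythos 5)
Your proof is correct, and it takes the only route available: the paper states this remark without proof, treating it as a direct unwinding of Definition~\ref{C_1A} and Definition~\ref{D-E-maps}, and your argument supplies exactly that unwinding, including the two facts that make the pointwise identifications legitimate — that every map in $\E$ is a regular epimorphism (so a map in $\E$ with a kernel pair is the coequalizer of that kernel pair), and that precomposing with the epimorphism $\A(B,D)\to P(D)$ does not change the coequalizer. Your handling of the representable case as the degenerate identity fork, and your care that the a priori unnatural pointwise factorizations are only used to verify membership in $\E$ of the already-natural maps $q$ and $\A(B,-)\to P$, close the only points where a gap could have arisen.
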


\begin{prop}\label{D-companion}
	The class $\mt C$ is a companion for the class of products and $\G$-powers. If $\G$ is small then $\mt C$ is an accessible companion.
\end{prop}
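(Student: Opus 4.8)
The plan is to verify the two clauses of Definition~\ref{companion} --- compatibility (I) and the representability clause (II) --- and then, under the hypothesis that $\G$ is small, to exhibit each $\mt C_M$ as the category of models of a small sketch. Throughout, the workhorse will be the following fact about the class $\E$ of Definition~\ref{E-maps}: every map in $\E$ is the coequalizer of its own kernel pair (an \emph{effective} regular epimorphism), and $\E$ is closed in $\V^{\mathbbm{2}}$ under products and under $\G$-powers $G\pitchfork(-)$. Closure under $\G$-powers is immediate from $G'\pitchfork(G\pitchfork e)\cong(G'\otimes G)\pitchfork e$ together with the hypothesis that $\G$ is closed under tensor; closure under products and effectiveness are exactly what Assumption~\ref{ass:E} is designed to supply. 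Under alternative (I) both follow from Lemma~\ref{pointwise} and its proof: the fully faithful $J=\V_0(H,1)\colon\V_0\hookrightarrow\P\G$ sends an $\E$-map to a pointwise surjection, which is an effective epimorphism in the presheaf topos $\P\G$; since $J$ preserves kernel pairs and its left adjoint $L$ is cocontinuous with $LJ\cong 1$, both effectiveness and product-stability transfer back to $\V_0$. Under alternative (II) the same conclusions are read off from the stated cancellation property and the assumed closure of $\E$ under products.

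Given this, compatibility (I) is a short computation. For a small family of $\G$-pseudo equivalence relations, with factorizations $e_i\in\E$ and $(h_i,k_i)$ the kernel pair of the quotient $q_i\in\E$, the product pair $(\prod_i f_i,\prod_i g_i)\colon\prod_i X_i\to\prod_i Y_i$ factors through $\prod_i e_i\in\E$ followed by $\prod_i(h_i,k_i)$; since limits commute with limits, this latter pair is the kernel pair of $\prod_i q_i$, which lies in $\E$ and so is effective. Hence $\mt C_{\Delta I}$ is closed under products, and the coequalizer of $\prod_i f_i$ and $\prod_i g_i$ is $\prod_i q_i$, with codomain $\prod_i Q_i$; that is, the coequalizer functor preserves products of diagrams in $\mt C_{\Delta I}$. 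The identical argument with $G\pitchfork(-)$ in place of $\prod_i$ handles $\G$-powers, giving clause (I).

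For clause (II), let $\A$ be $\Psi$-complete (with products and $\G$-powers) and virtually cocomplete, and let $F\colon\A\to\V$ be small and $\Psi$-continuous; we must place $F$ in $\mt C_1^\dagger\A$, which by the remark preceding the statement means exhibiting $F$ as the coequalizer of a pair $\A(B,-)\rightrightarrows\A(A,-)$ whose quotient $q$ and whose comparison into the kernel pair of $q$ are pointwise in $\E$. (The $\bo{Set}$-specific Proposition~\ref{companion-terminal} is unavailable here, so the construction is direct.) First I would build the covering representable, mirroring the unenriched product case: since $F$ is small and $\G$ is a dense generator, choose a small family $(A_s,G_s,x_s\colon G_s\to FA_s)$ of $\G$-elements generating $F$, set $A=\prod_s(G_s\pitchfork A_s)$, and use that $F$ preserves products and $\G$-powers to read off a canonical element of $FA\cong\prod_s[G_s,FA_s]$, hence a map $q\colon\A(A,-)\to F$. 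A diagram chase --- the enriched analogue of the surjectivity check, using the projections $FA\to[G_s,FA_s]$ --- shows $q_C\in\E$ for every $C$. Next form the kernel pair $P$ of $q$ in $\P^\dagger\A$ (cocomplete since $\A$ is virtually cocomplete); being a finite limit of the small $\Psi$-continuous functors $\A(A,-)$ and $F$, it is again small and $\Psi$-continuous, so the same construction yields $\A(B,-)\to P$ pointwise in $\E$. Composing into $P\rightrightarrows\A(A,-)$ then presents $F$ pointwise as the coequalizer of a $\G$-pseudo equivalence relation (here effectiveness of $q_C$ identifies the coequalizer with $F$), so $F\in\mt C_1^\dagger\A$. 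This step --- producing a single representable that $\E$-covers $F$, and verifying the assembled data is genuinely a $\G$-pseudo equivalence relation --- is the main obstacle.

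Finally, suppose $\G$ is small. Since $\mt C_{\Delta I}$ is non-empty only for $M=\Delta I$, it remains to show it is accessible and accessibly embedded in $[\C,\V]$. Generalising Example~\ref{acccomp}(3) to the enriched setting, I would present a $\G$-pseudo equivalence relation $(f,g)\colon X\to Y$ by the extra data of its factorization $e\colon X\to Z$ and quotient $q\colon Y\to Q$, imposing the kernel-pair relations as limit conditions together with the conditions $e\in\E$ and $q\in\E$. Each condition ``$(-)\in\E$'' unpacks, via Definition~\ref{E-maps}, into the requirement that $G\pitchfork(-)$ be a regular epimorphism --- that is, the coequalizer of its kernel pair, a limit-plus-colimit condition --- for each $G\in\G$; because $\G$ is small there are only set-many such conditions, so the result is a small sketch $\S$ whose models are exactly the $\G$-pseudo equivalence relations. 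Hence $\mt C_{\Delta I}\simeq\tx{Mod}(\S)$ is accessible and accessibly embedded, and by clause (I) closed under the relevant $\Psi$-limits, so $\mt C$ is an accessible companion. When $\G$ is large this sketch is large, which is precisely why smallness of $\G$ is required.
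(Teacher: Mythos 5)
Your proposal is correct and follows essentially the same route as the paper's proof: compatibility from stability of $\E$ under products and $\G$-powers together with kernel pairs commuting with all limits; clause (II) by covering $F$ with the single representable $\A(\prod_s(G_s\pitchfork A_s),-)$, checking this cover is pointwise in $\E$, and repeating the construction for its kernel pair (again small and limit-preserving); and, for small $\G$, accessibility of $\mt C_{\Delta I}$ via a sketch whose colimit specifications say that $G\pitchfork q$ and $G\pitchfork e$ are coequalizers of their sketched kernel pairs for each $G\in\G$. The one step you leave as a ``diagram chase'' --- that $q_C\in\E$ for every $C$ --- is exactly where the paper splits on Assumption~\ref{ass:E}: the isomorphisms $G\pitchfork q_C\cong q_{G\pitchfork C}$ (valid since both $\A(A,-)$ and $F$ preserve $\G$-powers) reduce the $\E$-condition to a statement about $q$ at other objects of $\A$, which under (I) is handled by regular projectivity of $I$ together with Lemma~\ref{pointwise}, and under (II) by the cancellation property applied to the pointwise regular epimorphism $\sum_s G_s\cdot\A(A_s,-)\to F$ factoring through $q$.
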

\begin{proof}
	That $\mt C$ is compatible with products and $\G$-powers in $\V$ is a consequence of the fact that the maps in $\E$ are stable under them, and kernel pairs commute with any limit. 
	
	Assume now that $\A$ is virtually cocomplete with products and $\G$-powers, and consider $F\colon \A\to\V$ to be a small functor which preserves these limits. Let $Y\colon \A\op\to\P(\A\op)$ be the Yoneda embedding; then, by smallness of $F$ and since $\G$ is a dense generator, there is a regular epimorphism  
	$$q\colon \sum_i(P_i\cdot YA_i)\twoheadrightarrow F$$%
	with $P_i\in\G$ for any $i$. Consider now $A=\textstyle\prod_i(P_i\pitchfork A_i)$ in $\A$ and the comparison $\textstyle\sum_i(P_i\cdot YA_i)\to YA$. Since $F$ preserves products and $\G$-powers, $q$ factorizes through the comparison via a map
	$$e\colon YA\longrightarrow F.$$%  
	We wish to prove that $e$ lies pointwise in $\E$. The proof will depend on which condition, (I) or (II), holds in $\V$. Assume that (I) holds; since both $YA$ and $F$ preserve $\G$-powers, we have an isomorphism $G\pitchfork e_B\cong e_{G\pitchfork B}$ for any $G\in\G$ and $B\in\A$. Therefore
	$$ \V_0(G,e_B)\cong\V_0(I,G\pitchfork e_B)\cong \V_0(I,e_{G\pitchfork B}) $$% 
which is surjective since $\V_0(q_B)$ is so by condition
(I); thus $e_B\in\E$ for any $B$. On the other hand, if (II) holds
then $e_B$ is a regular epimorphism for any $B$ in $\A$ (since $q_B$ is one), and thus also $G\pitchfork e_B\cong e_{G\pitchfork
  B}$ is a regular epimorphism, thus once again $e_B\in\E$.
	
	Now, since $\A$ is virtually cocomplete, $\P(\A\op)$ is complete, and the kernel pair $K$ of $e$ is still small and preserves the same limits as $F$; hence by the same arguments we can find a map $e'\colon YA'\to K$ which lies pointwise in $\E$. It follows that $F$ can be expressed as the coequalizer of a $\G$-pseudo equivalence relation between representables; in other words it lies in $\mt C_1(\A\op)$. This proves that $\mt C$ is a companion for products and $\G$-powers.
	
	Assume now that $\G$ is small. We define a sketch for $\mt C_{\Delta I}$ in the spirit of Example~\ref{acccomp}(3). Consider the $\V$-category $\C'$ in $\P\C$ spanned by: $\C$, the coequalizer $q$ of $(f,g)$, the kernel  pair $h,k\colon Z\to Y$ of $q$, the kernel pair $(h',k')$ of the map $e\colon X\to Z$ induced by the kernel pair $(h,k)$, and $\G$-powers of all these. Then define the sketch on $\C'$ with limit conditions $\mathbb{L}$ specifying $(h,k)$ and $(h',k')$ as the kernel pairs of $q$ and $e$ respectively. The only colimit conditions $\mathbb{C}$ specify, for any $G\in\G$, the maps $G\pitchfork q$ and $G\pitchfork e$ as the coequalizers of $(G\pitchfork h,G\pitchfork k)$ and $(G\pitchfork h',G\pitchfork k')$ respectively. Since these conditions force $q$ and $e$ to be identified with maps in $\E$, it's then easy to see that 
	$$\mt C_{\Delta I}\simeq \tx{Mod}(\C',\mathbb{L},\mathbb{C}).$$% 
	As a consequence it is accessible and closed under filtered colimits, products, and $\G$-powers in $[\C,\V]$. 
\end{proof}

We can now apply Theorem~\ref{C-strong-charact} to obtain: 

\begin{teo}
	Let $\K$ be an accessible $\V$-category with products and $\G$-powers, and let $\A$ be a full subcategory of $\K$. The following are equivalent:\begin{enumerate}\setlength\itemsep{0.25em}
		\item $\A$ is accessible, accessibly embedded, and closed under products and $\G$-powers;
		\item $\A$ is accessibly embedded and $\mt C$-virtually reflective.
	\end{enumerate}
\end{teo}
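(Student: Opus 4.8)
The plan is to recognize this as an immediate instance of the general characterization theorem established earlier, namely Theorem~\ref{C-strong-charact}, once we correctly identify the relevant class of weights and verify that $\mt C$ is a companion for it. So the first step is to set $\Psi$ to be the class of weights corresponding to (small) products together with $\G$-powers; with this choice, the phrase ``$\K$ is an accessible $\V$-category with products and $\G$-powers'' is precisely the statement that $\K$ is an accessible $\V$-category with $\Psi$-limits, and ``$\A$ is closed under products and $\G$-powers in $\K$'' is precisely ``$\A$ is closed under $\Psi$-limits.''

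The key input is Proposition~\ref{D-companion}, which tells us that the colimit type $\mt C$ generated by the $\G$-pseudo equivalence relations (Definition~\ref{D-E-maps}) is indeed a companion for this class $\Psi$ of products and $\G$-powers. This supplies exactly the hypothesis needed to invoke Theorem~\ref{C-strong-charact}: there, $\mt C$ is assumed to be a companion for $\Psi$ and $\K$ is assumed to be accessible with $\Psi$-limits, and both are now in hand. Note that the notion of $\mt C$-virtual reflectivity appearing in condition (2) of our statement is literally the one from Definition~\ref{C-virtual-left} specialized to this $\mt C$, so no translation of the conclusion is required either.

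With these identifications the proof reduces to a single application: by Proposition~\ref{D-companion} the colimit type $\mt C$ is a companion for the class $\Psi$ of products and $\G$-powers, and $\K$ is accessible with $\Psi$-limits by hypothesis, so Theorem~\ref{C-strong-charact} yields the equivalence of (1) and (2) directly. There is no substantial obstacle to overcome here, since all the work has already been done: the genuinely nontrivial content lives in Proposition~\ref{D-companion} (verifying the two companion axioms, in particular that every small product- and $\G$-power-preserving $F\colon\A\to\V$ on a virtually cocomplete $\A$ is a coequalizer of a $\G$-pseudo equivalence relation between representables, which is where Assumption~\ref{ass:E} and the analysis of the class $\E$ of maps enter) and in the general Theorem~\ref{C-strong-charact} itself. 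The only point worth stating explicitly in writing up the argument is that accessibility of $\mt C$ is not needed for this theorem, so the smallness hypothesis on $\G$ from the second half of Proposition~\ref{D-companion} plays no role; we use only that $\mt C$ is a companion.
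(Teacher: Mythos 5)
Your proposal is correct and matches the paper's own argument exactly: the paper proves this theorem by citing Proposition~\ref{D-companion} (that $\mt C$ is a companion for products and $\G$-powers) and then invoking Theorem~\ref{C-strong-charact} directly. Your added observation that accessibility of the companion (and hence smallness of $\G$) is not needed here is also accurate, since that hypothesis only enters in the sketch-theoretic results.
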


Regarding sketches, note that, by the presentation of $\mt C$ as in
Proposition~\ref{D-companion} and Section~\ref{D-sketch}, the
$\V$-categories of $\mt C$-models of sketches are the same as the
$\V$-categories of models of limit sketches where in addition some
maps are specified to lie in $\E$; such sketches are
commonly called limit/$\E$ sketches. Thus, in light of
Proposition~\ref{D-companion}, Theorems~\ref{relatice-C-charact} and \ref{Psi-sketch} in this case become:

\begin{teo}\label{prod+G-powers-weak}
	The following are equivalent for a $\V$-category $\A$: \begin{enumerate}\setlength\itemsep{0.25em}
		\item $\A$ is accessible with products and $\G$-powers;
		\item $\A$ is accessible and $\mt{C}_1^\dagger \A$ is cocomplete;
		\item $\A$ is accessible and $\mt C$-virtually cocomplete;
		\item $\A$ is accessibly embedded and $\mt C$-virtually reflective in $[\C,\V]$ for some $\C$.
	\end{enumerate}
	If $\G$ is small, then they are further equivalent to: \begin{enumerate}
	\item[(5)] $\A$ is the $\V$-category of models of a limit/$\E$ sketch.
	\end{enumerate}
\end{teo}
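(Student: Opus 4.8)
The plan is to deduce everything from the general machinery already assembled, with the only genuine work lying in the sketch clause. First I would record the dictionary supplied by Proposition~\ref{D-companion}: the colimit type $\mt C$ is a companion for the class $\Psi$ consisting of products and $\G$-powers, and ``$\A$ is $\Psi$-complete'' is merely another way of saying ``$\A$ has products and $\G$-powers''. With this in place, the equivalence of (1), (2), (3), and (4) is exactly the instance of Theorem~\ref{relatice-C-charact} applied to this particular $\mt C$ and $\Psi$; since that theorem only requires $\mt C$ to be a companion (not an accessible one), no smallness hypothesis on $\G$ is needed here, which matches the statement.

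For the sketch clause I would assume $\G$ small, so that $\mt C$ is an \emph{accessible} companion by the second half of Proposition~\ref{D-companion}. Theorem~\ref{Psi-sketch} then shows that (1) is equivalent to $\A$ being (equivalent to) the $\V$-category $\tx{Mod}_{\mt C}(\S)$ of $\mt C$-models of some sketch $\S$. The remaining task is therefore purely about sketches: to identify the $\V$-categories of $\mt C$-models with the $\V$-categories of models of limit/$\E$ sketches. Here is where the real content sits, and I would handle it using the $\mt C$-sketch construction of Section~\ref{D-sketch} together with the explicit presentation of $\mt C_{\Delta I}$ from the proof of Proposition~\ref{D-companion}. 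That presentation exhibits $\mt C_{\Delta I}$ as the models of a sketch whose limit conditions specify the relevant kernel pairs and whose colimit conditions specify the coequalizers $G\pitchfork q$ and $G\pitchfork e$ for $G\in\G$. By Definition~\ref{D-E-maps}, imposing these colimit conditions on top of the kernel-pair limit conditions is precisely the requirement that the two structural maps lie in $\E$. Consequently, expanding every colimit cocylinder of a $\mt C$-sketch through this sub-sketch converts the $\mt C$-conditions into plain limit conditions supplemented by a specification that certain maps lie in $\E$ --- that is, into a limit/$\E$ sketch --- and conversely any limit/$\E$ sketch arises in this way. This is the verbatim analogue of the argument given for the class $\mt R$ in Example~\ref{limit-epi}, which is flagged there as extending to the present setting. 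Chaining this identification with Theorem~\ref{Psi-sketch} (or, equivalently, with Theorem~\ref{C-sketch}) yields the equivalence of (5) with the other conditions.

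I expect the main obstacle to be precisely this last translation: one must verify that ``a map lies in $\E$'' can be faithfully encoded by the limit and colimit conditions of a sketch. This forces a careful use of Assumption~\ref{ass:E}, since it is the two cases (I) and (II) there that guarantee $\E$ is detected pointwise by $\G$ and is stable under the limits appearing in the sketch; without this one could not be sure that the expanded sketch really cuts out the maps in $\E$. One must also check that the resulting correspondence of models is an equivalence of $\V$-categories, not merely a bijection on objects, so that accessibility and $\Psi$-completeness transfer correctly. By contrast, the equivalences (1)--(4) are immediate once the companion dictionary of Proposition~\ref{D-companion} is invoked, and require no argument beyond citing Theorem~\ref{relatice-C-charact}.
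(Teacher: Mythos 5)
Your proposal is correct and follows essentially the same route as the paper: Proposition~\ref{D-companion} plus Theorem~\ref{relatice-C-charact} for the equivalence of (1)--(4) (with no smallness of $\G$ needed there), and, when $\G$ is small, the sketch presentation of $\mt C_{\Delta I}$ from Proposition~\ref{D-companion} combined with the $\mt C$-sketch machinery of Section~\ref{D-sketch} and Theorems~\ref{Psi-sketch}/\ref{C-sketch} to identify $\mt C$-models with models of limit/$\E$ sketches. The translation step you flag as the main obstacle is exactly what the paper disposes of via that explicit presentation (as in Example~\ref{limit-epi}), so your account matches the paper's argument in both structure and substance.
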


For some of the examples outlined below, this theorem relates to the results of \cite{LR12:articolo}, although at this point, it doesn't capture them completely. We deal with this in Section~\ref{wrsnc}: see Theorem~\ref{weak-prod+pow}.

\begin{es}[\em Products and small powers]
	Assume that $\V$ satisfies (I) and consider $\G=\V_0$ as the dense generator. Then, using Lemma~\ref{pointwise}, it's easy to see that the class $\E$ consists exactly of the split epimorphisms in $\V$.
	
	Then the theorem above classifies accessible $\V$-categories with products and powers. A fully faithful inclusion $J\colon\A\to\K$ is $\mt C$-virtually reflective in this context if for any $X\in\K$ there is a pointwise-split map $q\colon\A(A,-)\to \K(X,J-)$ whose kernel pair is also a pointwise-split subobject of some representable $\A(B,-)$.
\end{es}

\begin{es}[\em Products and projective powers]\label{products}
	Let $\V$ be a symmetric monoidal quasivariety as in \cite{LT20:articolo}; in this section we consider $\G$ to consist of the enriched finitely presentable and regular projective objects of $\V$. Then $\V$ satisfies condition (II) being regular. The class $\E$ is simply given by the regular epimorphisms in $\V$. The corresponding colimit type $\mt R$, like in the ordinary case, is the one formed by the pseudo-equivalence relations in $\V$: we say that a pair $f,g\colon X\to Y$ in $\V$ is a {\em pseudo-equivalence relation} if it factors as a regular epimorphism $e\colon X\twoheadrightarrow Z$ followed by a kernel pair $h,k\colon Z\to Y$. 
	
	\noindent Note that a Cauchy complete $\V$-category has products and $\G$-powers if and only if it has products and powers by projective objects. This is because every projective object of $\V$ is a split subobject of coproducts of elements of $\G$ \cite[Proposition~4.8]{LT20:articolo}. Therefore Theorem~\ref{prod+G-powers-weak} provides a characterization of the accessible $\V$-categories with products and projective powers. 
	
	\noindent A fully faithful inclusion $J\colon\A\to\K$ is $\mt C$-virtually reflective in this context if for any $X\in\K$ there is a regular epimorphism $q\colon\A(A,-)\to \K(X,J-)$ whose kernel pair is also a regular quotient of some representable $\A(B,-)$. In the ordinary case, this means that there exist $r\colon X\to JA$ in $\K$ and $h_1,h_2\colon A\to B$ in $\A$ such that every map $f\colon X\to JC$ factors as $f=Jg\circ r$ for some $g\colon A\to C$ in $\A$ (that is, $r$ is a weak reflection of $X$ into $\A$) and for any two $g_1,g_2$ with $Jg_2\circ r=Jg_2\circ r$ there exists $k\colon B\to C$ such that $g_1=k\circ h_1$ and $g_2=k\circ h_2$.
	
	\noindent What this gives in the case $\V=\bo{Set}$
    is not exactly the traditional characterization of \cite[Chapter~4]{AR94:libro} which uses weak colimits and weak left adjoints. See Section \ref{wrsnc} for the relation between left $\mt R$-adjoints and weak left adjoints, and between colimits in $\mt R_1^\dagger\A$ and weak colimits.
\end{es}

\begin{es}[\em Products and powers by $\mathbbm{2}$]\label{prod+powers2}
	Let $\V=\bo{Cat}$, which satisfies condition (I), and consider $\G=\{\mathbbm{2}^n\}_{n\in\mathbb{N}}$ together with the induced class $\E$.
	
	\noindent By Lemma~\ref{pointwise}, $f\colon\C\to\D$ in $\bo{Cat}$ lies in $\E$ if and only if it is {\em surjective on cubes}; that is, if $\mathbbm{2}^n\pitchfork f$ is surjective on objects for any $n\in\mathbb{N}$. The colimit type $\mt H$ induced by $\E$ can be described as the one formed by the pairs $f,g\colon \C\to \D$ in $\bo{Cat}$ which factor as a surjective on cubes functor $e\colon \C\to \E$ followed by a kernel pair $h,k\colon \E\to \D$ whose coequalizer is surjective on cubes.
	
	\noindent Given the ordinal $\mathbbm{n}:=\{0\to1\dots n-1\}$, one could try to consider those morphisms $e$ such that $\mathbbm{n}\pitchfork e$ is a regular epimorphism for each $n\geq 0$. Since $\mathbbm{n}$ is a split subobject of $\mathbbm{2}^{n-1}$, it follows that every cube epimorphism also satisfies this property. However the converse doesn't hold: consider the inclusion of the non-commutative square into the commutative square, this satisfies the condition on ordinals in $\bo{Cat}$ but is not surjective on cubes. Note that, by \cite[Proposition~6.2]{pedicchio_tholen_2003} and the observation just made, every surjective on cubes functor in $\bo{Cat}$ is an effective descent morphism.
\end{es}

\begin{es}[\em Products and finite powers]\label{pure}
	Let $\V$ be locally finitely presentable as a closed category with a regular projective unit, so that condition (I) is satisfied. Consider $\G=\V_f$, then the maps in $\E$ are usually called {\em pure epimorphisms}: these are the morphisms $e$ for which $\V_0(A,e)$ is surjective for any $A\in\V_f$. 	
\end{es}

\subsection{Flexible Limits}\label{flexible}

For this section we let $\V=\bo{Cat}$ and consider the class of weights $\bo{Flex}$ for flexible limits; these are generated by products, inserters, equifiers, and splittings of idempotents. See \cite{lack20102} as a reference for 2-limits.

\begin{obs}
	Note that, when dealing with accessible categories, it's equivalent to consider $\bo{Flex}$ or the class $\bo{PIE}$ of PIE-limits. This is because every accessible 2-category is Cauchy complete, and flexible limits are generated by PIE-limits and splittings of idempotents.
\end{obs}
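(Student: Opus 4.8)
The plan is to establish the claimed coincidence by combining the two facts recalled in the statement: that the saturated class $\bo{Flex}$ is generated by the PIE-weights together with the weight for splitting an idempotent, and that every accessible 2-category is Cauchy complete. I would organize the argument as a two-sided implication between ``$\A$ has $\bo{Flex}$-limits'' and ``$\A$ has $\bo{PIE}$-limits,'' under the standing hypothesis that $\A$ is accessible.

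The easy direction is purely formal, and I would dispatch it first. Since every PIE-weight is in particular a flexible weight, the class $\bo{PIE}$ is contained in $\bo{Flex}$; hence any 2-category admitting all $\bo{Flex}$-limits admits all $\bo{PIE}$-limits, with no appeal to accessibility.

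For the converse I would route through Cauchy completeness. Suppose $\A$ is accessible with $\bo{PIE}$-limits. As recalled earlier in the paper, every accessible $\V$-category is Cauchy complete; for $\V=\bo{Cat}$ this means that $\A$ is closed under absolute (co)limits, and in particular that idempotents split in $\A$. I would then invoke the generation result: the saturation of $\bo{PIE}$ together with the weight for idempotent-splitting equals the saturation of $\bo{Flex}$. Since $\A$ has all $\bo{PIE}$-limits by hypothesis and all splittings of idempotents by Cauchy completeness, it admits limits weighted by every weight in this saturated class, that is, all flexible limits.

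The one point demanding care is the identification of idempotent-splitting as an absolute limit in the $\bo{Cat}$-enriched sense, so that it is supplied automatically by Cauchy completeness; this is precisely where accessibility does its work, and it is the only nonformal ingredient, the remainder being manipulation of saturated classes of weights. Once this is in place the two implications combine to yield the stated equivalence, and the very same reasoning shows that throughout the characterization theorems for accessible 2-categories one may freely interchange $\bo{Flex}$ and $\bo{PIE}$.
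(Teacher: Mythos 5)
Your proposal is correct and takes essentially the same approach as the paper: the remark's own inline justification is exactly your two ingredients, namely that accessibility forces Cauchy completeness (hence the existence of idempotent splittings, these being absolute limits) and that $\bo{Flex}$ is generated by $\bo{PIE}$ together with splittings of idempotents. Your write-up merely makes explicit the two directions that the paper leaves implicit.
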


Before describing a companion for $\bo{Flex}$ let's recall some facts about retract equivalences, coisoidentifiers, and a notion of kernel in the 2-categorical context.

A retract equivalence in a 2-category $\K$ is a morphism $q\colon D\to E$ for which there exist a section $s\colon E\to D$, so that $qs=1_E$, and an invertible 2-cell $\sigma\colon sq\cong 1_D$. In $\bo{Cat}$ retract equivalences are precisely those equivalences that are moreover surjective on objects.

We shall present retract equivalences in $\bo{Cat}$ as part of a kernel-quotient system (\cite{BG14:articolo}, see also Section~\ref{wrsnc}). The {\em kernel} of this system is given by what we call an isokernel cell. 

\begin{Def}
	Let $q\colon D\to E$ be a morphism in a 2-category $\K$; the {\em isokernel cell} of $q$ is the universal invertible 2-cell 
	\begin{center}
		\begin{tikzpicture}[baseline=(current  bounding  box.south), scale=2]
			
			\node (a) at (0,0) {$ C$};
			\node (b) at (1,0) {$ D$};
			\node (c) at (0.5,0) {$\ \ \Downarrow\phi$};

			\path[font=\scriptsize]
			
			(a) edge[bend left, ->] node [above] {$\pi_1$} (b)
			(a) edge[bend right, ->] node [below] {$\pi_2$} (b);
			
		\end{tikzpicture}
	\end{center}
	such that $q\phi=id$.
\end{Def}

In $\bo{Cat}$, the category $C$ is given by the full subcategory of $D^{\mathbbm{2}}$ whose objects are the isomorphisms $f\colon x\to y$ of $D$ for which $q(f)=id$. Then $\pi_1$ and $\pi_2$ are the domain and codomain projections, and $\phi_f=f\colon \pi_1(f)\to\pi_2(f)$. 

The {\em quotient} of the system is given by coisoidentifiers:

\begin{Def}
	Let $\phi\colon \pi_1\Rightarrow \pi_2\colon C\to D$ be an invertible 2-cell in a 2-category $\K$. The {\em coisoidentifier} $q\colon  D\to E$ of $\phi$ is the universal morphism out of $D$ satisfying the equality $q\phi=\tx{id}$.
\end{Def}

See Section~\ref{wrsnc} for a description in terms of weighted colimits.

The next proposition shows that retract equivalences are the (split) colimit of their isokernel cells. 

\begin{prop}
	Every retract equivalence in a 2-category $\K$ is the coisoidentifier of its isokernel cell. 
\end{prop}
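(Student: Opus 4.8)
The plan is to verify directly that $q$ enjoys the universal property of the coisoidentifier of its isokernel cell $\phi\colon\pi_1\Rightarrow\pi_2\colon C\to D$; that is, for every object $X$ the functor $(-)\circ q\colon\K(E,X)\to\K(D,X)$ is an isomorphism onto the full subcategory $\K(D,X)_\phi$ of those $t\colon D\to X$ with $t\phi=\tx{id}$, $2$-naturally in $X$. The key point is that the section $s$ and the $2$-cell $\sigma$ exhibit this coisoidentifier as a \emph{split} (hence absolute) colimit, so the whole statement reduces to elementary manipulations with $qs=1_E$ and $\sigma\colon sq\cong 1_D$ together with the two universal properties in play.

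First I would record the easy half. Since $q\phi=\tx{id}$ by the definition of the isokernel cell, we get $(gq)\phi=g(q\phi)=\tx{id}$, so $(-)\circ q$ lands in $\K(D,X)_\phi$. Precomposing with $s$ and using $qs=1_E$ shows that $(-)\circ s$ satisfies $((-)\circ s)\circ((-)\circ q)=\tx{id}_{\K(E,X)}$, which already gives injectivity on objects and faithfulness of $(-)\circ q$. Moreover the invertible $2$-cell $\sigma$ induces a natural isomorphism $(-)\circ\sigma\colon((-)\circ q)\circ((-)\circ s)=(-)\circ sq\Rightarrow(-)\circ 1_D=\tx{id}_{\K(D,X)}$, so $(-)\circ q$ is fully faithful, with $(-)\circ s$ serving as a pseudo-inverse and strict retraction.

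It remains to identify the (strict) image of $(-)\circ q$ with $\K(D,X)_\phi$. On objects the image is exactly $\{t : tsq=t\}$: if $t=gq$ then $tsq=gqsq=gq=t$, and conversely $t=(ts)q$ whenever $tsq=t$. The inclusion $\{t:tsq=t\}\subseteq\K(D,X)_\phi$ is immediate from $q\phi=\tx{id}$. For the reverse inclusion---the crux of the argument---I would first arrange the triangle identity $q\sigma=\tx{id}_q$ by the standard improvement of an equivalence with $qs=1_E$ kept strict (replacing $\sigma$ by a corrected invertible $2$-cell if necessary). Then $\sigma\colon sq\Rightarrow 1_D$ is an invertible $2$-cell with $q\sigma=\tx{id}$, so by the universal property of the isokernel cell it factors as $\sigma=\phi u$ for a unique $u\colon D\to C$ with $\pi_1u=sq$ and $\pi_2u=1_D$. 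Hence for any $t$ with $t\phi=\tx{id}$ we obtain $t\sigma=(t\phi)u=\tx{id}$, which forces $tsq=t$; thus $\K(D,X)_\phi\subseteq\{t:tsq=t\}$. Being fully faithful, injective on objects, and bijective on objects onto the full subcategory $\K(D,X)_\phi$, the functor $(-)\circ q$ is an isomorphism of categories onto $\K(D,X)_\phi$, which is precisely the defining property of the coisoidentifier; $2$-naturality in $X$ is routine. The main obstacle is exactly this last inclusion: it is where the triangle identity is genuinely needed, so the real work lies in justifying that $\sigma$ may be chosen with $q\sigma=\tx{id}_q$ and in the whiskering identity $t\sigma=(t\phi)u$ that allows $\phi$ to absorb $\sigma$.
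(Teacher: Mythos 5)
Your proof is correct and is essentially the paper's own argument: both proofs factor the invertible $2$-cell $\sigma$ through the isokernel cell via its universal property (your $u$ is the paper's $v$, with $\pi_1u=sq$, $\pi_2u=1_D$, $\phi u=\sigma$) and then deduce that every $t$ with $t\phi=\mathrm{id}$ satisfies $tsq=t$, hence factors as $(ts)q$, uniqueness following because $q$ is a split epimorphism. If anything you are more careful than the paper, which applies the universal property of the isokernel cell to $\sigma$ without noting that this tacitly requires first adjusting $\sigma$ so that $q\sigma=\mathrm{id}_q$ (exactly your triangle-identity step), and which dispatches the $2$-dimensional part of the universal property only implicitly, by observing that the colimit is split.
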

\begin{proof}
	Let $q\colon D\to E$ be a retract equivalence in $\K$; then we can take a section $s\colon E\to D$  and an invertible 2-cell $\sigma\colon sq\cong 1_D$.
	Given the isokernel cell $\phi\colon \pi_1\Rightarrow \pi_2\colon C\to D$ of $q$, by the universal property of the limit applied to $\sigma$ there exists $v\colon  D\to C$ with $\pi_1 v=sq$, $\pi_2 v=1_D$, and $\phi v=\sigma$. It now follows easily that $q$ is the (split) coisoidentifier of $\phi$: given any map $h\colon D\to F$ such that $h\phi=\tx{id} $, then $(hs)q=h\pi_1 v=h\pi_2 v=h$ so that $h$ factors through $q$. The factorization is unique since $q$ is an epimorphism.
\end{proof}

In general, it is not true that the coisoidentifier of any isokernel cell is a retract equivalence. For instance, in $\bo{Cat}$, if $\phi\colon \pi_1\Rightarrow \pi_2\colon C\to D$ is the isokernel cell of some map $r\colon D\to F$, and $D$ has a non-identity isomorphism $f\colon x\to x$ such that $r(f)=id$, then the colimit $q\colon D\to E$ of $\phi$ will send $f$ to the identity map. Thus $q$ is not faithful and in particular not a retract equivalence.

We can avoid this problem by introducing the notion of acyclic isokernel cell: 

\begin{Def}\label{equivalence2rel}
	An isokernel cell $\phi\colon \pi_1\Rightarrow \pi_2\colon C\to D$ in $\bo{Cat}$ is called {\em acyclic} if $\phi_c=id$ whenever $\pi_1c=\pi_2c$. 
\end{Def}

\begin{obs}\label{alsoacyclic}
	Let $k\colon B\to C$ be the equalizer of $\pi_1$ and $\pi_2$, then $\phi$ is acyclic if and only if  $\phi k=id$. Equivalently, $\phi$ is acyclic if and only if the equalizer $k$ of $\pi_1$ and $\pi_2$ coincides with the identifier of $\phi$.
\end{obs}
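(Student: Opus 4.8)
The plan is to reduce everything to unwinding the universal properties of the equalizer and the identifier, since acyclicity is by definition a condition on exactly those objects $c$ of $C$ satisfying $\pi_1c=\pi_2c$, and these are precisely the objects picked out by the equalizer $k$. So the whole statement should fall out of elementary bookkeeping, with the second equivalence deduced from the first.

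First I would treat the equivalence ``$\phi$ is acyclic if and only if $\phi k=id$''. Since $k\colon B\to C$ equalizes $\pi_1$ and $\pi_2$, we have $\pi_1k=\pi_2k$, so the whiskered 2-cell $\phi k$ is an endo-2-cell on $\pi_1k$ whose component at an object $b\in B$ is $\phi_{kb}$. In $\bo{Cat}$ the objects of the equalizer are exactly the objects $c$ of $C$ with $\pi_1c=\pi_2c$, so as $b$ ranges over $B$ the object $kb$ ranges over all and only such $c$. Hence $\phi k=id$ holds precisely when $\phi_c=id$ for every $c$ with $\pi_1c=\pi_2c$, which is the definition of acyclicity (Definition~\ref{equivalence2rel}). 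This direction is therefore immediate.

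For the second equivalence I would exploit the relationship between the identifier and the equalizer. Write $i\colon I\to C$ for the identifier of $\phi$, the universal morphism with $\phi i=id$. Because $\phi i=id$ forces $\pi_1i=\pi_2i$ (the components of $\phi i$ are identities, so their domains and codomains agree), $i$ equalizes $\pi_1,\pi_2$ and so factors uniquely through $k$ as $i=ku$ for some $u\colon I\to B$. I claim $u$ is invertible --- i.e.\ $k$ coincides with the identifier --- exactly when $\phi k=id$. Indeed, if $\phi k=id$ then $k$ itself satisfies the defining property of the identifier, so there is $v\colon B\to I$ with $k=iv$; from $i=ku=i(vu)$ and $k=iv=k(uv)$ the monicity of $i$ and $k$ (identifiers and equalizers are monomorphisms) gives $vu=1$ and $uv=1$. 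Conversely, if $u$ is invertible then $k=iu^{-1}$, whence $\phi k=(\phi i)u^{-1}=id$.

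Combining the two observations yields the statement: $\phi$ is acyclic $\iff$ $\phi k=id$ $\iff$ $k$ coincides with the identifier of $\phi$. I do not expect a genuine obstacle here; the only points needing a little care are the identification of the equalizer's objects with those $c$ satisfying $\pi_1c=\pi_2c$ (so that $\phi k$ sees exactly the relevant components), and the observation that $\phi i=id$ automatically equalizes $\pi_1$ and $\pi_2$, which is what lets the identifier factor through the equalizer and drives the second equivalence.
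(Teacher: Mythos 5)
Your proof is correct. The paper states this observation without proof, so there is no argument of its own to compare against; yours supplies exactly the missing details, and both steps are sound: the first equivalence follows because in $\bo{Cat}$ the objects of the equalizer $B$ are precisely the objects $c$ of $C$ with $\pi_1c=\pi_2c$, so the components of $\phi k$ are exactly the components of $\phi$ that acyclicity constrains; the second follows from the comparison map $u$ with $i=ku$, using that identifiers and equalizers are monomorphisms (both by uniqueness of factorizations through them). One point worth highlighting: your argument for the second equivalence is purely 2-categorical and would work in any 2-category with the relevant limits, whereas the first equivalence (and the alternative, more concrete route to the second — namely that the identifier is the full subcategory of $C$ on the objects $c$ with $\phi_c$ an identity, which by naturality of $\phi$ coincides with the equalizer exactly when the two have the same objects, i.e.\ exactly when $\phi$ is acyclic) is specific to $\V=\bo{Cat}$, which is all the paper needs here.
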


\begin{prop}\label{catexact}
	An isokernel cell $\phi\colon \pi_1\Rightarrow \pi_2\colon C\to D$ in $\bo{Cat}$ is acyclic if and only if its coisoidentifier is a retract equivalence. In this case, $\phi$ is also the isokernel cell of its coisoidentifier.
\end{prop}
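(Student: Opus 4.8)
The plan is to work throughout with the explicit descriptions of the isokernel cell and the coisoidentifier in $\bo{Cat}$ recalled above. Write $\phi$ as the isokernel cell of some $r\colon D\to F$, so that $C$ is the full subcategory of $D^{\mathbbm{2}}$ on the isomorphisms $f$ with $r(f)=\tx{id}$, with $\pi_1,\pi_2$ the domain and codomain and $\phi_f=f$. The isomorphisms $f$ of $D$ with $r(f)=\tx{id}$ form a wide subgroupoid $G\subseteq D$: it contains the identities and is closed under composition and inverses since $r$ is a functor. The first observation, which drives everything, is that $\phi$ is acyclic if and only if $G$ has at most one morphism between any two objects. Indeed, if $f,g\colon x\to y$ both lie in $G$ then $g^{-1}f$ is a $G$-endomorphism, hence an identity by acyclicity, so $f=g$; conversely any nonidentity $G$-endomorphism is a $\phi_c$ with $\pi_1c=\pi_2c$ and would violate acyclicity. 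Thus acyclicity says precisely that each connected component of $G$ is indiscrete.

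For the easy implication, suppose the coisoidentifier $q\colon D\to E$ is a retract equivalence. Then $q$ is in particular an equivalence, hence faithful. For any $c$ with $\pi_1c=\pi_2c$ the arrow $\phi_c$ is an automorphism with $q\phi_c=\tx{id}=q(\tx{id})$, so faithfulness of $q$ forces $\phi_c=\tx{id}$; that is, $\phi$ is acyclic.

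For the converse I would assume $\phi$ acyclic and build the section by hand. Choosing a representative $x_0$ in each component of $G$, acyclicity provides for each object $x$ a \emph{unique} connecting isomorphism $\theta_x\colon x_0\to x$ in $G$, with $\theta_{x_0}=\tx{id}$. Define a functor $p\colon D\to D$ by $p(x)=x_0$ and $p(g)=\theta_y^{-1}\,g\,\theta_x$ for $g\colon x\to y$; the $\theta_x$ assemble into a natural isomorphism $\theta\colon p\cong 1_D$, and $p\phi=\tx{id}$ because for $f\in G$ the composite $\theta_y^{-1}f\theta_x$ is a $G$-endomorphism of $x_0$, hence an identity. The universal property of the coisoidentifier then factors $p$ as $sq$ for a unique $s\colon E\to D$, supplying the required invertible $2$-cell $\sigma:=\theta\colon sq\cong 1_D$. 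To obtain $qs=1_E$, observe that each $\theta_x$ lies in $G$, so $q\theta_x=\tx{id}$; hence the invertible $2$-cell $q\theta\colon qp\Rightarrow q$ is an identity, forcing $qp=q$ as functors. Therefore $qsq=q$, and since any two functors out of $E$ that agree after precomposition with $q$ must coincide (the uniqueness clause of the universal property makes $q$ epic), we conclude $qs=1_E$. Thus $q$ is a retract equivalence.

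Finally, for the ``in this case'' statement I would compare underlying groupoids. The isokernel cell of $q$ is carried by the isomorphisms $g$ with $q(g)=\tx{id}$, and I claim these coincide with those satisfying $r(g)=\tx{id}$, i.e.\ with $C$. One inclusion is immediate from $q\phi=\tx{id}$. For the other, given an isomorphism $g\colon x\to y$ with $q(g)=\tx{id}$, applying $s$ and using $sq=p$ gives $p(g)=\tx{id}$, whence $x_0=y_0$ and $g=\theta_y\theta_x^{-1}\in G$, so $r(g)=\tx{id}$. The two isokernel cells therefore have the same underlying category, projections, and universal $2$-cell, so $\phi$ is the isokernel cell of $q$. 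The main obstacle in all of this is the forward direction: making the coisoidentifier's universal property concrete enough to construct the section $s$ and to verify $qs=1_E$, for which the indiscreteness of the components of $G$ (that is, acyclicity) is exactly what is needed.
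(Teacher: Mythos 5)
Your proof is correct and follows essentially the same route as the paper's: the easy direction via faithfulness of an equivalence, and the converse by choosing a representative in each connected component of the groupoid of $\phi$-arrows, with acyclicity supplying the unique connecting isomorphisms. The only cosmetic difference is that the paper constructs the coisoidentifier explicitly as the full subcategory of chosen representatives and verifies its universal property directly (via the splitting induced by the isokernel cell), whereas you factor your normalization functor $p$ through the abstractly given coisoidentifier and extract the section from its universal property; the underlying construction is identical.
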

\begin{proof}
	Assume first that the coisoidentifier $q\colon D\to E$ of $\phi$ is a retract equivalence. By the property of the colimit, every morphism in $\D$ of the form $\phi_c\colon d\to d$ is sent to the identity morphism by $q$. But $q$ is an equivalence; thus $\phi_c=1_d$ and $\phi$ is acyclic. 
	
	Conversely let's assume that $\phi$, as above, is an acyclic isokernel cell in $\bo{Cat}$. We shall give an explicit construction of its coisoidentifier. 
	
	Consider the following equivalence relation on the objects of
        $D$: two objects $d$ and $e$ of $\D$ are related if and only
        if there exists $c$ in $\C$ for which $\phi_c\colon d\to
        e$; 
	in other words,  if $\pi_1c=d$ and $\pi_2c=e$.
	This is actually an equivalence relation on the objects of $ D$ since $\phi$ is an isokernel cell (the inverse or composition of any maps of the form $\phi_c$ is still of the form $\phi_{c'}$ for some $c'\in C$). Now, for each equivalence class of objects choose a representative. Let $E$ be the full subcategory of $D$ consisting of the chosen representatives, and $s\colon E\to D$ be the inclusion; clearly this is essentially surjective on objects and so an equivalence.
	
	For each $d\in D$, let $qd\in E$ be the chosen representative of the equivalence class of $d$, and let $\sigma_d\colon sqd\to d$ be the unique isomorphism of the form $\phi_c$ for some unique $c\in C$ (the uniqueness of $\sigma_d$ follows from the fact that $\phi$ is acyclic: given any other $\sigma_d'$ the composite $\sigma_d^{-1}\sigma_d'$ must be the identity map). Then $q$ defines a functor $q\colon D\to E$ in such a way that $qs=1_E$ and the $\sigma_d$ define a natural isomorphism $\sigma\colon sq\cong 1_D$. It's now easy to see that $\phi$ is also the isokernel cell of $q$ and thus, by the universal property of the limit, there is a unique $v\colon D\to C$ with $\pi_1 v=1_D$, $\pi_2 v=sq$, and $\phi v=\sigma$. Now $q$ is a (split) coisoidentifier of $\phi$.
\end{proof}

We will now construct a companion $\mt P$ for $\bo{Flex}$ by considering retract equivalences and acyclic isokernel cells in $\bo{Cat}$. 

\begin{Def}\label{2-companion}
	Let $\mt P$ be the colimit type given by: $\mt P_M $ is non empty only when $ M =\Delta 1\colon \W\op\to \bo{Cat}$, where $\W=\{\cdot\cong\cdot\}$ is the 2-category freely generated by an invertible 2-cell. In that case $\mt P_M $ consists of the (invertible) 2-cells
	\begin{center}
		\begin{tikzpicture}[baseline=(current  bounding  box.south), scale=2]
			
			\node (a) at (0,0) {$X$};
			\node (b) at (1,0) {$Y$};
			\node (c) at (0.5,0) {$\ \ \Downarrow\psi$};

			\path[font=\scriptsize]
			
			(a) edge[bend left, ->] node [above] {$f$} (b)
			(a) edge[bend right, ->] node [below] {$g$} (b);
			
		\end{tikzpicture}
	\end{center}
	in $\bo{Cat}$ which factor as a retract equivalence $e\colon X\to Z$ followed by an acyclic isokernel cell $\phi\colon \pi_1\Rightarrow \pi_2\colon Z\to Y$ (so that $f=\pi_1e$, $g=\pi_2e$, and $\psi=\phi e$).
\end{Def}

\begin{obs}\label{alsoP_M}
	If $\psi$ in $\mt P_M$ as above, since $e$ is in particular surjective on objects the coisoidentifier of $\psi$ and $\phi$ coincide and is a retract equivalence by Proposition~\ref{catexact} above. Moreover $\phi$ is then the isokernel cell of such a coisoidentifier, and $e$ is the map induced by the universal property of the limit.
\end{obs}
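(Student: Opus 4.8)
The plan is to derive all three assertions from Proposition~\ref{catexact} together with the single observation that whiskering an invertible 2-cell by a functor surjective on objects reflects the property of becoming an identity. Write $\psi=\phi e$ with $e\colon X\to Z$ a retract equivalence and $\phi\colon\pi_1\Rightarrow\pi_2\colon Z\to Y$ an acyclic isokernel cell. The key point I would record first is that, for any functor $q\colon Y\to F$, one has $q\psi=\tx{id}$ if and only if $q\phi=\tx{id}$. Indeed the component of $q\psi$ at an object $x\in X$ is $q(\phi_{ex})$, so $q\psi=\tx{id}$ says precisely that $q(\phi_z)=\tx{id}$ for every $z$ in the image of $e$ on objects; since a retract equivalence is surjective on objects, this image is all of $Z$, and the condition becomes $q\phi=\tx{id}$.

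From this the first assertion is immediate: a functor out of $Y$ inverts $\psi$ exactly when it inverts $\phi$, so the universal such functors agree and the coisoidentifiers of $\psi$ and of $\phi$ coincide. That this common coisoidentifier $q\colon Y\to E$ is a retract equivalence is then exactly Proposition~\ref{catexact} applied to the acyclic isokernel cell $\phi$. The second assertion is likewise immediate from Proposition~\ref{catexact}: since $\phi$ is an acyclic isokernel cell, that proposition tells us $\phi$ is the isokernel cell of its own coisoidentifier $q$.

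For the third assertion I would invoke the universal property of this isokernel cell. As $q$ is the coisoidentifier of $\psi$ we have $q\psi=\tx{id}$, so $\psi\colon f\Rightarrow g\colon X\to Y$ is an invertible 2-cell annihilated by $q$; by the universal property of the isokernel cell $\phi$ of $q$ there is a unique $u\colon X\to Z$ with $\pi_1u=f$, $\pi_2u=g$, and $\phi u=\psi$. But the factorization $\psi=\phi e$ says exactly that $e$ satisfies $\pi_1e=f$, $\pi_2e=g$, and $\phi e=\psi$, so uniqueness forces $e=u$; that is, $e$ is the comparison map induced by the limit.

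I expect no genuine obstacle here: every step unwinds a universal property and the heavy lifting has already been done in Proposition~\ref{catexact}. The only substantive input is the surjectivity-on-objects argument of the first paragraph, which is precisely where the hypothesis that $e$ be a retract equivalence (and not merely an equivalence) is used; if there is a point to watch, it is simply to confirm that this reflection of the identity condition really does identify the two universal cocones, rather than merely producing a comparison between them.
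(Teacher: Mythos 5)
Your proof is correct and takes essentially the same route as the paper's own (very terse) justification: the surjectivity on objects of the retract equivalence $e$ shows that a functor $q$ out of $Y$ satisfies $q\psi=\mathrm{id}$ if and only if $q\phi=\mathrm{id}$, so the two coisoidentifiers coincide; Proposition~\ref{catexact} then supplies both the retract-equivalence claim and the claim that $\phi$ is the isokernel cell of that coisoidentifier; and the uniqueness clause in the universal property of the isokernel cell forces $e$ to be the induced comparison map. You have merely spelled out the details that the remark leaves implicit, so there is nothing to add.
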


\begin{obs}
	A 2-functor $F\colon\A\to\bo{Cat}$ lies in $\mt P^\dagger_1 \A$ if and only if there exists a diagram
	\begin{center}
		
		\begin{tikzpicture}[baseline=(current  bounding  box.south), scale=2]
			
			\node (b) at (0.5,0) {$\A(B,-)$};
			\node (c) at (2,0) {$\A(A,-)$};
			\node (c1) at (1.2,0) {$\ \ \Downarrow\psi$};
			\node (d) at (3,0) {$F$};

			\path[font=\scriptsize]
			
			([yshift=2.5pt]b.east) edge [bend left, ->] node [above] {} ([yshift=2.5pt]c.west)
			([yshift=-2.5pt]b.east) edge [bend right, ->] node [below] {} ([yshift=-2.5pt]c.west)
			(c) edge [->] node [above] {$q$} (d);
		\end{tikzpicture}
	\end{center}
	where $\psi$ is invertible, $q\psi$ is an identity, and both $q\colon\A(A,-)\to F$ and the induced map $\A(B,-)\to P$, into the isokernel cell $P$ of $q$, are pointwise retract equivalences.
\end{obs}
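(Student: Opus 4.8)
The plan is to unwind the definition of $\mt P^\dagger_1\A$ and reduce the whole statement to a pointwise assertion in $\bo{Cat}$, where the earlier results on retract equivalences, isokernel cells and coisoidentifiers (in particular Remark~\ref{alsoP_M} and Proposition~\ref{catexact}) apply componentwise. By Definition~\ref{C_1A}, a non-representable object of $\mt P^\dagger_1\A=\mt P_1(\A\op)\op$ has the form $\Delta 1*YH$ for a $2$-functor $H\colon\W\to\A\op$ with $\A(H-,B')\in\mt P_{\Delta 1}$ for every $B'\in\A$; writing $A=H1$, $B=H0$ and $\psi=Y(H\psi)$, this is exactly a coisoidentifier of an invertible $2$-cell $\psi\colon\A(B,-)\rightrightarrows\A(A,-)$ in $[\A,\bo{Cat}]$ each of whose components $\psi_{B'}$ lies in $\mt P_{\Delta 1}$. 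So the first routine step is to check that the weighted colimit $\Delta 1*YH$ really is the coisoidentifier of $\psi$ (a short cocone computation: a $\Delta 1$-weighted cocone under $\W$ is precisely a map out of $\A(A,-)$ sending $\psi$ to an identity), and to recall that both this coisoidentifier and the isokernel cell $P$ of the resulting $q\colon\A(A,-)\to F$ are computed pointwise in $[\A,\bo{Cat}]$.

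For the forward implication I would proceed as follows. Given $F=\Delta 1*YH$ with each $\psi_{B'}\in\mt P_{\Delta 1}$, the coisoidentifier $q$ is pointwise the coisoidentifier $q_{B'}$ of $\psi_{B'}$; by Remark~\ref{alsoP_M} each $q_{B'}$ is a retract equivalence, the acyclic isokernel cell $\phi_{B'}$ from the factorization of $\psi_{B'}$ is the isokernel cell of $q_{B'}$, and the factor $e_{B'}$ is exactly the map induced by the universal property of that isokernel cell. Read pointwise, these three facts give the required diagram: $q$ is a pointwise retract equivalence, $q\psi=\mathrm{id}$, and the map $\A(B,-)\to P$ induced into the (pointwise) isokernel cell of $q$ is pointwise $e_{B'}$, hence a pointwise retract equivalence. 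The representable case is handled separately and trivially by taking $B=A$, $\psi=\mathrm{id}$, $q=\mathrm{id}$.

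For the converse, suppose we are handed such a diagram. Each component $q_{B'}$ is a retract equivalence, so by the proposition that every retract equivalence is the coisoidentifier of its isokernel cell, together with Proposition~\ref{catexact}, its isokernel cell $\Phi_{B'}$ (which computes $P$ pointwise) is acyclic and has $q_{B'}$ as its coisoidentifier. The induced map $u\colon\A(B,-)\to P$ satisfies $\Phi u=\psi$ pointwise, so $\psi_{B'}=\Phi_{B'}u_{B'}$ exhibits the invertible $\psi_{B'}$ as a retract equivalence $u_{B'}$ followed by an acyclic isokernel cell $\Phi_{B'}$, i.e.\ $\psi_{B'}\in\mt P_{\Delta 1}$. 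Finally, since $u_{B'}$ is a retract equivalence it is surjective on objects, so by the argument of Remark~\ref{alsoP_M} the coisoidentifier of $\psi_{B'}=\Phi_{B'}u_{B'}$ agrees with that of $\Phi_{B'}$, namely $q_{B'}$; as coisoidentifiers are pointwise, $F=\Delta 1*YH$ lies in $\mt P^\dagger_1\A$.

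The main obstacle I anticipate is bookkeeping rather than anything conceptual: one must keep straight the directions of the $2$-cells, confirm the identification of $\Delta 1*YH$ with the pointwise coisoidentifier of $\psi$, and verify that the map induced into the isokernel cell really recovers the factor $e$ coming from membership in $\mt P_{\Delta 1}$ (so that $u_{B'}=e_{B'}$). Once this dictionary between $\mt P^\dagger_1\A$ and pointwise coisoidentifiers is in place, both directions are direct componentwise translations of Remark~\ref{alsoP_M} and Proposition~\ref{catexact}.
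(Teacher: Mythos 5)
Your proposal is correct and is essentially the argument the paper leaves implicit: the remark is stated there without proof, and the intended justification is exactly your dictionary identifying the non-representable objects of $\mt P^\dagger_1\A$ with pointwise coisoidentifiers of invertible 2-cells $\psi\colon\A(B,-)\rightrightarrows\A(A,-)$ lying componentwise in $\mt P_{\Delta 1}$, followed by a componentwise application of Remark~\ref{alsoP_M} and Proposition~\ref{catexact}. Both directions check out, including the identification of the induced map $\A(B,-)\to P$ with the factor $e$ from the factorization, the use of surjectivity on objects to see that the coisoidentifiers of $\psi_{B'}$ and of its isokernel-cell factor agree, and the trivial representable case.
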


In the next proposition we will use \cite[Theorem~6.2]{BLV:articolo}, which states that if $\A$ is a 2-category with flexible limits and $F\colon \A\to\bo{Cat}$ is a 2-functor which preserves them and (whose underlying functor) satisfies the solution-set condition, then there exists $A\in\A$ and a pointwise retract equivalence $q\colon \A(A,-)\to F$. 

\begin{prop}
	The colimit type $\mt P$ is an accessible companion for $\bo{Flex}$.
\end{prop}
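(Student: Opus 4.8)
The plan is to verify the three requirements separately: compatibility of $\mt P$ with $\bo{Flex}$ (condition (I) of Definition~\ref{companion}), the companion condition (II), and accessibility of the single non-trivial component $\mt P_{\Delta 1}$. The heart of the argument is condition (II), and I would prove it by applying \cite[Theorem~6.2]{BLV:articolo} twice: once to the given functor to obtain a pointwise retract equivalence out of a representable, and once to the isokernel cell of that map.

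For condition (II), let $\A$ be $\bo{Flex}$-complete and virtually cocomplete and let $F\colon\A\to\bo{Cat}$ be small and $\bo{Flex}$-continuous. Being small, $F$ is a small colimit of representables, so its underlying ordinary functor satisfies the solution-set condition; since it also preserves flexible limits, \cite[Theorem~6.2]{BLV:articolo} yields an object $A$ and a pointwise retract equivalence $q\colon\A(A,-)\to F$. I would then form the isokernel cell $P$ of $q$ in $[\A,\bo{Cat}]$, with universal invertible $2$-cell $\phi\colon\pi_1\Rightarrow\pi_2\colon P\to\A(A,-)$ satisfying $q\phi=\mathrm{id}$. Computed pointwise, $P(X)$ is cut out of the power $\A(A,X)^{\mathbbm 2}$ by an equifier, so $P$ is a flexible limit of the $\bo{Flex}$-continuous functors $\A(A,-)$ and $F$; as flexible limits commute with flexible limits in $\bo{Cat}$, $P$ is again $\bo{Flex}$-continuous. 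Applying \cite[Theorem~6.2]{BLV:articolo} to $P$ now produces an object $B$ and a pointwise retract equivalence $e\colon\A(B,-)\to P$. Setting $\psi:=\phi e$ gives an invertible $2$-cell between $\pi_1 e,\pi_2 e\colon\A(B,-)\to\A(A,-)$ with $q\psi=q\phi e=\mathrm{id}$, and $e$ is exactly the map induced into the isokernel cell of $q$. By the characterisation of $\mt P^\dagger_1\A$ recorded just before the statement, the data $(q,\psi)$ with $q$ and $e$ pointwise retract equivalences witness $F\in\mt P^\dagger_1\A$, which is condition (II).

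The hard part will be checking that the second appeal to \cite[Theorem~6.2]{BLV:articolo} is legitimate, i.e. that $P$ satisfies the solution-set condition. I would deal with this by exploiting that $\A$ has powers by every small category, since cotensors are flexible limits and $\A$ is $\bo{Flex}$-complete. Because $P$ preserves such powers, a functor $C\to P(X)$ is the same as an object of $P(X)^{C}\cong P(X^{C})$, and objects of $P(X^{C})$ are governed by maps $A\to X^{C}$ together with isomorphisms in $F(X^{C})$; smallness of $F$ then bounds these uniformly, producing the required solution set. Equivalently, one shows directly that a finite flexible limit of the small functor $F$ and of representables over the flexibly complete $\A$ is again small, using that $\P^\dagger\A$ is cocomplete by virtual cocompleteness.

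For condition (I), I would verify that $\mt P_{\Delta 1}\subseteq[\W,\bo{Cat}]$ is closed under flexible limits, computed pointwise, using that retract equivalences and acyclic isokernel cells are stable under products, inserters and equifiers; and that the composite $\Delta 1*-$ preserves them. The latter is the key simplification: for $\psi\in\mt P_{\Delta 1}$ the colimit $\Delta 1*\psi$ is the coisoidentifier of $\psi$, which by Observation~\ref{alsoP_M} and Proposition~\ref{catexact} is a split retract equivalence, hence an \emph{absolute} colimit; absolute colimits commute with all limits, in particular with flexible ones. Finally, for accessibility I would present $\mt P_{\Delta 1}$ as the category of models of a limit/colimit sketch encoding the factorisation ``retract equivalence followed by acyclic isokernel cell'' --- each of retract equivalences, isokernel cells and coisoidentifiers being describable by flexible limits and colimits --- and conclude by \cite[Corollary~5.7]{LT22:virtual}, exactly as in Example~\ref{acccomp}, that $\mt P_{\Delta 1}$ is accessible and accessibly embedded in $[\W,\bo{Cat}]$.
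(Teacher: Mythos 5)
Your skeleton is the same as the paper's: two applications of \cite[Theorem~6.2]{BLV:articolo} (once to $F$, once to its isokernel cell), stability of retract equivalences and isokernel cells under flexible limits for compatibility, and a sketch presentation for accessibility. The genuine gap lies in the hypothesis of \cite[Theorem~6.2]{BLV:articolo} itself: the solution-set condition. You assert that smallness of $F$ already implies that its underlying functor satisfies it. For $\bo{Set}$-valued functors this is true, and it is even true here for the test object $X=1$ (since $\tx{ob}\colon\bo{Cat}\to\bo{Set}$ has a right adjoint, $\tx{ob}\,F$ is a small $\bo{Set}$-functor, hence has a weakly initial set of elements). But the condition required is for \emph{every} test category $X$: a set of maps $x_i\colon X\to FA_i$ such that each $x\colon X\to FA$ factors as $F(h)\circ x_i$ for a single $1$-cell $h\colon A_i\to A$. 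A functor out of $X$ into a colimit of categories need not lift through the colimiting cocone, so smallness alone gives no such factorization. Your attempted repair via powers hits exactly this obstruction: transposing $x\colon X\to FA$ to an object of $F(X\pitchfork A)$ and factoring that through the weakly initial set of elements $s_i\in\tx{ob}\,F(A_i)$ produces a factorization of $x$ through the functor $\A(A_i,A)\to FA$ sending $h\mapsto F(h)(s_i)$ --- a functor on a hom-category, using the action of $F$ on $2$-cells --- and not one of the form $F(h)\circ x_i$. The same objection applies to your treatment of the isokernel cell $P$: proving $P$ small (which is correct, and is also what the paper does, via completeness of $\P(\A\op)$) does not by itself yield its solution set.

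What the paper supplies, and what your proof is missing, is an argument deriving the solution-set condition from smallness \emph{together with} flexible-limit preservation and flexible completeness of $\A$: by virtual cocompleteness $\P(\A\op)$ is complete, so each $[X,F-]$ is again small and flexible-limit preserving; smallness gives a regular epimorphism $\sum_i\bigl(P_i\cdot\A(A_i,-)\bigr)\twoheadrightarrow[X,F-]$, and preservation of products and powers lets this map factor through the comparison into $\A\bigl(\prod_i(P_i\pitchfork A_i),-\bigr)$, still pointwise surjective on objects; the resulting single map $\eta\colon X\to F\bigl(\prod_i(P_i\pitchfork A_i)\bigr)$ is then weakly initial in $X\downarrow F$, because any $x\colon X\to FA$ lifts as an object along the surjection and hence factors through $\eta$ via a $1$-cell built from a projection and an evaluation. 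This argument applies verbatim to $P$, which is why the paper needs no separate device there. Until this (or an equivalent) step is supplied, both of your appeals to \cite[Theorem~6.2]{BLV:articolo} are unjustified.
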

\begin{proof}
	Let us first show that $\mt P$ is compatible with $\bo{Flex}$; for that we need to prove that the inclusion $ J\colon\mt P_M\hookrightarrow[\W,\bo{Cat}]$ (with $M$ and $\W$ as in the definition above) and the composite of $J$ with the colimit 2-functor $[\W,\bo{Cat}]\to \bo{Cat}$ preserve flexible limits.
	
	Consider the 2-category $\Z$ generated by the following data below.
	\begin{center}
		\begin{tikzpicture}[baseline=(current  bounding  box.south), scale=2]
			
			\node (0) at (-0.8,0) {$w$};
			\node (a) at (0,0) {$x$};
			\node (b) at (1,0) {$y$};
			\node (c) at (0.5,0) {$\ \ \cong\phi$};
			\node (d) at (1.8,0) {$z$};
			
			\path[font=\scriptsize]
			
			(0) edge[ ->] node [above] {$e$} (a)
			(a) edge[bend left, ->] node [above] {$\pi_1$} (b)
			(a) edge[bend right, ->] node [below] {$\pi_2$} (b)
			(b) edge[->] node [above] {$q$} (d);
			
		\end{tikzpicture}
	\end{center}
	with $q\phi=1$. There is a continuous and cocontinuous 2-functor $T\colon[\Z,\bo{Cat}]\to[\W,\bo{Cat}]$ which acts by sending a diagram $(e,\phi,q)$ to the invertible 2-cell $\phi e$. 
	
	Now let $\P$ be the full subcategory of $[\Z,\bo{Cat}]$ consisting of those diagrams for which $e$ and $q$ are retract equivalences and $\phi$ is the isokernel cell of $q$. We will now see that $T$ restricts to an equivalence $T'\colon\P\to\mt P_M$. First note that if $(e,\phi,q)$ is in $\P$ then $\phi e$ lies in $\mt P_M$ since $\phi$ is acyclic by Proposition~\ref{catexact}. Consider the 2-functor $S\colon \mt P_M\to\P$ defined by sending a 2-cell $\psi$ to the triple $(e,\phi,q)$ where $q$ is the coisoidentifier of $\psi$, $\phi$ is the isokernel cell of $q$, and $e$ is the map induced by $\psi$ into the domain of $\phi$; this is well defined by Remark~\ref{alsoP_M}. It's easy to see that $S$ is an inverse for $T'$, and hence $T'$ is an equivalence of 2-categories.
	
	Now, to prove that the inclusion $ J\colon\mt P_M\hookrightarrow[\W,\bo{Cat}]$ and the restriction of the colimit 2-functor $[\W,\bo{Cat}]\to \bo{Cat}$ to $\mt P_M$ preserve flexible limits, it's enough to show that the inclusion $\P\hookrightarrow [\Z,\bo{Cat}]$ and the 2-functor $[\Z,\bo{Cat}]\to\bo{Cat}$, evaluating at $z$, preserve them. The latter is continuous and cocontinuous (being an evaluation 2-functor) and the former preserves flexible limits since retract equivalences are stable in $\bo{Cat}$ under them (see for example \cite[Section~9]{LR12:articolo}) and isokernel cells are stable under any limits. Thus it follows that $\mt P$ is compatible with flexible limits in $\bo{Cat}$. Moreover the same arguments also show that $\mt P_M$ is closed in $[\W,\bo{Cat}]$ under filtered colimits.
	
	Let's now prove the companion property (II) from Definition~\ref{companion}. Consider a virtually cocomplete 2-category $\A$ with flexible limits, and a small flexible-limit preserving $F\colon \A\to\bo{Cat}$. By virtual cocompleteness of $\A$, the $\V$-category $\P(\A\op)$ is complete; thus for any $X\in\bo{Cat}$ the 2-functor $[X,F-]$ is still small and flexible-limit preserving. Therefore there exists a regular epimorphism
	$$\sum_i(P_i\cdot \A(A_i,-))\twoheadrightarrow [X,F-]$$%
	which, since $[X,F-]$ preserves products and powers, factors through the comparison as a map
	$$\A(\prod_i P_i\cdot A_i,-)\twoheadrightarrow [X,F-]$$%
	which is, in particular, pointwise surjective on objects. The corresponding morphism  $\eta\colon X\to F(\textstyle\prod_i P_i\cdot A_i)$ has the property that any $x\colon X\to FA$ factorizes through $\eta$ via some $\textstyle\prod_i P_i\cdot A_i\to A$; thus $F$ satisfies the solution-set condition.
	
	By \cite[Theorem~6.2]{BLV:articolo} there is a
        pointwise retract equivalence $q\colon \A(A,-)\to F$. Form now
        the isokernel cell $\phi\colon \pi_1\Rightarrow \pi_2\colon
        G\to\A(A,-)$ of $q$. By completeness of $\P(\A\op)$, the
        2-functor $G$ is still small and, since it preserves flexible
        limits, there is once again a pointwise retract equivalence $p\colon \A(B,-)\to G$. It follows that $f:=\pi_1p$, $g:=\pi_2p$, and $\psi:=\phi p$ define a 2-cell that lies pointwise in $\mt P_M$ and which has coisoidentifier equal to $F$. This shows that $\mt P$ is a companion for $\bo{Flex}$.
	
	To conclude, we need only prove that $\mt P_M$ is accessible and accessibly embedded in $[\W,\bo{Cat}]$. By the arguments above, it's enough to show that the 2-category $\P$ is accessible and accessibly embedded in $[\Z,\bo{Cat}]$. Let $\Z'$ be the full subcategory of $\P^\dagger\Z$ spanned by the representables, the isokernel cell $\phi'\colon \pi'_1\Rightarrow \pi'_2$ of the map $e\colon w\to x$, and the equalizers $l$ and $l'$ of $(\pi_1,\pi_2)$ and $(\pi'_1,\pi'_2)$ respectively.
	
	Then define the sketch on $\Z'$ with limit conditions
        $\mathbb{L}$ specifying $\phi$ and $\phi'$ as the isokernel
        cells of $q$ and $e$ respectively, $l$ as both the equalizer
        of $(\pi_1,\pi_2)$ and the identifier of $\phi$, and similarly
        $l'$ as both the equalizer of $(\pi'_1,\pi'_2)$ and the
        identifier of $\phi'$. The colimit cocones $\mathbb{C}$
        specify simply the maps $q$ and $e$ as the coisoidentifiers of
        $\phi$ and $\phi'$ respectively. These conditions say exactly
        that $\phi$ and $\phi'$ are acyclic isokernel cells
        (Remark~\ref{alsoacyclic}), and that $q$ and $e$ are retract
        equivalences (by Proposition~\ref{catexact}); it's
        then easy to see that 
	$$\mt P_M\simeq\P\simeq \tx{Mod}(\W',\mathbb{L},\mathbb{C})$$% 
	is therefore accessible. 
\end{proof}

As we did in the previous section, by the presentation of $\mt P_M$
given above and Section~\ref{D-sketch}, the 2-categories of $\mt
P$-models of sketches are the same as the 2-categories of models of
limit sketches where in addition some maps are specified to be retract
equivalences. If we let $\E$ be the class of retract equivalences in
$\bo{Cat}$, we recover the notion of limit/RE sketch. Thus in
this case  Theorems~\ref{relatice-C-charact} and~\ref{C-sketch}
become: 

\begin{teo}\label{flexible-char}
	Let $\A$ be a 2-category; the following are equivalent: \begin{enumerate}\setlength\itemsep{0.25em}
		\item $\A$ is accessible with flexible limits;
		\item $\A$ is accessible and $\mt{P}_1^\dagger \A$ is cocomplete;
		\item $\A$ is accessible and $\mt P$-virtually cocomplete;
		\item $\A$ is accessibly embedded and $\mt P$-virtually reflective in $[\C,\V]$ for some $\C$;
		\item $\A$ is the $\V$-category of models of a
                  limit/\textnormal{RE} sketch.
	\end{enumerate}
\end{teo}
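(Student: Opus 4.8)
The plan is to obtain this theorem as a direct instance of the general machinery, now that the previous proposition has established that $\mt P$ is an accessible companion for $\bo{Flex}$. Setting $\Psi=\bo{Flex}$ and $\mt C=\mt P$, the equivalence of conditions (1)--(4) is nothing but Theorem~\ref{relatice-C-charact} read off in this special case: since $\mt P$ is a companion for $\bo{Flex}$, accessibility together with $\bo{Flex}$-completeness, cocompleteness of $\mt P_1^\dagger\A$, $\mt P$-virtual cocompleteness, and $\mt P$-virtual reflectivity into some $[\C,\bo{Cat}]$ all coincide. For this part there is genuinely nothing to prove beyond citing Theorem~\ref{relatice-C-charact} with the companion in hand.

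For the equivalence of (1) with (5), the strategy is to invoke Theorem~\ref{C-sketch}, which --- again because $\mt P$ is an accessible companion, equipped with the fixed sketch presentation of each $\mt P_M$ built in the proof of the previous proposition --- tells us that $\A$ is accessible with $\bo{Flex}$-limits if and only if $\A$ is equivalent to the $\V$-category of models of a $\mt P$-sketch. It then remains only to recognise that, for this particular $\mt C=\mt P$, the notion of $\mt P$-sketch coincides with that of a limit/\textnormal{RE} sketch, exactly as in the analogous identification carried out for $\mt R$ in Example~\ref{limit-epi}.

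The main work, and the step I expect to require the most care, is precisely this last identification. The plan is to unwind Definition~\ref{C-sketch-model} using the sketch presentation $\mt P_M\simeq\tx{Mod}(\Z',\mathbb{L},\mathbb{C})$ obtained in the accessibility argument, whose base carries the isokernel cell $\phi$ of $q$, its identifier/equalizer, and the analogous data for $e$. A $\mt P$-sketch then augments an ordinary limit sketch, for each of its cocylinders, with the images of these $\mathbb{L}$ and $\mathbb{C}$; by Proposition~\ref{catexact} and Remark~\ref{alsoacyclic} these conditions assert exactly that $\phi$ is an acyclic isokernel cell and that $q$ is its coisoidentifier, that is, that the distinguished map $q$ is a retract equivalence. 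One must check carefully that these limit and colimit specifications genuinely cut out the \emph{pointwise} retract equivalences --- using the pointwise descriptions recorded in the observations preceding the theorem --- and that no further data is needed, so that a model of the $\mt P$-sketch is precisely a model of a limit sketch in which the marked maps are forced to be retract equivalences. Granting this, models of $\mt P$-sketches and of limit/\textnormal{RE} sketches agree, and combining with Theorem~\ref{C-sketch} closes the loop $(1)\Leftrightarrow(5)$.
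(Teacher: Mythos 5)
Your proposal matches the paper's own proof: the paper likewise obtains (1)--(4) by reading off Theorem~\ref{relatice-C-charact} with $\Psi=\bo{Flex}$ and $\mt C=\mt P$ (the accessible companion established in the preceding proposition), and obtains (5) from Theorem~\ref{C-sketch} together with the identification, via the sketch presentation of $\mt P_M$ and the machinery of Section~\ref{D-sketch}, of $\mt P$-sketches with limit/\textnormal{RE} sketches. The unwinding you flag as the delicate step is exactly what the paper handles with the same ingredients (the presentation $\mt P_M\simeq\tx{Mod}(\Z',\mathbb{L},\mathbb{C})$, Proposition~\ref{catexact}, Remark~\ref{alsoacyclic}), argued in parallel with the limit/$\E$ sketch identification of the previous section.
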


\begin{obs}
	Say that a functor $F\colon\A\to\bo{Cat}$ is RE-weakly representable if there exists $A\in\A$ and a pointwise retract equivalence $q\colon\A(A,-)\to F$. Then an inclusion $J\colon\A\to\K$ into a 2-category $\K$ is $\mt P$-reflective if and only if for any $X\in\K$ the 2-functor $\K(X,J-)$ is RE-weakly represented by some $q\colon\A(A,-)\to \K(X,J-)$ whose isokernel cell is also a RE-weakly representable 2-functor.
\end{obs}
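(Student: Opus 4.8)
The plan is to prove the statement purely by unwinding Definition~\ref{C-virtual-left} in the case $\mt C=\mt P$ and then applying the pointwise description of $\mt P_1^\dagger\A$ recorded in the Observation just above. First I would recall that, by Definition~\ref{C-virtual-left}, the fully faithful $J\colon\A\to\K$ is $\mt P$-reflective (that is, $\mt P$-virtually reflective) exactly when, for every $X\in\K$, the 2-functor $\K(X,J-)\colon\A\to\bo{Cat}$ lies in $\mt P_1^\dagger\A$. Thus the whole assertion reduces to re-expressing membership in $\mt P_1^\dagger\A$ in the language of RE-weak representability.

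Next I would invoke the Observation above characterizing $\mt P_1^\dagger\A$: a 2-functor $F\colon\A\to\bo{Cat}$ lies in $\mt P_1^\dagger\A$ if and only if there is a pointwise retract equivalence $q\colon\A(A,-)\to F$ whose isokernel cell $P$ receives a pointwise retract equivalence $\A(B,-)\to P$. The first condition says precisely that $F$ is RE-weakly representable (witnessed by $q$), and the second says precisely that $P$ is RE-weakly representable. Substituting $F=\K(X,J-)$ then yields the displayed equivalence: $J$ is $\mt P$-reflective if and only if each $\K(X,J-)$ is RE-weakly represented by some $q\colon\A(A,-)\to\K(X,J-)$ whose isokernel cell is again RE-weakly representable.

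The only point requiring care---and the step I would flag as the main, though minor, obstacle---is confirming that the isokernel cell $P$ of $q$, a priori formed in $\P^\dagger\A$, is genuinely a 2-functor $\A\to\bo{Cat}$, so that calling $P$ ``RE-weakly representable'' even makes sense. This is immediate because the isokernel cell is a weighted 2-limit and weighted limits in a presheaf 2-category are computed pointwise: $PX$ is the isokernel cell of $q_X\colon\A(A,X)\to\K(X,JX)$ in $\bo{Cat}$, and a pointwise retract equivalence $\A(B,-)\to P$ is exactly the data demanded by RE-weak representability of $P$. Beyond this identification no computation is needed, since the statement is a direct translation of the two definitions into one another.
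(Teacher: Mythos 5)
Your proposal is correct and is exactly the argument the paper intends: the statement is given as a remark without proof, and its justification is precisely the unwinding you perform, combining Definition~\ref{C-virtual-left} (reflectivity means each $\K(X,J-)$ lies in $\mt P_1^\dagger\A$) with the earlier Observation characterizing membership in $\mt P_1^\dagger\A$ via a pointwise retract equivalence $q$ from a representable together with a pointwise retract equivalence from a representable into the isokernel cell of $q$. Your closing point that the isokernel cell is a genuine 2-functor because it is computed pointwise is also consistent with the paper's conventions (limits of $\V$-functors are always formed pointwise in $\V$), so nothing is missing.
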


The theorem above gives a characterisation of accessible 2-categories with flexible limits similar to that of \cite[Theorem~9.4]{LR12:articolo} and \cite[Section~9.3]{BLV:articolo}; for the relation between these two characterizations see Theorem~\ref{Flex-Re}. Notice that in \cite{LR12:articolo} and \cite{BLV:articolo} they consider the conical version of accessibility while we deal with the flat one; however these notions coincide by \cite[Theorem~3.15]{LT21:articolo}.

\subsection{Powers}\label{powers}

In this section we denote by $\Lambda$ the class of weights for powers by $\lambda$-small sets.

\begin{Def}\cite[Section~4]{AKV00}
  A functor $H\colon \C\to\bo{Set}$, with small domain $\C$, is called
  {\em $\lambda$-sifted} if the following conditions hold:
  \begin{enumerate}\setlength\itemsep{0.25em}
  \item given less than $\lambda$ elements $x_i\in Hc_i$ there exists an object $c\in\C$ such that each $(c_i,x_i)$ lies in the same component of $\tx{El}(H)$ as some element of $Hc$.
  \item given a set $I$ of cardinality less that $\lambda$, and families $(c,x_i)_{i\in I},(c',x'_i)_{i\in I}$ in $\tx{El}(H)$ such that for each $i$ the pair $(c,x_i),(c',x'_i)$ lies in one connected component of $\tx{El}(H)$, there exists a zig-zag $Z$ in $\C$ connecting $c$ and $c'$ such that each of the pair above can be connected by a zig-zag in $\tx{El}(H)$ whose underlying zig-zag is $Z$.
  \end{enumerate}
  Denote by $\mt{S}^\lambda$ the colimit type given by the
  $\lambda$-sifted functors, with weight $\Delta 1$.
  
  We allow
  $\lambda$ to be $\infty$, in which case there is no restriction on the cardinality of $I$, and we  define $\mt{S}^\infty$ accordingly.
\end{Def}

\begin{prop}\cite[Section~4]{AKV00}
	The colimit of a functor $H\colon\C\to\bo{Set}$ commutes with $\lambda$-small powers if and only if $H$ is $\lambda$-sifted. In particular $\mt{S}^\lambda$ is compatible with $\Lambda$.
\end{prop}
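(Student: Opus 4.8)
The plan is to analyse the canonical comparison map through categories of elements. Recall that for $H\colon\C\to\bo{Set}$ the colimit $\tx{colim}\,H$ is the set $\pi_0\tx{El}(H)$ of connected components of the category of elements, two objects $(c,x)$ and $(c',x')$ being identified precisely when they are joined by a zig-zag in $\tx{El}(H)$. For a $\lambda$-small set $X$ the power $H^X\colon\C\to\bo{Set}$ sends $c$ to $(Hc)^X$, so an object of $\tx{El}(H^X)$ is a pair $(c,(x_\xi)_{\xi\in X})$ with $x_\xi\in Hc$, and a morphism $(c,(x_\xi))\to(c',(x'_\xi))$ is a single arrow $f\colon c\to c'$ of $\C$ with $Hf(x_\xi)=x'_\xi$ for all $\xi$. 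The canonical comparison
$$\tx{colim}\,(H^X)\longrightarrow(\tx{colim}\,H)^X$$
sends the class of $(c,(x_\xi))$ to the family $([c,x_\xi])_{\xi\in X}$, and the colimit of $H$ commutes with the $X$-power exactly when this map is a bijection for every $\lambda$-small $X$. The proposition thus reduces to matching bijectivity of this map with the two clauses defining $\lambda$-siftedness.

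For the forward direction, assume $H$ is $\lambda$-sifted. For surjectivity, a family $([c_\xi,x_\xi])_{\xi\in X}$ involves fewer than $\lambda$ elements, so clause (1) produces a single $c$ and elements $y_\xi\in Hc$ with $(c_\xi,x_\xi)$ in the same component of $\tx{El}(H)$ as $(c,y_\xi)$; then $(c,(y_\xi))$ is a preimage. For injectivity, if $(c,(x_\xi))$ and $(c',(x'_\xi))$ have the same image then $(c,x_\xi)$ and $(c',x'_\xi)$ lie in one component of $\tx{El}(H)$ for each $\xi$; clause (2) supplies a single zig-zag $Z$ in $\C$ from $c$ to $c'$ along which every coordinate can be connected, and such a $Z$ together with the coordinatewise connecting data assembles into a zig-zag in $\tx{El}(H^X)$ joining the two objects. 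Hence the comparison is bijective.

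For the backward direction, assume the comparison is a bijection for every $\lambda$-small $X$. To recover clause (1), given fewer than $\lambda$ elements $x_i\in Hc_i$, take $X$ to be their index set; surjectivity applied to $([c_i,x_i])_i$ yields a common $c$ with $y_i\in Hc$ and $(c_i,x_i)\sim(c,y_i)$, which is clause (1). To recover clause (2), given families $(c,x_i)_i,(c',x'_i)_i$ with $(c,x_i)\sim(c',x'_i)$ for each $i$, again take $X$ the index set; the two objects of $\tx{El}(H^X)$ have equal image, so by injectivity they lie in one component, and the underlying zig-zag in $\C$ of any connecting zig-zag is the required common $Z$. The main obstacle is precisely this combinatorial bookkeeping: one must verify that a morphism, and hence a zig-zag, in $\tx{El}(H^X)$ is exactly a \emph{single} zig-zag in $\C$ carrying all $X$-many coordinates simultaneously, which is what makes the common underlying $Z$ of clause (2) the correct condition rather than the a priori weaker statement that each coordinate is separately connected.

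Finally, for the compatibility of $\mt{S}^\lambda$ with $\Lambda$: since $\lambda$-small powers in $[\C,\bo{Set}]$ are computed pointwise, preservation of $\Lambda$-limits by $\tx{colim}=\Delta 1*-$ on $\lambda$-sifted diagrams is exactly the commutation just established. Closure of the $\lambda$-sifted functors under $\lambda$-small powers follows from the equivalence itself: for $\lambda$-small $X,Y$ one has $(H^X)^Y\cong H^{X\times Y}$ pointwise, with $X\times Y$ again $\lambda$-small (by regularity of $\lambda$, and trivially when $\lambda=\infty$), so $H$ sifted forces $H^X$ sifted. This gives both requirements in the definition of compatibility.
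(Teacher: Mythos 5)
The paper offers no proof of this proposition --- it is imported by citation from \cite[Section~4]{AKV00} --- so there is no internal argument to compare yours against; what you have written is a correct, self-contained proof of the cited result. Your route is the natural one: identify $\colim H$ with $\pi_0\tx{El}(H)$, observe that a morphism of $\tx{El}(H^X)$ is a \emph{single} arrow of $\C$ transporting all $X$-many coordinates simultaneously, and then surjectivity of the canonical comparison $\colim(H^X)\to(\colim H)^X$ matches clause (1) of $\lambda$-siftedness while injectivity matches clause (2); the converse is the same dictionary read backwards, and you correctly flag the key bookkeeping point (the common underlying zig-zag $Z$) that makes clause (2) the right condition rather than mere coordinatewise connectedness. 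You are also right that the ``in particular'' statement needs two things, preservation \emph{and} closure of $\mt{S}^\lambda_{\Delta 1}$ under $\lambda$-small pointwise powers, and deducing closure from the equivalence itself via $(H^X)^Y\cong H^{X\times Y}$ is an efficient shortcut. Two minor points, neither a genuine gap: (i) in that closure argument you should note that your chain of isomorphisms composes to the \emph{canonical} comparison for $H^X$ (coherence of comparison maps under the isomorphism $\{Y,\{X,-\}\}\cong\{X\times Y,-\}$), since an arbitrary bijection between $\colim((H^X)^Y)$ and $(\colim(H^X))^Y$ would not by itself certify $\lambda$-siftedness of $H^X$; (ii) regularity of $\lambda$ is not actually needed for $X\times Y$ to be $\lambda$-small --- infiniteness of $\lambda$ suffices --- though this is immaterial since the paper only considers regular cardinals.
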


This allows us to prove that $\mt{S}^\lambda$ is a companion for $\Lambda$, and to establish a relationship between being $\lambda$-sifted and being $\Lambda$-precontinuous. 

\begin{prop}
	The colimit type $\mt{S}^\lambda$ is a companion for
        $\Lambda$. Indeed the equality 
	$$ \mt{S}^\lambda_1\A=\Lambda\tx{-PCts}(\A\op,\bo{Set})$$%
       holds for any $\A$ at all. 
\end{prop}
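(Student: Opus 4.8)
The plan is to establish the displayed equality of full subcategories of $\P\A=[\A\op,\bo{Set}]$ first, and then read off the companion property as a formal consequence.

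One inclusion is immediate. Since the preceding proposition shows $\mt{S}^\lambda$ is compatible with $\Lambda$, the general result that $\mt C_1\A\subseteq\Lambda\tx{-PCts}(\A\op,\bo{Set})$ for any colimit type compatible with $\Lambda$ applies, giving $\mt{S}^\lambda_1\A\subseteq\Lambda\tx{-PCts}(\A\op,\bo{Set})$.

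For the reverse inclusion I would start from a small, $\Lambda$-precontinuous $M\colon\A\op\to\bo{Set}$. Smallness lets me present $M$ as a conical colimit of representables: choosing a small full subcategory through which $M$ factors and passing to the (small) category of elements of the restriction, I get $M\cong\Delta 1\ast YH=\tx{colim}_{\D}YH$ with $\D$ small and $H\colon\D\to\A$, where $Y\colon\A\to\P\A$. By Definition~\ref{C_1A}, membership $M\in\mt{S}^\lambda_1\A$ is precisely the assertion that $\A(A,H-)\colon\D\to\bo{Set}$ is $\lambda$-sifted for every $A\in\A$. To produce this, fix $A$ and a $\lambda$-small set $s$ and unwind the precontinuity isomorphism \eqref{commutativity}, which for the power weight reads $M\ast((YA)^s)\cong(M\ast YA)^s$ with $Y\colon\A\op\to[\A,\bo{Set}]$ here. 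Because $(-)\ast T$ is cocontinuous in the weight and $\A(-,X)\ast T\cong T(X)$ by co-Yoneda, the left side computes as $\tx{colim}_{\D}(\A(A,H-)^s)$, while $M\ast YA\cong M(A)\cong\tx{colim}_{\D}\A(A,H-)$ gives the right side as $(\tx{colim}_{\D}\A(A,H-))^s$. Thus $\Lambda$-precontinuity of $M$ says exactly that the canonical comparison $\tx{colim}_{\D}(\A(A,H-)^s)\to(\tx{colim}_{\D}\A(A,H-))^s$ is invertible for all $\lambda$-small $s$; by the preceding proposition this is the definition of $\A(A,H-)$ being $\lambda$-sifted. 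As $A$ was arbitrary, $M=\Delta 1\ast YH\in\mt{S}^\lambda_1\A$, and the two subcategories coincide.

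The companion claim then follows with no further work. Applying the equality to $\A\op$ and dualising gives $\mt{S}^\lambda_1(\A\op)\op=\Lambda\tx{-PCts}(\A,\bo{Set})\op$ for every $\A$, that is, $(\mt{S}^\lambda)_1^\dagger\A=\Lambda\tx{-PCts}(\A,\bo{Set})\op$. In particular, when $\A$ is $\Lambda$-complete and virtually cocomplete the right-hand side consists of the small $\Lambda$-continuous functors $\A\to\bo{Set}$ (Proposition~\ref{precont=cont} and its corollaries), so this is exactly condition (II) in the reformulation recorded after Definition~\ref{companion}; together with compatibility (I), already noted, we conclude that $\mt{S}^\lambda$ is a companion for $\Lambda$.

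I expect the crux to be the middle paragraph: keeping the two distinct Yoneda embeddings $\A\to\P\A$ and $\A\op\to[\A,\bo{Set}]$ (and the attendant variances) straight through the two coend computations, and---most importantly---verifying that the isomorphism handed to us by $\Lambda$-precontinuity really is the canonical colimit-versus-power comparison that the preceding proposition characterises, rather than some abstract isomorphism. A lesser but genuine point is the reduction to a small index category $\D$: one must be sure the chosen presentation of the small functor $M$ is honestly a colimit of representables, so that co-Yoneda may be applied termwise under the colimit.
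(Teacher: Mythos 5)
Your proof is correct and follows essentially the same route as the paper's: both reduce to showing every small $\Lambda$-precontinuous functor lies in $\mt{S}^\lambda_1\A$ by writing it as a small conical colimit of representables, unwinding the precontinuity isomorphism into the canonical colimit-versus-power comparison, and invoking the preceding proposition to conclude that each $\A(A,H-)$ is $\lambda$-sifted. The worry you flag about the isomorphism being canonical is already settled by Definition~\ref{Psi-precont}, which requires precisely that the canonical map be invertible.
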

\begin{proof}
  We already know that $\mt{S}^\lambda$ is compatible with
  $\Lambda$, so it will suffice to show that any
  $\Lambda$-precontinuous $F\colon\A\op\to\bo{Set}$ lies in
  $\mt{S}^\lambda_1\A$. Since $F$ is small we can find $H\colon\C\to\A$, with $\C$ small, such that $F\cong \colim YH$. By $\Lambda$-precontinuity of $F$, for each $X\in\bo{Set}_{\lambda}$ and $A\in\A$ we have 
	$$  \colim(X\pitchfork\A(A,H-))\cong X\pitchfork( \colim\A(A,H-)). $$%
	Therefore, by the proposition above, $\A(A,H-)$ is $\lambda$-sifted for any $A\in\A$; in other words $\A(A,H-)\in \mt{S}^\lambda$ for all $A\in\A$. Thus $F\cong\colim YH$ lies in $\mt{S}^\lambda_1\A$.
\end{proof}

\begin{obs}
	If in the proof we take  $H$ to be the projection $\pi_F\colon\tx{El}(F)\to \A$ we obtain the following characterization: a functor $F\colon\A\op\to\bo{Set}$ lies in $ \mt{S}^\lambda_1\A$ if and only if $\tx{El}(F)$ has a small final subcategory (that is, $F$ is small) and $\A(A,\pi_F-)$ is $\lambda$-sifted for any $A\in\A$.
\end{obs}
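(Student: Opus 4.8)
The plan is to read off this characterization from the preceding proposition, which identifies $\mt S^\lambda_1\A$ with the small $\Lambda$-precontinuous presheaves $\A\op\to\bo{Set}$, by running its proof with the canonical choice $H=\pi_F$. Thus $F\in\mt S^\lambda_1\A$ if and only if $F$ is small and $\Lambda$-precontinuous, and I would reformulate each of these two conditions in terms of $\pi_F$. For the first, recall that a presheaf is small exactly when it is a small colimit of representables; since the density presentation writes $F$ as $\colim_{\tx{El}(F)}Y\pi_F$, smallness of $F$ is equivalent to $\tx{El}(F)$ admitting a small final subcategory. This is the first of the two stated conditions.

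For the second condition I would unwind $\Lambda$-precontinuity. A weight in $\Lambda$ is a power weight $X\colon\mathbf{1}\to\bo{Set}$ for a $\lambda$-small set $X$, and a diagram $\mathbf{1}\to\A\op$ simply picks out an object $B$; so the isomorphism~\eqref{commutativity} defining $\Lambda$-precontinuity amounts to $F*(X\pitchfork YB)\cong X\pitchfork(F*YB)$ for every $\lambda$-small $X$ and every $B\in\A$. Using the identity $F*S\cong\colim_{\tx{El}(F)}(S\circ\pi_F)$ for $\bo{Set}$-weighted colimits, together with $F*YB\cong FB$, the left-hand side becomes $\colim_{\tx{El}(F)}(X\pitchfork\A(B,\pi_F-))$ and the right-hand side becomes $X\pitchfork\colim_{\tx{El}(F)}\A(B,\pi_F-)$. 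Hence $\Lambda$-precontinuity of $F$ says precisely that, for each $B$, the colimit of $\A(B,\pi_F-)$ commutes with $\lambda$-small powers, which by the characterization of $\lambda$-sifted functors recalled above is exactly the assertion that each $\A(B,\pi_F-)$ is $\lambda$-sifted.

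The one point requiring care is that $\tx{El}(F)$ need not be small, whereas both Definition~\ref{C_1A} and the notion of $\lambda$-sifted functor are phrased for a small domain. To handle this I would fix a small final subcategory $\C\hookrightarrow\tx{El}(F)$ (available precisely because $F$ is small) and set $H:=\pi_F|_\C$. Finality gives $\colim_\C(G|_\C)\cong\colim_{\tx{El}(F)}G$ for every $G\colon\tx{El}(F)\to\bo{Set}$, in particular for $G=\A(B,\pi_F-)$ and for $G=X\pitchfork\A(B,\pi_F-)$; therefore the power--colimit commutation of the previous paragraph holds over $\tx{El}(F)$ if and only if it holds over the small category $\C$, so that $\A(B,\pi_F-)$ being $\lambda$-sifted is both well defined and independent of the chosen final subcategory. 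With this in hand the backward implication is immediate: when each $\A(B,H-)$ is $\lambda$-sifted, the presentation $F\cong\colim_\C YH=\Delta 1*YH$ exhibits $F$ as an object of $\mt S^\lambda_1\A$ by Definition~\ref{C_1A}(2). I expect this reconciliation of the canonical (possibly large) category of elements with the smallness requirements to be the main obstacle; everything else is a direct translation through the two propositions already established.
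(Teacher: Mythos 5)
Your proof is correct and takes essentially the same route as the paper: the remark is read off from the preceding proposition's identification $\mt{S}^\lambda_1\A=\Lambda\tx{-PCts}(\A\op,\bo{Set})$ by using the canonical presentation $F\cong\colim Y\pi_F$, with $\Lambda$-precontinuity unwound (via $F*G\cong\colim_{\tx{El}(F)}G\pi_F$) into the statement that $\colim\A(A,\pi_F-)$ commutes with $\lambda$-small powers for every $A$, i.e.\ that each $\A(A,\pi_F-)$ is $\lambda$-sifted. Your additional step of restricting to a small final subcategory of $\tx{El}(F)$ and invoking finality is exactly the size bookkeeping the paper leaves implicit when it says ``take $H$ to be $\pi_F$'', and you handle it correctly.
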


\begin{obs}
	This can be done more generally for any $\V$ and any class of objects $\G$ in $\V$. Consider the class of weights $\pitchfork_\G$ for powers by elements of $\G$, and let $\mt C_\G$ be the colimit type given by the pairs $( M ,H)$ for which
	$$  M *(G\pitchfork H)\cong G\pitchfork( M *H) $$% 
	for any $G\in\G$. 
\end{obs}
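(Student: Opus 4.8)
The plan is to establish the exact analogue of the preceding proposition: that $\mt C_\G$ is a companion for $\pitchfork_\G$ and that the equality $(\mt C_\G)_1\A=\pitchfork_\G\tx{-PCts}(\A\op,\V)$ holds for \emph{every} $\V$-category $\A$, with $\mt C_{\G,\Delta I}$ now playing the role that $\lambda$-sifted functors played for $\bo{Set}$. As there, the sharper equality for all $\A$ yields the companion property for free, so the real work is to prove the two inclusions. Throughout I assume, as in Section~\ref{products+G-powers}, that $\G$ contains the unit $I$ and is closed under tensor products; this costs nothing, since powers by $G\otimes G'$ agree with the iterated power $G\pitchfork(G'\pitchfork-)$ and so $\pitchfork_\G$ determines the same limits either way.

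First I would record that $\mt C_\G$ is compatible with $\pitchfork_\G$. The preservation clause is immediate: for $H\in\mt C_{\G,M}$ the composite $\mt C_{\G,M}\hookrightarrow[\C,\V]\xrightarrow{M*-}\V$ sends the power $G\pitchfork H$ to $M*(G\pitchfork H)\cong G\pitchfork(M*H)$, which is precisely the defining isomorphism of $\mt C_{\G,M}$. For the closure clause one checks that $G'\pitchfork H$ again lies in $\mt C_{\G,M}$ whenever $H$ does and $G'\in\G$: using $G\pitchfork(G'\pitchfork-)\cong(G\otimes G')\pitchfork-$ pointwise together with $G\otimes G'\in\G$, the required commutation for $G'\pitchfork H$ reduces to the commutations for $H$ against $G'$ and against $G\otimes G'$. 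This gives condition (I) of Definition~\ref{companion}, and hence, by the proposition showing $\mt C_1\A\subseteq\Psi\tx{-PCts}(\A\op,\V)$ for any compatible $\mt C$, the inclusion $(\mt C_\G)_1\A\subseteq\pitchfork_\G\tx{-PCts}(\A\op,\V)$.

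The heart of the argument is the reverse inclusion, carried out exactly as in the $\lambda$-sifted case. Given a small $\pitchfork_\G$-precontinuous $F\colon\A\op\to\V$, smallness lets me write $F\cong\colim YH$ as a conical colimit of representables with $\C$ small, so that $FA\cong\colim\A(A,H-)$ for every $A$. Instantiating the precontinuity isomorphism \eqref{commutativity} at the power weights $G\in\pitchfork_\G$ and reading it off pointwise produces, for each $A\in\A$ and $G\in\G$, an isomorphism $\colim(G\pitchfork\A(A,H-))\cong G\pitchfork(\colim\A(A,H-))$; this is exactly the statement that $\A(A,H-)$ lies in $\mt C_{\G,\Delta I}$. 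Hence $(\Delta I,\A(A,H-))\in\mt C_\G$ for all $A$, so $F\cong\Delta I*YH=\colim YH$ belongs to $(\mt C_\G)_1\A$ by Definition~\ref{C_1A}. Combining the two inclusions gives the equality for arbitrary $\A$.

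Finally the companion property itself: dualizing the equality gives $(\mt C_\G)_1^\dagger\A=\pitchfork_\G\tx{-PCts}(\A,\V)\op$ for all $\A$, and on a $\pitchfork_\G$-complete $\A$ the (dual of the) earlier corollary identifies $\pitchfork_\G\tx{-PCts}(\A,\V)$ with the small $\pitchfork_\G$-continuous functors $\A\to\V$; thus every such functor lies in $(\mt C_\G)_1^\dagger\A$, which is condition (II) of Definition~\ref{companion}. The main obstacle is the reverse inclusion of the previous paragraph: one must verify that $\pitchfork_\G$-precontinuity of $F$ transfers, through a chosen conical presentation $F\cong\colim YH$, into membership $\A(A,H-)\in\mt C_{\G,\Delta I}$ for every $A$. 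This is the enriched replacement for the $\bo{Set}$-level fact that the colimit of $H$ commutes with $\lambda$-small powers if and only if $H$ is $\lambda$-sifted, and the only care needed is the bookkeeping of variances and the pointwise computation of the powers and colimits involved, both of which run as in the proof of the preceding proposition.
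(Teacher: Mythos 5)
The paper states this only as a remark, with no proof, but the intended argument is the evident weighted analogue of the preceding proposition, and your attempt to replay that proposition verbatim breaks down at its central step. The claim that ``smallness lets me write $F\cong\colim YH$ as a conical colimit of representables'' is a purely $\bo{Set}$-theoretic fact (take $H$ the projection from the category of elements); over a general base $\V$ it is false. For instance, take $\V=\bo{Cat}$ and $\A$ the unit $2$-category: presheaves are then just categories, the only representable is the terminal category $1$, and a conical colimit of a diagram constant at $1$ is a discrete category, so the small presheaf $\mathbbm{2}$ is not a conical colimit of representables. This is exactly why the remark defines $\mt C_\G$ on pairs $(M,H)$ with an \emph{arbitrary} weight $M$, rather than only $\Delta I$ as in the definition of $\mt{S}^\lambda$: restricting to $\mt C_{\G,\Delta I}$, as you do, discards precisely the extra generality the remark introduces, and with only the conical component the asserted equality $(\mt C_\G)_1\A=\pitchfork_\G\tx{-PCts}(\A\op,\V)$ and condition (II) of Definition~\ref{companion} would fail for such $\V$.

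The repair is to use the weighted presentation that smallness actually provides: $F\cong\tx{Lan}_{J\op}(FJ\op)\cong M*YJ$ with $J\colon\C\hookrightarrow\A$ a small full subcategory and $M=FJ\op$. Since $-*T$ is cocontinuous in the weight and $YB*T\cong TB$, one gets $F*T\cong M*(TJ)$ for every $T\colon\A\to\V$; instantiating \eqref{commutativity} at the power weights then reads, for each $A\in\A$ and $G\in\G$, as $M*(G\pitchfork\A(A,J-))\cong G\pitchfork FA\cong G\pitchfork(M*\A(A,J-))$, which is exactly the condition $\A(A,J-)\in\mt C_{\G,M}$ required by Definition~\ref{C_1A} to place $F$ in $(\mt C_\G)_1\A$. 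With that substitution the rest of your argument (the compatibility check, and the deduction of condition (II) from the dual of Proposition~\ref{precont=cont}) goes through as written. One further caveat: your tensor-closure hypothesis on $\G$ is not quite ``costless'', because it changes the colimit type and not merely the class of limits. The closure of $\mt C_{\G,M}$ under $\G$-powers, which your compatibility check needs, uses $G\otimes G'\in\G$, and commutation of $M*H$ with $G$- and $G'$-powers for a fixed $H$ does not formally imply commutation with $(G\otimes G')$-powers. So strictly you prove the statement for the tensor-closure $\bar\G$, i.e.\ for $\mt C_{\bar\G}$; this still yields a companion for $\pitchfork_\G$, since a small $\pitchfork_\G$-continuous functor on a $\pitchfork_\G$-complete $\A$ is automatically $\pitchfork_{\bar\G}$-continuous, but that reduction should be stated rather than the two colimit types being tacitly identified.
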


\section{Weak reflections}\label{wrsnc}

In this section we plan to capture the standard characterization theorems of \cite{AR94:libro} for accessible categories with products in terms of weak reflection and weak cocompleteness. We also obtain the results of \cite{LR12:articolo} involving accessible 2-categories with flexible limits. To do this we make use of the notion of {\em kernel-quotient system} developed in \cite[Section~2]{BG14:articolo}. 

\begin{Def}
	Let us fix an object $X\in\V$ together with a map $x\colon X\to I$; we define a $\V$-category $\mathbb F$ with three objects $2,1,0$ and homs $\mathbb F(2,2)=\mathbb F(1,1)=\mathbb F(0,0)=I$, $\mathbb F(2,1)=X,\ \mathbb F(2,0)=\mathbb F(1,0)=I$, and $\mathbb F(0,1)=\mathbb F(0,2)=\mathbb F(1,2)=0$; the only non-trivial composition map is $x\colon \mathbb F(1,0)\otimes\mathbb F(2,1)\to\mathbb F(2,0)$. Let now $\mathbb{K}$ be the full subcategory of $\mathbb F$ with objects $2$ and $1$; we depict $\mathbb F$ and $\mathbb K$ as below.
	\begin{center}
		
		\begin{tikzpicture}[baseline=(current  bounding  box.south), scale=2]
			
			%\node (a) at (-0.3,0) {$\K(Q,-)$};
			\node (b) at (1,0) {$2$};
			\node (c) at (2,0) {$1$};
			\node (d) at (2.8,0) {$0$};
			\node (e) at (1.5,0) {$\mathbb{K}$};

			\path[font=\scriptsize]
			
			%(a) edge [->>] node [above] {$s$} (b)
			([yshift=1.5pt]b.east) edge [->, bend left] node [above] {} ([yshift=1.5pt]c.west)
			([yshift=-1.5pt]b.east) edge [->, bend right] node [below] {} ([yshift=-1.5pt]c.west)
			(c) edge [->>] node [above] {} (d);
		\end{tikzpicture}
	\end{center}
\end{Def}

Let $h\colon\mathbbm{2}\to\mathbb F$ pick out the arrow $1\to 0$ in $\mathbb F$, and $k\colon \mathbb{K}\to\mathbb F$ be the inclusion; then we can consider the adjunction below (as in \cite{BG14:articolo})
\begin{center}
	
	\begin{tikzpicture}[baseline=(current  bounding  box.south), scale=2]

		\node (f) at (0,0.4) {$[\mathbbm{2},\V]$};
		\node (g) at (1.4,0.4) {$[\mathbb{K},\V]$};
		\node (h) at (0.7, 0.45) {$\perp$};
		\path[font=\scriptsize]

		([yshift=-1.4pt]f.east) edge [->] node [below] {$K$} ([yshift=-1.4pt]g.west)
		([yshift=2.1pt]f.east) edge [bend left,<-] node [above] {$Q$} ([yshift=2.1pt]g.west);
	\end{tikzpicture}
	
\end{center}
where $K=k^*\circ \tx{Ran}_h$ and $Q=h^*\circ \tx{Lan}_k$. Given a map $f$ in $\V$ we call $Kf$ the $\mathbb F$-kernel of $f$, and given a diagram $H$ on $\mathbb{K}$ we call the map $QH$ the $\mathbb F$-quotient of $H$.

\begin{obs}
	Note that, for any $H\colon\mathbb{K}\to\V$ and $f\colon A\to H2$ in $\V$, pre-composition with $f$ induces a diagram $H^f\colon\mathbb{K}\to\V$ with $(H^f)1=H1, (H^f)0=H0,$ and $(H^f)2=A$.
\end{obs}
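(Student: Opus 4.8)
The plan is to read the remark as the assertion that precomposing with $f$ those structure maps of $H$ that emanate from the object $2$ produces a bona fide $\V$-functor, and to verify this directly. First I would record the data of a $\V$-functor $H$ out of $\mathbb F$: the three objects $H2,H1,H0$, together with the action maps $\alpha\colon X\to[H2,H1]$, $p\colon H1\to H0$, and $r\colon H2\to H0$ induced by the homs $\mathbb F(2,1)=X$, $\mathbb F(1,0)=I$, and $\mathbb F(2,0)=I$. The only relation among these is forced by the single non-trivial composite $x\colon\mathbb F(1,0)\otimes\mathbb F(2,1)\to\mathbb F(2,0)$, and amounts to the coherence that $[H2,p]\circ\alpha\colon X\to[H2,H0]$ equals the map naming $r$ precomposed with $x$.

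Next I would simply set $(H^f)2=A$, $(H^f)1=H1$, $(H^f)0=H0$ and define the new action maps by inserting $f$ into everything coming out of $2$: explicitly $\alpha^f:=[f,H1]\circ\alpha$, $r^f:=r\circ f$, and $p^f:=p$ left unchanged, since $p$ involves neither $2$ nor $f$. Units and the trivial composites are preserved automatically, so the whole content is that the coherence relation above survives. This is the one genuine computation: by functoriality of the internal hom one has $[A,p]\circ\alpha^f=[A,p]\circ[f,H1]\circ\alpha=[f,H0]\circ[H2,p]\circ\alpha$, and the coherence for $H$ rewrites $[H2,p]\circ\alpha$ as the $x$-twisted name of $r$; applying $[f,H0]$ then turns this into the $x$-twisted name of $r\circ f=r^f$, which is exactly the coherence required of $H^f$.

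I expect no conceptual obstacle here; the only thing to be careful about is the enriched bookkeeping, namely the interchange identity $[A,p]\circ[f,H1]=[f,H0]\circ[H2,p]$ (both being $[f,p]$) and the correct handling of the twist by $x$. In the case $\V=\bo{Set}$ everything collapses to the one-line verification that, for each $\xi\in X$, $p\circ(\alpha(\xi)\circ f)=(p\circ\alpha(\xi))\circ f=r\circ f$, so that $H^f$ satisfies the same relation as $H$. Assembling these observations shows that $H^f$ is a well-defined $\V$-functor with the stated values on objects, which is the assertion of the remark.
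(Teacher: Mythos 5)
Your verification is correct, and since the paper states this observation without proof (it is treated as self-evident), the only meaningful comparison is with how the observation is actually used. One point of interpretation deserves attention: as defined in the paper, $\mathbb{K}$ is the \emph{full subcategory of $\mathbb F$ on the objects $2$ and $1$ only}, so a diagram $H\colon\mathbb{K}\to\V$ has no value at $0$; the clause ``$(H^f)0=H0$'' in the statement is vestigial, and what Lemma~\ref{stabilityquotients} actually requires --- so that the quotient functor $Q$ can be applied to $H^e$ --- is precisely the $\mathbb{K}$-version. You instead read $H$ as a $\V$-functor out of all of $\mathbb F$, which is why you needed to verify the coherence condition attached to the unique nontrivial composite $x$. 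That computation is right: the interchange $[A,p]\circ[f,H1]=[f,p]=[f,H0]\circ[H2,p]$, together with the fact that postcomposing the name of $r$ with $[f,H0]$ gives the name of $r\circ f$, shows the relation for $H$ transports to $H^f$. So your argument proves a (slightly stronger) $\mathbb F$-level statement whose restriction to $\mathbb{K}$ is the one needed. It is worth recording, though, that for the intended domain $\mathbb{K}$ the observation is strictly trivial: $\mathbb{K}$ has no nontrivial composites, so a $\V$-functor $\mathbb{K}\to\V$ is exactly a pair of objects $H2,H1$ together with a single action map $X\to[H2,H1]$, subject only to the automatic unit axioms; $H^f$ is obtained by postcomposing that map with $[f,H1]$, and there is nothing further to check.
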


\begin{lema}\label{stabilityquotients}
	Every $\mathbb F$-quotient is an epimorphism in $\V$. Moreover, for any $H\colon\mathbb{K}\to\V$ and any epimorphism $e\colon A\to H2$ in $\V$, we have $QH\cong Q(H^e)$.
\end{lema}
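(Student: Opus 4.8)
The plan is to make the functor $Q=h^*\circ\tx{Lan}_k$ completely explicit. A $\V$-functor $H\colon\mathbb K\to\V$ is the same as a pair of objects $H2,H1$ together with a structure map $a\colon X\otimes H2\to H1$, the transpose of the action $\mathbb K(2,1)=X\to[H2,H1]$; and the induced diagram $H^e$ of the Observation has $(H^e)2=A$, $(H^e)1=H1$ and structure map $a\circ(X\otimes e)$. Since $k$ is fully faithful, $\tx{Lan}_kH$ agrees with $H$ on the objects $2$ and $1$, while the coend $\tx{Lan}_kH(0)=\int^{c}\mathbb F(kc,0)\otimes Hc$ collapses---using $\mathbb F(2,0)=\mathbb F(1,0)=I$ and the single nontrivial composition $x\colon\mathbb F(1,0)\otimes\mathbb F(2,1)\to\mathbb F(2,0)$---to the pushout of $H2\xleftarrow{x\otimes H2}X\otimes H2\xrightarrow{a}H1$. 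Restricting along $h$, which picks out the arrow $1\to 0$, then identifies $QH$ with the coprojection $H1\to\tx{Lan}_kH(0)$ of this pushout.

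For the first assertion I would use that pushouts preserve epimorphisms. Since $x\colon X\to I$ is a (split) epimorphism and $X\otimes-$ is a left adjoint, hence preserves epimorphisms, the map $x\otimes H2$ is an epimorphism; its pushout along $a$ is exactly $QH$, which is therefore an epimorphism as well.

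For the second assertion I would first rewrite $QH$ as a coequalizer of a parallel pair out of $H1$. Fixing a section $s\colon I\to X$ of $x$, any cocone $(p\colon H2\to W,\,q\colon H1\to W)$ on the above pushout is forced to satisfy $p=q\circ a\circ(s\otimes H2)$, and the remaining equation $p\circ(x\otimes H2)=q\circ a$ then reads exactly as the requirement that $q$ coequalize the pair $a,\ a\circ((sx)\otimes H2)\colon X\otimes H2\rightrightarrows H1$. Naturally in $W$ this exhibits $QH\colon H1\to\tx{coeq}\,(a,\,a\circ((sx)\otimes H2))$ as that coequalizer. Applying the same description to $H^e$ presents $Q(H^e)$ as the coequalizer of the very same pair precomposed with $X\otimes e$; since $e$ is an epimorphism and $X\otimes-$ preserves these, $X\otimes e$ is an epimorphism, and precomposing a parallel pair with an epimorphism leaves its coequalizer unchanged. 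Hence $QH\cong Q(H^e)$, and the isomorphism is the identity on the common domain $H1$, this compatibility being the comparison map induced by applying $Q$ to the evident $H^e\Rightarrow H$ (the identity on $1$ and $e$ on $2$).

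The main obstacle I anticipate is the honest reduction of the defining coend to the pushout (equivalently coequalizer) above in the enriched setting, with correct bookkeeping of the structure map $a$ and of the composite $x$; once that normal form is available both claims are immediate. The only other delicate point is the use of a section of $x$ in the second part; this is harmless because $x$ is a split epimorphism in all the cases of interest (for instance $X=I+I$ with $x$ the fold map, or the walking isomorphism mapping to $I$ in the $\bo{Cat}$-case).
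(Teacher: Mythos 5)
Your proof is correct and takes a genuinely different route from the paper's. The paper argues purely by universal properties: for the first claim it observes that two maps $f,g$ agreeing on $QH$ are both factorizations of a single cocone through the colimit, hence equal; for the second, it asserts that since $e$ is an epimorphism, cocones for $H$ and for $H^e$ are the same, so the colimits agree. You instead compute $Q$ explicitly, reducing the coend for $\tx{Lan}_kH(0)$ to the pushout of $x\otimes H2$ along the action $a\colon X\otimes H2\to H1$ (equivalently, via a section of $x$, to a coequalizer out of $H1$), and then invoke standard stability facts: pushouts of epimorphisms are epimorphisms, and precomposing a parallel pair with an epimorphism does not change its coequalizer. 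What your computation buys is rigour exactly where the paper is loose: a cocone for $H$ under the weight $\mathbb F(k-,0)$ is a \emph{pair} $(p\colon H2\to W,\ q\colon H1\to W)$ with $qa=p(x\otimes H2)$, not merely the map $q$ out of $H1$, so both halves of the paper's argument tacitly require the component $p$ to be determined by, and reconstructible from, $q$ --- and that is precisely where a hypothesis on $x$ enters. Indeed, as stated the lemma fails for a general $x\colon X\to I$: taking $\V=\bo{Set}$ and $X=\emptyset$, the quotient $QH$ is the coproduct injection $H1\to H2\sqcup H1$, which is not an epimorphism and is not invariant under replacing $H2$ by a cover $A$. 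So your standing assumption that $x$ is a split epimorphism is not a blemish of your argument but a hypothesis the lemma itself needs; the paper uses it silently, and it does hold in both kernel--quotient systems of Example~\ref{ese-kernels} (the codiagonal $I+I\to I$, and the unique map from the walking isomorphism to the unit in $\bo{Cat}$). It would be worth recording this hypothesis explicitly alongside the paper's standing assumptions on $\E$.
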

\begin{proof}
	Consider an $\mathbb F$-quotient map $e=QH\colon H1\to P$ and any pair $f,g\colon P\to B$ such that $fe=ge$. Then $fe$ is a cocone for $H$ and factors through $QH$ by $f$ and $g$; by the universal property of the colimit then $f=g$. For the last part of the statement note that, since $e$ is an epimorphism, giving a cocone for $H$ is equivalent to giving a cocone for $H^e$. Therefore $QH\cong Q(H^e)$.
\end{proof}

\begin{as}
	Let $\E$ be a collection of maps in $\V$. From now on we assume that $\mathbb F$ and $\E$ satisfy the following proprieties:
	\begin{enumerate}
		\item Every map in $\E$ is the $\mathbb F$-quotient of its $\mathbb F$-kernel.
		\item $\E$ is closed under composition.
	\end{enumerate} 
\end{as}

If $\E$ consists of all the $\mathbb F$-quotient maps, then (1) is saying that $\mathbb F$-quotient maps are effective in the sense of \cite{BG14:articolo}.

\begin{ese}\label{ese-kernels}The following examples satisfy the conditions above:
	\begin{enumerate}\setlength\itemsep{0.25em}
		
		\item Let $\V$ be either a regular category or have a regular projective unit. Let $\G$ be a dense generator of $\V_0$, and $\mathscr E$ as in Definition~\ref{E-maps} with the properties assumed in Section~\ref{products+G-powers}. Then we can consider the kernel-quotient system for kernel pairs and coequalizers ($X=I+I$ and $x$ is the co-diagonal).
		
		\item Let $\V=\bo{Cat}$; consider $\mathbb F$ generated by $X=\{\cdot\cong\cdot\}$ the free-living isomorphism and $x\colon X\to 1$ the unique map. Then $\mathbb F$-quotients are coisoidentifiers and $\mathbb F$-kernels are isokernel cells. We take $\E$ to consist of the retract equivalences (which are $\mathbb F$-quotients, but not all $\mathbb F$-quotients are retract equivalences).
	\end{enumerate}
\end{ese}

The data of a kernel-quotient system and a class of maps $\E$ induces a colimit type $\mt C$:

\begin{Def}
	Given $\mathbb F$ and $\E$ as above we can define a colimit type $\mt C$ as follows: $\mt C_M$ is non-empty only for $M=\mathbb F(k-,0)\colon\mathbb{K}\op\to\V$ and in that case $\mt C_M$ is the full subcategory of $[\mathbb{K},\V]$ spanned by the diagrams of the form $(Kq)\circ e$ for any compatible $e,q\in\E$.
\end{Def}	

\begin{es}
	In the case of Example~\ref{ese-kernels}(1) the colimit type
        induced is that of $\G$-pseudo equivalence relations
        (Section~\ref{products+G-powers}); in the case of (2) we
        obtain the companion for the class of flexible limits 
        (Section~\ref{flexible}). 
\end{es}

\begin{Def}\cite{LR12:articolo}
	Say that a $\V$-functor $P\colon \A\op\to\V$ is {\em $\E$-weakly representable} if there exists a map $f\colon  YA\to P$ which is pointwise in $\E$. Denote by $\tx{W}_\E(\A)$ the full subcategory of $[\A\op,\V]$ spanned by the $\E$-weakly representables.
\end{Def}

Note that by construction we then have $\mt C_1(\A)\subseteq\tx{W}_\E(\A)$.

\begin{Def}
	We say that a $\V$-functor $F\colon\A\to\B$ is {\em $\E$-weakly reflective} if $\B(B,F-)$ is $\E$-weakly representable for any $B\in\B$.
\end{Def}

When $\V=\bo{Set}$ and $\E$ is the class of surjections this corresponds to the notion of weakly reflective subcategory of \cite{AR94:libro}. More generally, this is part of the framework of \cite{LR12:articolo} where $\E$-weak reflectivity was first considered in the enriched context.

In the following proposition we assume $\mt C_1(\K\op)$ to have $\mathbb F$-kernels of representables; that is true in particular when it has all limits of representables and hence whenever $\K$ is complete. 

\begin{prop}\label{weakl-left}
	The following are equivalent for a fully faithful inclusion $J\colon\A\hookrightarrow\K$ and a  $\V$-category $\K$ for which $\mt C_1(\K\op)$ has $\mathbb F$-kernels of representables:\begin{enumerate}\setlength\itemsep{0.25em}
		\item $\A$ is $\E$-weakly reflective in $\K$;
		\item $\A$ is $\mt C$-virtually reflective in $\K$.
	\end{enumerate}
\end{prop}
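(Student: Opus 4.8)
The direction $(2)\Rightarrow(1)$ is immediate from the inclusion $\mt C_1(\A\op)\subseteq\tx{W}_\E(\A\op)$ noted just before the statement: if $\A$ is $\mt C$-virtually reflective, then $\K(X,J-)$ lies in $\mt C_1^\dagger\A=\mt C_1(\A\op)$ for every $X\in\K$, hence is $\E$-weakly representable, which is exactly what it means for $J$ to be $\E$-weakly reflective. So the content is all in $(1)\Rightarrow(2)$.

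The plan for $(1)\Rightarrow(2)$ is as follows. Fix $X\in\K$ and set $F:=\K(X,J-)\colon\A\to\V$. By hypothesis $F$ is $\E$-weakly representable, so there are $A\in\A$ and a pointwise-$\E$ map $q\colon\A(A,-)\to F$; by the Yoneda lemma $q$ is induced by a morphism $x\colon X\to JA$ in $\K$, so that $q$ is the restriction along $J$ of $g:=\K(x,-)\colon\K(JA,-)\to\K(X,-)$, a map between representables in $\P(\K\op)$. To conclude that $F\in\mt C_1^\dagger\A$, one unwinds Definition~\ref{C_1A} together with the description of $\mt C_M$ (non-empty only for $M=\mathbb F(k-,0)$): it suffices to produce, besides $q$, an object $B\in\A$ and a pointwise-$\E$ map $e\colon\A(B,-)\to P$, where $P$ is the ``$2$''-component of the $\mathbb F$-kernel of $q$ in $[\A,\V]$. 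Indeed, given such $(q,e)$, the data assemble into a diagram $H\colon\mathbb K\to\A\op$ with $\A(H-,a)\in\mt C_M$ for all $a$; and since every map of $\E$ is the $\mathbb F$-quotient of its $\mathbb F$-kernel while $e$ is pointwise epi (Lemma~\ref{stabilityquotients}), the pointwise $\mathbb F$-quotient of this diagram recovers $F$, exhibiting $F\cong M*YH$.

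The key step is the construction of $e$, and it is here that the hypotheses on $\K$ enter. By assumption $\mt C_1(\K\op)$ has $\mathbb F$-kernels of representables, so the $\mathbb F$-kernel of $g$ exists with its $2$-component $P_\K$ lying in $\mt C_1(\K\op)$; since $\mt C_1(\K\op)\subseteq\tx{W}_\E(\K\op)$, the object $P_\K$ is $\E$-weakly representable, giving $W\in\K$ and a pointwise-$\E$ map $\K(W,-)\to P_\K$. Now restriction along $J$ preserves this $\mathbb F$-kernel, because $\mathbb F$-kernels are computed as pointwise limits (via $\tx{Ran}_h$ and $k^*$) and restriction is pointwise; hence restricting identifies $P$ with the restriction of $P_\K$ and turns $\K(W,-)\to P_\K$ into a pointwise-$\E$ map $\K(W,J-)\to P$. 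Finally I apply hypothesis $(1)$ a second time, now to the object $W$: the functor $\K(W,J-)$ is $\E$-weakly representable, so there is a pointwise-$\E$ map $\A(B,-)\to\K(W,J-)$; composing it with $\K(W,J-)\to P$ and using that $\E$ is closed under composition yields the desired $e\colon\A(B,-)\to P$. As $X$ was arbitrary, $\A$ is $\mt C$-virtually reflective.

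The main obstacle is precisely this construction of $e$. One must recognise that the $\mathbb F$-kernel of $q$ can be obtained by restricting, along the fully faithful $J$, a kernel that already lives in $\mt C_1(\K\op)$ — which is legitimate exactly because $\mathbb F$-kernels are pointwise limits and so commute with restriction — and then that the \emph{non}-representable functor $\K(W,J-)$ thereby produced can itself be made $\E$-weakly representable by invoking the $\E$-weak reflectivity hypothesis once more. This self-referential second use of $(1)$, combined with the closure of $\E$ under composition, is what closes the argument; everything else is the routine bookkeeping of reassembling $(q,e)$ into a diagram of type $\mt C$ and checking, via Lemma~\ref{stabilityquotients} and effectiveness of $\E$-maps, that its $\mathbb F$-quotient is $F$.
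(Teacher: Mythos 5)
Your proof is correct and follows essentially the same route as the paper's: extend $q$ to a map $q'$ between representables over $\K$, use the hypothesis that $\mt C_1(\K\op)$ has $\mathbb F$-kernels of representables to get a pointwise-$\E$ map from a representable $\K(W,-)$ into the kernel, restrict along $J$ (legitimate since kernels are pointwise limits), apply $\E$-weak reflectivity a second time to $W$, and compose using closure of $\E$ under composition. The only cosmetic difference is that you invoke the stated inclusion $\mt C_1(\K\op)\subseteq\tx{W}_\E(\K\op)$ where the paper unwinds the definition of $\mt C_1(\K\op)$ directly to produce the same pointwise-$\E$ map.
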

\begin{proof} 
	Note that $(1)$ says that each $\K(K,J-)$ is in $\tx{W}_\E(\A\op)$, while $(2)$ says that it is in $\mt C_1(\A\op)$. Then $(2)\Rightarrow(1)$ is trivial since $\mt C_1(\A\op)\subseteq\tx{W}_\E(\A\op)$. 
	
	For $(1)\Rightarrow(2)$ assume that $\A$ is $\E$-weakly reflective in $\K$; we need to prove that $\K(K,J-)$ actually lies in $\mt C_1(\A\op)$. By hypothesis $\K(K,J-)$ is $\E$-weakly representable, so there exists $A\in\A$ together with a map pointwise in $\E$
	$$ q\colon \A(A,-)\twoheadrightarrow \K(K,J-). $$%
	Such a $q$ determines a map $K\to JA$ in $\K$, and this in turn induces a morphism 
	$$ q'\colon\K(JA,-)\longrightarrow\K(K,-)$$%
	which, when restricted to $\A$, gives back $q$. Now, by the hypothesis on $\mt C_1(\K\op)$, the $\mathbb F$-kernel $Kq'$ of $q'$, with domain $S$, lies $\mt C_1(\K\op)$. In particular we obtain a diagram
	\begin{center}
		
		\begin{tikzpicture}[baseline=(current  bounding  box.south), scale=2]
			
			\node (a) at (-0.3,0) {$\K(Q,-)$};
			\node (b) at (0.8,0) {$S$};
			\node (c) at (2.3,0) {$\K(JA,-)$};
			\node (d) at (3.7,0) {$\K(K,-)$};
			\node (e) at (1.4,0) {$Kq'$};

			\path[font=\scriptsize]
			
			(a) edge [->>] node [above] {$s$} (b)
			([yshift=1.5pt]b.east) edge [->, bend left] node [above] {} ([yshift=1.5pt]c.west)
			([yshift=-1.5pt]b.east) edge [->, bend right] node [below] {} ([yshift=-1.5pt]c.west)
			(c) edge [->] node [above] {$q'$} (d);
		\end{tikzpicture}
	\end{center}
	where $s$ is a map pointwise in $\E$, and hence an epimorphism by Lemma~\ref{stabilityquotients}. Now we can restrict this diagram to $\A$ by pre-composing with $J$ and, since pre-composition in continuous, $(Kq')J\cong Kq$ is the $\mathbb F$-kernel of $q$. Moreover $q$ is the $\mathbb F$-quotient of $Kq$ by our initial assumptions on $\E$. Note also that $sJ$ is still pointwise in $\E$ and hence an epimorphism.
	
	By hypothesis $\K(Q,J-)$ is $\E$-weakly representable, so there exists $r\colon\A(B,-)\twoheadrightarrow\K(Q,J-)$ which lies pointwise in $\E$. The map $e=sJ\circ r$ is still pointwise in $\E$ by condition (2); thus we have a presentation as below 
	\begin{center}
		
		\begin{tikzpicture}[baseline=(current  bounding  box.south), scale=2]
			
			\node (a) at (-0.3,0) {$\A(B,-)$};
			\node (b) at (0.8,0) {$SJ$};
			\node (c) at (2.3,0) {$\A(A,-)$};
			\node (d) at (3.7,0) {$\K(K,J-)$};
			\node (e) at (1.45,0) {$Kq$};

			\path[font=\scriptsize]
			
			(a) edge [->>] node [above] {$e$} (b)
			([yshift=1.5pt]b.east) edge [->, bend left] node [above] {} ([yshift=1.5pt]c.west)
			([yshift=-1.5pt]b.east) edge [->, bend right] node [below] {} ([yshift=-1.5pt]c.west)
			(c) edge [->>] node [above] {$q$} (d);
		\end{tikzpicture}
	\end{center}
	showing that $\K(X,J-)$ can be written as an $\mathbb F$-quotient of representables (by condition (1) and Lemma~\ref{stabilityquotients}). Moreover the diagram is constructed so that it lies pointwise in $\mt C$, witnessing that $\K(X,J-)$ lies in $\mt C_1(\A\op)$.
\end{proof}

The notion of {\em $\E$-weak colimit} was also introduced in \cite{LR12:articolo}; we recall its definition below.

\begin{Def}
	Given a $\V$-category $\A$, a weight $M\colon\C\op\to\V$ with small domain, and $H\colon\C\to\A$, we say that the {\em $\E$-weak colimit} of $H$ weighted by $M$ exists in $\A$ if $[\C\op,\V](M,\A(H,-))\colon\A\to\V$ is $\E$-weakly representable. We say that $\A$ is {\em $\E$-weakly cocomplete} if it has all $\E$-weak colimits.
\end{Def}

When $\E$ satisfies the assumption of this section, $\E$-weak cocompleteness and $\mt C$-virtual cocompleteness coincide:

\begin{cor}
	The following are equivalent for a $\V$-category $\A$: \begin{enumerate}\setlength\itemsep{0.25em}
		\item $\A$ is $\E$-weakly cocomplete;
		\item $\A$ is $\mt C$-virtually cocomplete.
	\end{enumerate}
\end{cor}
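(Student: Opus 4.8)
The plan is to observe that both conditions concern the same family of $\V$-functors. For a weight $M\colon\C\op\to\V$ and a diagram $H\colon\C\to\A$, write $G_{M,H} := [\C\op,\V](M,\A(H,-))\colon\A\to\V$. Condition (1) asserts that every $G_{M,H}$ is $\E$-weakly representable, i.e.\ lies in $\tx{W}_\E(\A\op)$, while condition (2) asserts that every $G_{M,H}$ lies in $\mt C_1^\dagger\A$. The implication $(2)\Rightarrow(1)$ is then immediate from the inclusion $\mt C_1(-)\subseteq\tx{W}_\E(-)$ noted above (applied to $\A\op$, giving $\mt C_1^\dagger\A\subseteq\tx{W}_\E(\A\op)$): any $G_{M,H}$ lying in $\mt C_1^\dagger\A$ is in particular $\E$-weakly representable.

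For $(1)\Rightarrow(2)$ I would run the construction from the proof of Proposition~\ref{weakl-left}, but directly on the functors $G_{M,H}$. Fix $(M,H)$. By hypothesis $G_{M,H}$ is $\E$-weakly representable, so there is a map $q\colon\A(A,-)\to G_{M,H}$ that is pointwise in $\E$. Form the $\mathbb F$-kernel $Kq$ of $q$, computed pointwise in $[\A,\V]$; by the standing assumptions on $\E$ the map $q$ is then the $\mathbb F$-quotient of $Kq$. To place $G_{M,H}$ in $\mt C_1^\dagger\A$ it remains only to find a map $\A(B,-)\to Kq$ that is pointwise in $\E$: together with the two legs $Kq\rightrightarrows\A(A,-)$ and the quotient $q$, this produces a fork lying pointwise in $\mt C$, which is exactly a presentation of $G_{M,H}$ as an object of $\mt C_1^\dagger\A$.

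The crux, and the step I expect to be the main obstacle, is to show that $Kq$ is itself $\E$-weakly representable, so that hypothesis (1) can be applied a second time. For this I would note that $Kq$ is again of the form $G_{M',H'}$. Indeed $G_{M,H}=\{M,\A(H,-)\}$ is a small weighted limit, in $[\A,\V]$, of the representables $\A(Hc,-)$; and the $\mathbb F$-kernel operation $K=k^*\circ\tx{Ran}_h$ is built from a right Kan extension, hence is a weighted-limit construction, applied pointwise to the map $q$ out of the representable $\A(A,-)$. Amalgamating these two layers of weighted limits over a single small indexing category exhibits $Kq$ as a small weighted limit of representables, i.e.\ as $G_{M',H'}$ for suitable $M'$ and $H'\colon\C'\to\A$. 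Applying $(1)$ to $(M',H')$ now yields the desired pointwise-$\E$ map $\A(B,-)\to Kq$, and hence $\A$ is $\mt C$-virtually cocomplete.

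I would close with a cautionary remark: although this is morally Proposition~\ref{weakl-left} applied to the inclusion $Z\colon\A\hookrightarrow\P^\dagger\A$ (which is complete, so the hypothesis on $\mathbb F$-kernels of representables holds), one cannot quote that proposition as a black box here, because $\E$-weak reflectivity of $\A$ in $\P^\dagger\A$ would require every object of $\P^\dagger\A$---not merely every colimit of representables---to have $\E$-weakly representable hom, a condition strictly stronger than $\E$-weak cocompleteness. The direct argument succeeds precisely because the only auxiliary object it needs, the $\mathbb F$-kernel $Kq$, remains a colimit of representables.
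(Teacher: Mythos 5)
Your proof is correct, and at its computational core it runs the same mechanism as the paper's; the difference is in packaging, and it is instructive. The paper proves $(1)\Rightarrow(2)$ by a black-box reduction: it takes $\K=\P\A$ and observes that, since every object of $\P\A$ is a small colimit of representables $X\cong M*JH$, the functors $\K(X,J-)$ are exactly the functors $G_{M,H}=[\C\op,\V](M,\A(H,-))$; hence $\E$-weak cocompleteness of $\A$ is \emph{literally} $\E$-weak reflectivity of $\A$ in $\K$, while cocompleteness of $\K$ guarantees that $\mathbb F$-kernels of maps between representables of $\K$ are again representable (the standing hypothesis of Proposition~\ref{weakl-left}); quoting that proposition then finishes the proof. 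You instead inline the content of Proposition~\ref{weakl-left}: weak representation $q$, its pointwise $\mathbb F$-kernel, a second weak representation of the kernel, and the resulting fork (note that, as in the paper's own argument, you also need Lemma~\ref{stabilityquotients} at the end, to know that precomposing with the map $r$ --- an epimorphism, being in $\E$ --- does not change the $\mathbb F$-quotient). The step you rightly single out as the crux, that the kernel object $(Kq)(2)$ is again of the form $G_{M',H'}$, is true, and in your situation the usual coherence obstruction to amalgamating iterated limits does not arise: the diagram $q$ consists of a single map from a representable into $G_{M,H}$, so it is determined by a cylinder $\bar q\colon M\to\A(H-,A)$, and one may take $\C'$ to be the collage obtained from $\C$ by adjoining one object $*$ with $H'(*)=A$, $M'(*)=X$, $\C'(c,*)=Mc$, the actions on these homs being given by $x\colon X\to I$ and by $\bar q$. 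Equivalently, and more economically, $q=\P\A(g,J-)$ for some $g\colon X\to JA$ in $\P\A$, and the kernel is $\P\A(N,J-)$ for $N$ the corresponding finite colimit in the cocomplete category $\P\A$ --- which is exactly the paper's device. Finally, your closing caution is only half right: Proposition~\ref{weakl-left} indeed cannot be applied to $Z\colon\A\hookrightarrow\P^\dagger\A$, for precisely the reason you give, but it \emph{can} be applied as a black box to $J\colon\A\hookrightarrow\P\A$, where the obstruction disappears because every object of $\P\A$ is a colimit of representables; this is what the paper does. What your route buys is a self-contained argument with no choice of ambient cocompletion; what the paper's buys is brevity, and the fact that your amalgamation step is absorbed into cocompleteness of $\P\A$ rather than argued by hand.
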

\begin{proof}
	$(2)\Rightarrow (1)$ is trivial. To prove that $(1)\Rightarrow (2)$ we need to show that if $\tx{W}_\E^\dagger(\A)$ has colimits of representables then so does $\mt C_1^\dagger\A$. Let $\K=\P\A$ and $J\colon\A\hookrightarrow\K$ be the inclusion; then $\K$ is cocomplete and hence satisfies the hypothesis of Proposition~\ref{weakl-left}. Moreover $J$ has an $\E$-weak left adjoint: given $X\in\K$ we can write it as a colimit $X\cong M*JH$ of objects from $\A$, then $\K(X,J-)\cong \{M, YH\}$ is a limit of representables in $[\A,\V]$. In other words $\K(X,J-)$ is a colimit of representables when seen in the opposite $\V$-category, by our assumption then $\K(X,J-)$ lies in $\tx{W}_\E^\dagger(\A)$, as desired. 
	
	It follows by the proposition above that $J$ is $\mt C$-virtually reflective, and thus $\mt C_1^\dagger\A$ has colimits of representables: compute the colimits in $\K$ and then transport them into $\mt C_1^\dagger\A$ through the relative left adjoint. 
\end{proof}

\begin{obs}\label{weak-finite}
	Assume that the unit $I$ and the object $X$ (defining $\mathbb F$) are $\alpha$-presentable; then $\mathbb F$-kernels are $\alpha$-small limits. Thus, if we replace $\P\A$ in the proof above with the free cocompletion under $\alpha$-small colimits, we can prove that $\tx{W}_\E^\dagger(\A)$ has $\alpha$-small colimits of representables if and only if $\mt C_1^\dagger\A$ has them.
\end{obs}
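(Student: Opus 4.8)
The plan is to re-run the proof of the preceding corollary essentially verbatim, replacing the free cocompletion $\P\A$ under all small colimits by the free cocompletion $\P_\alpha\A$ of $\A$ under $\alpha$-small colimits, and reading every occurrence of ``colimit'' as ``$\alpha$-small colimit''. One implication needs nothing new: since $\mt C_1(\A)\subseteq\tx{W}_\E(\A)$ by construction we have $\mt C_1^\dagger\A\subseteq\tx{W}_\E^\dagger(\A)$, and as both are full subcategories of $\P^\dagger\A$ containing the representables, any $\alpha$-small colimit of representables computed in $\mt C_1^\dagger\A$ is computed in $\P^\dagger\A$ and so equally serves in $\tx{W}_\E^\dagger(\A)$. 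Thus all the content lies in the converse, which is precisely what the corollary handled through an application of Proposition~\ref{weakl-left}.

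First I would record the opening sentence of the remark. The $\mathbb F$-kernel functor $K=k^*\circ\tx{Ran}_h$ is a restriction composed with a right Kan extension along $h\colon\mathbbm{2}\to\mathbb F$, and its value is a weighted limit whose weight is assembled from the homs $\mathbb F(2,1)=X$ and $\mathbb F(2,0)=\mathbb F(1,0)=I$. Since $I$ and $X$ are $\alpha$-presentable this weight is $\alpha$-small, so every $\mathbb F$-kernel is an $\alpha$-small (and, being of kernel-pair/isokernel type, connected) limit. The payoff is that, whereas the corollary used completeness of $\P\A$ to know that $\mt C_1((\P\A)\op)$ has $\mathbb F$-kernels of representables, here it is enough that $\P_\alpha\A$ admit these particular $\alpha$-small limits: their values on representables are then again representable and hence lie in $\mt C_1((\P_\alpha\A)\op)$, which is exactly the hypothesis of Proposition~\ref{weakl-left} for $\K=\P_\alpha\A$.

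With that hypothesis in hand I set $\K=\P_\alpha\A$ and write $J\colon\A\hookrightarrow\K$ for the inclusion, so that $\K$ has $\alpha$-small colimits. To see that $J$ is $\E$-weakly reflective, observe that every $X\in\K$ is by construction an $\alpha$-small colimit $X\cong M*JH$ of objects of $\A$ with $M$ an $\alpha$-small weight; hence $\K(X,J-)\cong\{M,YH\}$ is an $\alpha$-small limit of representables in $[\A,\V]$, equivalently an $\alpha$-small colimit of representables in the opposite $\V$-category, and so lies in $\tx{W}_\E^\dagger(\A)$ by the standing assumption. Proposition~\ref{weakl-left} then promotes $\E$-weak reflectivity to $\mt C$-virtual reflectivity, producing a relative left adjoint $L\colon\K\to\mt C_1^\dagger\A$. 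Finally, any $\alpha$-small diagram of representables in $\mt C_1^\dagger\A$ has its $\alpha$-small colimit formed in $\K$ and transported along $L$ into $\mt C_1^\dagger\A$, exactly as in the corollary; this exhibits the required $\alpha$-small colimits of representables in $\mt C_1^\dagger\A$ and closes the converse.

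The step I expect to be the main obstacle is the one place where the argument genuinely departs from the corollary: verifying that $\P_\alpha\A$ really does carry the $\alpha$-small $\mathbb F$-kernels of representables needed to invoke Proposition~\ref{weakl-left}. Since $\P_\alpha\A$ is not complete and $\alpha$-small limits do not commute with $\alpha$-small colimits in $\V$, closure under $\alpha$-small limits cannot be read off naively — indeed $\P_\alpha\A$ need not even possess a terminal object. What rescues the argument is that $\mathbb F$-kernels are \emph{connected} $\alpha$-small limits, and it is the good interaction of connected $\alpha$-small limits with the $\alpha$-small colimits that build $\P_\alpha\A$, together with the $\alpha$-presentability of $I$ and $X$, that must be checked to conclude that these $\mathbb F$-kernels of representables exist and remain representable. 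Establishing this closure property carefully is the crux of the proof.
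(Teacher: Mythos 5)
Your overall skeleton is the right one (indeed it is the one the remark prescribes: rerun the corollary with $\P\A$ replaced by the free cocompletion under $\alpha$-small colimits), and the trivial direction, the verification that $\A$ is $\E$-weakly reflective in $\K=\P_{\alpha}\A$, and the final transport step along the relative left adjoint are all handled correctly. But the step you single out as ``the crux'' is misdiagnosed, and you explicitly leave it unproved --- so as written the proposal has a gap exactly where you claim the real content lies.

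The hypothesis of Proposition~\ref{weakl-left} is not about limits existing in $\K=\P_{\alpha}\A$; it asks that the $\mathbb F$-kernel of a map of representables $q'\colon\K(JA,-)\to\K(K,-)$ lie in the subcategory $\mt C_1(\K\op)$ of $[\K,\V]$. That kernel is a limit computed in the functor category $[\K,\V]$, pointwise in $\V$, so it always exists; the only question is whether it belongs to $\mt C_1(\K\op)$, and since $\mt C_1(\K\op)$ contains the representables it suffices that the kernel be representable. Now a weighted limit of covariant representables satisfies $\{W,\K(G-,-)\}\cong\K(W*G,-)$ whenever the weighted colimit $W*G$ exists in $\K$: representability of the kernel is governed by \emph{colimits} in $\K$, not limits (in the kernel-pair case the representing object is the pushout $JA+_{K}JA$). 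Since $I$ and $X$ are $\alpha$-presentable, the kernel weight is $\alpha$-small, so the required colimit is $\alpha$-small and exists in $\P_{\alpha}\A$ by construction --- that is the entire verification, and it is also why the corollary's proof invokes \emph{cocompleteness} of $\P\A$ (your phrase ``the corollary used completeness of $\P\A$'' repeats a slip in the sentence preceding Proposition~\ref{weakl-left}; $\P\A$ need not be complete at all). Consequently your worries about $\P_{\alpha}\A$ lacking a terminal object, about the kernels being connected limits, and about $\alpha$-small limits failing to commute with $\alpha$-small colimits in $\V$ are beside the point: none of these issues arises, and the ``closure of $\P_{\alpha}\A$ under connected $\alpha$-small limits'' that you propose to establish is neither needed for the argument nor something one should expect to hold in general.
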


Now we can apply Proposition~\ref{weakl-left} and its corollary in the context of Example~\ref{ese-kernels}(1), where $\E$ is the class of those regular epimorphisms that are stable under $\G$-powers. Then Theorem~\ref{prod+G-powers-weak} becomes:

\begin{teo}\label{weak-prod+pow}
  The following are equivalent for a $\V$-category $\A$:
  \begin{enumerate}\setlength\itemsep{0.25em}
  \item $\A$ is accessible with products and $\G$-powers;
  \item $\A$ is accessible and $\E$-weakly cocomplete;
  \item $\A$ is accessibly embedded and $\E$-weakly reflective in $[\C,\V]$ for some $\C$;
  \item $\A$ is the $\V$-category of models of a limit/$\E$ sketch.
  \end{enumerate}
\end{teo}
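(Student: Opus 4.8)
The plan is to deduce this theorem from Theorem~\ref{prod+G-powers-weak} by translating each of its ``$\mt C$-virtual'' conditions into the corresponding ``$\E$-weak'' condition, using the dictionary established in Proposition~\ref{weakl-left} and its corollary for the kernel-quotient system of Example~\ref{ese-kernels}(1), in which $\E$ is the class of regular epimorphisms stable under $\G$-powers. Since Theorem~\ref{prod+G-powers-weak} already gives the equivalence of condition~(1) here with ``accessible and $\mt C$-virtually cocomplete'', with ``accessibly embedded and $\mt C$-virtually reflective in $[\C,\V]$ for some $\C$'', and (for $\G$ small) with ``the $\V$-category of models of a limit/$\E$ sketch'', it suffices to match these three against conditions~(2), (3), and~(4) respectively.

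For $(1)\Leftrightarrow(2)$ I would invoke the corollary to Proposition~\ref{weakl-left}, which states outright that $\mt C$-virtual cocompleteness and $\E$-weak cocompleteness coincide; combined with Theorem~\ref{prod+G-powers-weak} this yields the equivalence. For $(1)\Leftrightarrow(3)$ I would apply Proposition~\ref{weakl-left} with $\K=[\C,\V]$: because $[\C,\V]$ is a presheaf $\V$-category it is cocomplete, so $\mt C_1([\C,\V]\op)$ has all limits of representables and in particular the $\mathbb F$-kernels of representables required as the hypothesis of that proposition. The proposition then identifies $\mt C$-virtual reflectivity in $[\C,\V]$ with $\E$-weak reflectivity, and Theorem~\ref{prod+G-powers-weak} supplies the accessibility bookkeeping. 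Finally $(1)\Leftrightarrow(4)$ is just the equivalence of conditions~(1) and~(5) in Theorem~\ref{prod+G-powers-weak}, valid when $\G$ is small.

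Because the substantive work has already been carried out in Proposition~\ref{weakl-left} and its corollary, this argument is essentially bookkeeping, and I expect no genuine obstacle. The only point demanding care is verifying the hypothesis of Proposition~\ref{weakl-left} in the reflectivity step---namely that $\mt C_1([\C,\V]\op)$ admits $\mathbb F$-kernels of representables---but this is automatic from cocompleteness of the presheaf category, exactly as in the proof of the corollary where the choice $\K=\P\A$ was used. I would therefore present the proof as a short chain of citations to Theorem~\ref{prod+G-powers-weak}, Proposition~\ref{weakl-left}, and its corollary, taking care only to name the relevant instance of the kernel-quotient system from Example~\ref{ese-kernels}(1).
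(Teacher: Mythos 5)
Your proposal is correct and is essentially the paper's own argument: the paper states this theorem as an immediate translation of Theorem~\ref{prod+G-powers-weak} via Proposition~\ref{weakl-left} and its corollary, instantiated at the kernel-quotient system of Example~\ref{ese-kernels}(1). Your condition-by-condition matching (corollary for cocompleteness, Proposition~\ref{weakl-left} with $\K=[\C,\V]$ whose cocompleteness supplies the $\mathbb F$-kernel hypothesis, and Theorem~\ref{prod+G-powers-weak}(5) for the sketch clause, noting the smallness of $\G$) is exactly the intended bookkeeping.
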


In this way we recover the characterization of ordinary accessible categories with products  
given in \cite[Chapter~4]{AR01}; we also obtain an enriched version of it in the context of categories enriched over finitary quasivarieties. 

Similarly, we can apply Proposition~\ref{weakl-left} and its corollary in the context of Example~\ref{ese-kernels}(2), where $\V=\bo{Cat}$ and $\E$ is the class of retract equivalences. Then Theorem~\ref{flexible-char} becomes:

\begin{teo}\label{Flex-Re}
  The following are equivalent for a 2-category $\A$: 
  \begin{enumerate}\setlength\itemsep{0.25em}
  \item $\A$ is accessible with flexible limits;
  \item $\A$ is accessible and $\E$-weakly cocomplete;
  \item $\A$ is accessibly embedded and $\E$-weakly reflective in $[\C,\V]$ for some $\C$;
  \item $\A$ is the $\V$-category of models of a
    limit/\textnormal{RE} sketch.
  \end{enumerate}
\end{teo}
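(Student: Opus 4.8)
The plan is to obtain Theorem~\ref{Flex-Re} as a repackaging of the already-established Theorem~\ref{flexible-char}, once the colimit type $\mt C$ attached to the present kernel-quotient system is identified with the companion $\mt P$ for $\bo{Flex}$ of Section~\ref{flexible}. Indeed, for $\V=\bo{Cat}$ with $\E$ the class of retract equivalences, the induced $\mt C$ is precisely $\mt P$, as recorded in the example following the definition of $\mt C$ from a kernel-quotient system; consequently $\mt C_1^\dagger\A=\mt P_1^\dagger\A$ for every $2$-category $\A$, and all the $\mt C$-virtual notions coincide with the corresponding $\mt P$-virtual ones. With this identification in hand, I would match the conditions one at a time.

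For $(1)\Leftrightarrow(2)$: by Theorem~\ref{flexible-char} condition $(1)$ is equivalent to $\A$ being accessible and $\mt P$-virtually cocomplete. The corollary to Proposition~\ref{weakl-left} tells us that $\mt C$-virtual cocompleteness is the same as $\E$-weak cocompleteness, and since $\mt C=\mt P$ this is exactly condition $(2)$. For $(1)\Leftrightarrow(3)$: Theorem~\ref{flexible-char} gives that $(1)$ is equivalent to $\A$ being accessibly embedded and $\mt P$-virtually reflective in $[\C,\V]$ for some $\C$. Here I would invoke Proposition~\ref{weakl-left} with $\K=[\C,\V]$; since a presheaf $2$-category is complete, $\mt C_1(\K\op)$ has all limits of representables, in particular the $\mathbb F$-kernels required by the hypothesis of that proposition. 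Hence $\mt C$-virtual reflectivity in $[\C,\V]$ coincides with $\E$-weak reflectivity, giving condition $(3)$.

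Finally $(1)\Leftrightarrow(4)$ is immediate, since condition $(4)$---that $\A$ is the $\V$-category of models of a limit/RE sketch---is verbatim condition $(5)$ of Theorem~\ref{flexible-char}. The only points requiring care, rather than a genuine obstacle, are the identification $\mt C=\mt P$ and the verification that the completeness of $[\C,\V]$ supplies the $\mathbb F$-kernels needed to apply Proposition~\ref{weakl-left}; no new argument about flexible limits or about retract equivalences is needed beyond what Section~\ref{flexible} already provides. In effect the theorem is a dictionary result, translating the internally-defined $\mt P$-virtual vocabulary of Theorem~\ref{flexible-char} into the $\E$-weak vocabulary of \cite{LR12:articolo}.
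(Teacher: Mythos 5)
Your proposal is correct and takes essentially the same route as the paper, which proves Theorem~\ref{Flex-Re} precisely by applying Proposition~\ref{weakl-left} and its corollary in the setting of Example~\ref{ese-kernels}(2) --- where the induced colimit type is the companion $\mt P$ of Section~\ref{flexible} --- so that Theorem~\ref{flexible-char} translates term by term into the $\E$-weak vocabulary. Your added care about the hypothesis of Proposition~\ref{weakl-left} (completeness of $[\C,\V]$ supplying the $\mathbb F$-kernels of representables) is exactly the sufficient condition the paper records just before that proposition.
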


As a consequence we obtain part of \cite[Theorem~9.4]{LR12:articolo} characterizing accessible 2-categories with flexible limits in terms of weak cocompleteness.

%\bibliography{biblio}
%\bibliographystyle{abbrv}

\end{document}